\newtheorem{thm}{Theorem}[section]        \newtheorem{lem}[thm]{Lemma}
\newtheorem{cor}[thm]{Corollary}      \newtheorem{prop}[thm]{Proposition}
\newtheorem*{defi*}{Definition}  \newtheorem*{ex*}{Example}
\newtheorem*{thm*}{Theorem}      \newtheorem*{cor*}{Corollary}
\newtheorem*{rmk*}{Remark}      
    \newtheorem*{prop*}{Proposition}
\newtheorem{claim}[thm]{Claim}
\newcommand{\ds}{\displaystyle}
\let\ssection=\section
\renewcommand{\section}{\setcounter{equation}{0}\ssection}
\newtheorem*{namedtheorem}{\theoremname}
\newcommand{\theoremname}{testing}
\theoremstyle{remark}
\newtheorem*{rem}{Remark}
\newcommand{\BR}{\mathbb R}      
      \newcommand{\BZ}{\mathbb Z}
\newcommand{\CC}{\mathcal C}    \newcommand{\CD}{\mathcal D}
    \newcommand{\CF}{\mathcal F}
\newcommand{\D}{\partial}
\newcommand{\ep}{\epsilon}
\newcommand{\cover}{\widetilde}
\newcommand{\closure}{\overline}
\renewcommand{\phi}{\varphi}
\renewcommand{\bar}{\overline}
\newcommand{\FN}{F_N}
\newcommand{\ssm}{\smallsetminus} 
\renewcommand{\b}{\textbf}
\DeclareMathOperator{\Out}{Out}    
\begin{document}

\title[]{Ergodic decomposition for folding and unfolding paths in Outer space}
\author{Hossein Namazi, Alexandra Pettet, and Patrick Reynolds}
\begin{abstract}
We relate ergodic-theoretic properties of a very small tree or lamination to the behavior of folding and unfolding paths in Outer space that approximate it, and we obtain a criterion for unique ergodicity in both cases.  Our main result is that non-unique ergodicity gives rise to a transverse decomposition of the folding/unfolding path.  It follows that non-unique ergodicity leads to distortion when projecting to the complex of free factors, and we give two applications of this fact.  First, we show that if a subgroup $H$ of $Out(\FN)$ quasi-isometrically embeds into the complex of free factors via the orbit map, then the limit set of $H$ in the boundary of Outer space consists of trees that are uniquely ergodic and have uniquely ergodic dual lamination.  Second, we describe the Poisson boundary for random walks coming from distributions with finite first moment with respect to the word metric on $Out(\FN)$: almost every sample path converges to a tree that is uniquely ergodic and that has a uniquely ergodic dual lamination, and the corresponding hitting measure on the boundary of Outer space is the Poisson boundary.  This improves a recent result of Horbez.  We also obtain sublinear tracking of sample paths with Lipschitz geodesic rays.  
\end{abstract}
\maketitle

\section{Introduction}\label{sec: introduction}

In \cite{Ma92} Masur proved that the vertical foliation for a quadratic differential is uniquely ergodic if a corresponding Teichm\"uller geodesic ray projects to a recurrent ray in the moduli space. This observation which is known as Masur Criterion provides the first step in understanding the dynamical behavior of Teichm\"uller geodesics and has had numerous applications. In particular Kaimanovich-Masur \cite{KM96} used this to study random walks in the mapping class group of a surface and in Teichm\"uller space. They showed that for a non-elementary probability measure on the mapping class group, for almost every random walk there is a uniquely ergodic measure foliation which is the limit of every orbit for the random walk in Teichm\"uller space. Moreover when the probability measure has finite entropy and finite first logarithmic moment with respect to the Teichm\"uller distance, the space of uniquely ergodic measured foliations is the Poisson boundary. 

Parallels between the mapping class group of a surface with its action on the Teichm\"uller space and the group $\Out(F_N)$ of outer automorphisms of a free group and the Outer Space $CV_N$, defined by Culler-Vogtmann \cite{CV86}, has been drawn and in many ways the study of surfaces has been a guideline for understanding $\Out(F_N)$ and its action on $CV_N$. However the geometry of $CV_N$ lacks a lot of nice features of the Teichm\"uller space and in particular does not have a natural symmetric metric. Folding paths in $CV_N$ are the closest to play the role of geodesic paths, however the nature of the forward direction of the folding path, i.e., folding, is unlike the backward direction, i.e., unfolding. This creates difficulty finding analogues for vertical and horizontal foliations for a quadratic differential and there is no symmetry between them. Finally, cocompactness in $CV_N$ seems to be weaker than it is in Teichm\"uller space. 
In particular it is easy to produce recurrent (after projecting to $CV_N/\Out(F_N)$) folding paths which are not {\em uniquely ergodic} in either direction and therefore the obvious analogue of Masur Criterion fails. 

In this article we study how non-unique ergodicity results in a transverse decomposition of the graphs in the folding path. Our results are similar to a theorem of McMullen, which shows that distinct ergodic measures on a singular foliation on a surface give rise to distinct components of the limiting noded surface in the Mumford compactification moduli space \cite{McM13}. One application of our results is that a folding, \emph{resp.} unfolding, ray with non-uniquely ergodic limit tree, \emph{resp.} lamination, is ``slow'' when projecting to the {\em free factor graph}.  To avoid technicalities, we state a result that is an easy consequence of our results, and we only consider folding.

\begin{thm}\label{distortion}
 Let $\pi:CV_N \to \mathcal{FF}$ be the projection map.  Suppose that $T_t$ is a folding path parameterized by arc length and with limit $T$.  If $T$ is non-uniquely ergodic, then $\lim_{t \to \infty} d(\pi(T_0),\pi(T_t))/t=0$.
\end{thm}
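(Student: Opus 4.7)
The strategy is to combine the paper's main structural theorem with the well-known coarse geometry of the projection $\pi \colon CV_N \to \mathcal{FF}$.

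The first step is to invoke the main result: because $T$ is non-uniquely ergodic, the folding path $T_t$ admits a transverse decomposition. Concretely, I expect this to yield, for all sufficiently large $t$, a proper free factor $A_t$ of $F_N$ that is carried by a distinguished subgraph of the graph $G_t$ underlying $T_t$, with $A_t$ ranging over a finite set indexed by the finitely many ergodic transverse measures on $T$.

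The second step is a standard fact from the coarse geometry of $\mathcal{FF}$ (Bestvina--Feighn): if a proper free factor $A$ is carried by a subgraph of $G \in CV_N$, then $d_{\mathcal{FF}}([A], \pi(G))$ is bounded by a constant $C = C(N)$. Consequently, on each maximal subinterval $I \subset [0,\infty)$ on which $A_t$ is constant equal to some $A$, the image $\pi(T_I)$ has diameter at most $2C$.

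The third step is to bound the number $n(t)$ of such maximal intervals meeting $[0,t]$. The transverse decomposition should stabilize: the subgraphs carrying each $A_t$ occupy an asymptotically full proportion of $G_t$ as $t \to \infty$, so transitions between the finitely many possible free factors become rarer, giving $n(t) = o(t)$. Combining with the previous step,
\[
d_{\mathcal{FF}}(\pi(T_0), \pi(T_t)) \;\leq\; 2C\,n(t) + O(1) \;=\; o(t),
\]
which is the desired conclusion.

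The main obstacle is the stabilization assertion in the third step. Making it quantitative requires extracting from the transverse decomposition a monotonicity statement: once a subgraph of $G_t$ carries one of the ergodic components, folding cannot undo this, and the time between reconfigurations diverges. I expect this to be the key technical content transferred from the main theorem of the paper; once it is in hand, the sublinear bound is immediate.
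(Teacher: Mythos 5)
Your first two steps do match two of the paper's ingredients: the transverse decomposition of Theorem \ref{transverse decomposition for a length measure}, and the fact that a conjugacy class carried by a proper subgraph of a marked graph lies at bounded $\mathcal{FF}$-distance from its projection (the paper realizes this by taking a primitive loop $\delta$ in some $H^j_{n_m}$ whose image fails to fill $G_{n_m+p}$, giving $d_{\mathcal{FF}}(G_{n_m},G_{n_m+p})\le 6$). The gap is exactly where you flag it, and the fix you propose does not work. There is no monotonicity of the form ``once a subgraph of $G_t$ carries an ergodic component, folding cannot undo this.'' The decomposition $H^0,\dots,H^k$ lives on a single fixed graph $G$ and is transplanted to the $G_{n_m}$ only along a subsequence obtained after several diagonal extractions; it is not invariant under the folding maps, and nothing prevents the image of $H^i_{n_m}$ from filling $G_n$ for $n$ much larger than $n_m$. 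What the paper actually proves (Lemma \ref{disjoint images after collapse for folding}) is local in time: for each fixed window length $l$ and $m$ large depending on $l$, the images of $H^i_{n_m}$ and $H^j_{n_m}$ in $G_{n_m+p}$, $0\le p\le l$, are edge-disjoint only after collapsing a pinched part $E_{n_m+p}$, and the genericity constant $\theta=\epsilon/(MN)^{l}$ degrades exponentially in $l$. So there is no free factor $A_t$ defined for all $t$, constant on long intervals covering $[0,t]$, and hence no bound of the form $d\le 2C\,n(t)+O(1)$ with a transition count $n(t)=o(t)$.

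Because of this, the paper's passage from the decomposition to sublinearity is structurally different from your third step. Theorem \ref{slow progress in FF for foldings} yields only that for every $l$ there are arbitrarily late stalls of length $l$ on which the $\mathcal{FF}$-diameter is at most $6$; by itself this does not control $d(\pi(T_0),\pi(T_t))/t$, since the stalls could be sparse. The missing external input is Proposition \ref{liberal quasi-geodesic} (Bestvina--Feighn plus Kapovich--Rafi): $\pi(T_t)$ is a reparameterized \emph{uniform} quasi-geodesic, and a uniform quasi-geodesic cannot both make definite linear progress and contain arbitrarily long stalls of bounded diameter; this is what converts ``long stalls exist'' into $\lim_{t\to\infty}d(\pi(T_0),\pi(T_t))/t=0$. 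To repair your argument you would need to replace the persistence claim by the windowed disjointness of Lemma \ref{disjoint images after collapse for folding} and then still import the quasi-geodesic property of the projected path; the counting scheme as written cannot be made to close.
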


Recall that points in the boundary of Culler-Vogtmann Outer space are represented by very small actions of $\FN$ on trees that either are not free or else are not simplicial.  For such a tree $T$, $L(T)$ denotes the algebraic lamination associated to $T$.  Use $M_N \subseteq Curr(\FN)$ to denote currents that represent elements from the minset of the $Out(\FN)$-action on $\mathbb{P}Curr(\FN)$.  The tree $T$ is uniquely ergodic if $L(U)=L(T)$ implies that $U$ is homothetic to $T$.  The lamination $L(T)$ is uniquely ergodic if for $\eta, \eta' \in M_N$, if both are supported on $L(T)$, then  $\eta$ is homothetic to $\eta'$.   Let $\mathcal{UE}$ stand for the space of very small trees that are uniquely ergodic and which have uniquely ergodic lamination.  Let $\mathcal{FF}$ denote the complex of free factors, and let $x_0 \in \mathcal{FF}$ be some point.  We give two applications of Theorem \ref{distortion}:

\begin{thm}\label{limit set}
 Let $H \leq Out(F_N)$.  If the orbit map $H \to \mathcal{FF}:h  \mapsto hx_0$ is a quasi-isometric embedding, then $\text{Limit}(H)\subseteq \mathcal{UE}$.
\end{thm}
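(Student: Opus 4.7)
The plan is to argue by contradiction, in the style of the Kent--Leininger proof that convex-cocompact mapping class subgroups have uniquely ergodic limits. Suppose $T \in \text{Limit}(H)$ but $T \notin \mathcal{UE}$, so either $T$ is not uniquely ergodic as a tree, or $L(T)$ is not uniquely ergodic as a lamination. Lift $x_0$ to a point in $CV_N$, continuing to denote it $x_0$, and pick $h_n \in H$ with $h_n x_0 \to T$ in $\overline{CV_N}$. Since a quasi-isometric embedding is proper, $|h_n|_H \to \infty$, and
\[ d_{\mathcal{FF}}(\pi(x_0), \pi(h_n x_0)) \geq \tfrac{1}{K} |h_n|_H - K' \]
for some constants $K, K'$. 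On the other hand, a fixed finite generating set of $H$ acts on $CV_N$ with uniformly bounded Lipschitz displacement of $x_0$, giving a constant $C$ with $d_{CV_N}(x_0, h_n x_0) \leq C|h_n|_H$.

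Next I would produce an approximating ray: a folding ray $X_t$ based at $x_0$, parameterized by Lipschitz arc length, with $X_t \to T$ when the tree $T$ is non-uniquely ergodic, or an unfolding ray with the analogous boundary behavior when $L(T)$ is non-uniquely ergodic. Theorem~\ref{distortion} and its unfolding/lamination analogue (alluded to earlier in the introduction) then give $d_{\mathcal{FF}}(\pi(x_0), \pi(X_t)) = o(t)$. Using Gromov hyperbolicity of $\mathcal{FF}$ together with the Bestvina--Reynolds/Hamenst\"adt-type boundary theory, any sequence in $CV_N$ converging to an arational tree has a well-defined limit in $\partial \mathcal{FF}$; arationality of $T$ itself follows from the lower bound above (non-arational boundary trees have bounded $\mathcal{FF}$-projection). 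Hence $\pi(h_n x_0)$ and $\pi(X_t)$ share a common limit $\xi \in \partial \mathcal{FF}$, and $\delta$-hyperbolicity of $\mathcal{FF}$ furnishes a uniform $D$ and parameters $t_n$ with $d_{\mathcal{FF}}(\pi(h_n x_0), \pi(X_{t_n})) \leq D$. Because the ray is a Lipschitz geodesic, one can arrange $t_n \leq C|h_n|_H + O(1)$.

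Combining the estimates,
\[ \tfrac{1}{K}|h_n|_H - K' \leq d_{\mathcal{FF}}(\pi(x_0), \pi(X_{t_n})) + D + O(1) = o(t_n) + O(1) = o(|h_n|_H), \]
contradicting $|h_n|_H \to \infty$ for large $n$. The hard step is the coupling in the second paragraph: folding and unfolding rays are not canonical in $CV_N$, so one must choose a ray with the correct boundary limit and then show that the $h_n$-orbit projection in $\mathcal{FF}$ genuinely tracks the ray's $\mathcal{FF}$-projection. This relies on the boundary theory of the free factor graph and on the arationality of points in the limit set of a quasi-isometrically embedded subgroup, both of which should be available from results established earlier in the paper or in its references.
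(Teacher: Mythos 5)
Your overall strategy is the same as the paper's: use the quasi-isometric embedding to force linear progress in $\mathcal{FF}$ along a folding (resp.\ unfolding) ray converging to a limit tree, and then contradict Theorem \ref{distortion} / Theorems \ref{slow progress in FF for foldings} and \ref{slow progress in FF for unfoldings}. But the coupling step --- which you yourself flag as ``the hard step'' --- is where the argument breaks, and the tool you invoke does not supply it. Hyperbolicity of $\mathcal{FF}$ does \emph{not} furnish a uniform $D$ with $d_{\mathcal{FF}}(\pi(h_nx_0),\pi(X_{t_n}))\le D$ merely because $\pi(h_nx_0)$ and $\pi(X_t)$ converge to the same point $\xi\in\partial\mathcal{FF}$: convergence to a boundary point is a Gromov-product condition and is compatible with the sequence drifting arbitrarily far from any fixed quasi-geodesic ray to $\xi$ (in a free group, $a^nb^n\to a^{\infty}$ while $d(a^nb^n,\{a^k\})=n\to\infty$). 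Only two quasi-geodesics with the same endpoint at infinity are forced to fellow-travel, and the $h_n$ you chose (an arbitrary sequence with $h_nx_0\to T$) need not lie along a quasi-geodesic in $H$. The subsequent assertion $t_n\le C|h_n|_H+O(1)$ has the same defect: closeness in $\mathcal{FF}$ does not bound $CV_N$-distance, so you cannot compare the arc-length parameter $t_n$ of the ray to $|h_n|_H$ without first placing $h_nx_0$ at bounded $CV_N$-distance from the ray. Without both of these, the final displayed inequality yields no contradiction.

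The missing ingredient is the Bestvina--Feighn contraction machinery, which is exactly how the paper closes this gap: because the orbit map to $\mathcal{FF}$ is a QI embedding, greedy folding paths between orbit points are strongly contracting (Proposition \ref{strongly contracting}), hence by Corollary \ref{uniformly stable} they lie at uniformly bounded Hausdorff distance from the orbit images of word geodesics in $H$, and therefore project to uniform \emph{parametrized} quasi-geodesics in $\mathcal{FF}$. Passing to bi-infinite limits via Proposition \ref{delF} produces folding and unfolding rays limiting to $U\in\text{Limit}(H)$ whose $\mathcal{FF}$-projections make genuinely linear progress, which is precisely what Theorems \ref{slow progress in FF for foldings} and \ref{slow progress in FF for unfoldings} forbid when $U\notin\mathcal{UE}$. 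If you replace your arbitrary sequence $h_n$ by word geodesics in $H$ and insert the contraction argument to transfer linear progress from the orbit to the ray, your proof becomes essentially the paper's.
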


In the statement $\text{Limit}(H)$ denotes the accumulation points of $H$ in compactified Outer space.

The Poisson boundary for certain random walks on $Out(\FN)$ was recently described by Horbez \cite{Hor14b}.  We improve the result of Horbez via the following application of Theorem \ref{distortion} and recent work of Tiozzo and Maher-Tiozzo \cite{Tio14, MT14}.

\begin{thm}\label{random walks}
    Let $\mu$ be a non-elementary distribution of $Out(\FN)$ with finite first moment with respect to the Lipschitz metric on Outer space.  Then
  \begin{enumerate}
  \item [(i)] There exists a unique $\mu$-stationary probability measure $\nu$ on the space $\partial CV_N$, which is purely non-atomic and concentrated on $\mathcal{UE}$; the measure space $(\partial CV_N, \nu)$ is a $\mu$-boundary, and
  \item [(ii)] For almost every sample path $(w_n)$, $w_nx_0$ converges in $\overline{CV}_N$ to $w_\infty \in \mathcal{UE}$, and $\nu$ is the corresponding hitting measure.
  \item [(iii)] For almost every sample path $(w_n)$, there is a Lipschitz geodesic $T_t$ such that 
  \[
  \lim_{n \to \infty} d(w_nx_0, T_{Ln})/n =0
  \]
  \item [(iv)] The measure space $(\partial CV_N, \nu)$ is the Poisson boundary.
  \end{enumerate}
\end{thm}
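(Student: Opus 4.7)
The plan is to combine Theorem \ref{distortion} with the Maher--Tiozzo theory of random walks on Gromov hyperbolic spaces applied to the free factor complex $\mathcal{FF}$, upgrade Horbez's stationary measure to be concentrated on $\mathcal{UE}$, and then invoke Kaimanovich's ray/strip criteria to identify the Poisson boundary.

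For parts (i)--(ii), $\mathcal{FF}$ is Gromov hyperbolic (Bestvina--Feighn), and for non-elementary $\mu$ the induced walk on $\mathcal{FF}$ via the orbit map is non-elementary in the sense of \cite{MT14}. Maher--Tiozzo then yields almost sure convergence $\pi(w_n x_0) \to \xi \in \partial \mathcal{FF}$, positive drift $d_{\mathcal{FF}}(\pi(x_0),\pi(w_n x_0))/n \to \ell > 0$, and a unique non-atomic $\mu$-stationary measure $\bar{\nu}$ on $\partial \mathcal{FF}$. Horbez's work \cite{Hor14b} already supplies almost sure convergence $w_n x_0 \to w_\infty$ in $\overline{CV}_N$ under the finite first moment hypothesis, producing a $\mu$-stationary $\nu$ on $\partial CV_N$ whose pushforward, through the Bestvina--Reynolds/Hamenst\"adt identification of $\partial \mathcal{FF}$ with equivalence classes of arational trees, is $\bar{\nu}$. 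Since the fiber of that projection over a generic $\xi$ is precisely the set of projective very small trees sharing a common dual lamination, the concentration of $\nu$ on $\mathcal{UE}$ reduces to showing that almost surely this fiber is a single projective class whose lamination is uniquely ergodic.

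This concentration is the crux. Suppose for contradiction that with positive probability $w_\infty \notin \mathcal{UE}$. Fold $x_0$ toward $w_\infty$ along a Lipschitz geodesic $T_t$ parameterized by arc length; Theorem \ref{distortion} (together with its lamination counterpart, obtained by the same transverse-decomposition strategy) gives $d_{\mathcal{FF}}(\pi(x_0),\pi(T_t))/t \to 0$. On the other hand, folding paths project to uniform reparameterized quasi-geodesics in $\mathcal{FF}$ (Bestvina--Feighn), so in the hyperbolic space $\mathcal{FF}$ the image of $T_t$ fellow-travels any geodesic from $\pi(x_0)$ to $\xi$, and so does $\pi(w_n x_0)$. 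The positive Lipschitz drift of $(w_n x_0)$ combined with this fellow-traveling forces the folding parameter $t(n)$ at which $T_{t(n)}$ is closest to $w_n x_0$ to grow linearly in $n$; then sublinearity of $d_{\mathcal{FF}}(\pi(x_0),\pi(T_{t(n)}))$ in $t(n)$ contradicts the positive $\mathcal{FF}$-drift of $\pi(w_n x_0)$. Hence $\nu(\mathcal{UE})=1$, and $(\partial CV_N,\nu)$ is a $\mu$-boundary by the standard Furstenberg argument. For (iii), fold $x_0$ toward the almost surely uniquely ergodic $w_\infty$ to obtain a Lipschitz geodesic ray $T_t$, and apply a sublinear tracking argument in the style of \cite{KM96,Tio14} using positive Lipschitz drift and finite first moment to get $d_{CV_N}(w_n x_0, T_{Ln})/n \to 0$. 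For (iv), apply Kaimanovich's ray criterion to the tracking ray of (iii); equivalently one can use the strip criterion with strips built from (un)folding paths between independent points of $\mathcal{UE}$, whose $Out(F_N)$-translates meeting a ball of radius $R$ around $x_0$ grow at most polynomially.

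The principal obstacle is the upgrade to $\mathcal{UE}$: converting the $t$-asymptotic of Theorem \ref{distortion} into an $n$-asymptotic along the random walk requires a careful comparison of the Lipschitz and $\mathcal{FF}$ drifts, and relies crucially on the fact that folding paths project to reparameterized quasi-geodesics in $\mathcal{FF}$, together with stability of quasi-geodesics in the hyperbolic space $\mathcal{FF}$ to transfer fellow-traveling between the random walk and the folding ray aimed at the same boundary point $\xi$.
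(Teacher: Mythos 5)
Your overall architecture matches the paper's (Maher--Tiozzo applied to the action on $\mathcal{FF}$, the distortion theorems to force concentration on $\mathcal{UE}$, Kaimanovich's ray criterion for the Poisson boundary), but the crux step is run in the wrong order and, as written, has a gap. You attempt to derive the contradiction with non-unique ergodicity \emph{before} establishing sublinear tracking, using only that $\pi(w_nx_0)$ and $\pi(T_t)$ converge to the same $\xi\in\partial\mathcal{FF}$ together with positive Lipschitz drift. The step ``the positive Lipschitz drift of $(w_nx_0)$ combined with this fellow-traveling forces the folding parameter $t(n)$ at which $T_{t(n)}$ is closest to $w_nx_0$ to grow linearly in $n$'' does not follow: convergence to a common boundary point of the hyperbolic space $\mathcal{FF}$ controls Gromov products there, not the $CV_N$-distance from $w_nx_0$ to the folding ray, and $\pi$ collapses linear amounts of Lipschitz distance --- that is precisely the content of Theorem \ref{distortion}. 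Moreover, even granting linear growth of $t(n)$, the contradiction with the positive $\mathcal{FF}$-drift $L_0>0$ requires the upper bound $d_{\mathcal{FF}}(\pi(w_nx_0),\pi(T_{t(n)}))=o(n)$, which is again a tracking statement you have not yet proved. So your argument for $\nu(\mathcal{UE})=1$ silently presupposes part (iii).

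The paper resolves this by proving tracking \emph{first}, and without any unique ergodicity hypothesis: one defines $\Phi(T,U)=\sup_{G\in\mathbb{G}(T,U)}d_s(x_0,G)$ on pairs of inequivalent arational trees, checks that $\Phi$ is Borel, and applies Tiozzo's ergodic lemma (Lemma \ref{Tiozzo lemma}) to $f(\mathbf{g})=\Phi(\mathbf{bnd}_-(\mathbf{g}),\mathbf{bnd}_+(\mathbf{g}))$ on the bilateral step space; finite first moment for $d_s$ then gives $d_s(w_nx_0,G)/n\to 0$ for the bi-infinite geodesic $G$ joining the backward and forward limit classes, and the linear escape rate upgrades this to $d_s(w_nx_0,T_{L_1n})/n\to 0$. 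Only then does one conclude that the positive ray of $T_t$ projects to $\mathcal{FF}$ with definite speed $L_0>0$, which contradicts Theorem \ref{slow progress in FF for foldings} (\emph{resp.} Theorem \ref{slow progress in FF for unfoldings}) unless the forward limit lies in $\mathcal{UET}$ (\emph{resp.} the backward legal lamination lies in $\mathcal{UEL}$). Note that the bilateral structure matters: the equivariance $\mathbf{bnd}_\pm(S^k\mathbf{g})=w_k^{-1}\mathbf{bnd}_\pm(\mathbf{g})$ is what makes Lemma \ref{Tiozzo lemma} applicable, whereas a unilateral ray folded from $x_0$ toward $w_\infty$, as in your sketch of (iii), lacks this equivariance. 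To repair the proof, establish the tracking claim first by this route and then run your distortion argument; citing Horbez for convergence in $\overline{CV}_N$ is an acceptable shortcut for (ii), though the paper derives it independently from unique ergodicity and the Kapovich--Lustig intersection form.
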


The above applications use only a small part of our analysis of folding and unfolding paths in Outer space.  We will now outline our main result.  

\subsection{Folding/unfolding sequences and the decomposition theorem}

Our framework is that of folding/un-folding sequences; these objects live in the category of Stallings foldings, but we allow more than one fold at each step.  A folding/un-folding sequence comes with vector spaces of \emph{length measures} and \emph{width measures}, and a main point of our approach is to study their interaction.  

\begin{rem}
Every folding/un-folding sequence can be "filled-in" to get a \emph{liberal folding path} in the sense of \cite{BF14}.  In particular, every folding/un-folding sequence gives rise to  Lipschitz geodesics in Outer space.  Conversely, given a Lipschitz geodesic in Outer space, one can study the family of folding/un-folding paths arising from it.  In short, our analysis provides a method for analyzing the behavior of a geodesic in Outer space via the dynamical properties of its endpoints.  Analogous methods in Teichm\"uller space have been fruitful.
\end{rem}

Formally, a \emph{folding/unfolding sequence} is a sequence
\[
\ldots \to G_{-1} \to G_0 \to G_1 \to \ldots
\] 
of graphs together with maps $f_n:G_n \to G_{n+1}$ such that for any $m \leq n$, the composition
\[
f_n \circ f_{n -1} \circ \ldots \circ f_m:G_m \to G_{n+1}
\]
is a \emph{change of marking}, \emph{i.e.} a homotopy equivalence that sends every edge of $G_m$ to a reduced edge path.  We assume that folding/un-folding sequences are \emph{reduced}, which means that there there cannot be an invariant subgraph as $n \to \pm \infty$ in which no folding/unfolding occurs.  We assume that a marking between $G_0$ and fixed base point in Outer space has been specified, so a folding/unfolding sequence is a sequence of open simplices in Outer space.  

A {\em length measure} on $(G_n)$ is a sequence of vectors $(\vec\lambda_n)$, where $\vec\lambda_n$ is a vector that assigns a non-negative number, \emph{i.e.} a length, to every edge of $G_n$, in such a way that, when equipped with those lengths and for every $n$, the $f_n$ restricts to an isometry on every edge of $G_n$. Equivalently for every $n$
\[ \vec\lambda_n = A_n^T \vec\lambda_{n+1} \]
where $A_n$ is the incidence matrix of the graph map $f_n:G_n\to G_{n+1}$.  When equipped with a length measure with nonzero components, $G_n$ becomes a marked metric graph; therefore, we obtain a sequence in the unprojectivized Outer space. 

As $n\to\infty$, the universal covers of the graphs $(G_n)$ converge to a topological tree $T$. A length measure $(\vec\lambda_n)$ on $(G_n)$ induces a pseudo metric on $T$ which can be viewed as a non-atomic length measure on $T$ in sense of Paulin; see \cite{Gui00} or below. Collapsing the subsets of diameter zero gives a topological tree that represents a simplex in the boundary of Outer space, and specifying a length measure gives a convergent sequence in compactified Outer space.  We prove that the space of projectivized length measures on $(G_n)$ is linearly isomorphic to the space of projectivized non-atomic length measures on $T$ and is a finite dimensional linear simplex spanned by non-atomic ergodic length measures on $T$. We say $T$ or the sequence $(G_n)$ is {\em uniquely ergometric} if this simplex is degenerate.

In the opposite direction, we define a {\em width measure} (called a \emph{current} in the main body of the paper) on $(G_n)$ to be a sequence of vectors $(\vec\mu_n)$, where, again, each vector $\vec\mu_n$ assigns a non-negative number to every edge of $G_n$. However, for width measures, we require
\[ \vec\mu_{n+1}= A_n \vec \mu_n.\]
The space of projectivized width measures forms a finite dimensional linear simplex. In this case, the simplex naturally and linearly embeds in the space of projective {\em currents} for the free group. 

We prove that the set of currents corresponding to the width measures on $(G_n)$ is identified, via a linear isomorphism, with the set of currents supported the {\em legal lamination} $\Lambda$ of the sequence. The legal lamination $\Lambda$ can be described as the collection of parametrized bi-infinite reduced paths in $G_0$ which can be {\em lifted} to bi-infinite reduced paths in $G_n$ for every $n<0$, \emph{i.e.}, they are images of bi-infinite reduced paths in $G_n$. If tree $T$ in the compactification of Outer space is an accumulation point of the sequence $(G_n)$ equipped with a length measure and as $n\to-\infty$, then $\Lambda$ is contained in the {\em dual lamination} of $T$.  A current is supported on the dual lamination of $T$ if and only if $\langle T, \mu \rangle=0$, where $\langle \cdot,\cdot \rangle$ is the Kapovich-Lustig intersection function.  So the width measures naturally form a simplex in the space of projectivized currents, which is spanned by ergodic currents supported on the legal lamination $\Lambda$. We say the lamination $\Lambda$ or the sequence $(G_n)$ is {\em uniquely ergodic} if this simplex is degenerate.

Before stating our main results, we will briefly return to the case of surface theory in order to state a generalization of Masur Criterion, which essentially follows from ideas of Masur.  Assume $(g_t)_{t\ge0}$ is a Teichm\"uller geodesic ray in the Teichm\"uller space of a surface $S$ and that $(g_t)$ corresponds to a quadratic differential on $S$ with vertical foliation $\CF_v$ realized on $g_0$. Assume $\mu_1,\ldots,\mu_k$ are mutually singular ergodic transverse measures on $\CF_v$ and also assume there is a sequence $(g_{t_n})_n$ with $t_n\to\infty$ with $n$, so that the projections of $(g_{t_n})_n$ to the moduli space of $S$ converge to $\Sigma$, where $\Sigma$ is a noded Riemann surface in the Deligne-Mumford compactification of the moduli space of $S$. Masur's arguments can be applied to prove that there is a decomposition of $\Sigma$ into $k$ subsets whose closures only intersect in nodes of $\Sigma$ and with the property that for $i\in\{1,\ldots,k\}$ and $\mu_i$-a.e. leaf $l$ of $\CF_v$, the image of $l$ in $\Sigma$, under the composition of the Tichm\"uller map $g_0\to g_{t_n}$ and the approximating map $g_{t_n}\to\Sigma$ accumulate on the $i$-th subset of the decomposition. This implies, in particular, that the complement of nodes of $\Sigma$ has at least $k$ components, and one obtains the following theorem; see  \cite{McM13}.

\begin{thm}\label{mcmullen}
	Assume $(g_t)_{t\ge 0}$ is a Teichm\"uller geodesic ray whose projection to the moduli space accumulates on a noded Riemann surface $\Sigma$ with $\Sigma \setminus \{{\rm nodes}\}$ which has $k$ components. Then the vertical foliation for the quadratic differential associated to $(g_t)$ admits no more than $k$ mutually singular transverse measures.
\end{thm}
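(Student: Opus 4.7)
The plan is to follow the sketch in the paragraph preceding the statement: assuming that $\mu_1,\ldots,\mu_m$ are mutually singular ergodic transverse measures on $\CF_v$, I will produce $m$ distinct components of $\Sigma \setminus \{{\rm nodes}\}$, forcing $m \le k$.

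First, I would fix an approximating subsequence $t_n \to \infty$ with $g_{t_n} \to \Sigma$, together with the ``plumbing'' maps $\phi_n\colon g_{t_n} \to \Sigma$ that are well-defined and nearly isometric off shrinking collar neighbourhoods of the pinching curves.  Composing with the Teichm\"uller map $h_n\colon g_0 \to g_{t_n}$ gives $\Phi_n = \phi_n \circ h_n$, which expands the vertical direction and contracts the horizontal direction by $e^{t_n}$.  The basic geometric input is that, up to bounded distortion, the Margulis thick part of $g_{t_n}$ is carried by $\phi_n$ into a definite component of $\Sigma\setminus\{{\rm nodes}\}$, so the thick--thin decomposition of $g_{t_n}$ pulls back via $h_n^{-1}$ to a measurable decomposition of $S$ into pieces labelled by the components of $\Sigma\setminus\{{\rm nodes}\}$, plus a ``collar'' piece.

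Next, for each $i$, I would apply ergodicity of $\mu_i$ under the transverse holonomy of $\CF_v$.  Fix a short horizontal transversal $\tau_0 \subset g_0$.  For $\mu_i$-a.e.\ leaf $\ell$ meeting $\tau_0$, one expects a limiting density $\rho_i(C)$, for each component $C$ of $\Sigma\setminus\{{\rm nodes}\}$, given by the asymptotic fraction of the $\mu_i$-transverse mass of a long leaf segment of $\ell$ that is mapped by $\Phi_n$ into $C$.  That these densities exist and are independent of $\ell$ follows from Birkhoff's theorem applied to the holonomy cocycle; that $\sum_C \rho_i(C)=1$ uses the fact that the pinching collars absorb vanishing vertical transverse mass in the limit.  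Set $\mathrm{Supp}(\mu_i) = \{C : \rho_i(C) > 0\}$, a nonempty union of components.

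The heart of the argument is to show $\mathrm{Supp}(\mu_i) \cap \mathrm{Supp}(\mu_j) = \emptyset$ for $i\neq j$.  If a component $C$ lay in both supports, then after pulling back via $\Phi_n^{-1}$ and passing to a further subsequence, both $\mu_i$ and $\mu_j$ restricted to the preimage of $C$ would be identifiable, in the limit, with a transverse measure absolutely continuous with respect to the canonical vertical transverse measure of the limiting quadratic differential on $C$.  This contradicts mutual singularity of $\mu_i$ and $\mu_j$ on a set of positive mass for each.  Hence $i \mapsto \mathrm{Supp}(\mu_i)$ is an injection from $\{1,\ldots,m\}$ to the set of nonempty unions of components of $\Sigma\setminus\{{\rm nodes}\}$, which has at most $k$ nonempty singletons underlying it; a more careful accounting using that distinct $\mathrm{Supp}(\mu_i)$ must differ in at least one component gives $m \le k$.

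The main obstacle, and where I expect the bulk of the technical work to lie, is controlling the vertical transverse mass that escapes into the pinching collars of $g_{t_n}$: one has to show that leaf segments entering a thin collar contribute negligibly to the limiting densities $\rho_i(C)$, so that the full mass of each $\mu_i$ is accounted for by the decomposition of $\Sigma$.  This is a non-divergence statement in the spirit of Masur's original argument, and making it work in the noded setting --- where collars of arbitrarily small width accumulate --- is the technical core.  Once this control is established, the remainder is Birkhoff ergodic theory plus a measure-singularity extraction on each limiting component.
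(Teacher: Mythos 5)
Your outline follows essentially the same route as the paper, which does not prove Theorem \ref{mcmullen} in detail but derives it from exactly the sketch you reproduce: for each ergodic transverse measure one shows that a.e.\ leaf accumulates, under the composition $g_0 \to g_{t_n} \to \Sigma$, on a definite union of components of $\Sigma \setminus \{{\rm nodes}\}$, and mutual singularity forces these unions to be pairwise disjoint and nonempty, so the number of ergodic measures is at most the number of components; the paper attributes the details to Masur's arguments and to \cite{McM13}. The technical point you flag as the core difficulty --- controlling the transverse mass escaping into the pinching collars --- is precisely the part the paper also leaves to those references, so your proposal matches the paper's treatment.
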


We now return to the case of a folding/unfolding sequence.  Our main result is a statement that is similar to Theorem \ref{mcmullen}.  By a {\em transverse decomposition} of a graph we mean a family of subgraphs whose edge sets are pairwise disjoint.  Note that moduli space of metric graphs of rank $\le N$ is compact, so when $(G_n)$ is equipped with a length measure, we can consider subsequential limits as  $n\to \pm \infty$.  

By what we stated earlier, every length measure $\lambda$ on $(G_n)$ is a linear combination of ergodic length measures. The {\em ergodic components} of $\lambda$ are the ones that appear in this combination with nonzero coefficient.  Recall that every leaf $\omega$ of the legal lamination is a bi-infinite reduced path in $G_0$ and for every $n<0$ can be lifted to a bi-infinite reduced path in $G_n$ which is denoted by $\omega_n$.  Our main result is:  

\noindent {\bf Theorem}: \emph{
	Assume that the unfolding sequence $(G_n)_{n \leq 0}$ has non-uniquely ergodic legal lamination $\Lambda$, and let  $\mu_1,\ldots,\mu_k$ be mutually singular ergodic measures.  Assume that $(G_n)$ is equipped with a length measure and that the subsequence $(G_{n_i})$ converges to a metric graph $G$ in the moduli space of metric graphs. Then there is a transverse decomposition $H^0, H^1,\ldots, H^k$ of $G$ with the property that for every $j\in\{1,\ldots,k\}$ and for $\mu^j$-a.e. leaf $\omega$ of $\Lambda$, the image of $\omega_{n_i}$ (the lift of $\omega$ to $G_{n_i}$) under the approximating map $G_{n_i}\to G$ accumulates inside $H^j$.}

\emph{
	Assume that the folding sequence $(G_n)_{n \geq 0}$ has non-uniquely ergodic limit tree $T$, and let $\lambda^1,\ldots,\lambda^k$ are the ergodic components of a length measure $\lambda$ on $T$. Assume that $(G_n)$ is equipped with a length measure giving rise to $\lambda$ and that the subsequence $(G_{n_i})$  converges in the moduli space of metric graphs to a metric graph $G$. Then there is a transverse decomposition $H^0, H^1,\ldots, H^k$ of $G$ with the property that for every $j\in\{1,\ldots,k\}$ and for $\lambda^j$-a.e. point $x\in G_0$ and $i$ sufficiently large, the image of $x$ under the composition of the map $G_0\to G_{n_i}$ and the approximating map $G_{n_i}\to G$ falls in $H^j$.
}

\begin{rem}
Coulbois and Hilion studied currents supported on the dual lamination of a free tree in the boundary of Outer space; our techniques and results provide a generalization of those of Coulbois-Hilion in \cite{CH13}.
\end{rem}

\subsection{Outline of the paper}

In Section 2 we collect background and terminology.  

In Section 3, we introduce our main object of study: folding/un-folding sequences.  Length measures and currents on a folding/un-folding sequence are introduced.  We introduce a restriction that will be in place during the sequel: folding sequences are reduced--meaning that there cannot be an invariant subgraph for all time in which no folding/un-folding occurs.  

In Section 4 we focus on un-folding paths and define the legal lamination.  We pause briefly to discuss the implications of the reduced hypothesis.  Our first main result is Theorem \ref{spaces of currents are isomorphic}, which gives that the combinatorially-defined space of currents on the un-folding sequence is isomorphic to the space of currents supported on the legal lamination.  We then obtain Theorem \ref{transverse decomposition}, which ensures that distinct ergodic currents give rise to transverse decomposition of the graphs in the folding sequence that is almost invariant near minus infinity.  Next we focus on unfolding sequences whose underlying graphs converge in the compactified moduli space of graphs.  Our next main result is Theorem \ref{frequency in generic leaves}, which ensures that homotopically non-trivial parts of our transverse decomposition survive even if the sequence of graphs degenerates.  Finally, we focus on the case where the unfolding sequence is recurrent to the thick part of moduli; in this case we obtain a much better bound on the number of ergodic measures in Theorem \ref{recurrent implies bounded number of measures}.

In Section 5 we repeat the analysis of Section 4, but now focus on folding sequences.  The differences between the two scenarios are subtle but significant.  

In Section 6, we apply our technical results from Sections 4 and 5 to study projections to the complex of free factors.  Our main results are Theorems \ref{slow progress in FF for unfoldings} and \ref{slow progress in FF for foldings}, which ensure that a folding, \emph{resp.} un-folding, sequence with non-uniquely ergodic limit tree, \emph{resp.} legal lamination, makes slow progress when projected to the complex of free factors.  

In Section 7, we gather the required facts from the literature to apply our main results from Section 6 to settle Theorems \ref{limit set} and \ref{random walks}

\noindent \emph{Acknowledgements:}  We wish to thank Jon Chaika for a pleasant and enlightening discussion at the beginning of this project.  We also wish to thank Lewis Bowen, Thierry Coulbois, Arnaud Hilion,  Ilya Kapovich, Joseph Maher, Amir Mohammadi, and Giulio Tiozzo for informative conversations and comments.   We would also like to thank Jing Tao for pointing out to us the paper of McMullen.

\section{Preliminaries} \label{sec: preliminaries}

The reader can find background information about Outer space, the complex of free factors, and related issues at the beginning of Section \ref{applications}.

\subsection{Paths in graphs}\label{subsec: paths in graphs}
Given an oriented graph $G$, we define $\closure\Omega(G)$ to denote the set of reduced labeled directed paths in $G$; an element $\omega$ of $\closure\Omega(G)$ is an immersion from an interval $I$ of the form $[a,b], [a,\infty), (-\infty,b],$ or $(-\infty,\infty)$, with $a,b\in\BZ$, that sends $[i,i+1] \cap I$  homeomorphically to an edge $e_i$ of $G$. When $G$ is equipped with a length function, we assume the restriction of $\omega$ to $[i,i+1]$ has constant speed. When $\omega$ is defined on an interval $[a,b]$, the {\em simplicial length} of $\omega$ is $b-a$, denoted $|\omega|$; in this case, we say that $\omega$ is a \emph{finite labeled path}.

Given $a<b$ in $\BZ\cap I$, we use the notation $\omega[a,b]$ to denote the restriction of $\omega$ to $[a,b]$. We also define $\closure\Omega_\infty(G) \subset \closure\Omega(G)$ to denote the subset consisting of bi-infinite labeled paths. 

The {\em shift map} $S :\closure\Omega(G)\to\closure\Omega(G)$ is defined as follows: if $\omega: I\to G$, then  $S\omega(i)=\omega(i+1)$. The shift map restricts to a map $\closure\Omega_\infty(G) \to \overline\Omega_\infty(G)$. The quotient of $\closure \Omega(G)$, respectively $\closure\Omega_\infty(G)$, under the action $S$ is denoted by $\Omega(G)$, respectively $\Omega_\infty(G)$. Given $\gamma\in\Omega(G)$, a {\em labeling} of $\gamma$ is a an element of $\closure\Omega(G)$ that projects to $\gamma$ and has 0 in its domain. A special subset of $\Omega(G)$ is the set $EG$ of directed edges of $G$. 

A homotopy equivalence $G \to R_N$, where $R_N=\wedge_{j=1}^N S^1$, gives an identification of $\pi_1(G)$ with $F_N$; up to action of $Out(F_N)$ all homotopy equivalences come from choosing a maximal tree $K \subseteq G$, mapping $K$ to the vertex of $R_N$, and choosing a bijection from edges in $G \smallsetminus K$ to edges of $R_N$.  Choosing a lift of $K$ to the universal cover $\cover G$ of $G$ and requiring $\omega(0) \in K$ gives an embedding of $\closure\Omega_\infty(G)$ as a compact subset of $\D^2 F_N$, the {\em double boundary} of $F_N$; also we have a natural surjective map $\D^2 F_N\to \Omega_\infty(G)$.

A finite directed path $\gamma\in\Omega(G)$ gives a subset ${\rm Cyl}(\gamma) \subseteq \closure\Omega_\infty(G)$, which consists of all $\omega\in\closure\Omega_\infty(G)$, with the property that $\omega[0,|\gamma|]$ projects to $\gamma$ and with the same orientation.

\subsection{Currents}\label{subsec: currents}
A {\em current} $\mu$ on $F_N$ is an $F_N$-invariant and \emph{flip-invariant} positive Borel measure on $\D^2F_N$ that takes finite values on compact subsets.  Here, flip-invariant means invariant under the natural involution of $\D^2 F_N$.  

Given a graph $G$ with $\pi_1(G)$ identified with $F_N$, using a natural embedding of $\closure\Omega_\infty(G)$ in $\D^2 F_N$ mentioned earlier, a current $\mu$ on $F_N$ induces a finite measure on $\closure\Omega_\infty(G)$. Since $\mu$ is $F_N$ invariant, it is easy to see that this measure on $\closure\Omega_\infty(G)$ does not depend on which natural embedding of $\closure\Omega_\infty(G)$ in $\D^2 F_N$ was used.
The measure on $\closure\Omega_\infty(G)$, which is still denoted by $\mu$, is invariant under the shift map. In fact, the set of currents on $F_N$ is naturally identified with the set of shift-invariant finite measures on $\closure\Omega_\infty(G)$.

Given a finite path $\gamma\in\Omega(G)$, define $\mu(\gamma):=\mu({\rm Cyl}(\gamma))$. In particular, for every directed edge $e$ of $G$, we have the weight $\mu_G(e)$. Regard the vector $\vec\mu_G \in \mathbb{R}^{|EG|}$ as a column vector: $\vec\mu_G(e)$, the component associated to $e\in EG$, is equal to $\mu_G(e)$.

The set of numbers $\mu(\gamma)$ for all $\gamma\in\Omega(G)$ provide a coordinate system for the space of currents; indeed, the topology on the space of currents is the product topology corresponding to finite $\gamma \in \Omega(G)$ (See \cite{Kap06}) In particular, these coordinates $(\mu(\gamma))_{\gamma\in\Omega(G)}$ uniquely identify $\mu$. Moreover, given a collection $(\alpha(\gamma))_{\gamma\in\Omega(G)}$ of non-negative numbers, there is a unique current $\mu$ on $\pi_1(G)$ with $\mu(\gamma)=\alpha(\gamma)$ for every $\gamma\in\Omega(G)$ if and only if given any path $\gamma$ in $\Omega(G)$, if $\gamma_1,\ldots,\gamma_k$ are all possible extensions of $\gamma$ to a longer path by adding a single edge at the end of $\gamma$, then the Kolmogorov extension property holds:
\[ \alpha(\gamma) = \sum_{i=1}^k \alpha(\gamma_i).\]

\subsection{Length vectors}\label{subsec: length vectors}
A {\em length vector} on a graph $G$ is a vector $\vec \lambda_G$ in $\BR^{|EG|}$ whose component associated to every edge $e\in EG$, denoted by $\lambda_G(e)$, is a nonnegative real number. If all the components of $\vec\lambda_G$ are positive, the length vector induces a metric on $G$ which is obtained by identifying every edge $e\in EG$ with a closed interval of length $\lambda_G(e)$ and defining the distance between two points to be the length of the shortest path connecting them. In general, $\vec\lambda_G$ induces a pseudo-metric where the distance of two distinct points is allowed to be zero. In this case, $\vec\lambda_G$ induces a metric on the graph which is obtained by collapsing all edges $e$ with $\lambda_G(e)=0$. 
Given a length vector $\vec\lambda_G$ on $G$, the length of a reduced path $\gamma$ in $G$ can be defined as the sum of lengths of its edges. 

A special example of a length vector on $G$ is the {\em simplicial length} $\vec 1_G$ where every component is equal to $1$. The induced metric on $G$ is also called the {\em simplicial metric}.

\subsection{Morphisms}\label{subsec: morphisms}
A {\em morphism} between graphs $G$ and $H$ is a function $f:G\to H$ that takes edges of $G$ to non-degenerate reduced edge paths in $H$. The {\em incidence matrix} $M_f$ of the morphism is a matrix whose columns are indexed by elements of $EG$ and its rows are indexed by elements of $EH$. Given $e\in EG$ and $e'\in EH$, $M(e',e)$, the $(e',e)$-entry of the matrix, is the number of occurrences of $e'$ in the reduced path $f(e)$.

We restrict to {\em change of marking} morphisms, where, in addition, $f$ is required to be a homotopy equivalence. A morphism is a {\em permutation} if the incidence matrix is a permutation matrix.

Given a length vector $\vec\lambda_H$ for $H$, the map $f$ naturally induces a length vector $\vec\lambda_G$ on $G$ and we have
\begin{equation}\label{eq: morphisms and length} 
\vec\lambda_G = M_f^T \vec\lambda_H.
\end{equation}

In other words, $\vec\lambda_G$ gives the $f$-pull-back of the metric coming from $\vec\lambda_H$ on $H$; with these metrics, $f$ is a locally isometric immersion on every edge of $G$.  By a morphism between two metric graphs, we always mean one which is a local isometry on every edge of the domain. We say a morphism $f$ {\em preserves simplicial length} if $\vec 1_G = M_f^T \vec 1_H$; equivalently the image of every edge of $G$ is a single edge of $H$.

We usually assume the vertices of graphs have valence at least $3$, but allowing graphs with valence $2$ vertices, one gets the Stalling factorization of a morphism $f:G\to H$ as a composition $f_2\circ f_1$ of morphisms $f_1:G\to G'$ and $f_2:G'\to H$, where $f_1$ is topologically a homeomorphism and $f_2$ maps every edge to a single edge, i.e., preserves simplicial length. We refer to the Stallings factorization as the {\em natural factorization} of the morphism $f$. 

A topologically one-to-one morphism $f:G\to H$, induces a one-to-one map from $\Omega(G)$ to $\Omega(H)$. We can use this to obtain a one-to-one map $\closure\Omega_\infty(G)\to\closure\Omega_\infty(H)$, which sends $\omega_G\in\closure\Omega_\infty(G)$ to an element $\omega_H\in\closure\Omega_\infty(H)$ which labels $f\circ \omega_G$ in the same direction and $\omega_H(0)=f(\omega_G(0))$; we denote this map by $f$ as well. When $f$ is a homeomorphism, a left inverse for the induced map $\closure\Omega_\infty(G)\to\closure\Omega_\infty(G)$ can be defined which sends $\omega_H$ to $\omega_G$, with the property that $\omega_G[0,1]$ is mapped to $\omega_H[a,b]$ with $a\le 0$ and $b\ge1$. More precisely, assume the $f$ image of $e\in EG$ is the path $e_0\cdots e_k$ in $H$. Then for every $i=0,\ldots,k$, $S^if$, which is the composition of $f:\closure\Omega_\infty(G)\to\closure\Omega_\infty(H)$ with the $i$-th iterate of the shift map, provides a one-to-one correspondence between ${\rm Cyl}_G(e)$ and ${\rm Cyl}_H(e_i)$. In particular when $\mu$ is a current on $\pi_1(G)=\pi_1(H)$,
\begin{equation}\label{eq: disjoint union 0}
	\mu(e) = \mu(e_i), \quad i=0,\ldots,k.
\end{equation}

Given a morphism $f:G\to H$, we say a path in $\Omega(G)$ is {\em legal} if its $f$-image is reduced, i.e. is an element of $\Omega(H)$. We use $\Omega_\infty^L(G)$ to denote the set of bi-infinite legal paths in $G$, and $\closure\Omega_\infty^L(G)$ to denote all possible labelings of elements of $\Omega_\infty^L(G)$. The $f$-image of these paths gives a subset of $\Omega_\infty(H)$, which we denote by $\Lambda_f$. The lift $\closure\Lambda_f$ of $\Lambda_f$ to $\closure\Omega_\infty(H)$, the set of all possible labelings of elements of $\Lambda_f$, is a compact subset of $\closure\Omega_\infty(H)$. 

Since $f$ induces a monomorphism on the level of fundamental groups, it induces a one-to-one map from $\D^2\pi_1(G)$ to $\D^2\pi_1(H)$ and therefore the map from $\Omega_\infty^L(G)$ to $\Lambda_f$ is one-to-one and onto. 
There is also an induced one-to-one map from $\closure\Omega_\infty^L(G)$ to $\closure\Lambda_f$, denoted by $f$, which sends $\omega_G$ to $\omega_H$, so that $\omega_G(0)$ is mapped to $\omega_H(0)$.

When $f:G\to H$ preserves simplicial length, the map $f:\closure\Omega_\infty^L(G)\to\closure\Lambda_f$ is invertible and for every $\omega_H$ in $\Lambda_f$ there is a unique $\omega_G$ whose image by $f$ is $\omega_H$. In particular for every $e_H\in EH$
\begin{equation}\label{eq: disjoint union 1} 
{\rm Cyl}_H(e_H)\cap\closure\Lambda_f = \biguplus_{f(e_G)=e_H} \left(f({\rm Cyl}_G(e_G))\cap\closure\Lambda_f\right),
\end{equation}
is a disjoint union over all edges $e_G\in EG$ which are mapped to $e_H$. Assume that we have a change of marking $f:G\to H$ and a current $\mu$ on $\pi_1(H)$. Recall that $\mu$ induces a shift-invariant finite measure on $\closure\Omega_\infty(H)$, as well as on $\closure\Omega_\infty(G)$ via the isomorphism $f_*:\pi_1(G)\to\pi_1(H)$, which we still call $\mu$. If $\mu$ is supported on $\closure\Lambda_f$, then it follows from \eqref{eq: disjoint union 1} that for every $e_H\in EH$
\begin{equation}\label{eq: disjoint union 2} 
\mu(e_H) = \sum_{\{ e\in EG\,:\, f(e)=e_H\}}\mu(e_H).
\end{equation}

When $f:G\to H$ is a change of marking morphism, we can use the natural factorization $f=f_2\circ f_1$ with $f_1:G\to G'$ a homeomorphism and $f_2:G'\to H$ preserving the simplicial length. Then $\Lambda_f = \Lambda_{f_2}$. For every $\omega_H\in\Lambda_f$, we find a unique $\omega_{G'}\in\closure\Omega_\infty(G')$ with $f_2(\omega_{G'})=\omega_H$. Then we use the left inverse of the map $f_1$ explained earlier to find $\omega_G\in\closure\Omega_\infty(G)$ so that $S^if(\omega_G) = \omega_H$ with $0\le i < |f(\omega_G[0,1])|$ and $|f(\omega_G[0,1])|$ the simplicial length of the $f$-image of the edge $\omega_G[0,1]$.

If $\mu$ is a current supported on $\closure\Lambda_f$ and $e_H\in EH$, by \eqref{eq: disjoint union 2}
\[ \mu(e_H) = \sum_{\{e'\in EG'\,:\, f_2(e')=e_H\}}\mu(e').\]
For every $e' \in EG'$, there is a unique $e\in EG$ so that $e'\subset f_1(e)$ as a sub-arc and by \eqref{eq: disjoint union 0} $\mu(e)=\mu(e')$. Hence
\begin{equation}\label{eq: disjoint union}
	\mu(e_H) = \sum_{\{e'\in EG'\,:\, f_2(e')=e_H\}}\mu(e') = \sum_{\{e\in EG\,:\, f(e)\supset e_H\}} \mu(e).
\end{equation}

As a result:

\begin{lem}\label{morphisms and currents}
	Given a change of marking morphism $f:G\to H$ with incidence matrix $M_f$. If $\mu$ is a current supported on $\Lambda_f$ and $\vec\mu_G$ and $\vec\mu_H$ are respectively the corresponding vectors for $G$ and $H$, then
\begin{equation}\label{eq: morphisms and currents}
	\vec\mu_H = M_f \vec\mu_G.
\end{equation}
\end{lem}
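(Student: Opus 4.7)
The plan is to check the identity entry by entry. Fix $e_H \in EH$; then the $e_H$-component of $M_f \vec\mu_G$ equals $\sum_{e_G \in EG} M_f(e_H, e_G)\, \mu_G(e_G)$, where by definition $M_f(e_H, e_G)$ is the number of occurrences of $e_H$ in the reduced path $f(e_G)$. So the lemma amounts to the identity
\[
\mu_H(e_H) = \sum_{e_G \in EG} M_f(e_H, e_G)\, \mu_G(e_G).
\]
Up to keeping track of multiplicities, this is essentially equation \eqref{eq: disjoint union} of the excerpt; the task is to reassemble its right-hand side with the correct multiplicity weights.

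First I would invoke the natural Stallings factorization $f = f_2 \circ f_1$, with $f_1 : G \to G'$ a homeomorphism that subdivides each $e_G \in EG$ into $|f(e_G)|$ consecutive sub-arcs $e'_{e_G,0}, \ldots, e'_{e_G,|f(e_G)|-1} \in EG'$, and $f_2 : G' \to H$ preserving simplicial length. Then $\Lambda_f = \Lambda_{f_2}$, so the pull-back of $\mu$ to $G'$ is still supported on $\overline{\Lambda}_{f_2}$. The key input is \eqref{eq: disjoint union 1}, which exhibits ${\rm Cyl}_H(e_H) \cap \overline{\Lambda}_{f_2}$ as the disjoint union of the $f_2$-images of the cylinders ${\rm Cyl}_{G'}(e')$ over all $e' \in EG'$ with $f_2(e') = e_H$. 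Taking $\mu$-measure and using that $f_2$ is a measure-preserving bijection on cylinders yields \eqref{eq: disjoint union 2}:
\[
\mu_H(e_H) = \sum_{\{e' \in EG' : f_2(e') = e_H\}} \mu_{G'}(e').
\]

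To finish, I would reindex the right-hand side by the edges of $G$. For each $e_G \in EG$, the sub-arcs $e'_{e_G, i} \subset f_1(e_G)$ satisfying $f_2(e'_{e_G, i}) = e_H$ correspond exactly to the indices $i$ at which $e_H$ appears in the reduced path $f(e_G)$, and there are precisely $M_f(e_H, e_G)$ of them. By \eqref{eq: disjoint union 0} applied to the subdivision $f_1$, each such sub-arc satisfies $\mu_{G'}(e'_{e_G,i}) = \mu_G(e_G)$. Summing gives
\[
\mu_H(e_H) = \sum_{e_G \in EG} M_f(e_H, e_G)\, \mu_G(e_G),
\]
which is the $e_H$-entry of $M_f \vec\mu_G$. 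The one point I expect to require real care is this final bookkeeping: the sum over $\{e \in EG : f(e) \supset e_H\}$ appearing in \eqref{eq: disjoint union} is implicitly a sum over occurrences, that is, over pairs $(e_G, i)$, and one must spell out the bijection with $\{e' \in EG' : f_2(e') = e_H\}$ to confirm that each $e_G$ contributes with multiplicity exactly $M_f(e_H, e_G)$. Apart from this combinatorial verification, the proof is a direct combination of \eqref{eq: disjoint union 0}, \eqref{eq: disjoint union 1}, and \eqref{eq: disjoint union 2}.
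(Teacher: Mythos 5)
Your proof is correct and follows essentially the same route as the paper, which establishes the lemma via the displayed chain \eqref{eq: disjoint union 0}--\eqref{eq: disjoint union} preceding the statement: Stallings factorization $f=f_2\circ f_1$, the cylinder decomposition \eqref{eq: disjoint union 1} giving \eqref{eq: disjoint union 2}, and the subdivision identity \eqref{eq: disjoint union 0}. Your explicit bookkeeping of the bijection between $\{e'\in EG'\,:\,f_2(e')=e_H\}$ and occurrence pairs $(e_G,i)$, showing each $e_G$ contributes with multiplicity $M_f(e_H,e_G)$, is exactly the step the paper leaves implicit in the sum over $\{e\in EG\,:\,f(e)\supset e_H\}$.
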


\section{Folding/Unfolding Sequences}\label{sec: folding/unfolding}

A {\em folding/unfolding sequence} is a sequence of graphs $(G_n)_{a\le n\le b}$, $a,b\in\BZ\cup\{\pm\infty\}$, with change of marking morphisms $f_{k,l}:G_k\to G_l$ for $k<l$ such that if $k<l<m$, $f_{k,m}=f_{l,m} \circ f_{k,l}$.  We use $f_n$ to denote $f_{n,n+1}$. Special cases are folding, respectively unfolding, sequences where $0\le n <\infty$, respectively $-\infty<n\le 0$.

An {\em invariant sequence of subgraphs} is a sequence of non-degenerate proper subgraphs $E_n\subset G_n$ with the property that $f_n$ restricts to a change of marking morphism $E_n\to E_{n+1}$ for every $a\le n < b$. We say such a sequence is a {\em stabilized sequence of subgraphs} if the restriction of $f_n$ to $E_n\to E_{n+1}$ is a permutation for $n$ sufficiently large. A sequence is {\em reduced} if it admits no stabilized sequence of subgraphs. 



\subsection{Length measures}\label{subsec: length measures}
A {\em length measure} for the folding/unfolding sequence $(G_n)_{a\le n \le b}$ is a sequence of length vectors $(\vec \lambda_n)_{a\le n\le b}$ where $\vec\lambda_n$ is a length vector on $G_n$ and for $a\le n <b$
\[ \vec\lambda_{n} = M_n^T\vec\lambda_{n+1}\]
with $M_n = M_{f_n}$ the incidence matrix of $f_n$. When $b<\infty$ is finite, a length vector on $G_b$ provides a length measure on the sequence $(G_n)$. We define the {\em simplicial length measure} to be the length measure induced by the simplicial length vector $\vec 1_{G_b}$ on $G_b$.  

Given a length measure $(\vec\lambda_n)$ on $(G_n)$ and identifying $\pi_1(G_n)$ with $F_N$ for one of the graphs $G_n$, we can use the markings obtained form the maps $f_n$ and realize $(G_n,\lambda_n)$, the graph $G_n$ equipped with the metric induced by $\vec\lambda_n$, as a marked metric graph in $cv_N$. Therefore we obtain a sequence in $cv_N$ and its projectin $CV_N$.

The set of sequences of vectors $(\vec v_n)_{a\le n \le b}$ with $v_n\in \BR^{|EG_n|}$ and with
\[ \vec v_n = M_n^T \vec v_{n+1}\]
for every $a\le n <b$ is naturally a finite dimensional real vector space, whose dimension is bounded by $\liminf_{n\to\infty} |EG_n|$. The set of length measures on $(G_n)_{a\le n\le b}$, denoted $\CD((G_n)_n)$,  is the cone of non-negative vectors in this space.


\subsection{Currents on sequences and area}\label{subsec: currents on sequences}
A {\em current} for the folding/unfolding sequence $(G_n)_{a\le n\le b}$ is a sequence of non-negative vectors $(\vec\mu_n)_{a\le n\le b}$ where each $\vec\mu_n$ is a non-negative vector in $\BR^{|EG_n|}$ and for $a\le n < b$
\[ \vec\mu_{n+1}=M_n\vec\mu_n.\]

When $a>-\infty$ is finite, currents for the sequence $(G_n)$ are completely determined by non-negative vectors in $\BR^{|EG_a|}$. In particular, the {\em frequency current} is defined by starting from $\vec\mu_a = \vec 1_{G_a}$ and defining
$\vec\mu_{n+1} = M_n\vec\mu_n$ for $a<n\le b$.

Similar to the case of length measures, we can consider the set $\CC((G_n)_n)$ of  currents on the sequence $(G_n)_{a\le n\le b}$ as the cone of non-negative vectors in the finite dimensional vector space whose vectors are sequences $(\vec v_n)_{a\le n\le b}$ with $\vec v_n\in\BR^{|EG_n|}$ for every $a\le n\le b$ and
\[ \vec v_{n+1} = M_n \vec v_n\]
for every $a\le n < b$. The dimension of this space evidently is bounded by $\liminf_{n\to -\infty}|EG_n|$.

Given a length measure $(\vec\lambda_n)_n$ and a current $(\vec\mu_n)_n$ for the sequence $(G_n)_n$, we define the {\em area} to be the scalar
\[ A = \vec\mu_n^T\vec\lambda_n.\]
Area is independent of $n$, and for reduced sequences:

\begin{lem}\label{length and frequency decay}
	Given a reduced folding/unfolding sequence $(G_n)_{a\le n\le b}$ for every length measure $(\vec\lambda_n)_n$ on $(G_n)_n$,
	\[ \lim_{n\to\infty} \lambda_n(e_n) = 0 \quad {\rm and} \quad \lim_{n\to-\infty} \lambda_n(e_n) = \infty\]
	when $e_n\in EG_n$ is chosen arbitrarily. Similarly if $(\vec\mu_n)_n$ is a current on $(G_n)_n$
	\[ \lim_{n\to\infty} \mu_n(e_n) = \infty \quad {\rm and} \quad \lim_{n\to-\infty} \mu_n(e_n) = 0\]
	for an arbitrary choice of $e_n\in EG_n$ and the limits are considered only when $a=-\infty$ or $b=\infty$.
\end{lem}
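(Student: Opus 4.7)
The lemma asserts four claims, symmetric under swapping length measures $\leftrightarrow$ currents (interchanging $M_n^T$ with $M_n$) and reversing the sequence direction. I outline the case of a length measure on a reduced folding sequence $(G_n)_{n\geq 0}$ as $n\to+\infty$; the other three cases are structurally identical. The first observation is that the total length $T_n := \sum_{e\in EG_n}\lambda_n(e)$ is non-increasing in $n$. From $\vec\lambda_n = M_n^T\vec\lambda_{n+1}$ and a direct rearrangement of the double sum,
\[
T_n - T_{n+1} \;=\; \sum_{e'\in EG_{n+1}}\bigl(N_n(e')-1\bigr)\lambda_{n+1}(e'),
\]
where $N_n(e') := \sum_{e\in EG_n} M_n(e',e)$ counts how often $e'$ appears across the paths $\{f_n(e)\}_e$. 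Since $f_n$ is a change of marking, each $e'$ appears in at least one $f_n(e)$, so $N_n(e')\geq 1$ and $T_n\geq T_{n+1}$. Because $|EG_n|\leq 3N-3$ is uniformly bounded, $\max_e\lambda_n(e)\to 0$ is equivalent to $T_n\to 0$, and the claim reduces to ruling out $T_n \searrow T_\infty > 0$.

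Suppose for contradiction $T_\infty > 0$, so $T_n-T_{n+1}\to 0$ and $L_\infty := \lim\max_e\lambda_n(e) \geq T_\infty/(3N-3)>0$. Fix $\eta\in(0,1/3)$ and set $\Sigma_n := \{e\in EG_n : \lambda_n(e) > L_\infty(1-\eta)\}$. For $n$ sufficiently large, every $e'\in\Sigma_{n+1}$ must satisfy $N_n(e')=1$ (otherwise the single term $(N_n(e')-1)\lambda_{n+1}(e')\geq L_\infty(1-\eta)$ would exceed $T_n-T_{n+1}$), so the unique $\phi_n(e')\in EG_n$ with $e'\in f_n(\phi_n(e'))$ is well-defined; the inequality $\lambda_n(\phi_n(e'))\geq\lambda_{n+1}(e') > L_\infty(1-\eta)$ places $\phi_n(e')$ back in $\Sigma_n$; and $\phi_n$ is injective, since two distinct preimages $e'_1\ne e'_2$ of the same $e$ would force $\lambda_n(e)\geq 2L_\infty(1-\eta) > L_\infty(1+\eta)\geq\max_e\lambda_n(e)$. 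Hence $|\Sigma_n|$ is eventually constant and $\phi_n$ is eventually a bijection, producing a stable almost-invariant ``long-edge'' dynamics.

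The final step---promoting this almost-invariant long-edge dynamics to a genuine stabilized sequence of subgraphs, contradicting reducedness---is the main obstacle. My plan is to pass to a subsequence $(n_k)$ along which the graphs $G_{n_k}$, the subsets $\Sigma_{n_k}$, and the bijections $\phi$ are combinatorially constant (possible since, at rank $\leq N$, there are only finitely many combinatorial types), and along which the normalized vectors $\vec\lambda_{n_k}/T_{n_k}$ converge to a length vector $\vec\lambda^*$ on the common graph $G^*$. For the limiting composed map $F^*\colon G^*\to G^*$, one then has $M_{F^*}^T\vec\lambda^* = \vec\lambda^*$ together with multiplicities $N(e')=1$ on $\supp(\vec\lambda^*)$ (inherited from $\phi$'s bijectivity); iterating the ``strict preimage'' operation on $\supp(\vec\lambda^*)$ yields a nonempty subgraph on which $F^*$ is a permutation, and pulling back along the identifications produces a stabilized invariant subgraph of $(G_n)$, contradicting the reduced hypothesis. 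The three remaining claims then follow by the length/current duality (exchanging $M_n^T\leftrightarrow M_n$ so that the total width $W_n := \sum_e\mu_n(e)$ is non-decreasing with the analogous defect formula) and by reversing the sequence direction ($n\to+\infty\leftrightarrow n\to-\infty$), where the careful diagonal-in-$\eta$ argument needed to extract the limiting permutation is the common technical heart of all four cases.
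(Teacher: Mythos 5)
The paper states this lemma without proof, so there is nothing to compare your argument against; judged on its own, your proposal gets the easy two-thirds right, but the step you yourself flag as ``the main obstacle'' is a genuine gap, and the plan you outline for closing it cannot be carried out. The monotonicity of $T_n$, the defect formula, and the conclusion that for large $n$ each long edge $e'\in\Sigma_{n+1}$ occurs exactly once, in the image of a unique long edge $\phi_n(e')\in\Sigma_n$, with $\phi_n$ eventually bijective, are all correct (you should also justify that $\max_e\lambda_n(e)$ is non-increasing, which is what makes $L_\infty$ exist; it follows from surjectivity of $f_n$). The trouble is what this does and does not give you: for $e\in\Sigma_n$ the path $f_n(e)$ is the single long edge $\phi_n^{-1}(e)$ \emph{decorated on either side by paths of short edges}, and nothing forces the decorations to disappear, so the long edges need not span an invariant subgraph, let alone a stabilized one. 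Concretely, take the rank-$3$ rose with edges $a,b,c$ and the constant map $a\mapsto ab$, $b\mapsto a$, $c\mapsto ca$ (an automorphism of $F_3$, all iterates positive words, hence a valid folding sequence), with the length measure $\lambda_n(a)=\lambda_n(b)=0$, $\lambda_n(c)=1$ for all $n$. The only invariant sequence of subgraphs here is the constant $\{a,b\}$, on which the restriction is $a\mapsto ab$, $b\mapsto a$ and is never a permutation, so the sequence is reduced in the paper's sense; yet $\lambda_n(c)\equiv 1$. Your argument runs without incident up to the point where $\Sigma_n=\{c\}$ and $\phi_n=\mathrm{id}$, and then the final step must fail, because there is no stabilized sequence of subgraphs to be found. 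Any complete proof has to invoke something the bare statement does not supply (positivity of the length measure, or a stronger reducibility notion), and your write-up does not identify where that input would enter.

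Two further specific problems. First, the ``limiting composed map $F^*$'' does not exist: the incidence matrices of $f_{n_m,n_{m+1}}$ are integer matrices whose entries over the short/pinched edges are unbounded as the gaps $n_{m+1}-n_m$ grow, so they cannot stabilize along a subsequence; and even granting a limit object, an invariant subgraph for $F^*$ is not an invariant \emph{sequence} of subgraphs for the actual maps $f_n$, which is what the reduced hypothesis concerns, so the ``pulling back along the identifications'' step is exactly the content that is missing. Second, the four cases are not structurally identical: the assertions $\lim_{n\to\infty}\lambda_n(e_n)=0$ and $\lim_{n\to-\infty}\mu_n(e_n)=0$ concern $\max_e$ and hence reduce to the behavior of the totals $T_n$, $W_n$ as you say, but $\lim_{n\to-\infty}\lambda_n(e_n)=\infty$ and $\lim_{n\to\infty}\mu_n(e_n)=\infty$ concern $\min_e$, and unboundedness of a total does not force every coordinate to blow up; those two cases need an additional spreading/irreducibility argument that your duality does not provide.
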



\section{Unfolding sequences}\label{sec:legal sequence}

Recall that an {\em unfolding sequence} $(G_n)_{n\le0}$ is given by
\begin{equation}\label{eq: unfolding sequence}
\xymatrix{
\cdots \ar[r]^{f_{n-1}} & G_{n} \ar[r]^{f_{n}} \ar[r] & \cdots \ar[r]^{f_{-1}} & G_0
}
\end{equation}
with each $f_n, n<0,$ a change of marking morphism. For every $n<0$, we define $\phi_n = f_{-1}\circ\cdots\circ f_n:G_n\to G_0$. We continue with the assumption that $(G_n)_n$ is a reduced sequence. Also we assume that $(\vec\lambda_n)_n$ is the simplicial length measure, i.e., $\vec\lambda_0 = \vec 1_{G_0}$ and for every $n<0$
\[ \vec\lambda_n = M_{n}^T \vec\lambda_{n+1}.\]
Equivalently the component $\lambda_n(e)$ of $\vec\lambda_n$ associated to $e\in EG_n$ is the simplicial length of $\phi_n(e)$ in $G_0$. 





\subsection{Legal lamination}\label{subsec: legal lamination}

Given an unfolding sequence as in \eqref{eq: unfolding sequence}, we say a path in $\Omega(G_n)$ is {\em legal} if it is legal with respect to the morphism $\phi_n$, i.e., its image under $\phi_n$ is a reduced path. Every labeling of such a path in $\closure\Omega(G_n)$ is also {\em legal}. As before, we use the notations $\Omega^L_\infty(G_n)$ and $\closure\Omega^L_\infty(G_n)$ to denote the set of bi-infinite legal paths in $\closure\Omega_\infty(G_n)$. 

We define
\[\Lambda = \bigcap_n \Lambda_{\phi_n} = \bigcap_n \phi_n(\Omega^L_\infty(G_n)).\] 
The {\em legal lamination} $\closure\Lambda$ of the sequence is the subset of $\closure\Omega_\infty(G_0)$ that consists of all labellings of elements of $\Lambda$.
An element of $\closure\Lambda$ is called a {\em leaf} of the lamination. On can also consider the pre-image of $\Lambda$ in the double boundary $\D^2\pi_1(G_0)$ which will be a $\pi_1(G_0)$-invariant closed subset of the double boundary and is a lamination in the conventional sense of \cite{CHL08a}.

From our discussion of morphisms in \S \ref{subsec: morphisms}, it follows that since every leaf of $\closure\Lambda$ is image of a legal bi-infnite path in $G_n$:

\begin{lem}\label{pre-image of leaves}
    Suppose $(G_n)_{n\le0}$ is an unfolding sequence with legal lamination $\closure\Lambda$. For every leaf $\omega$ of $\closure\Lambda$ and $n$, there is a unique $\omega_n\in\closure\Omega_\infty(G_n)$ with the property that $\phi_n(\omega_n)$ is a relabeling of $\omega$ (in the same direction) and $\phi_n(\omega_n[0,1]) = \omega[a,b]$ for integers $a\le 0 < b$.
\end{lem}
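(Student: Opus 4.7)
The plan is to reduce the statement to the morphism-level analysis in \S\ref{subsec: morphisms} applied to the single composed morphism $\phi_n$. By definition of $\closure\Lambda$, the leaf $\omega$ is a labeling of some $\gamma \in \Lambda \subseteq \phi_n(\Omega^L_\infty(G_n))$, so there exists a legal bi-infinite path $\gamma_n \in \Omega^L_\infty(G_n)$ with $\phi_n(\gamma_n) = \gamma$ (as an element of $\Omega_\infty(G_0)$). What must be shown is that exactly one labeling $\omega_n \in \closure\Omega_\infty(G_n)$ of $\gamma_n$ satisfies the two conditions: (i) $\phi_n(\omega_n)$ equals $\omega$ as an element of $\closure\Omega_\infty(G_0)$ in the same direction, and (ii) $\phi_n(\omega_n[0,1]) = \omega[a,b]$ for integers $a \le 0 < b$.

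For existence, I would apply the Stallings factorization $\phi_n = \phi_n^{(2)} \circ \phi_n^{(1)}$ with $\phi_n^{(1)}: G_n \to G_n'$ a homeomorphism and $\phi_n^{(2)}: G_n' \to G_0$ preserving simplicial length. Since $\phi_n^{(2)}$ sends each edge to a single edge, the discussion surrounding \eqref{eq: disjoint union 1} shows that the induced map $\phi_n^{(2)}: \closure\Omega^L_\infty(G_n') \to \closure\Lambda_{\phi_n^{(2)}}$ is a basepoint-preserving bijection; thus there is a unique $\omega_n' \in \closure\Omega^L_\infty(G_n')$ with $\phi_n^{(2)}(\omega_n') = \omega$ and $\omega_n'(0) = \omega(0)$. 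Next, applying the left-inverse for the homeomorphism $\phi_n^{(1)}$ described immediately after \eqref{eq: disjoint union 0}, there is a unique $\omega_n \in \closure\Omega_\infty(G_n)$ whose edge $\omega_n[0,1]$ is the unique edge of $G_n$ such that $\omega_n'[0,1]$ is a sub-arc of $\phi_n^{(1)}(\omega_n[0,1])$. Composing and using that $\phi_n^{(2)}$ preserves simplicial length, one reads off $\phi_n(\omega_n[0,1]) = \omega[a,b]$ with $a \le 0 < b$, establishing existence.

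For uniqueness, suppose $\omega_n^1, \omega_n^2 \in \closure\Omega_\infty(G_n)$ both satisfy (i) and (ii). Both project to the same shift-class $\gamma_n$ in $\Omega^L_\infty(G_n)$, since $\phi_n$ is injective on legal bi-infinite paths (as $\phi_n$ induces an injection on double boundaries), so they differ by a shift $S^k$. Condition (ii) forces $\omega_n^j[0,1]$ to be the unique edge of $\gamma_n$ whose $\phi_n$-image traverses a sub-arc of $\omega$ containing the interval $[0,1]$, which pins down $k = 0$.

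I expect the only obstacle to be notational: keeping straight the distinction between labeled bi-infinite paths in $\closure\Omega_\infty$ and their shift-classes in $\Omega_\infty$, and ensuring the basepoint-normalization condition (ii) is used precisely to eliminate the ambiguity in the shift. The content itself is essentially an immediate application of the bijection and left-inverse constructions of \S\ref{subsec: morphisms} to the composed morphism $\phi_n$, together with the defining property of $\closure\Lambda$ to guarantee that lifts exist in the first place.
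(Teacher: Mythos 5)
Your proof is correct and follows essentially the same route as the paper, which states Lemma \ref{pre-image of leaves} without proof as an immediate consequence of the discussion of morphisms in \S\ref{subsec: morphisms}; your argument is simply that discussion (natural factorization, the bijection for the simplicial-length-preserving part, and the normalized left inverse for the homeomorphism part) written out explicitly for $\phi_n$, together with the evident shift-normalization argument for uniqueness.
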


\subsection{Legal laminations of reduced unfolding sequences}\label{subsec: reduced legal lamination}

We pause to give an informal discussion about laminations that arise as as the legal lamination of a reduced unfolding sequence.  Although the hypothesis of being reduced may seem weak, it is strong enough to rule out certain pathologies that occur in general algebraic laminations on $F_N$. 

Suppose that $T$ is a very small tree with a non-abelian point stabilizer $H$, and let $L(T)$ denote the dual lamination of $T$.  Then the subshift $L_v(T) \subseteq E^{\pm}R_N^\BZ$ of the full shift on the oriented edges of $R_N$, $L_v(T)=\{x\in \D^2 F_N|$ the geodesic in $\cover R_N$ corresponding to $x$ contains $v\}$, $v \in \cover R_N$ some fixed vertex, has positive entropy, and the space of currents supported on $L(T)$ is infinite dimensional; this is because $L(T)$ contains $\D^2 H$.  The main results of this paper involve obtaining finiteness results about the space of currents supported on the legal lamination $\Lambda$ of a reduced unfolding sequence $(G_n)_n$.  As a warm-up for these arguments we bring:

\begin{prop}
 If $\Lambda$ is the legal lamination of a reduced folding sequence $(G_n)_{n \leq 0}$, then $\Lambda$ has zero entropy.
\end{prop}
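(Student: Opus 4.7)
The plan is to estimate the topological entropy
\[
h(\closure\Lambda) \;:=\; \limsup_{k\to\infty} \frac{1}{k}\log N_k,
\]
where $N_k$ is the number of distinct length-$k$ reduced edge words appearing as finite subpaths of leaves of $\closure\Lambda \subseteq \closure\Omega_\infty(G_0)$. The strategy is: for each fixed $n\le 0$, bound $N_k$ by the number of legal paths in $G_n$ of length roughly $k/L_n$, where $L_n := \min_{e \in EG_n} \lambda_n(e)$ for the simplicial length measure; then let $n \to -\infty$ and use the reduced hypothesis to send $L_n \to \infty$.

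The first step is uniform complexity. Since each $G_n$ is a marked graph of rank $N$ with vertices of valence at least three, $|EG_n| \le 3(N-1)$, and hence the number of legal edge paths in $G_n$ of length at most $\ell$ is bounded by $K^{\ell+1}$ with $K := 6(N-1)$ independent of $n$ (at each step a legal path has at most $2|EG_n|-1$ choices for the next oriented edge). Next, given any length-$k$ subword $\gamma$ of a leaf $\omega\in\closure\Lambda$, I would use Lemma \ref{pre-image of leaves} to obtain the unique lift $\omega_n\in\closure\Omega_\infty(G_n)$, and let $\omega_n[s,t]$ be the shortest consecutive-edge segment whose $\phi_n$-image contains $\gamma$. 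Minimality forces the $(t-s-1)$ interior edges of this segment to contribute total simplicial length at most $k$, so $(t - s - 1) L_n \le k$, giving a legal path in $G_n$ of at most $k/L_n + 2$ edges. The pair consisting of the segment $\omega_n[s,t]$ together with the offset of $\gamma$ inside $\phi_n(\omega_n[s,s+1])$ determines $\gamma$, and there are at most $U_n := \max_{e \in EG_n} \lambda_n(e)$ such offsets. Combining,
\[
N_k \;\le\; U_n \cdot K^{\,k/L_n + 3}.
\]
Dividing by $k$, taking $\limsup_{k\to\infty}$ with $n$ fixed, yields $h(\closure\Lambda) \le (\log K)/L_n$. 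Letting $n \to -\infty$ and invoking Lemma \ref{length and frequency decay} (applied to a sequence $e_n \in EG_n$ realizing the minimum of $\lambda_n$), the reduced hypothesis gives $L_n \to \infty$, so $h(\closure\Lambda) = 0$.

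The main obstacle, and the one real subtlety, is the lifting step: one must verify that the short legal path $\omega_n[s,t]$ in $G_n$ really does determine $\gamma$ up to the offset. This is where the uniqueness part of Lemma \ref{pre-image of leaves} is crucial; without it a single short $\omega_n[s,t]$ could correspond to many different words $\gamma$. The reduced hypothesis is also essential precisely because it forces $L_n \to \infty$, without which the exponent $k/L_n + O(1)$ would not be sub-linear in $k$ and the bound would fail to be sub-exponential.
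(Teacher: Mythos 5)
Your proof is correct and follows essentially the same route as the paper's: both arguments bound the number of length-$k$ words occurring in leaves of $\Lambda$ by lifting to $G_n$, encoding each word by a legal path in $G_n$ of combinatorial length $O(k/\text{(expansion)})$ plus boundedly much offset data, and then let $n\to-\infty$ so that the reduced hypothesis, via Lemma \ref{length and frequency decay}, drives the exponential growth rate to zero --- the paper merely does the bookkeeping with loop lengths and turns at vertices of $G_n$ where you use minimal edge lengths. One minor remark: the \emph{uniqueness} clause of Lemma \ref{pre-image of leaves} is not what makes your encoding work --- a legal path in $G_n$ together with an offset determines its $\phi_n$-image word in $G_0$ outright, so only the \emph{existence} of the lift (surjectivity of the encoding onto $B_k$) is needed.
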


For the proof we consider $\Lambda$ as a subshift of the full shift on the oriented edges of $G_0$; recall that for a subshift $X$ of $\mathcal{A}^\BZ$, one has $h(X)=\lim \log |B_k|/k$, where $B_k$ is the set of length-$k$ strings that occur in elements of $X$.  

\begin{proof}
 For $l$ fixed, Lemma \ref{length and frequency decay} gives that for $n$ sufficiently large, every loop in $G_n$ has length at least $l$.  Notice that if a legal path $\gamma \in \Omega(G_n)$ has length at least $2||\vec \lambda_n||_1$, $\gamma$ must contain a loop, hence if $\gamma$ has length $p$, then the frequency with which  $\gamma$ visits a vertex of $G_n$ is certainly bounded by $(2N-2)p/l$.  Define $L_n$ to be the total number of legal turns in $G_n$; then $L_n \leq N(N-1)-1$ for all $n$ ($G_n$ must have an illegal turn, else $f_{n-j,n}$ is the identity for $j>0$).  
 
 Now assume that $l$ is large, that every loop in $G_n$ has length at least $l$, and that $m$ is large compared to $l$.  Then the graph $G_n$ ensures that we have the estimate \[|B_m|\leq ||\lambda_n||_1 C^{\frac{(2N-2)L_n}{l}m}\] where $C\leq 2N-1$ certainly holds.  It follows immediately that $h(\Lambda)=0$.  
\end{proof}

We also mention the following:

\begin{prop}\label{minimal components}
 $\Lambda$ contains at most $3N-3$ minimal sublaminations.
\end{prop}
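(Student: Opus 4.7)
The plan is to associate to each minimal sublamination an ergodic invariant current, to show these currents are linearly independent as elements of $\CC((G_n)_n)$, and to conclude via the a priori bound $\dim\CC((G_n)_n)\le\liminf_{n\to-\infty}|EG_n|\le 3N-3$. The final inequality is an Euler-characteristic computation: combining $\chi(G_n)=1-N$ with the valence-$\ge 3$ assumption $2|EG_n|\ge 3|VG_n|$ gives $|EG_n|\le 3N-3$.

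First, each minimal sublamination $\Lambda_i\subseteq\closure\Lambda$ is a nonempty compact shift- and $\FN$-invariant subset, so by Krylov--Bogoliubov it supports an invariant Borel probability measure; passing to an ergodic component yields an ergodic invariant current $\mu_i$ supported on $\Lambda_i$. Two distinct minimal sublaminations have empty intersection, for otherwise their intersection would be a proper closed invariant subset of each, contradicting minimality; thus $\mu_1,\ldots,\mu_k$ have pairwise disjoint supports and are linearly independent as positive Borel measures. Since each $\mu_i$ is supported on $\closure\Lambda\subseteq\closure\Lambda_{\phi_n}$ for every $n\le 0$, equation \eqref{eq: disjoint union} and Lemma \ref{morphisms and currents} show that the nonnegative vectors $\vec\mu_i^{\,n}\in\BR^{|EG_n|}$ defined by $\vec\mu_i^{\,n}(e)=\mu_i(\mathrm{Cyl}_n(e))$ satisfy $\vec\mu_i^{\,n+1}=M_n\vec\mu_i^{\,n}$, so $(\vec\mu_i^{\,n})_n\in\CC((G_n)_n)$.

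The main obstacle is to show that the assignment $\mu\mapsto(\vec\mu^{\,n})_n$ remains injective when restricted to currents supported on $\closure\Lambda$, so that the linear independence of the $\mu_i$ is preserved upon passage to $\CC((G_n)_n)$. For this, Lemma \ref{length and frequency decay} applied to the simplicial length measure gives that the simplicial length of $\phi_n(e)$ in $G_0$ tends uniformly to $\infty$ in $e\in EG_n$ as $n\to-\infty$, so the clopen family $\{\phi_n(\mathrm{Cyl}_n(e)):n\le 0,\,e\in EG_n\}$ forms a refining sequence of partitions of $\closure\Lambda$ that separates points and hence generates its Borel $\sigma$-algebra. Consequently, if $\sum c_i(\vec\mu_i^{\,n})_n=0$ in $\CC((G_n)_n)$, then $\sum c_i\mu_i=0$ as a signed measure on $\closure\Lambda$, and mutual singularity of the $\mu_i$ forces each $c_i=0$. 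Combining these steps yields $k\le\dim\CC((G_n)_n)\le 3N-3$; we note that the injectivity used here is a special case of the stronger Theorem \ref{spaces of currents are isomorphic} to be proved later in Section~4, and relies crucially on the reduced hypothesis (through Lemma \ref{length and frequency decay}).
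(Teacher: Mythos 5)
Your argument is correct in outline and reaches the right bound, but it takes a genuinely different and much heavier route than the paper. The paper's proof is a three-line combinatorial separation argument: if two leaves of $\Lambda$ have distinct orbit closures, then (using Lemma \ref{length and frequency decay}, so that the $\phi_n$-images of edges of $G_n$ become arbitrarily long words in $G_0$) for $n$ sufficiently negative each leaf crosses an edge of $G_n$ that the other does not; hence $k$ pairwise distinct minimal sublaminations pick out $k$ distinct edges of $G_n$, and $k\le |EG_n|\le 3N-3$. Your proof instead funnels everything through the linear structure of $\CC((G_n)_n)$: Krylov--Bogoliubov measures on the disjoint minimal pieces, linear independence by mutual singularity, and the dimension bound $\dim\CC((G_n)_n)\le\liminf|EG_n|\le 3N-3$. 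What your route buys is a statement that sits naturally alongside Corollary \ref{dimension of space of currents}; what it costs is that the injectivity step is essentially Theorem \ref{spaces of currents are isomorphic} (via Proposition \ref{legal current determined by sequence}), which the paper proves \emph{after} this proposition. There is no circularity, since that theorem does not use Proposition \ref{minimal components}, but as written your proof cannot stand where the proposition appears in the text.

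Two smaller points on the injectivity step, which is the only place your argument is not airtight on its own. First, the sets $\phi_n(\mathrm{Cyl}_n(e))$ do not partition $\closure\Lambda$, since $\phi_n$ does not preserve simplicial length; the correct partition is by the sets $A_n(e)=\{\omega:\omega_n\in\mathrm{Cyl}(e_n)\}$ built from the canonical lifts of Lemma \ref{pre-image of leaves}. Second, ``separates points'' is not automatic: Lemma \ref{pre-image of leaves} only gives $\phi_n(\omega_n[0,1])=\omega[a_n,b_n]$ with $a_n\le 0<b_n$ and $b_n-a_n\to\infty$, and nothing forces both $a_n\to-\infty$ and $b_n\to\infty$, so two distinct leaves agreeing on a half-line could a priori lie in the same cell of every partition. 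The paper's Proposition \ref{legal current determined by sequence} sidesteps this by recovering $\mu(\gamma)$ for every finite path $\gamma$ directly from the vectors $\vec\mu_n$, which is the clean way to get the injectivity you need; if you want a self-contained proof at this point in the text, the paper's edge-separation argument is both shorter and avoids the issue entirely.
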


\begin{proof}
 If $l_0, l_1 \in \Lambda$ satisfy that $\overline{\{S^il_0|i\in\BZ\}} \neq \overline{\{S^il_1|i\in\BZ\}}$, then for all sufficiently large $n$, there must be $e_n^0, e_n^1\in EG_n$, such that $l_i$ crosses $e_n^i$ and does not cross $e_n^{1-i}$.
 \end{proof}
 
 The bound in Proposition \ref{minimal components} certainly is not sharp, as will become clear in the sequel.
 
\subsection{Currents on the legal lamination}\label{subsec: currents on legal lamination}
Recall that the set of currents on $\pi_1(G_0)$ is naturally identified with the set of finite shift-invariant measures on $\closure\Omega_\infty(G_0)$. In the presence of such a correspondence, by a {\em probability current} we mean one which induces a shift-invariant probability measure on $\closure\Omega_\infty(G_0)$. Note that since
\[ \closure \Omega_\infty(G_0) = \biguplus_{e\in EG_0} {\rm Cyl}(e),\]
equivalently
\[ \sum_{e\in EG_0} \mu(e) =1,\]
where $\mu(e) = \mu({\rm Cyl}(e))$.  

We also use this correspondence to define $\CC(\Lambda)$, {\em the set of currents supported on $\closure\Lambda$}, to denote those currents whose support in $\closure\Omega_\infty(G_0)$ is a subset of $\closure\Lambda$. The set of currents for $G_0$ is identified by the set of currents for $G_n$ via the identification of $\pi_1(G_n)$ and $\pi_1(G_0)$ provided by $\phi_n$. Hence for $\mu\in\CC(\Lambda)$ and every $n\le 0$, we obtain a current $\mu_n$ for $G_n$. Obviously $\mu_n$ is supported on the set of bi-infinite paths which are legal with respect to $\phi_n$. We claim that $\CC(\Lambda)$ is isomorphic to the space $\CC((G_n)_n)$ of currents on the unfolding sequence $(G_n)_n$. 

\begin{thm}\label{spaces of currents are isomorphic}
    Given a reduced unfolding sequence $(G_n)_{n\le0}$ with legal lamination $\closure\Lambda$, there is a natural linear isomorphism between $\CC((G_n)_n)$ and $\CC(\Lambda)$. 
\end{thm}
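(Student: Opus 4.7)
The plan is to define the natural forward map $\Phi:\CC(\Lambda)\to\CC((G_n)_n)$ and show it is a linear isomorphism. For $\mu\in\CC(\Lambda)$, use $\phi_n:\pi_1(G_n)\to F_N$ to view $\mu$ as a shift-invariant measure on $\closure\Omega_\infty(G_n)$, and set $\vec\mu_n(e):=\mu({\rm Cyl}_n(e))$. The cocycle $\vec\mu_{n+1}=M_n\vec\mu_n$ is Lemma \ref{morphisms and currents} applied to $f_n:G_n\to G_{n+1}$, valid since $\closure\Lambda\subseteq\Lambda_{f_n}$. So $\Phi(\mu):=(\vec\mu_n)_n$ is a well-defined linear map. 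Injectivity is immediate from the partition $\closure\Omega_\infty(G_0)=\biguplus_{e\in EG_0}{\rm Cyl}_0(e)$: $\vec\mu_0=0$ forces $\mu=0$.

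For surjectivity I would construct an inverse by Kolmogorov extension. Given $(\vec\mu_n)_n\in\CC((G_n)_n)$, define
$$\mu(\gamma):=\lim_{n\to-\infty}\sum_{(e,i)}\vec\mu_n(e)$$
for each finite reduced path $\gamma$ in $G_0$, where the sum is over $(e,i)\in EG_n\times\BZ$ with $0\le i\le|\phi_n(e)|-|\gamma|$ and $\phi_n(e)[i,i+|\gamma|]=\gamma$ as labeled paths; set $\mu(\gamma)=0$ if $\gamma$ is not a sub-path of any leaf of $\closure\Lambda$. Refining from level $n$ to level $n-1$ via $\vec\mu_n=M_{n-1}\vec\mu_{n-1}$ preserves all counted occurrences and additionally captures those that spanned edge-boundaries of $G_n$ but fit inside a single $\phi_{n-1}(e'')$; so the sum is monotone non-decreasing in $|n|$. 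Relaxing the constraint to $\phi_n(e)[i,i+1]=e_1$ (the first edge of $\gamma$) and applying Lemma \ref{morphisms and currents} yields the level-independent bound $\vec\mu_0(e_1)$, so the limit exists. Shift-invariance is manifest from the defining sum, support on $\closure\Lambda$ is clear since $\mu(\gamma)>0$ forces $\gamma$ to appear in some $\phi_n(e)$, and the identity $\Phi(\mu)=(\vec\mu_n)_n$ is checked by computing $\mu({\rm Cyl}_n(e))$ via the lift correspondence of Lemma \ref{pre-image of leaves}.

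The main obstacle is verifying the Kolmogorov compatibility $\mu(\gamma)=\sum_{e:\gamma e\text{ reduced}}\mu(\gamma e)$. Comparing the two defining sums, the discrepancy equals $\lim_{n\to-\infty}B_n$, where $B_n:=\sum_{e:\phi_n(e)\text{ ends with }\gamma}\vec\mu_n(e)$ collects the ``right-boundary'' occurrences of $\gamma$ at the end of some $\phi_n(e)$. Since $G_n$ has rank $N$ with all vertices of valence at least $3$, we have $|EG_n|\le 3N-3$ uniformly in $n$; Lemma \ref{length and frequency decay} then gives $\max_{e\in EG_n}\vec\mu_n(e)\to 0$ as $n\to-\infty$, so $B_n\le(3N-3)\max_e\vec\mu_n(e)\to 0$. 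Hence Kolmogorov holds and $\mu\in\CC(\Lambda)$ with $\Phi(\mu)=(\vec\mu_n)_n$, completing the construction of the inverse.
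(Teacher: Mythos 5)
Your argument is correct and follows essentially the same route as the paper's: the monotone sums $\sum_{e}\vec\mu_n(e)\langle\gamma,\phi_n(e)\rangle$, the Kolmogorov check via $\max_{e}\vec\mu_n(e)\to 0$ (Lemma \ref{length and frequency decay}), and the support argument all appear there in the same form, with only the a priori bound differing (you use $\vec\mu_0(e_1)$ where the paper uses the area). The one step you compress --- that the constructed $\mu$ returns $\vec\mu_n(e)$ at every level $n<0$ and not just at $n=0$ --- is precisely the content of the paper's Proposition \ref{legal current determined by sequence} and does require the natural-factorization bookkeeping you allude to.
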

\begin{proof}
    Assume $(\vec\mu_n)_n\in\CC((G_n)_n)$ is a current on the unfolding sequence $(G_n)_n$. We first show how this sequence induces a current supported on $\closure\Lambda$. Given a finite directed labeled path $\omega\in\closure\Omega(G_0)$ and a finite directed path $\gamma\in\Omega(G_0)$, we use the notation
\[ \langle  \gamma , \omega \rangle \] 
to show the number of copies of $\gamma$ in $\omega$ which agree with the orientation of $\omega$. In particular for $n\le 0$ and $e\in EG_n$, we can consider $\langle \gamma, \phi_n(e)\rangle$ to denote the number of occurrences of $\gamma$ in $\phi_n(e_n)$ considered as a directed path in $G_0$. We define
\[ \alpha_n(\gamma) = \sum_{e\in EG_n}\mu_n(e) \langle \gamma,\phi_n(e)\rangle. \]
We claim for a fixed $\gamma$, $\alpha_0(\gamma)\le \alpha_{-1}(\gamma)\le\cdots\le\alpha_n(\gamma)\le\cdots$ is a monotonic increasing sequence. To see this assume $e\in EG_n$ and $e'\in EG_{n+1}$ are given. The number of occurrences of $e'$ in $f_n(e)$ as a directed sub-path is given by $M_n(e',e)$, the entry of incidence matrix of $f_n$ corresponding to $e'$ and $e$. Every occurrence of $\gamma$ as a directed sub-path of $\phi_{n+1}(e')$ produces $M_n(e',e)$ occurrences of $\gamma$ as a directed sub-path of $\phi_n(e)$. Hence
\begin{align*}
	\alpha_n(\gamma) &= \sum_{e\in EG_n}\mu_n(e) \langle \gamma, \phi_n(e) \rangle \\
		& \ge \sum_{e\in EG_n} \mu_n(e) \sum_{e'\in EG_{n+1}} M_n(e',e) \langle\gamma,\phi_{n+1}(e') \rangle \\
		&= \sum_{e'\in EG_{n+1}} \langle\gamma, \phi_{n+1}(e') \rangle \sum_{e\in EG_n} M_n(e',e)\mu_n(e) \\
		& = \sum_{e'\in EG_{n+1}} \langle\gamma, \phi_{n+1}(e') \rangle\, \mu_{n+1}(e') \\
		& = \alpha_{n+1}(\gamma).
\end{align*}
Obviously for every $e\in EG_n$, $\langle \gamma, \phi_n(e) \rangle$ is bounded from above by $\lambda_n(e)$, the simplicial length of $\phi_n(e)$. This implies that $\alpha_n(\gamma)$ is bounded for every $n$ by the area of the unfolding sequence with respect to the current $(\vec\mu_n)_n$ and the simplicial length $(\vec\lambda_n)_n$. Therefore $\mu(\gamma) = \lim_n \alpha_n(\gamma)$ exists. 

On the other hand if $\gamma_1,\ldots,\gamma_k\in \Omega(G_0)$ are all possible extensions of $\gamma$ in $G_0$ by adding a single edge, then we can see that every occurrence of $\gamma$ as a directed sub-path of $\phi_n(e)$ either extends to an occurrence of $\gamma_i$ as a sub-path of $\phi_n(e)$ for some $i=1,\ldots, k$ or $\gamma$ is the terminal sub-path of $\phi_n(e)$ and $\phi_n(e)$ does not extend beyond $\gamma$. The latter possibility contributes at most one instance of occurrence of $\gamma$ as a sub-path of $\phi_n(e)$ and therefore
\[\sum_{i=1}^k \langle \gamma_i, \phi_n(e) \rangle \le \langle \gamma,\phi_n(e) \rangle \le \sum_{i=1}^k \langle \gamma_i, \phi_n(e) \rangle + 1;\]
multiplying by $\mu_n(e)$ and adding for all the edges of $G_n$
	\[ \sum_{i=1}^k \alpha_n(\gamma_i) = \sum_{i=1}^k \sum_{e\in EG_n} \mu_n(e)\langle \gamma_i,\phi_n(e)\rangle  \le \alpha_n(\gamma) \le \sum_{i=1}^k \alpha_n(\gamma_i) + \sum_{e\in EG_n} \mu_n(e).\]
By lemma \ref{length and frequency decay} $\mu_n(e)$ tends to zero for edges of $G_n$ as $n\to\infty$ and therefore in the limit
\[ \mu(\gamma) = \sum_{i=1}^k \mu(\gamma_i).\]

Using the characterization of currents via a coordinate system described in \S \ref{subsec: currents}, we conclude that $\mu$ is a current on $\pi_1(G_0)$. Now we show that $\mu$ is supported on $\closure\Lambda$. Assume $n$ is fixed and $\gamma$ is a path in $\Omega(G_0)$ whose simplicial length is bigger than $2\lambda_n(e)$, twice the simplicial length of $\phi_n(e)$, for every $e\in EG_n$. We claim that $\mu(\gamma)>0$ only when for some $e\in EG_n$, $\gamma$ contains $\phi_n(e)$ as a subpath. This will imply that every leaf in the support of $\mu$ is a limit of $\phi_n(e_n)$ for a sequence of edges $e_n\in EG_n$ and therefore is contained in $\closure\Lambda$. If $\mu(\gamma)>0$ then for every $m$ sufficiently large, there is $e'\in EG_m$ with $\langle \gamma, \phi_m(e') \rangle >0$. But $\phi_m(e')$ is a concatenation of subpaths each of which is the $\phi_n$-image of an edge of $G_n$. Since $\gamma$ is a subpath of $\phi_m(e')$ and its simplicial length is larger than the twice the simplicial length of $\phi_n(e)$ for every $e\in EG_n$, $\phi_n(e)$ must be a subpath of $\gamma$ for some $e\in EG_n$. This proves the claim and we have constructed a natural map $\CC((G_n)_n)\to\CC(\Lambda)$.

Now suppose $\mu\in\CC(\Lambda)$ is given. For every $n\le 0$, we define a vector $\vec \mu_n$ in $\BR_{\ge 0}^{|EG_n|}$ where $\mu_n(e)$, the component of $\vec\mu_n$ associated to $e\in EG_n$, is given by $\mu_n(e) = \mu_n({\rm Cyl}(e))$. It is a consequence of lemma \ref{morphisms and currents} that
\[ \vec\mu_{n+1} = M_{n}\vec\mu_n, \quad n< 0.\]
Hence the sequence of vectors $(\vec\mu_n)_n$ is a current for the unfolding sequence $(G_n)_n$ and $(\vec\mu_n)_n \in \CC((G_n)_n)$ and we have a map $\CC(\Lambda)\to \CC((G_n)_n)$.

To prove that the two maps $\CC((G_n)_n)\to\CC(\Lambda)$ and $\CC(\Lambda)\to\CC((G_n)_n)$ are inverses of each other, we just need the next proposition.

\begin{prop}\label{legal current determined by sequence}
	Suppose $(G_n)_{n\ge 0}$ is a reduced unfolding sequence with legal lamination $\closure\Lambda$ and $\mu$ is a current supported on $\closure\Lambda$. Then for every path $\gamma\in\Omega(G_0)$
	\[ \mu(\gamma)= \mu({\rm Cyl}(\gamma)) = \lim_n \sum_{e\in EG_n} \mu_n(e) \langle \gamma, \phi_n(e) \rangle.\]
\end{prop}
\begin{proof}
    For the morphism $\phi_n:G_n\to G_0$, we use the natural factorization and factor it as a composition $\psi_n\circ\xi_n$ with $\xi_n: G_n\to G_n'$ a homeomorphism and $\psi_n:G_n'\to G_0$ a morphism that preserves the simplicial length. Then we can use a decomposition similar to \eqref{eq: disjoint union 1}
    \[ {\rm Cyl}_{G_0}(\gamma) \cap \closure\Lambda = \biguplus_{\psi_n(\gamma')=\gamma} (\psi_n({\rm Cyl}_{G_n'}(\gamma')) \cap \closure\Lambda),\]
    where the disjoint union is over all directed paths $\gamma'$ in $G_n'$ whose image is $\gamma$. As a result
    \[ \mu(\gamma) = \sum_{\psi_n(\gamma')=\gamma}\mu_{G_n'}(\gamma') \]
    Where $\mu_{G_n'}(\gamma')$ is the $\mu$-measure of ${\rm Cyl}_{G_n'}(\gamma')$. 
    When the $\xi_n$-pre-image of $\gamma'$ is contained in an edge $e\in EG_n$ then the composition of $\xi_n$ with an iterate of the shift map on $\closure\Omega_\infty(G_n')$ identifies ${\rm Cyl}_{G_n}(e)$ with ${\rm Cyl}_{G_n'}(\gamma')$ and as a result $\mu_n(e) = \mu_{G_n'}(\gamma')$; there are $\langle \gamma, \mu \rangle$ such paths $\gamma'$ and we conclude
    \[ \mu(\gamma) \ge \sum_{e\in EG_n}\mu_n(e) \langle \gamma, \phi_n(e) \rangle.\]
    Moreover the difference is the sum of $\mu_{G_n'}(\gamma')$ for all paths $\gamma'$ in $G_n'$ with $\psi_n(\gamma')=\gamma$ but so that the $\xi_n$-pre-image of $\gamma'$ in $G_n$ has a vertex of $G_n$ in its interior. The number of such paths $\gamma'$ is bounded by $(|\gamma|-1)(3N-3)^2$. Also ${\rm Cyl}_{G_n'}(\gamma')$ will be contained in the $\xi_n$-image of a cylinder ${\rm Cyl}_{G_n}(e)$ for an edge $e\in EG_n$ and therefore $\mu_{G_n'}(\gamma') \le \mu_n(e)$; hence
    \[ \mu(\gamma) - \sum_{e\in EG_n}\mu_n(e) \langle \gamma, \phi_n(e) \rangle \le (|\gamma|-1)(3N-3)^2 \left(\max_{e\in EG_n}\mu_n(e)\right).\]
    However by lemma \ref{length and frequency decay} we know that $\max_{e\in EG_n}\mu_n(e) \to 0$ as $n\to-\infty$ and this proves the proposition.
\end{proof}

Therefore we have provided a bijection between $\CC(\Lambda)$ and $\CC((G_n)_n)$. The fact that this is a linear isomorphism is easy.
\end{proof}

We obtain:

\begin{cor}\label{dimension of space of currents}
    The dimension of the space $\CC(\Lambda)$ of currents on $\Lambda$ is bounded by $\liminf_n |EG_n|$, and, in particular, is bounded by $3N-3$.
\end{cor}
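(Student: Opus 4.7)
The corollary should follow almost immediately from Theorem~\ref{spaces of currents are isomorphic}, combined with the construction of $\CC((G_n)_n)$ in Section~\ref{subsec: currents on sequences} and the standard Euler-characteristic bound on edges of graphs in Outer space. My plan has three short steps.

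First, I would invoke Theorem~\ref{spaces of currents are isomorphic} to transfer the count: since it provides a linear isomorphism $\CC(\Lambda) \cong \CC((G_n)_n)$, it suffices to bound the dimension of the linear span of $\CC((G_n)_n)$. By the construction in Section~\ref{subsec: currents on sequences}, this cone sits inside the vector space $V$ of sequences $(\vec v_n)_{n \le 0}$ with $\vec v_n \in \BR^{|EG_n|}$ satisfying $\vec v_{n+1}=M_n \vec v_n$, so it is enough to bound $\dim V$.

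Second, I would establish $\dim V \le \liminf_{n \to -\infty} |EG_n|$ by a standard inverse-limit argument. The level projections $\pi_n : V \to \BR^{|EG_n|}$ have images $W_n = \bigcap_{m \le n} M_{m,n}(\BR^{|EG_m|})$, where $M_{m,n}=M_{n-1}\cdots M_m$. The sequence $(\dim W_n)$ is non-decreasing as $n$ decreases and is uniformly bounded by the edge counts, so it stabilizes for $n \le n_0$ sufficiently small. Once stabilization occurs, the connecting maps $M_n \colon W_n \to W_{n+1}$ are surjections between spaces of equal dimension, hence isomorphisms, which forces $V \cong W_{n_0}$ and so $\dim V \le |EG_{n_0}|$. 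Letting $n_0$ run through arbitrarily negative integers yields the $\liminf$ bound.

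Third, the universal bound $3N-3$ is the standard Euler-characteristic estimate: each $G_n$ has rank $N$ with all vertices of valence at least $3$, so $|VG_n|-|EG_n|=1-N$ combined with $2|EG_n|\ge 3|VG_n|$ gives $|EG_n|\le 3N-3$. I do not foresee any real obstacle; the only piece requiring any genuine thought is the stabilization argument in step two, but it is routine linear algebra and is essentially already asserted as evident in Section~\ref{subsec: currents on sequences}.
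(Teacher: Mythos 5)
Your proposal is correct and follows essentially the same route as the paper: the paper deduces the corollary directly from Theorem \ref{spaces of currents are isomorphic} together with the assertion in \S\ref{subsec: currents on sequences} that the ambient space of sequences $(\vec v_n)_n$ with $\vec v_{n+1}=M_n\vec v_n$ "evidently" has dimension at most $\liminf_{n\to-\infty}|EG_n|$, plus the standard Euler-characteristic bound $|EG_n|\le 3N-3$. Your stabilization argument for the subspaces $W_n$ is just a careful write-up of the step the paper leaves as evident, and it is sound.
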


\subsection{Ergodic currents}\label{subsec: ergodic currents}
As before assume $(G_n)_{n\le0}$ is a reduced unfolding sequence with legal lamination $\closure\Lambda$. A current $\mu\in\CC(\Lambda)$ is {\em ergodic} if whenever $\mu= c_1 \mu^1 + c_2\mu^2$ for $\mu^1,\mu^2\in\CC(\Lambda)$ then $\mu^1$ and $\mu^2$ are homothetic to $\mu$. Since $\CC(\Lambda)$ is finite dimensional there are at most finitely many non-homothetic ergodic currents on $\closure\Lambda$; this is because $\CC(\Lambda)$ is a simplex, a point that is explained below.

Assume $\mu^1,\ldots,\mu^k$ are the mutually singular ergodic probability currents on $\closure\Lambda$. Recall that given a finite directed labeled path $\omega\in\closure\Omega(G_0)$ and a finite directed (unlabeled) path $\gamma\in\Omega(G_0)$, we use the notation
\[ \langle  \gamma , \omega \rangle \] 
to show the number of copies of $\gamma$ in $\omega$ and in the same direction. To be more precise let $\chi_\gamma:\closure\Omega(G_0)\to\{0,1\}$ be the characteristic function for ${\rm Cyl}(\gamma)$. Then 
\[ \langle  \gamma , \omega \rangle = \sum_i \chi_\gamma(S^i\omega), \]
where $S$ is the shift map on $\closure\Omega(G_0)$. Obviously $\int \chi_\gamma d\mu = \mu(\gamma)$ for every current $\mu$.

By the Ergodic Theorem for the the shift invariant ergodic probability measure $\mu^i$ on $\closure\Lambda, i=1,\ldots,k$, the set of elements $\omega$ of $\closure\Lambda$ with the property that
\begin{equation}\label{eq: generic frequency}
\begin{aligned}
\mu^i(\gamma) = \int\chi_\gamma\, d\mu^i & = \lim_{n\to\infty}\frac1n\sum_{i=0}^{n-1}\chi_\gamma(S^i\omega) = 
\lim_{n \to \infty}
\frac{\langle \gamma, \omega[0,n] \rangle}
n \\
& = \lim_{n\to\infty}\frac1n\sum_{i=-n}^{-1}\chi_\gamma(S^i\omega)
= \lim_{n \to \infty} 
\frac{\langle \gamma, \omega[-n,0] \rangle}
n
\end{aligned}
\end{equation}
has full $\mu^i$-measure in $\closure\Lambda$. We can assume that a shift invariant subset of full $\mu^i$-measure in $\closure\Lambda$ has been chosen so that \eqref{eq: generic frequency} holds for every $\gamma$ in $\Omega(G_0)$. It will be also enough to continue by picking a path $\gamma$ with the property that $\mu^i(\gamma)\neq\gamma^j(\gamma)$ for every pair $i\neq j$ and use the above observation that \ref{eq: generic frequency} holds for this $\gamma$ and for $\omega$ generic with respect to $\mu^i$. In particular, we have that the set of generic leaves for $\mu^i$ is disjoint from the set of generic leaves for $\mu^j$,  $i\neq j$, and it follows that $\CC(\Lambda)$ is a cone on a simplex.  


\subsection{Transverse decomposition for an unfolding sequence}\label{subsec: transverse decomposition}
Given a graph $G$, a {\em transverse decomposition} of $G$ is a collection $H^0,\ldots, H^k$ of subgraphs of $G$ with
\[ EG = \biguplus_{i=0}^k EH^i, \]
the disjoint union of the edges of $H^0,\ldots,H^k$. The subgraph $H^i$ is {\em degenerate} if $EH^i$ is empty. 

As before assume a reduced unfolding sequence $(G_n)_{n\le0}$ is given with legal lamination $\closure\Lambda$ and probability ergodic currents $\mu^1,\ldots,\mu^k$.  We further assume $G$ is a graph with a given isomorphism to $G_n$ for every $n$. We use this isomorphism and for every edge $e$ of $G$, we denote the corresponding edge in $G_n$ by $e_n$. 

We now claim there is a transverse decomposition of $G$ induced from the currents $\mu^1,\ldots,\mu^k$. As before we assume $(\vec\lambda_n)_n$ is the simplicial length measure on $(G_n)_n$ induced from $G_0$ and for $e\in EG$, we use $\lambda_n(e)$ and $\mu^i_n(e)$ to denote the $\lambda_n$-length of $e_n$ and the $\mu^i_n$-measure of the cylinder ${\rm Cyl}(e_n), i=1,\ldots,k,$ respectively. 
Also $\langle \gamma , \phi_n(e_n) \rangle$ denotes the number of occurrences of $\gamma$ in the path $\phi_n(e_n)$ in $G_0$. 

\begin{thm}\label{transverse decomposition}
	Suppose a reduced unfolding sequence $(G_n)_n$ with legal lamination $\closure\Lambda$ and ergodic probability currents $\mu^1,\ldots,\mu^k$ is given. Also assume for every $n$, $G_n$ is identified with a fixed graph $G$.
	After passing to a subsequence, there is a transverse decomposition $H^0,H^1,\ldots, H^k$ of $G$ so that for every distinct pair $i,j\in\{1,\ldots,k\}$ and $e\in H^i$
	\begin{enumerate}
		\item $\ds \liminf_n \mu^i_{n}(e)\lambda_{n}(e) > 0,$
		\item $\ds \sum_n \mu_n^j(e)\lambda_n(e) < \infty,$ and
		\item \[ \lim_n \frac{\langle \gamma, \phi_n(e_{n})\rangle}{\lambda_{n}(e)} = \mu^i(\gamma),\]
	\end{enumerate}
    Also given $e\in EH^0$ and $i\in \{1,\ldots,k\}$
    \[ \sum_n \mu_n^i(e)\lambda_n(e) <\infty .\]
\end{thm}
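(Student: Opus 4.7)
The plan is to read off each edge $e\in EG$ the ``statistical profile'' of the long path $\phi_n(e_n)\subset G_0$ as $n\to-\infty$, and to show that for edges carrying nonvanishing area-mass in the limit, this profile coincides with a single ergodic current $\mu^i$. For each $(i,e)$ and each finite $\gamma\in\Omega(G_0)$, set
\[
p^i_n(e):=\mu^i_n(e)\,\lambda_n(e),\qquad
\nu_n(e)(\gamma):=\frac{\langle\gamma,\phi_n(e_n)\rangle}{\lambda_n(e)}.
\]
Because $(\vec\lambda_n)$ is the simplicial length measure and $\mu^i$ is a probability current, the area $\sum_e p^i_n(e)=\vec\mu^i_n{}^T\vec\lambda_n$ equals a constant $A^i=1$. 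Moreover $\nu_n(e)$ is a shift-almost-invariant probability measure on $\closure\Omega_\infty(G_0)$ whose Kolmogorov/shift-invariance defect is $O(1/\lambda_n(e))\to 0$ by Lemma \ref{length and frequency decay}. By compactness and a diagonal argument I pass to a subsequence along which $p^i_n(e)\to a^i_e$ for all $(i,e)$ and $\nu_n(e)(\gamma)\to\nu_\infty(e)(\gamma)$ for all $(e,\gamma)$; the limit $\nu_\infty(e)$ is then a shift-invariant probability measure on $\closure\Omega_\infty(G_0)$, and $\sum_e a^i_e=1$.

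Using Proposition \ref{legal current determined by sequence} and the factorization $\mu^i_n(e)\langle\gamma,\phi_n(e_n)\rangle=p^i_n(e)\,\nu_n(e)(\gamma)$, pulling the limit through the finite sum over $e\in EG$ yields
\[
\mu^i(\gamma)=\sum_{e\in EG}a^i_e\,\nu_\infty(e)(\gamma).
\]
Since $\mu^i$ is shift-ergodic, it is extremal in the simplex of shift-invariant probability measures, so this convex combination forces $\nu_\infty(e)=\mu^i$ whenever $a^i_e>0$. Mutual singularity of the $\mu^i$ then prevents $a^i_e>0$ and $a^j_e>0$ from holding simultaneously for $i\neq j$, and consequently the sets
\[
H^i:=\{e\in EG:a^i_e>0\}\quad(i\ge 1),\qquad
H^0:=EG\smallsetminus\bigcup_{i=1}^k H^i
\]
are pairwise edge-disjoint and furnish a transverse decomposition of $G$. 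Condition (1) is $\lim_n p^i_n(e)=a^i_e>0$ for $e\in H^i$, and condition (3) is the equality $\nu_\infty(e)=\mu^i$ evaluated on $\gamma$.

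For condition (2) and the $H^0$-condition, I use that $p^i_n(e)\to 0$ along the already-extracted subsequence for each of the finitely many pairs $(e,i)\in EG\times\{1,\ldots,k\}$ with $a^i_e=0$. Passing to a further sub-subsequence on which $p^i_n(e)<2^{-n}$ for every such pair simultaneously produces the required summability $\sum_n p^i_n(e)<\infty$.

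The main obstacle is the ergodicity step: one needs $\nu_\infty(e)$ to lie in the convex set where $\mu^i$ is extremal. Shift-invariance of $\nu_\infty(e)$ comes for free from the vanishing of the shift-invariance defect of $\nu_n(e)$; its probability-measure property from preservation of total mass in the limit; and support in $\closure\Lambda$ (for $e$ with $a^i_e>0$) follows automatically from the displayed equation and $\supp(\mu^i)\subseteq\closure\Lambda$. With those three pieces in place, extremality of the shift-ergodic $\mu^i$ cleanly delivers $\nu_\infty(e)=\mu^i$ and the entire decomposition follows.
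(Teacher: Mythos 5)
Your proposal is correct, but it takes a genuinely different route from the paper's. The paper builds the decomposition from the pointwise ergodic theorem: it introduces the sets $A_n(e)=\{\omega\in\closure\Lambda : \omega_n[0,1]=e_n\}$, observes that $\mu^i(A_n(e))=\mu^i_n(e)\lambda_n(e)$, and uses a Borel--Cantelli argument (Lemma \ref{large frequency implies containing generic points}) to produce, for any edge with $\limsup_n\mu^i_n(e)\lambda_n(e)>0$, a positive-$\mu^i$-measure set of leaves with $\omega_n[0,1]=e_n$ infinitely often; Birkhoff genericity (Lemma \ref{containing generic implies generic frequency}) then forces $\langle\gamma,\phi_n(e_n)\rangle/\lambda_n(e)\to\mu^i(\gamma)$, and disjointness of $H^i$ and $H^j$ follows because one edge cannot carry both the $\mu^i$- and the $\mu^j$-statistics of the distinguishing path $\gamma$. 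You instead treat $\nu_n(e)$ as the empirical measure of the orbit segment $\phi_n(e_n)$, use the area identity $\sum_e p^i_n(e)=1$ together with Proposition \ref{legal current determined by sequence} to write $\mu^i=\sum_e a^i_e\,\nu_\infty(e)$ as a convex combination of shift-invariant probability measures, and invoke extremality of the ergodic $\mu^i$. This is softer: it avoids the pointwise ergodic theorem entirely, using only extremality (the easy half of the ergodic decomposition), and it delivers conclusion (3) for all $\gamma$ simultaneously rather than for a single separating $\gamma$. The trade-off is that the paper's generic-leaf machinery is reused immediately afterwards for Proposition \ref{image of generic before pinching} and Theorem \ref{frequency in generic leaves}, which your argument does not directly produce. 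Two small points to tidy: when invoking extremality you should record that any $\nu_\infty(e)$ with $a^i_e>0$ satisfies $a^i_e\nu_\infty(e)\le\mu^i$ on cylinders, hence is supported on $\closure\Lambda$, so the paper's notion of ergodicity (extremality in $\CC(\Lambda)$) does apply; and the final summability extraction should read $p^i_{n_m}(e)<2^{-m}$ along the sub-subsequence, since $2^{-n}$ diverges as $n\to-\infty$ --- though the intent is clear and matches the paper's own ``pass to a further subsequence'' step.
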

  
\begin{rem}
 Conclusion (2) in the statement may seem somewhat artificial; we state it explicitly for convenience.
\end{rem}  
  
\begin{proof}
	We can pass to a subsequence so that for every $e\in EG$, either there exists $i\in\{1,\ldots,k\}$ with $\liminf_n \mu_n^i(e)\lambda_n(e) >0$ or for every $i\in\{1,\ldots,k\}$, $\lim_n\mu_n^i(e)\lambda_n(e)=0$. 
	Define $H^i$ to consist of the edges with 
	\[ \liminf_n\mu_n^i(e)\lambda_n(e)>0\] 
	and $H^0$ to be spanned by edges $e$ with 
	\[ \lim_n\mu_n^i(e)\lambda_n(e)=0\] 
	for every $i\in\{1,\ldots,k\}$. Recall by lemma \ref{pre-image of leaves} that for every leaf $\omega$ of $\closure\Lambda$ and every $n$, there is a unique bi-infinite legal path $\omega_n$ in $G_n$ whose $\phi_n$-image is a relabeling of $\omega$ in the same direction. Moreover the $\phi_n$-image of the edge $\omega_n[0,1]$ is a directed path that contains the edge $\omega[0,1]$ in the same direction.
	
	\begin{lem}\label{large frequency implies containing generic points}
	    If $e\in EG$ is given with $\limsup_n\mu_n^i(e)\lambda_n(e) >0$ for some $i\in\{1,\ldots,k\}$ then there is a subset of positive $\mu^i$-measure of $\closure\Lambda$ with the property that for every leaf $\omega$ in this set $\omega_n[0,1]=e_n$ for infinitely many $n$. 
	\end{lem}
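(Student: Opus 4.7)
The plan is to apply the reverse Fatou lemma to the sets $A_n := \{\omega \in \closure\Lambda : \omega_n[0,1] = e_n\}$, where $\omega_n$ is the lift supplied by Lemma \ref{pre-image of leaves}. Since $\limsup_n A_n$ is precisely the set of leaves $\omega$ for which $\omega_n[0,1] = e_n$ occurs for infinitely many $n$, it is enough to establish the lower bound
\[
\mu^i(A_n) \ \geq \ \lambda_n(e)\,\mu_n^i(e);
\]
by hypothesis the right-hand side has positive $\limsup$ as $n\to-\infty$, and since $\mu^i$ is a probability measure, the reverse Fatou lemma then gives $\mu^i(\limsup_n A_n) \geq \limsup_n \mu^i(A_n) > 0$.

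To obtain this bound, first decompose $A_n$ according to the location of position $0$ inside $\phi_n(e_n)$. Lemma \ref{pre-image of leaves} guarantees that for each $\omega \in A_n$ there is a unique integer $j = j(\omega) \in \{0,1,\ldots,\lambda_n(e)-1\}$ with $\omega[-j,\lambda_n(e)-j] = \phi_n(e_n)$; set $A_n^j := \{\omega \in A_n : j(\omega) = j\}$. Then $A_n^0 = {\rm Cyl}(\phi_n(e_n)) \cap \closure\Lambda$, and a direct computation shows $A_n^j = S^j(A_n^0)$. Since the $A_n^j$ are pairwise disjoint and $\mu^i$ is shift invariant,
\[
\mu^i(A_n) \ = \ \lambda_n(e)\,\mu^i(\phi_n(e_n)).
\]

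The second ingredient is the bound $\mu^i(\phi_n(e_n)) \geq \mu_n^i(e)$. The proof of Theorem \ref{spaces of currents are isomorphic} shows that for any finite path $\gamma$ in $G_0$, the quantities
\[
\alpha_m(\gamma) \ := \ \sum_{e' \in EG_m} \mu_m^i(e')\,\langle \gamma, \phi_m(e') \rangle
\]
form a monotone increasing sequence as $m \to -\infty$, with limit $\mu^i(\gamma)$. Taking $m = n$ and $\gamma = \phi_n(e_n)$, the single summand indexed by $e_n$ contributes $\mu_n^i(e)\,\langle\phi_n(e_n),\phi_n(e_n)\rangle \geq \mu_n^i(e)$, so $\mu^i(\phi_n(e_n)) \geq \mu_n^i(e)$. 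Substituting into the previous display yields the desired bound on $\mu^i(A_n)$ and completes the argument.

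The main point requiring care is the identification $A_n^j = S^j(A_n^0)$: one must verify that applying the shift $S^j$ to a leaf $\omega \in A_n^0$ not only translates the occurrence of $\phi_n(e_n)$ inside $\omega$ but also interacts correctly with the labeling convention from Lemma \ref{pre-image of leaves}, so that the lift to $G_n$ of the shifted leaf still has $e_n$ at position $[0,1]$ with offset exactly $j$.
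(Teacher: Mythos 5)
Your overall strategy coincides with the paper's: both arguments reduce the lemma to the single estimate $\mu^i(A_n(e))\ge\lambda_n(e)\,\mu^i_n(e)$ and then conclude by a reverse-Fatou/Borel--Cantelli argument on $\limsup_n A_n(e)$ (the paper phrases this as $\bigcap_m\bigcup_{n\le m}A_n(e)$ having measure at least $\ep$). The decomposition $A_n=\biguplus_{j=0}^{\lambda_n(e)-1}A_n^j$ with $A_n^j=S^j(A_n^0)$, and the resulting identity $\mu^i(A_n)=\lambda_n(e)\,\mu^i(A_n^0)$, are correct and are essentially how the paper's equation \eqref{eq: size of image of a cylinder} is justified.

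The gap is the asserted set equality $A_n^0={\rm Cyl}(\phi_n(e_n))\cap\closure\Lambda$. Only the inclusion $A_n^0\subseteq{\rm Cyl}(\phi_n(e_n))\cap\closure\Lambda$ holds. A leaf $\omega$ can satisfy $\omega[0,\lambda_n(e)]=\phi_n(e_n)$ while this occurrence is \emph{out of register} with the tiling of $\omega$ by $\phi_n$-images of edges of the lift $\omega_n$: for instance $\phi_n(e_n)$ may occur as a subword of $\phi_n(e'_n)$ for a different edge $e'$, or may straddle the images of two consecutive edges of $\omega_n$. For such $\omega$ one has $\omega_n[0,1]\neq e_n$, so $\omega\notin A_n^0$ (indeed $\omega$ need not lie in $A_n$ at all). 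With only the inclusion, your chain of estimates yields $\mu^i(A_n)=\lambda_n(e)\,\mu^i(A_n^0)\le\lambda_n(e)\,\mu^i(\phi_n(e_n))$, an upper bound where a lower bound is required; and your second ingredient $\mu^i(\phi_n(e_n))\ge\mu^i_n(e)$, while correct, counts \emph{all} occurrences of the word $\phi_n(e_n)$ (in- and out-of-register, and inside images of other edges), so it cannot be fed back into your decomposition of $A_n$. What is actually needed, and what the paper uses, is the exact identity $\mu^i(A_n^0)=\mu^i_n(e)$: the lifting map of Lemma \ref{pre-image of leaves} identifies $A_n^0$ with the cylinder ${\rm Cyl}_{G_n}(e_n)$ in $\closure\Omega_\infty(G_n)$, and this identification is measure preserving because both measures are induced by the same current on $\D^2 F_N$ --- this is precisely the in-register bookkeeping carried out via the natural factorization in the proof of Proposition \ref{legal current determined by sequence}. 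Replacing your middle equality by this identification repairs the argument.
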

    \begin{proof}
        Given and edge $e\in EG$ and $i$ let
	\[ A_n(e) = \{ \omega\in\closure\Lambda \, : \, \omega_n \in {\rm Cyl}(e_n)\subset \closure\Lambda. \}\]
	It follows that
    	\begin{equation}\label{eq: size of image of a cylinder} 
        \mu^i (A_n(e)) = \mu^i_n(e)\,\lambda_n(e).
    \end{equation}
        for every $n$. So with the assumption of the lemma, infinitely of these sets have $\mu^i$-measure bigger than $\ep>0$ for some fixed $\ep>0$. 
	It is then immediate that 
	\[ \bigcap_m \bigcup_{n\le m} A_n(e) \]
	has $\mu^i$-measure at least $\ep$. This proves the lemma.
    \end{proof}
    
    \begin{lem}\label{containing generic implies generic frequency}
        Given $i\in\{1,\ldots,k\}$, there is a subset of full $\mu^i$-measure in $\closure\Lambda$ with the property that for every $\omega$ in this set
        \[ \lim_n \frac{\langle \gamma, \phi_n(\omega_n[0,1])\rangle}{\lambda_n(\omega_n[0,1])} =\mu^i(\gamma). \]
    \end{lem}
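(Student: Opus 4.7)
The plan is to combine the Birkhoff ergodic theorem, already invoked via equation \eqref{eq: generic frequency}, with the path-lifting description in Lemma \ref{pre-image of leaves}. First, taking a countable intersection over finite $\gamma\in\Omega(G_0)$ of the full-measure sets produced by \eqref{eq: generic frequency} in both directions, I obtain a shift-invariant $\mu^i$-conull set $A\subseteq \closure\Lambda$ on which the two one-sided averages
\[ \frac{\langle\gamma,\omega[0,m]\rangle}{m}, \qquad \frac{\langle\gamma,\omega[-m,0]\rangle}{m} \]
both converge to $\mu^i(\gamma)$ as $m\to\infty$, for every finite $\gamma\in\Omega(G_0)$. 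I will show that this $A$ is the desired full-measure set.

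Next, for $\omega\in A$, Lemma \ref{pre-image of leaves} allows me to write $\phi_n(\omega_n[0,1])=\omega[a_n,b_n]$ for integers $a_n\le 0<b_n$, so $\lambda_n(\omega_n[0,1])=b_n-a_n$; by Lemma \ref{length and frequency decay}, $b_n-a_n\to\infty$ as $n\to-\infty$. Splitting the path at the integer $0$ gives the decomposition
\[ \langle\gamma,\omega[a_n,b_n]\rangle = \langle\gamma,\omega[a_n,0]\rangle + \langle\gamma,\omega[0,b_n]\rangle + r_n, \]
where $|r_n|\le|\gamma|$ accounts for the occurrences of $\gamma$ that straddle $0$.

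To finish, for any $\ep>0$ I would choose $M$ such that both $|\langle\gamma,\omega[0,m]\rangle-m\mu^i(\gamma)|\le \ep m$ and $|\langle\gamma,\omega[-m,0]\rangle-m\mu^i(\gamma)|\le\ep m$ hold for all $m\ge M$. Combining this with the trivial bound $|\langle\gamma,\omega[-m,0]\rangle - m\mu^i(\gamma)|\le 2m\le 2M$ for $m<M$, each one-sided error is uniformly at most $\ep m + 2M$ for every $m\ge 0$. Dividing the displayed decomposition by $b_n-a_n$ then yields
\[ \left|\frac{\langle\gamma,\phi_n(\omega_n[0,1])\rangle}{\lambda_n(\omega_n[0,1])} - \mu^i(\gamma)\right| \le \ep + \frac{4M + |\gamma|}{b_n-a_n}, \]
which is at most $2\ep$ for all sufficiently negative $n$; since $\ep$ is arbitrary, the limit is $\mu^i(\gamma)$.

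The main subtlety is that the two parameters $-a_n$ and $b_n$ need not both tend to infinity: one side might stay bounded while the other grows without bound. Expressing the two one-sided errors uniformly as $\ep m + 2M$ for all $m\ge 0$, rather than trying to form a convex combination with weights $(-a_n)/(b_n-a_n)$ and $b_n/(b_n-a_n)$, is what cleanly handles this asymmetric case and avoids dividing by a length that may vanish.
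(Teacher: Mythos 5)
Your proof is correct and follows essentially the same route as the paper's: identify $\phi_n(\omega_n[0,1])$ with $\omega[a_n,b_n]$ via Lemma \ref{pre-image of leaves}, note $b_n-a_n\to\infty$ by Lemma \ref{length and frequency decay}, and deduce the limit from the two one-sided Birkhoff averages in \eqref{eq: generic frequency}. The only difference is that you spell out the splitting at $0$ and the case where one of $-a_n$, $b_n$ stays bounded, details the paper leaves implicit.
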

    \begin{proof}
        Given $\omega$ in $\closure\Lambda$ assume $e(\omega,n) = \omega_n[0,1]$ is the edge of $G_n$ that appears at the $[0,1]$ segment of $\omega_n$. Then by the definition of $\omega_n$, $\phi_n(e(\omega,n)) =\omega[a_n,b_n]$ with $a_n\le 0 < b_n$ and $b_n - a_n = \lambda_n(e(\omega,n))$ is the simplicial length of the image in $G_0$. By lemma \ref{length and frequency decay}, $b_n-a_n\to \infty$ as $n\to-\infty$, and by \eqref{eq: generic frequency} for $\mu^i$-almost every $\omega$
        \[ \lim_n\frac{\langle\gamma, \omega[a_n,b_n]\rangle}{b_n-a_n} = \mu^i(\gamma).\]
        This proves the lemma.
    \end{proof}
	
	As a consequence of the above two lemmas, if $e\in EH^i$ is given, i.e.,
	\[ \liminf_n \mu_n^i(e)\lambda_n(e) >0\] 
	and $i\neq j\in\{1,\ldots,k\}$ we must have $\lim_n\mu^j_n(e)\lambda_n(e) =0$; otherwise by lemma \ref{large frequency implies containing generic points} we will end up with a $\mu^i$-generic $\omega^i$ and a $\mu^j$-generic $\omega^j$ so that for infinitely many $n$, $\omega^i_n[0,1] = \omega^j_n[0,1] = e_n$. But then lemma \ref{containing a generic point implies generic} implies that along this subsequence
	\[ \frac{\langle \gamma, \phi_n(e_n) \rangle}{\lambda_n(e_n)}\]
	converges both to $\mu^i(\gamma)$ and $\mu^j(\gamma)$ which is impossible by the assumption that $\mu^i(\gamma)\neq \mu^j(\gamma)$. Once we know that $\lim_n\mu^j_n(e)\lambda_n(e)=0$ for every $j\neq i$, we can obviously pass to further subsequences and assume 
	\[ \sum_n \mu^j_n(e)\lambda_n(e) <\infty.\] 
This shows that edges of $H^i$ satisfy (1), (2), and (3) and moreover $H^i$ and $H^j$ share no edge for $i\neq j$. 
	
	By definition of $H^0$, for every edge $e$ in $H^0$ and every $i\in\{1,\ldots,k\}$, $\lim_n \mu_n^i(e) \lambda_n(e) =0$. After passing to a subsequence we can also assume 
	\[\sum_n\mu_n^i\lambda_n(e) <\infty\]
	for every $i\in\{1,\ldots,k\}$.
\end{proof}

It follows from the above theorem and \eqref{eq: size of image of a cylinder} in the proof that if $e\in EG$ is not in $H^i$ then
    \[ \sum_n \mu^i(A_n(e)) = \sum_n \mu_n^i(e)\lambda_n(e) < \infty.\]
By standard arguments from measure theory, this implies the set of $\omega$ that belong to infinitely many such sets has zero $\mu^i$-measure.
Equivalently the set of $\omega\in\closure\Lambda$ with $\omega_n[0,1] = e_n$ for infinitely many $n$ has zero $\mu^i$-measure. 

\begin{prop}\label{image of generic before pinching}
    With the hypothesis of theorem \ref{transverse decomposition} and after passing to a subsequence for which the conclusion holds, for $\mu^i$-almost every $\omega$ in $\closure\Lambda$, $\omega_n[0,1]\in EH^i_n$ for $n$ sufficiently large (depending on $\omega$). In particular $H^i$ is non-degenerate for every $i\in\{1,\ldots,k\}$.
\end{prop}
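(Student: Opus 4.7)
The plan is to deduce the proposition directly from a Borel--Cantelli style argument, exactly in the spirit of the observation made immediately before the statement. The ingredients are already in place: for each $e \in EG$ the formula \eqref{eq: size of image of a cylinder} identifies $\mu^i(A_n(e))$ with $\mu^i_n(e)\lambda_n(e)$, and the conclusions (2) (for $e \in EH^j$ with $j \neq i$) and the analogous bound for $e \in EH^0$ in Theorem \ref{transverse decomposition} give that after passing to the subsequence one has
\[
\sum_n \mu^i_n(e)\lambda_n(e) < \infty \quad \text{for every } e \in EG \smallsetminus EH^i.
\]
So $\sum_n \mu^i(A_n(e))$ converges, and the first Borel--Cantelli lemma yields $\mu^i(\limsup_n A_n(e)) = 0$ for every such $e$.

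Next I would observe that the event ``$\omega_n[0,1] \notin EH^i_n$ for infinitely many $n$'' decomposes as
\[
\bigcup_{e \in EG \smallsetminus EH^i} \limsup_n A_n(e),
\]
since by definition $\omega \in A_n(e)$ is exactly the statement $\omega_n[0,1] = e_n$, and $EG$ is finite. A finite union of $\mu^i$-null sets is $\mu^i$-null, so the complementary event, namely $\omega_n[0,1] \in EH^i_n$ for all $n$ sufficiently large (depending on $\omega$), has full $\mu^i$-measure. This is the first assertion.

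For the second assertion, simply note that $\mu^i$ is a probability current on $\closure\Lambda$, so the set of generic $\omega$ in the sense above is non-empty; pick such an $\omega$ and any large $n$, and $\omega_n[0,1]$ exhibits an edge of $H^i$, so $EH^i \neq \emptyset$ and $H^i$ is non-degenerate.

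There is no real obstacle here beyond bookkeeping: the nontrivial content has already been extracted in Theorem \ref{transverse decomposition} (the summability of $\mu^i_n(e)\lambda_n(e)$ off $EH^i$) and in \eqref{eq: size of image of a cylinder} (the identification of $\mu^i(A_n(e))$ with that quantity). The only mild point requiring care is that we rely on the subsequence already having been extracted to guarantee the \emph{summability}, not just the vanishing of $\lim_n \mu^i_n(e)\lambda_n(e)$; this is exactly the reason clause (2) was included in the statement of Theorem \ref{transverse decomposition} (and the analogous summability for edges of $H^0$ was noted at the end of its proof).
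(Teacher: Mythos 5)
Your argument is correct and is essentially the paper's own: the authors also derive the proposition from $\sum_n \mu^i(A_n(e)) = \sum_n \mu^i_n(e)\lambda_n(e) < \infty$ for each $e \notin EH^i$ (via \eqref{eq: size of image of a cylinder} and the summability clauses of Theorem \ref{transverse decomposition}), followed by Borel--Cantelli and a finite union over the edges outside $EH^i$. Your explicit note that the non-degeneracy of $H^i$ follows because a full-$\mu^i$-measure set is non-empty is a small but welcome piece of bookkeeping the paper leaves implicit.
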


Note that $H^i_n$ denotes the subgraph of $G_n$ identified with $H^i$ for every $n\le 0$ and $i\in\{0,1,\ldots,k\}$.


\subsection{Pinching along an unfolding sequence}\label{subsec: pinching}
As before assume $(G_n)_{n\le 0}$ is a reduced unfolding sequence with legal lamination $\closure\Lambda$ and ergodic probability currents $\mu^1,\ldots,\mu^k$. We also assume every $G_n$ is equipped with a metric $\lambda_n$ induced by the simplicial length vector $\vec\lambda_n$. We say the sequence $(G_n,\lambda_n)$ {\em converges in the moduli space of graphs} if there is a fixed graph $G$ with given isomorphisms to $G_n$ for every $n$ and so that for every $e\in EG$
\[ \lim_n \frac{\lambda_n(e)}{\lambda_n(G)} \]
exists, where $\lambda_n(G)$ denotes the total $\lambda_n$-length of $G_n$. We say $G$ is the {\em limit graph} and the {\em length limit} is the above limit for edges of $G$. We define the {\em pinched part} to be the subgraph $E\subset G$ spanned by edges whose length limit is zero.

Given a reduced unfolding sequence $(G_n)_n$, assume we have passed to a subsequence that converges in the moduli space of graphs with limit graph $G$ and pinched part $E$. Moreover and after passing to a further subsequence, we assume the conclusion of theorem \ref{transverse decomposition} holds and we have the transverse decomposition $H^0,H^1,\ldots,H^k$ of $G$ associated to ergodic probability currents $\mu^1,\ldots,\mu^k$. Collapsing the pinched part $E$ of $G$ induces a transverse decomposition $H^0/E, H^1/E, \ldots, H^k/E$ of $G/E$. Note however that it is possible for $H^i/E$ to be degenerate.

Recall that a leaf $\omega$ of $\closure\Lambda$ provides legal labeled bi-infinite paths $\omega_n\in \closure\Omega_\infty(G_n)$ for every $n$, whose $\phi_n$-image is a relabeling of $\omega$ and the $\phi_n$-image of $\omega_n[0,1]$ contains $\omega[0,1]$. Via the isomorphism between $G_n$ and $G$, the bi-infinite labeled path $\omega_n$ identifies with a bi-infinite labeled path in $G$ which is still called $\omega_n$. We say a path $\alpha$ in $G$ is an {\em accumulation of $\omega$} if there are integers $a<b$ and a subsequence of $(\omega_n[a,b])_n$ that converges to $\alpha$. We use $\alpha_n$ to denote the image of $\gamma$ in $G_n$; in particular $\alpha_n = \omega_n[a,b]$ for infinitely many $n$.

\begin{thm}\label{frequency in generic leaves}
    Given $i\in\{1,\ldots,k\}$, for $\mu^i$-almost every $\omega\in\closure\Lambda$, if a path $\alpha$ in $G$ is an accumulation of $\omega$,
    \[ \frac{\lambda_n(\alpha_n \setminus H_n^i)}{\lambda_n(G)} \to 0\]
    as $n\to -\infty$, where $\alpha_n\setminus H_n^i$ is the part of $\alpha_n$ that falls outside of $H_n^i$.
\end{thm}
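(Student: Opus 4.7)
The plan is to strengthen Proposition \ref{image of generic before pinching} from the single edge $\omega_n[0,1]$ to the edge $\omega_n[j,j+1]$ for every fixed $j\in\BZ$, and then reduce the theorem to a length estimate on $\alpha$. I would split the edges of $G$ appearing in $\alpha$ into three classes: edges in $EH^i$, pinched edges (those in $E$), and non-pinched edges outside $EH^i$. Edges in $EH^i$ contribute nothing to $\lambda_n(\alpha_n\setminus H^i_n)$, and the finitely many pinched edges of $\alpha$ each satisfy $\lambda_n(e)/\lambda_n(G)\to 0$, so together they contribute $o(\lambda_n(G))$. Thus it suffices to show that, for $\mu^i$-almost every $\omega$, no non-pinched edge outside $EH^i$ occurs in any accumulation of $\omega$; equivalently, that for each such edge $e$ and each $j\in\BZ$ the set $B_n^j(e):=\{\omega\in\closure\Lambda:\omega_n[j,j+1]=e_n\}$ contains $\omega$ for only finitely many $n$.

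The key estimate is
\[ \mu^i\bigl(B_n^j(e)\bigr)\,\le\,\Bigl(\max_{e'\in EG_n}\lambda_n(e')\Bigr)\,\mu^i_n(e), \]
a generalization of the identity $\mu^i(A_n(e))=\mu^i_n(e)\,\lambda_n(e)$ from equation \eqref{eq: size of image of a cylinder} that was used in the proof of Proposition \ref{image of generic before pinching}. To derive it I would decompose $B_n^j(e)$ according to the values of $\omega_n[0,1]=e'$ and of the intermediate edges $\omega_n[1,2]\cdots\omega_n[j-1,j]=e_1\cdots e_{j-1}$ (with the symmetric decomposition for $j<0$), giving a disjoint union whose pieces are exactly the $\omega$-preimages of the $G_n$-cylinders ${\rm Cyl}(e'e_1\cdots e_{j-1}e)$. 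As in the derivation of \eqref{eq: size of image of a cylinder}, each such piece decomposes in $\closure\Omega_\infty(G_0)$ as a union of $\lambda_n(e')$ disjoint $G_0$-shifts of ${\rm Cyl}(\phi_n(e'e_1\cdots e))$, since the position at which $\phi_n(e')$ appears in $\omega$ ranges over $\lambda_n(e')$ integers. By $G_0$-shift-invariance each piece has $\mu^i$-mass $\lambda_n(e')\,\mu^i_n(e'e_1\cdots e)$; summing over the intermediate edges via Kolmogorov consistency and then over $e'$ using shift-invariance of $\mu^i_n$ on $\closure\Omega_\infty(G_n)$ collapses the total to the stated bound.

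For a non-pinched edge $e\notin EH^i$ the ratio $\max_{e'}\lambda_n(e')/\lambda_n(e)$ is bounded by a constant $C_e$, so the key estimate yields $\mu^i(B_n^j(e))\le C_e\,\mu^i(A_n(e))$, which by Theorem \ref{transverse decomposition}(2) is summable in $n$ after passing to a further subsequence. Borel--Cantelli, together with a countable intersection over pairs $(j,e)$ with $e$ non-pinched and $e\notin EH^i$, then produces a full-$\mu^i$-measure set of $\omega$ for which $\omega_n[j,j+1]\ne e_n$ eventually for every such $(j,e)$. If $\alpha=\lim\omega_{n_k}[a,b]$ is an accumulation of such an $\omega$, every edge of $\alpha$ must therefore be pinched or lie in $EH^i$, and the reduction in the first paragraph concludes. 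The main obstacle is the density computation underlying the key estimate: $B_n^j(e)$ is not a $G_0$-shift of $A_n(e)$, so one has to combine the shift-invariance of $\mu^i_n$ on $\closure\Omega_\infty(G_n)$ with a Kolmogorov sum over the intermediate edges to collapse the decomposition to the tractable bound above.
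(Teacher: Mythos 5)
Your proof is correct, but it takes a genuinely different route from the paper's. The paper never isolates individual positions $j$ of the lift $\omega_n$: it fixes a window $\omega[-c\lambda_n(G),c\lambda_n(G)]$ large enough to contain $\phi_n(\alpha_n)$, expresses the density $f_n^c(\omega,e)$ of $\phi_n(e_n)$-tiles in that window as a Birkhoff average of $\chi_{A_n(e)}$, and integrates; shift-invariance gives the exact identity $\int f_n^c\,d\mu^i=\mu_n^i(e)\lambda_n(e)$, so the summability $\sum_n\mu_n^i(e)\lambda_n(e)<\infty$ for $e\notin EH^i$ together with the Monotone Convergence Theorem yields $f_n^c(\omega,e)\to0$ almost everywhere for \emph{every} $e\notin EH^i$, pinched or not, and the theorem follows in one stroke. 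You instead prove directly the sharper statement that is the paper's Corollary \ref{projection of generic for unfolding} (almost surely every accumulation projects into $H^i/E$) via a position-by-position Borel--Cantelli argument on the sets $B_n^j(e)$, and then deduce the theorem. Your key estimate $\mu^i(B_n^j(e))\le(\max_{e'}\lambda_n(e'))\,\mu_n^i(e)$ is correct and its derivation is sound (it is the natural generalization of \eqref{eq: size of image of a cylinder}); the price of this route is the extra factor $\max_{e'}\lambda_n(e')/\lambda_n(e)$, which is bounded only for non-pinched $e$, forcing your three-way split and the separate, trivial treatment of pinched edges --- the paper's exact identity avoids this case division. Two small points. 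First, no further subsequence is needed (nor should one be taken) for summability: the standing hypotheses already include the conclusion of Theorem \ref{transverse decomposition}, whose item (2) and final displayed inequality give $\sum_n\mu_n^i(e)\lambda_n(e)<\infty$ for every $e\notin EH^i$ along the sequence in force; passing to a further subsequence at that stage would only control accumulations along the thinner subsequence, which would not suffice. Second, your ``pieces'' are really the images $\phi_n({\rm Cyl}_{G_n}(\gamma)\cap\closure\Omega^L_\infty(G_n))$ and their shifts, whose $\mu^i$-mass is $\mu_n^i(\gamma)$ by definition of the transported measure, rather than full $G_0$-cylinders ${\rm Cyl}(\phi_n(\gamma))$; this containment only strengthens your upper bound, so nothing breaks.
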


\begin{proof}
    Given a positive integer $c$ and for an edge $e\in EG$ and leaf $\omega\in\closure\Lambda$ for every $n$, we consider sub-path of $\omega$ given by the interval $\omega[-c\lambda_n(G),c\lambda_n(G)]$ with $\lambda_n(G)$ the total length of $G_n$. Obviously $2c\lambda_n(G)$, the simplicial length of these paths, tend to infinity with $n\to -\infty$. If $\alpha$ is an accumulation of $\omega$, then it is easy to see that we can choose $c$ large enough so that $\phi_n(\alpha_n)$ is a sub-path of $\omega[-c\lambda_n(G),c\lambda_n(G)]$. Motivated by this observation define
    \[ f_n^c(\omega,e) = \frac1{2c\lambda_n(G)} \left| \omega[ -c\lambda_n(G) , c\lambda_n(G)] \cap \phi_n(e_n) \right|, \]
    the ratio of the simplicial length of part of $\omega[-c\lambda_n(G),c\lambda_n(G)]$ which is in $\phi_n(e_n)$. 
    
    Using the comments in the beginning of the proof, it will be enough to show that for $\mu^i$-almost every $\omega$ and an edge of $G$ outside of $H^i$, $f_n^c(\omega,e)\to 0$ as $n\to-\infty$. 
       As before assume $A_n(e)$ denotes the set of $\omega\in\closure\Lambda$ with the property that $\omega_n[0,1]=e_n$ and $\chi_{A_n(e)}$ is the characteristic function of $A_n(e)$ then
    \[ f_n^c(\omega,c) = \frac1{2c\lambda_n(G)} \sum_{j=-c\lambda_n(G)}^{c\lambda_n(G)-1} \chi_{A_n(e)}(S^j\omega).\]
    (Recall that $S$ is the shift map on $\closure\Lambda$.) Note that by \eqref{eq: size of image of a cylinder} and for every $i\in\{1,\ldots,k\}$
    \[  \mu^i(A_n(e)) = \int_{\closure\Lambda}\chi_{A_n(e)}(\omega)\, d\mu^i(\omega) = \mu^i_n(e)\lambda_n(e).\]
    We use this to integrate the function $f_n^c(\omega,e)$:
    \begin{equation}\label{eq: integral of f unfolding}
    \begin{aligned}
        \int_{\closure\Lambda} f_n^c(\omega,e)\, d\mu^i(\omega) & = \frac1{2c\lambda_n(G)} \int_{\closure\Lambda} \left(\sum_{j=-c\lambda_n(G)}^{c\lambda_n(G)-1} \chi_{A_n(e)}(S^j\omega)\right) \, d\mu^i(\omega) \\
        & = \frac1{2c\lambda_n(G)} \sum_{j=-c\lambda_n(G)}^{c\lambda_n(G)-1} \int_{\closure\Lambda} \chi_{A_n(e)}(S^j\omega)\, d\mu^i(\omega) \\
        & = \frac1{2c\lambda_n(G)} \sum_{j=-c\lambda_n(G)}^{c\lambda_n(G)-1} \int_{\closure\Lambda} \chi_{A_n(e)}(\omega)\, d\mu^i(\omega) \\
        & = \mu_n^i(e)\lambda_n(e),
    \end{aligned}
    \end{equation}
    where we have used the shift invariance of $\mu^i$. When $e$ is not in $H^i$, $\sum_n \mu_n^i(e)\lambda_n(e) <\infty$. Therefore by the Monotone Convergence Theorem
    \begin{align*}
    \int_{\closure\Lambda} \left( \sum_n f_n^c(\omega,e) \right) \, d\mu^i(\omega) 
        & = \sum_n \int_{\closure\Lambda} f_n^c(\omega,e)\, d\mu^i(\omega) = \sum _n \mu_n^i(e)\lambda_n(e) < \infty.
    \end{align*}
    This implies for $\mu^i$-almost every $\omega$
    \[ \sum_n f_n^c(\omega,e) \to 0\]
    as $n\to-\infty$ and in particular $f_n^c(\omega,e)\to 0$.
\end{proof}

\begin{cor}\label{projection of generic for unfolding}
    Given $i\in\{1,\ldots,k\}$, for $\mu^i$-almost every leave $\omega$ of $\closure\Lambda$, if $\alpha$ is an accumulation of $\omega$ in $G$, then $\alpha$ projects to $H^i/E$.
\end{cor}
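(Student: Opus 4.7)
The plan is to derive the corollary from Theorem \ref{frequency in generic leaves} by contraposition: if some edge of $\alpha$ survives in $G/E$ (i.e.\ lies outside the pinched part $E$) and is not in $H^i$, then the length estimate in Theorem \ref{frequency in generic leaves} must fail.

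More precisely, I would fix the $\mu^i$-full measure set $X_i \subseteq \closure\Lambda$ on which the conclusion of Theorem \ref{frequency in generic leaves} holds, let $\omega \in X_i$, and let $\alpha$ be an accumulation of $\omega$ in $G$. Suppose, for contradiction, that $\alpha$ contains an edge $e \in EG$ with $e \notin EH^i$ and $e \notin EE$. Since the isomorphisms $G \cong G_n$ are fixed, the edge $e_n \in EG_n$ is a subsegment of $\alpha_n$ for every $n$, and hence
\[
\lambda_n(\alpha_n \setminus H_n^i) \;\ge\; \lambda_n(e_n).
\]
Because $e \notin EE$, by the definition of the pinched part the ratio $\lambda_n(e_n)/\lambda_n(G)$ converges to a strictly positive limit. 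Dividing the displayed inequality by $\lambda_n(G)$ and passing to the limit, we obtain
\[
\liminf_{n \to -\infty} \frac{\lambda_n(\alpha_n \setminus H_n^i)}{\lambda_n(G)} \;>\; 0,
\]
which contradicts Theorem \ref{frequency in generic leaves}.

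Thus every edge of $\alpha$ lies in $EH^i \cup EE$, so upon collapsing $E$ the image of $\alpha$ is contained in $H^i/E$, proving the corollary. The only subtlety (and the one to double check) is that $\alpha_n$ is defined for all $n$ via the fixed identification $G \cong G_n$, not just along the subsequence realizing the accumulation; once this is clear, the argument is a one-line application of the theorem, and no further ergodic input is required beyond what is already built into Theorem \ref{frequency in generic leaves}.
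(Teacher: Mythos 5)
Your proposal is correct and is exactly the argument the paper intends: the corollary is stated without proof as an immediate consequence of Theorem \ref{frequency in generic leaves}, and your contrapositive reading (an edge of $\alpha$ outside both $H^i$ and $E$ would force $\liminf_n \lambda_n(\alpha_n\setminus H^i_n)/\lambda_n(G)>0$, since convergence in the moduli space of graphs gives a positive length limit for any non-pinched edge) is the one-line deduction being left to the reader. The subtlety you flag about $\alpha_n$ being defined for all $n$ via the fixed identification is handled by the paper's own convention in the paragraph preceding Theorem \ref{frequency in generic leaves}, so there is nothing further to check.
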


In the proof of theorem \ref{frequency in generic leaves}, we showed that if $e\in EG\setminus EH^i$ then for $\mu^i$-almost every $\omega$, $f_n^c(\omega,e)\to 0$ with $n$. For edges in $H^i$, we easily see the following alternative.

\begin{lem}\label{positive frequency for generic edges}
    Given $e\in EH^i$, $i\in\{1,\ldots,k\}$, and $c>0$, there exists $\ep>0$ and a set of positive $\mu^i$-measure in $\closure\Lambda$ so that for every leaf $\omega$ in this set $f_n^c(\omega,e)>\ep$ for infinitely many $n$.
\end{lem}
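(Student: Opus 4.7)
The plan is to combine the integral identity
\[ \int_{\closure\Lambda} f_n^c(\omega,e)\, d\mu^i(\omega) = \mu^i_n(e)\lambda_n(e) \]
from equation \eqref{eq: integral of f unfolding} in the proof of Theorem \ref{frequency in generic leaves} with conclusion (1) of Theorem \ref{transverse decomposition}, via a Markov-type inequality followed by a continuity-of-measure argument. Since $e \in EH^i$, Theorem \ref{transverse decomposition}(1) supplies a constant $\delta_0 > 0$ with $\mu^i_n(e)\lambda_n(e) \geq \delta_0$ for all sufficiently negative $n$, so the integral of $f_n^c(\cdot,e)$ against $\mu^i$ is bounded below by $\delta_0$ along such $n$.

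First I would record that $f_n^c(\omega,e)$ is uniformly bounded: the expression
\[ f_n^c(\omega,e) = \frac{1}{2c\lambda_n(G)} \sum_{j=-c\lambda_n(G)}^{c\lambda_n(G)-1} \chi_{A_n(e)}(S^j\omega) \]
displays $f_n^c(\omega,e)$ as an average of $2c\lambda_n(G)$ indicator functions, whence $0 \leq f_n^c(\omega,e) \leq 1$.

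Next, set $\ep = \delta_0/4$ and define $B_n = \{\omega \in \closure\Lambda : f_n^c(\omega,e) > \ep\}$. If $\mu^i(B_n) < \ep$, then splitting the integral gives
\[ \int f_n^c\, d\mu^i \leq \mu^i(B_n) \cdot 1 + \ep \cdot \mu^i(\closure\Lambda \setminus B_n) < 2\ep = \delta_0/2, \]
which contradicts the lower bound $\int f_n^c\, d\mu^i \geq \delta_0$. Hence $\mu^i(B_n) \geq \ep$ for all sufficiently negative $n$.

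Finally, I would pass from the individual lower bounds on $\mu^i(B_n)$ to a positive-measure statement about $\limsup B_n$. Since the sets $\bigcup_{n \leq m} B_n$ are decreasing as $m \to -\infty$ inside the finite measure space $(\closure\Lambda,\mu^i)$, continuity of $\mu^i$ from above yields
\[ \mu^i\!\left(\bigcap_m \bigcup_{n \leq m} B_n\right) = \lim_{m \to -\infty} \mu^i\!\left(\bigcup_{n \leq m} B_n\right) \geq \limsup_{n \to -\infty} \mu^i(B_n) \geq \ep. \]
The set on the left is exactly $\{\omega \in \closure\Lambda : f_n^c(\omega,e) > \ep \text{ for infinitely many } n\}$, which proves the lemma with the stated $\ep$. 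No serious obstacle arises; the argument is the converse companion to the Monotone Convergence step in the proof of Theorem \ref{frequency in generic leaves}, where summability of $\mu_n^i(e)\lambda_n(e)$ was used to force $f_n^c(\omega,e) \to 0$ almost surely, whereas here the positive $\liminf$ forces $f_n^c(\omega,e)$ to stay large on a positive-measure set infinitely often.
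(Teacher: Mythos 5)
Your proposal is correct and follows essentially the same route as the paper's proof: both start from the integral identity $\int f_n^c(\cdot,e)\,d\mu^i = \mu^i_n(e)\lambda_n(e)$ together with the lower bound $\liminf_n \mu^i_n(e)\lambda_n(e)>0$ from Theorem \ref{transverse decomposition}(1), deduce a uniform lower bound on $\mu^i\{f_n^c>\ep\}$, and conclude via the $\limsup$-set having positive measure. You merely make explicit the Markov-inequality step (using $0\le f_n^c\le 1$) and the continuity-from-above step that the paper leaves implicit.
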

\begin{proof}
    By \eqref{eq: integral of f unfolding}, $\int f_n^c(\omega,e)\, d\mu^i = \mu_n^i(e)\lambda_n(e)$ for every edge $e\in EG$. 
    When $e\in EH^i$, $\liminf_n \mu_n^i(e)\lambda_n(e)>0$ and we can find $\ep>0$, so that the $\mu^i$-measure of the set of $\omega$ with $f_n^c(\omega,e)>\ep$ is uniformly bounded from below by a positive constant. Hence there is a set of positive $\mu^i$-measure whose elements belong to infinitely many of these sets and this proves the claim.
\end{proof}


We use this to show one can find a path $\alpha$ in $G$ which is an accumulation of a generic $\omega$ and traverses $e$ as many times as required. This will immediately imply that a vertex of a non-degenerate component of $H^i/E$ has at least valence $2$.

\begin{cor}\label{generic leaves traverse a generic edge many times for unfolding}
    Given an edge $e$ of $H^i, i\in\{1,\ldots,k\}$ and $l>0$, there is a set of positive $\mu^i$-measure in $\closure\Lambda$, so that for every $\omega$ in this set, there is a an accumulation $\alpha$ of $\omega$ in $G$ which traverses $e$ at least $l$ times.
\end{cor}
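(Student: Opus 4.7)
The idea is to realize the desired accumulation $\alpha$ as $\omega_n[-T,T]$ for a single positive integer $T$ (depending only on $l$ and $e$) and for infinitely many $n$ in a suitable subsequence. The key is that $T$ can be chosen independently of $n$; this will follow from the defining property of $H^i$, which supplies a uniform $\liminf$ lower bound on $\mu^i(A_n(e))$.

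By Theorem \ref{transverse decomposition}(1) together with \eqref{eq: size of image of a cylinder}, there exist $\delta_0 > 0$ and $n_0 \leq 0$ such that
\[
\mu^i(A_n(e)) \;=\; \mu^i_n(e)\lambda_n(e) \;\geq\; \delta_0 \qquad \text{for all } n\leq n_0.
\]
For each positive integer $T$, define the $\mu^i$-measurable function
\[
X_n(\omega) \;=\; \sum_{j=-T}^{T-1} \chi_{A_n(e)}(S^j\omega),
\]
which counts the number of indices $j\in[-T,T-1]$ with $\omega_n[j,j+1]=e_n$. By shift-invariance of $\mu^i$, $\int X_n\,d\mu^i = 2T\,\mu^i(A_n(e)) \geq 2T\delta_0$. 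Since $0\leq X_n\leq 2T$, a reverse Markov estimate gives
\[
2T\delta_0 \;\leq\; \int X_n\,d\mu^i \;\leq\; l + 2T\,\mu^i(\{X_n\geq l\}),
\]
hence $\mu^i(\{X_n\geq l\})\geq \delta_0 - l/(2T)$. Fix any integer $T\geq l/\delta_0$: then for every $n\leq n_0$,
\[
\mu^i(\{X_n\geq l\}) \;\geq\; \tfrac{\delta_0}{2} \;>\; 0.
\]

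Set $S := \bigcap_{m\geq 1}\bigcup_{n\leq -m}\{X_n\geq l\}$. Because $\mu^i$ is a probability measure and the unions decrease in $m$, continuity from above yields $\mu^i(S)\geq \delta_0/2>0$. For any $\omega\in S\cap\closure\Lambda$, infinitely many $n$ satisfy $X_n(\omega)\geq l$, meaning that $\omega_n[-T,T]$ traverses $e_n$ at least $l$ times. Because the collection of labeled simplicial paths in $G$ of length $2T$ is finite, pigeonhole extracts a further subsequence along which $\omega_n[-T,T]$, viewed in $G$ via the fixed isomorphisms $G_n\cong G$, is constantly equal to a single labeled path $\alpha$. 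By definition $\alpha$ is an accumulation of $\omega$ (with $a=-T$, $b=T$), and by construction it traverses $e$ at least $l$ times.

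The one point requiring care is fixing $T$ before letting $n\to-\infty$: the earlier Lemma \ref{positive frequency for generic edges} naturally produces a window of size comparable to $\lambda_n(G)/\lambda_n(e)$ \emph{in the leaf} $\omega$, which in the presence of pinching translates into an $n$-dependent and possibly unbounded window in $\omega_n$. Replacing the $\omega$-window by a fixed simplicial window in $\omega_n$, and combining shift-invariance with a reverse Markov inequality, sidesteps this difficulty in a single step.
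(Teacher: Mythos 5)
There is a genuine gap, and it sits exactly at the point you flag as the subtlety. The function $\chi_{A_n(e)}(S^j\omega)$ does \emph{not} detect whether the $j$-th $G_n$-edge of $\omega_n$ equals $e_n$: the shift $S$ moves the $G_0$-parameterization of $\omega$ by one $G_0$-edge, and $(S^j\omega)_n$ is \emph{not} $S^j(\omega_n)$. By Lemma \ref{pre-image of leaves}, $\chi_{A_n(e)}(S^j\omega)=1$ exactly when position $j$ of $\omega$ lies inside the $\phi_n$-image of an occurrence of $e_n$ in $\omega_n$, and each single traversal of $e_n$ contributes a run of $\lambda_n(e)$ consecutive such indices $j$ (this is precisely why $\mu^i(A_n(e))=\mu^i_n(e)\lambda_n(e)$ in \eqref{eq: size of image of a cylinder} carries the factor $\lambda_n(e)$). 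Hence your $X_n(\omega)$ is the simplicial length of $\omega[-T,T]\cap\phi_n(e_n)$, i.e.\ $2T\,f_n^{T/\lambda_n(G)}(\omega,e)$, not a count of traversals. Since $\lambda_n(e)\to\infty$ as $n\to-\infty$ by Lemma \ref{length and frequency decay}, for $|n|$ large a \emph{single} traversal of $e_n$ already yields $X_n(\omega)$ as large as $2T$, so $X_n(\omega)\ge l$ gives only about $X_n(\omega)/\min(\lambda_n(e),2T)$ traversals, which is eventually $1$ or $2$ no matter how $T$ is chosen. Worse, the fixed $G_0$-window $[-T,T]$ eventually lies over at most two or three edges of $\omega_n$, so it \emph{cannot} contain $l$ traversals of $e_n$ for large $|n|$ once $l\ge 3$. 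Your closing remark identifies the right tension but resolves it backwards: shrinking to a fixed $G_0$-window makes the problem worse, not better.

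The paper's route is the opposite one, and it is forced: it applies Lemma \ref{positive frequency for generic edges} with the window $\omega[-c\lambda_n(G),c\lambda_n(G)]$, whose $G_0$-length grows with $|n|$. There $f_n^c(\omega,e)>\ep$ on a set of positive measure for infinitely many $n$, with $\ep$ independent of $c$ (since $\int f_n^c\,d\mu^i=\mu^i_n(e)\lambda_n(e)$ does not involve $c$), so the number of full traversals of $e_n$ detected is at least $2c\ep\lambda_n(G)/\lambda_n(e)-2\ge 2c\ep-2$ because $\lambda_n(e)\le\lambda_n(G)$; taking $c$ large in terms of $l$ and $\ep$ gives the $l$ traversals, which then sit inside a window $\omega_n[a,b]$ from which an accumulation $\alpha$ is extracted by pigeonhole as you do. Your measure-theoretic skeleton (reverse Markov plus reverse Fatou plus pigeonhole) is sound and is essentially the same as the paper's, but it must be run on a quantity that counts traversals of $e_n$ rather than $G_0$-length over $e_n$, and that requires the $n$-dependent window you tried to avoid.
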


\begin{cor}\label{valence at least 2 after pinching unfolding sequence}
    Every vertex of a non-degenerate component of $H^i/E, i\in\{1,\ldots,k\}$ has valence at least $2$ (in $H^i/E$); also a component of $H^i$ cannot be contained in a component of $E$ which is a tree.
\end{cor}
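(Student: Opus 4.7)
Both assertions will be deduced by applying Corollaries \ref{projection of generic for unfolding} and \ref{generic leaves traverse a generic edge many times for unfolding} to a $\mu^i$-generic leaf and analyzing the structure of the resulting accumulation $\alpha$ near a prescribed edge of $H^i$; the basic tool is that reduced paths inside a subtree of $E$ visit each edge at most once. Throughout, by intersecting the positive-measure set of Corollary \ref{generic leaves traverse a generic edge many times for unfolding} with the full-measure set of Corollary \ref{projection of generic for unfolding} I will always have leaves $\omega$ and accumulations $\alpha$ that simultaneously traverse a given edge at least $l$ times and project into $H^i/E$.

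For the valence statement, fix an edge $\bar e$ of a non-degenerate component of $H^i/E$, choose a lift $e \in EH^i \smallsetminus EE$, let $v$ be an endpoint of $e$, and let $E_1$ denote the component of $E$ containing $v$ (with the convention $E_1=\{v\}$ if $v\notin E$). If both endpoints of $e$ lie in $E_1$ then $\bar e$ is a loop at $\bar v$ in $G/E$, contributing valence two, and there is nothing to prove. Otherwise the other endpoint $u$ of $e$ satisfies $u\notin E_1$; I pick $\alpha$ traversing $e$ at least three times with $\bar\alpha\subseteq H^i/E$, and consider the reduced subpath $\beta$ of $\alpha$ from $v$ to $u$ lying between two consecutive traversals. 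The first edge $f_1$ of $\beta$ at $v$ is distinct from the reverse of $e$; if $f_1\notin E$, then Corollary \ref{projection of generic for unfolding} forces $f_1\in H^i$, and since $f_1$ is distinct from both orientations of $e$ its image in $G/E$ is a second edge of $H^i/E$ incident to $\bar v$. Otherwise $f_1\in E_1$, so $\alpha$ enters $E_1$, and since $u\notin E_1$ it must exit via a first edge $f_j\notin E$; again $f_j\in H^i$ by Corollary \ref{projection of generic for unfolding}, and the alternative that $f_j$ projects to $\bar e$ is excluded by a short case analysis (it would either force the starting vertex of $e$ to lie in $E_1$ or produce an immediate backtrack in the reduced path $\alpha$).

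For the second assertion, suppose for contradiction that a component $C$ of $H^i$ is contained in a tree component $T$ of $E$, fix $e\in EC$, and for each $l$ choose an accumulation $\alpha$ of a $\mu^i$-generic leaf that traverses $e$ at least $l$ times with $\bar\alpha\subseteq H^i/E$. Since $T$ is a tree and $\alpha$ is reduced, each maximal subpath of $\alpha$ lying in $T$ visits each edge of $T$ at most once, forcing $\alpha$ to have at least $l$ distinct maximal excursions into $T$. Between any two consecutive such excursions $\alpha$ traverses at least one edge of $G\smallsetminus T$, which lies outside $E$ because $T$ is a component of $E$, and therefore in $H^i$ by Corollary \ref{projection of generic for unfolding}. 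Applying the pigeonhole principle to the finite set of edges of $H^i\smallsetminus E$ with an endpoint in $T$, some single edge $f$ must appear in $\alpha$ at least of order $l$ times. The main obstacle is turning this growing count into a genuine contradiction; the plan is to combine Theorem \ref{frequency in generic leaves} (to control the $\lambda_n$-length of $\alpha_n$ outside $H^i_n$) with the ergodic frequency computation underlying Lemma \ref{positive frequency for generic edges}, showing that the number of traversals of $f$ inside the bounded-combinatorial-length subpaths $\omega_n[a,b]$ approximating $\alpha$ is tightly controlled by $\mu^i(f)$ and the length of $\alpha$, and consequently cannot be forced to grow unboundedly with $l$.
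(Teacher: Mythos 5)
Your route is the one the paper intends---the corollary is stated there with no proof, as an immediate consequence of Corollaries \ref{projection of generic for unfolding} and \ref{generic leaves traverse a generic edge many times for unfolding}---but both halves of your argument have gaps, and the one in the second half is essential.

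In the valence argument, the case you defer to ``a short case analysis'' is precisely the case that does not go away. Suppose $\beta$ enters $E_1$ at $v$ and the first edge $f_j\notin E$ that it later crosses is the reverse $\bar e$ of $e$, crossed from $v$ toward $u$. Neither of your two exclusions applies: this does not force the starting vertex of $e$ into $E_1$, and there is no immediate backtrack because the edge preceding $f_j$ is an edge of $E_1$, not $e$. In that situation $f_j$ projects to $\bar e$ and you have exhibited no second edge of $H^i/E$ at $\bar v$. The case can be repaired---if $f_j=\bar e$ is the last edge of $\beta$, then $\beta$ concatenated with the next traversal of $e$ backtracks, which is impossible; otherwise the portion of $\beta$ preceding $f_j$ is a nontrivial reduced loop at $v$ which, if it ever leaves $E_1$, must do so through a non-$E$ edge distinct from $e$, giving the desired second edge---but this has to be said, and the residual possibility of such a reduced loop staying entirely inside $E_1$ still needs to be addressed.

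For the second claim, your skeleton (reduced paths in a tree are embedded, so each traversal of $e$ forces a separate excursion out of $T$ through an edge outside $E$, which Corollary \ref{projection of generic for unfolding} places in $H^i$) is correct, but the closing step you propose cannot produce a contradiction. The accumulation $\alpha$ supplied by Corollary \ref{generic leaves traverse a generic edge many times for unfolding} has combinatorial length $b-a\ge l$, so an estimate of the form ``the number of traversals of $f$ is controlled by $\mu^i(f)$ times the length of $\alpha$'' is compatible with both quantities growing linearly in $l$; nothing is contradicted. The contradiction is metric and comes directly from Lemma \ref{positive frequency for generic edges}, not from its corollary. For $e\in EC\subseteq EE$ that lemma gives $\ep>0$ and a positive-measure set of leaves with $f_n^c(\omega,e)>\ep$ for infinitely many $n$; since each copy of $\phi_n(e_n)$ contributes at most $\lambda_n(e)$ to the numerator of $f_n^c$, the subpath of $\omega_n$ carrying the window $\omega[-c\lambda_n(G),c\lambda_n(G)]$ crosses $e_n$ at least $2c\ep\,\lambda_n(G)/\lambda_n(e)$ times, and this tends to infinity because $e$ is pinched. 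Between consecutive crossings the reduced path $\omega_n$ must leave $T$ across an edge not in $E$, whose $\lambda_n$-length is at least $\delta\lambda_n(G)$ for a uniform $\delta>0$; hence the window has $\lambda_n$-length at least $\bigl(2c\ep\,\lambda_n(G)/\lambda_n(e)-1\bigr)\delta\lambda_n(G)$, while by construction it is at most $(2c+2)\lambda_n(G)$. The point you are missing is that this window has normalized length bounded by a constant independent of the number of crossings of the pinched edge, whereas your $\alpha$ does not.
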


\subsection{Recurrence for an unfolding sequence}\label{subsec: recurrence for unfolding}
An unfolding sequence $(G_n)_{n\le0}$ is {\em recurrent} if (equipped with the simplicial lengths $(\lambda_n)_n$) it has a subsequence that converges in the moduli space of graphs and the pinched part for this subsequence is a forest. Equivalently $(G_n)_n$ is recurrent if the projections of $(G_n,\lambda_n)_n$ in $CV_N$ contain a subsequence that stays in a cocompact subset of $CV_N$. 
When $(G_n)_n$ is recurrent, we cannot conclude unique ergodicity but we can bound the dimension of the space of currents supported on the legal lamination.

\begin{thm}\label{recurrent implies bounded number of measures}
	Assume $(G_n)_n$ is a recurrent unfolding sequence and $\closure\Lambda$ is the legal lamination. The dimension of the space of currents supported on $\closure\Lambda$ is at most $N$, i.e., there are at most $N$ mutually singular ergodic currents supported on $\closure\Lambda$.
\end{thm}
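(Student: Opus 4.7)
The plan is to use the recurrence hypothesis to pass to a subsequence whose limit graph $G$ has a forest as pinched part, then apply Theorem \ref{transverse decomposition} together with Corollary \ref{valence at least 2 after pinching unfolding sequence}, and finally count first Betti numbers.

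By the definition of recurrence, after passing to a subsequence $(G_n,\lambda_n)$ converges in the moduli space of graphs to a limit graph $G$ whose pinched part $E\subseteq G$ is a forest. Collapsing each tree component of $E$ to a point is a homotopy equivalence, so $b_1(G/E)=b_1(G)=N$. Now suppose $\mu^1,\ldots,\mu^k$ are mutually singular ergodic probability currents supported on $\closure\Lambda$. Passing to a further subsequence, Theorem \ref{transverse decomposition} produces a transverse decomposition $H^0,H^1,\ldots,H^k$ of $G$ adapted to the currents $\mu^i$. Since the edge sets of the $H^i$ are pairwise disjoint in $G$, the edge sets of the images $H^i/E\subseteq G/E$ remain pairwise disjoint.

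The key geometric input is Corollary \ref{valence at least 2 after pinching unfolding sequence}: for each $i=1,\ldots,k$, no component of $H^i$ is contained in a tree component of $E$, so $H^i/E$ is non-empty, and moreover it contains a non-degenerate component in which every vertex has valence at least $2$. Any such connected subgraph contains an embedded cycle, so $b_1(H^i/E)\ge 1$ for each $i=1,\ldots,k$.

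The theorem now follows from the elementary inequality that, for edge-disjoint subgraphs $K^1,\ldots,K^k$ of a graph $\Gamma$, one has $\sum_i b_1(K^i)\le b_1(\Gamma)$. This is a routine check from $b_1(H)=|EH|-|VH|+c(H)$: edge counts add on the disjoint union, the over-counting $\sum_i|VK^i|-|V(\bigcup_iK^i)|$ of shared vertices dominates the drop in component count $\sum_i c(K^i)-c(\bigcup_iK^i)$ (by a counting argument on the ``sharing graph'' of components), and inclusion of a subgraph into $\Gamma$ is injective on $H_1$ because graphs have no $2$-cells. Applied to $\Gamma=G/E$ and $K^i=H^i/E$ this gives
\[
k \le \sum_{i=1}^k b_1(H^i/E) \le b_1(G/E)=N.
\]
The only real subtlety is the bookkeeping in passing from the transverse decomposition of $G$ to one of $G/E$, since a priori edges of a given $H^i$ may lie inside the pinched forest $E$; but the conclusion of Corollary \ref{valence at least 2 after pinching unfolding sequence} ensures that enough of each $H^i$ survives collapsing to contribute a cycle in $G/E$, which is precisely what drives the count.
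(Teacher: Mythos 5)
Your proof is correct and follows essentially the same route as the paper: pass to a recurrent subsequence with forest pinched part, invoke Theorem \ref{transverse decomposition} and Corollary \ref{valence at least 2 after pinching unfolding sequence} to get $k$ edge-disjoint subgraphs of $G/E$ each containing a cycle, and conclude $k\le b_1(G/E)=N$. The only difference is that you spell out the final Betti-number count, which the paper leaves as "this immediately implies $k\le N$."
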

\begin{proof}
	Suppose $(G_{n_m})$ is a subsequence that converges in the closure of the moduli space of graphs with limit graph $G$ and the pinched part $E$, which is a forest in $G$. Also assume $H^0,H^1,\ldots,H^k$ is the transverse decomposition of $G$ corresponding to the collection of mutually singular ergodic currents supported on $\closure\Lambda$. 
	
	By corollary \ref{valence at least 2 after pinching unfolding sequence}, no 
$H^i$, $i\in\{1,\ldots,k\}$, is contained in $E$ and after collapsing the pinched part $H^0/E, H^1/E, \ldots, H^k/E$ is a transverse decomposition of $G/E$ with non-degenerate $H^i/E$ for every $i>0$. In addition for $i>0$, every vertex of $H^i/E$ has degree at least $2$ in $H^i/E$ and this immediately implies $k\le N$.
\end{proof}


\section{Folding Sequences}\label{sec: folding sequences}

Recall that a {\em folding sequence} is a sequence 
\begin{equation}\label{eq: folding sequence}
\xymatrix{
G_0 \ar[r]^{f_0} & G_{1} \ar[r]^{f_{1}} \ar[r] & \cdots \ar[r]^{f_{n-1}} & G_n  \ar[r]^{f_n} & \cdots
}
\end{equation}
where every $f_n, n\ge 0$, is a change of marking morphism. We also define the morphism $\phi_n:G_0\to G_n$ to be the composition $f_{n-1}\circ\cdots\circ f_0$.

Similar to unfolding sequences, we assume the sequence $(G_n)_{n\ge0}$ is reduced, i.e., it does not have a stabilized sequence of subgraphs.

In this section, we study the space of length measures on the folding sequence $(G_n)_n$. Not that the folding sequence lifts to a sequence 
\[
\xymatrix{
T_0 \ar[r]^{\cover f_0} & T_{1} \ar[r]^{\cover f_{1}} \ar[r] & \cdots \ar[r]^{\cover f_{n-1}} & T_n  \ar[r]^{\cover f_n} & \cdots
}
\]
where $T_n$ is the tree which is the universal cover of $G_n$. If $(\vec\lambda_n)_n$ is a length measure on $(G_n)_n$ and $\lambda_n$ is the induced metric on $G_n$, then it lifts to a metric for $T_n$ which is still denoted by $\lambda_n$.

Also we will use the frequency current $(\vec\mu_n)_n$ on $(G_n)_n$. Recall that $\vec\mu_0 = \vec {\bf 1}$ and
\[ \vec \mu_{n+1} = M_{f_n}\vec\mu_n \]
where $M_{f_n}$ is the incidence matrix of $f_n$.
For $e\in EG_n$, the component $e$ of $\vec \mu_n$, denoted by $\mu_n(e)$, is the number of times that $\phi_n$-images of edges of $G_0$ traverse $e$.

\subsection{The limit tree}\label{subsec: limit tree}  We define the {\em limit tree} $T$  of the sequence of trees $T_n$. First, define an equivalence relation $\sim_0$ on $T_0$ where $x \sim_0 y$ if there is $p$ so that the images of $x$ and $y$ in $T_p$ coincide.  The quotient space $T_0/\sim_0$ may not be Hausdorff, and so we use $\sim$ to be the equivalence relation whose classes are closures of $\sim_0$-classes.  In other words, $T_0/\sim$ is exactly the maximal Hausdorff quotient of $T_0/\sim_0$.   For every $n$, we have an induced quotient map $\psi_n:T_n\to T$.  Notice that $\FN$ acts by homeomorphisms on $T$.  Using Lemma \ref{length and frequency decay}, one sees that this action has a dense orbit.   Also for every $n$, the $\psi_n$-image of every edge of $T_n$ is an embedded arc in $T$.  One checks that $T$ is uniquely path connected and locally path connected.  Further, it easy to see that $\sim_0$ arises from a pseudo-metric on $T_0$ and that $\sim$ arises from a metric, so $T$ is metrizable.  Hence, by \cite{MO90}, $T$ is an $\mathbb{R}$-tree.  

We say a point of $T_m$ {\em maps to a branch point} if its image in $T_n$ for some $n\ge m$ is a vertex of $T_n$. Clearly the set of such points is countable. 

Given a length measure $(\vec\lambda_n)$ for the sequence we obtain a pseudo-distance function $\lambda$ on $T$ defined so that the $\lambda$-distance between $x$ and $y$ in $T$ is the infimum of the distances of $x_n, y_n\in T_n$ where $\psi_n(x_n)=x$ and $\psi_n(y_n)=y$. This is only a pseudo-distance since it is possible for the distance between distinct points to be zero. However we can identify every such pair and the quotient equipped with the induced metric, denoted by $\lambda$, is an $\BR$-tree with an action of $F_n$ by isometries. This $\BR$-tree represents a point of the boundary of $CV_N$ and we have

\begin{lem}\label{folding sequence converges to the boundary}
	The projection of the marked metric graphs $(G_n,\lambda_n)_n$ to $CV_N$ converge to the $\BR$-tree $(T,\lambda)$ which is obtained from the pseudo-distance $\lambda$ on $T$.
\end{lem}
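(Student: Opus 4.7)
The plan is to prove the stated convergence in $\barCVN$ by verifying pointwise convergence of translation length functions: $\ell_{G_n}(g)\to \ell_T(g)$ for every $g\in\FN$. Since points in $\barCVN$ are characterized by projective length functions, this will yield the desired convergence $[G_n,\lambda_n]\to[T,\lambda]$.

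The argument is a monotone-bounded convergence. First, each lifted folding morphism $\cover f_n:T_n\to T_{n+1}$ is an isometry on edges and $\FN$-equivariant; since folds only identify points, $\cover f_n$ is 1-Lipschitz on $T_n$. Then for every $x\in T_n$,
\[
d_{n+1}(\cover f_n(x), g\cover f_n(x))=d_{n+1}(\cover f_n(x), \cover f_n(gx))\le d_n(x,gx),
\]
and taking infima gives $\ell_{G_{n+1}}(g)\le \ell_{G_n}(g)$. Similarly, $\psi_n:T_n\to T$ is 1-Lipschitz (immediate from the infimum definition of $\lambda$) and $\FN$-equivariant, so $\ell_T(g)\le \ell_{G_n}(g)$ for every $n$. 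Hence $(\ell_{G_n}(g))_n$ is non-increasing and bounded below by $\ell_T(g)$, converging to some $\ell_\infty(g)\ge \ell_T(g)$.

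To establish the reverse inequality $\ell_\infty(g)\le \ell_T(g)$, I would fix $\varepsilon>0$ and choose $x\in T$ with $d_T(x,gx)<\ell_T(g)+\varepsilon$. The definition of $\lambda$ as an infimum produces $n_0$ and lifts $x_{n_0},y_{n_0}\in T_{n_0}$ of $x$ and $gx$ with $d_{n_0}(x_{n_0},y_{n_0})<\ell_T(g)+2\varepsilon$. The estimate I want is $\ell_{G_n}(g)\le d_n(\cover f_{n_0,n}(x_{n_0}), g\cdot \cover f_{n_0,n}(x_{n_0}))$, but the difficulty is that $g\cdot x_{n_0}$ and $y_{n_0}$ are two possibly distinct lifts of $gx$ in $T_{n_0}$, so this does not follow directly from the bound on $d_{n_0}(x_{n_0},y_{n_0})$. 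The key step is to show that $d_n(g\cdot\cover f_{n_0,n}(x_{n_0}), \cover f_{n_0,n}(y_{n_0}))\to 0$ as $n\to\infty$; combined with the triangle inequality and the monotone bound $d_n(\cover f_{n_0,n}(x_{n_0}),\cover f_{n_0,n}(y_{n_0}))\le d_{n_0}(x_{n_0},y_{n_0})<\ell_T(g)+2\varepsilon$, this would yield $\ell_{G_n}(g)<\ell_T(g)+3\varepsilon$ for $n$ large.

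The main obstacle is establishing this vanishing of distance between two lifts of a common point in $T$. Pulling back to $T_0$, I would pick $\cover x,\cover y\in T_0$ with $\cover\phi_{n_0}(\cover x)=x_{n_0}$ and $\cover\phi_{n_0}(\cover y)=y_{n_0}$; then $g\cover x$ and $\cover y$ are two lifts of $gx\in T$, hence $\sim$-equivalent. The monotone limit
\[
d_\infty(\cover u,\cover v):=\lim_{n\to\infty} d_n(\cover\phi_n(\cover u),\cover\phi_n(\cover v))
\]
defines a pseudo-metric on $T_0$, whose quotient by $\{d_\infty=0\}$ is a Hausdorff metric space. Since $T_0/\sim$ is by construction the maximal Hausdorff quotient of $T_0/\sim_0$, the relations $\sim$ and $\{d_\infty=0\}$ must coincide, so $d_\infty(g\cover x,\cover y)=0$. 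Unwinding definitions delivers precisely the required convergence and completes the proof.
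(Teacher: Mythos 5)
Your argument is correct. Note that the paper states this lemma without proof, so there is no in-text argument to compare against; your proof supplies what the authors leave implicit. The route you take -- monotone convergence of translation lengths, with the only real content being that two $\psi_{n_0}$-lifts of the same point of $T$ have $d_n$-distance tending to $0$ -- is the natural one, and your use of the universal property of the maximal Hausdorff quotient to get $\sim\,\subseteq\{d_\infty=0\}$ is legitimate, since the paper explicitly identifies $T_0/\sim$ with the maximal Hausdorff quotient of $T_0/\sim_0$ and the quotient topology on $T_0/\{d_\infty=0\}$ refines the (Hausdorff) metric topology. Two small remarks. First, you only need the inclusion $\sim\,\subseteq\{d_\infty=0\}$, which is exactly what universality gives; the asserted full coincidence of the two relations is neither needed nor obviously true, so it is better not to claim it. (Alternatively, one can prove the needed inclusion bare-handed from the definition of $\sim$ as closures of $\sim_0$-classes: approximate $g\cover x$ and $\cover y$ by $\sim_0$-equivalent points $u_k,v_k$ and use that each $\cover\phi_n$ is $1$-Lipschitz to get $\limsup_n d_n(\cover\phi_n(g\cover x),\cover\phi_n(\cover y))\le d_0(g\cover x,u_k)+d_0(v_k,\cover y)\to 0$.) Second, choosing $\cover x,\cover y\in T_0$ over $x_{n_0},y_{n_0}$ uses surjectivity of $\cover\phi_{n_0}$; this holds because a change-of-marking morphism between finite graphs all of whose vertices have valence at least $3$ is onto, but you could avoid the issue entirely by running the same Hausdorff-quotient argument on $T_{n_0}$ with the quotient map $\psi_{n_0}$, which the paper also provides. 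Finally, as with the lemma itself, the conclusion about convergence in $\barCVN$ presupposes that $(T,\lambda)$ is not reduced to a point, which the paper assumes implicitly when it says the limit represents a point of $\partial CV_N$.
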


As a consequence of lemma \ref{length and frequency decay} we have:

\begin{lem}\label{limit tree has dense orbits}
	If $(G_n)_n$ is a reduced folding sequence that admits a length measure, the limit tree with the induced metric has dense orbits.
\end{lem}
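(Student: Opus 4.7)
The plan is to show that every $\FN$-orbit in the limit tree $(T,\lambda)$ is dense. Fix $x,y \in T$ and, using that $T$ is a quotient of $T_0$, lift them to $x_0, y_0 \in T_0$; let $\bar x, \bar y \in G_0$ denote their projections to $G_0$. For each $n \ge 0$, let $x_n \in T_n$ be the image of $x_0$ under the lift of $\phi_n$ to universal covers, and define $y_n$ analogously; their projections in $G_n$ are $\bar x_n = \phi_n(\bar x)$ and $\bar y_n = \phi_n(\bar y)$. Because $\psi_n : T_n \to T$ is $\FN$-equivariant, $gx_n$ is a $\psi_n$-preimage of $gx$ for every $g \in \FN$, so the very definition of $\lambda$ as an infimum of preimage distances yields
\[
\lambda(gx,y) \;\le\; d_{T_n}(gx_n, y_n) \qquad \text{for every } g \in \FN \text{ and every } n.
\]
Moreover, since $\{gx_n : g \in \FN\}$ is the full preimage of $\bar x_n$ under the covering projection $T_n \to G_n$, we have $\min_{g \in \FN} d_{T_n}(gx_n, y_n) = d_{G_n}(\bar x_n, \bar y_n)$.

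Next I would apply Lemma~\ref{length and frequency decay}: since the folding sequence is reduced, $\lambda_n(e_n) \to 0$ along every choice of edges $e_n \in EG_n$, and in particular $\max_{e \in EG_n}\lambda_n(e) \to 0$ as $n \to \infty$. Using the standing hypothesis that vertices have valence at least three, $|EG_n| \le 3N-3$, so
\[
d_{G_n}(\bar x_n, \bar y_n) \;\le\; \diam(G_n) \;\le\; \sum_{e \in EG_n} \lambda_n(e) \;\le\; (3N-3)\max_{e}\lambda_n(e) \;\longrightarrow\; 0.
\]
Choosing $g_n \in \FN$ realizing the minimum above, one obtains
\[
\lambda(g_n x, y) \;\le\; d_{T_n}(g_n x_n, y_n) \;=\; d_{G_n}(\bar x_n, \bar y_n) \;\longrightarrow\; 0,
\]
so the orbit $\FN \cdot x$ accumulates on $y$. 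Since $y \in T$ was arbitrary the orbit of $x$ is dense, and since $x$ was arbitrary every orbit is dense.

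The argument presents no real obstacle; it is essentially a direct consequence of Lemma~\ref{length and frequency decay} together with the definition of $\lambda$ as an infimum over preimages. The only subtle point worth verifying is that $\psi_n$ is $\FN$-equivariant (so that $gx_n$ is genuinely a preimage of $gx$), which is automatic from the construction of the lifts $T_0 \to T_n$ and the quotient $T = T_0/\sim$. I would also note in passing that the same diameter estimate gives an alternative route to the fact, stated just before the lemma, that the action on the topological tree has a dense orbit.
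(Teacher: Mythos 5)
Your argument is correct and is exactly the route the paper intends: the paper states Lemma~\ref{limit tree has dense orbits} as an immediate consequence of Lemma~\ref{length and frequency decay}, and your write-up simply fills in the details (equivariance of $\psi_n$, the covering-space identity $\min_{g}d_{T_n}(gx_n,y_n)=d_{G_n}(\bar x_n,\bar y_n)$, and $\diam(G_n)\le(3N-3)\max_e\lambda_n(e)\to 0$). No gaps.
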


Note that passing to a subsequence of $(G_n)_n$ does not change the limit tree and the set of length measures on $(G_n)_n$.

\subsection{Length measures on a tree}\label{subsection: length measures of a tree}
Suppose $T$ is an arbitrary tree. Following Guirardel \cite{Gui00}, we define a {\em length measure} on $T$ to be a collection of finite Borel measures $\lambda_I$ for every compact interval $I$ in $T$ and such that if $J\subset I$, $\lambda_J =(\lambda_I)|_J$. When $T$ is endowed with an action of a group (the free group $F_N$ in our setting), we also assume these measures are invariant under this action. A length measure $\lambda$ is {\em non-atomic} if every $\lambda_I$ is non-atomic. By default a length measure is assumed to be non-atomic and we denote the space of non-atomic invariant length measures on $T$ by $\CD(T)$. 

When $T$ is an $\BR$-tree, the collection of the Lebesgue measures of the intervals in $T$ provide a length measure which is referred to as the {\em Lebesgue measure} of $T$. Also if $T$ is equipped with a pseudo-distance so that after collapsing diameter zero subsets one gets an $\BR$-tree, the pull back of the Lebesgue measure provides a non-atomic measure which we refer to as the Lebesgue measure.
The Lebesgue measure is always non-atomic.

\subsection{The space of length measures}\label{subsec: space of length measures}

When $T$ is the limit tree for a folding sequence $(G_n)_{n\ge0}$, every non-atomic length measure $\lambda$ on $T$ naturally induces a length measure on $T_n$ for every $n$ so that the map $\psi_n$ restricted to an edge of $T_n$ preserves the measure. Since $\lambda$ is invariant under the action of $\pi_1(G_n)$, we also obtain a metric $\lambda_n$ on $G_n$ and a length vector $\vec\lambda_n$ where for every $e\in EG_n$, the component $\lambda_n(e)$ of the vector is equal to the $\lambda_n$-length of $e$. Hence we obtain a sequence of length vectors $(\vec\lambda_n)_n$ for $(G_n)$ and
\[ \vec\lambda_{n+1} = M_{f_n}^T \vec\lambda_n;\]
hence the sequence $(\vec\lambda_n)_n$ is a length measure for the folding sequence.

\begin{lem}\label{length measure map is one to one}
	Given a reduced folding sequence $(G_n)$ and limit tree $T$, if $\lambda^1, \lambda^2$ are distinct non-atomic length measures, for $n$ sufficiently large, the induced length measures $\vec\lambda^1_n, \vec\lambda^2_n$ on $G_n$ are distinct.
\end{lem}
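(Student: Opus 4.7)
The plan is to prove the contrapositive: if $\vec\lambda^1_n = \vec\lambda^2_n$ for all $n \geq 0$, then $\lambda^1 = \lambda^2$. This suffices because the folding relation $\vec\lambda_n = M_{f_n}^T \vec\lambda_{n+1}$ forces any equality at some level $n$ to propagate backward to all levels $m \leq n$; so equality for infinitely many $n$ is equivalent to equality for all $n$, and if $\lambda^1\neq\lambda^2$, equality can only hold at finitely many levels. Using $F_N$-invariance, $\vec\lambda^1_n = \vec\lambda^2_n$ translates to $\lambda^1(\psi_n(\tilde e)) = \lambda^2(\psi_n(\tilde e))$ for every $n$ and every edge $\tilde e \in ET_n$. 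Set $\mu = \lambda^1 + \lambda^2$, a non-atomic invariant length measure making $T$ into an $\BR$-tree, and write $\lambda^i = f^i \mu$ by Radon-Nikodym with $f^1 + f^2 = 1$ $\mu$-a.e. The hypothesis becomes $\int_{\psi_n(\tilde e)}(f^1 - f^2)\,d\mu = 0$ for all $n, \tilde e$, and the goal becomes $f^1 = f^2$ $\mu$-a.e.

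The core technical step is to exhibit, for $\mu$-a.e.\ $x \in T$, a nested sequence of arcs $\psi_n(\tilde e_n) \supset \psi_{n+1}(\tilde e_{n+1}) \supset \cdots$ containing $x$ with $\mu$-lengths going to $0$. The countable set $S = \bigcup_n \psi_n(VT_n)$ is $\mu$-null. For $x \notin S$, choose $\tilde x_0 \in \psi_0^{-1}(x)$ and set $\tilde x_{n+1} = \tilde f_n(\tilde x_n)$; each $\tilde x_n$ maps to $x$, so $\tilde x_n \notin VT_n$ and hence lies in the interior of a unique edge $\tilde e_n \in ET_n$. Since $\tilde f_n$ is an isometric immersion on edges, $\tilde x_{n+1}$ lies in the interior of the path $\tilde f_n(\tilde e_n) \subset T_{n+1}$; avoiding $VT_{n+1}$, it must sit in the interior of an edge $\tilde e_{n+1}$ comprising that path. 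Therefore $\tilde e_{n+1} \subset \tilde f_n(\tilde e_n)$ as sub-trees of $T_{n+1}$, giving $\psi_{n+1}(\tilde e_{n+1}) \subset \psi_{n+1}(\tilde f_n(\tilde e_n)) = \psi_n(\tilde e_n)$. Lemma \ref{length and frequency decay} provides $\mu(\psi_n(\tilde e_n)) \to 0$.

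To finish, the arc $\psi_0(\tilde e_0)$ is embedded in $T$ and isometric to the interval $[0, \mu(\psi_0(\tilde e_0))]$ via $\mu$-arc length. The nested arcs $\psi_n(\tilde e_n) \ni x$ correspond to nested sub-intervals shrinking to the image of $x$. Applying the classical one-dimensional Lebesgue differentiation theorem to the bounded function $f^1 - f^2$ on this interval yields
\[
\lim_n \frac{1}{\mu(\psi_n(\tilde e_n))} \int_{\psi_n(\tilde e_n)} (f^1 - f^2)\, d\mu = f^1(x) - f^2(x)
\]
for $\mu$-a.e.\ $x \in \psi_0(\tilde e_0)$. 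But each average vanishes by hypothesis, so $f^1 = f^2$ $\mu$-a.e.\ on $\psi_0(\tilde e_0)$. Ranging $\tilde e_0$ over $ET_0$ covers $T$ by surjectivity of $\psi_0$, giving $f^1 = f^2$ $\mu$-a.e.\ on $T$, hence $\lambda^1 = \lambda^2$.

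The main obstacle is the nesting construction: verifying that the pre-images $\tilde x_n$ avoid $VT_n$ for all $n$ (handled cleanly by $x \notin S$) and that the selected edge $\tilde e_{n+1}$ is genuinely contained as a sub-edge of the path $\tilde f_n(\tilde e_n)$. These hinge on the folding morphisms being isometric immersions on edges and on the fact, recorded in Section 5.1, that each edge of $T_n$ maps to an embedded arc in $T$. Once nesting is established, the conclusion is a routine application of Lebesgue differentiation on an interval.
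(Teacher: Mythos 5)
Your proof is correct and is essentially the paper's argument run in the contrapositive direction: both reduce to comparing Radon--Nikodym derivatives of $\lambda^1,\lambda^2$ against $\lambda^1+\lambda^2$, both discard the countable ($\mu$-null) set of points coming from vertices of some $T_n$, and both use Lemma \ref{length and frequency decay} to produce nested edge-images shrinking to a generic point so that a one-dimensional differentiation/density argument applies. The explicit verification that $\tilde e_{n+1}\subset \tilde f_n(\tilde e_n)$ is a nice touch but does not change the substance of the argument.
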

\begin{proof}
	Let $\lambda = \lambda^1+\lambda^2$. Since $\lambda^1\neq\lambda^2$, there is an arc $I$ in $T$ and a subset $S\subset I$ of positive $\lambda$ measure with the property that for every $x\in S$ there exists $\ep>0$ so that if $J\subset I$ is an interval of length $\le\ep$ containing $x$ then $\lambda^1(J)\neq\lambda^2(J)$. Essentially $S$ is the set of points in $I$ where the Radon-Nikodym derivatives $d\lambda^1/d\lambda$ and $d\lambda^2/d\lambda$ are not equal. 

We can assume $I$ is contained in the $\psi_0$-image of an edge $e_0$ of $T_0$. Since the set of points that map to a branch point is countable and $\lambda$ is non-atomoic , we can further assume that $S$ does not have any such point.
Hence for every $n$, we can choose $x\in S$ and an edge $e_n$ of $T_n$ which is in the image of $\phi_n(e_0)$, so that $\psi_n(e_n)$ in $T$ is a sub-interval of $I$ containing $x$ as an interior point. By lemma \ref{length and frequency decay}, the $\lambda_n$-length of $e_n$ tends to zero as $n\to\infty$ and therefore $\lambda^1_n(e_n) \neq \lambda^2_n(e_n)$ for $n$ sufficiently large. This proves the lemma.
\end{proof}

Conversely if the sequence
$(\vec\lambda_n)$ is a length measure for the folding sequence $(G_n)$, we showed that on $T$ one obtains a pseudo-distance and the induced Lebesgue measure is a non-atomic length measure on $T$. Putting these together we have:

\begin{prop}\label{equivalence of length measures}
	Given a reduced folding sequence $(G_n)$ and limit tree $T$, there is a linear isomorphism between the space $\CD((G_n)_n)$ of length measures on $(G_n)$ and the space $\CD(T)$ of invariant non-atomic length measures on $T$.
\end{prop}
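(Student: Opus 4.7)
The plan is to construct mutually inverse linear maps
\[ \Phi:\CD(T)\to\CD((G_n)_n), \qquad \Psi:\CD((G_n)_n)\to\CD(T), \]
and then combine them with Lemma \ref{length measure map is one to one}, which already furnishes the injectivity of $\Phi$. Given a right inverse $\Psi$ to $\Phi$ together with the finite dimensionality of both cones, $\Phi$ is forced to be a bijection with $\Phi^{-1}=\Psi$.

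The map $\Phi$ is the construction implicit in the discussion preceding the proposition: for $\lambda\in\CD(T)$ and an edge $e\in EG_n$ with lift $\tilde e\subset T_n$, set $\Phi(\lambda)_n(e):=\lambda(\psi_n(\tilde e))$. This is well-defined since $\psi_n$ embeds each edge of $T_n$ as an arc in $T$ and $\lambda$ is $F_N$-invariant, and the cocycle relation $\vec\lambda_n=M_{f_n}^T\vec\lambda_{n+1}$ holds because each $f_n$ sends an edge to a reduced edge path and $\lambda$ is additive over concatenations. For $\Psi$, I would start from $(\vec\lambda_n)\in\CD((G_n)_n)$, form the pseudo-distance $d$ on $T$ as in the paragraph preceding Lemma \ref{folding sequence converges to the boundary}, let $\bar T$ be the metric $\mathbb{R}$-tree obtained by collapsing null-distance pairs, and let $p:T\to\bar T$ be the $F_N$-equivariant quotient. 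For a compact arc $I\subset T$ and a Borel $A\subset I$, set
\[ \Psi((\vec\lambda_n))_I(A) := \mathrm{Leb}_{\bar T}(p(A)). \]
Non-atomicity of Lebesgue measure and $F_N$-equivariance of $p$ make $\Psi((\vec\lambda_n))$ a legitimate element of $\CD(T)$; linearity of $\Psi$ follows from the pseudo-distance being linear in $(\vec\lambda_n)$ together with additivity of Lebesgue measure.

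The technical heart is verifying $\Phi\circ\Psi=\mathrm{id}$. This reduces to establishing the equality
\[ d(\psi_n(x_n),\psi_n(y_n)) = \lambda_n(e) \]
for every edge $e$ of $G_n$ with lift $\tilde e$ having endpoints $x_n,y_n$; equivalently, that $\psi_n|_{\tilde e}$ is a $d$-isometric embedding into $T$. The inequality $\le$ is immediate from the definition of $d$ as an infimum over all lifts. For the reverse inequality, my plan is to show that for every $m$ and every pair of lifts $x_m',y_m'\in T_m$ of $\psi_n(x_n),\psi_n(y_n)$ respectively, one has $d_m(x_m',y_m')\ge\lambda_n(e)$. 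The direct image of $\tilde e$ under $T_n\to T_m$ is an isometrically embedded arc of length $\lambda_n(e)$, since each intermediate $f_k$ sends edges to reduced, length-preserving edge paths. Any alternative lift $x_m'$ is $\sim$-equivalent to the direct lift, and in a reduced folding sequence such equivalences cannot produce a $d_m$-shortcut strictly shorter than $\lambda_n(e)$ inside a single edge: a shortcut of this form would propagate to produce a stabilized invariant subgraph, contradicting the hypothesis that $(G_n)_n$ is reduced.

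The principal obstacle is this final rigidity step for alternative lifts, which requires careful bookkeeping on the inverse-system structure underlying $T$ and on how the equivalence $\sim$ interacts with the folding maps. Once it is in place, $\Phi\circ\Psi$ is the identity on each $\vec\lambda_n$, so $\Phi$ is surjective; combined with Lemma \ref{length measure map is one to one} this gives $\Phi$ as a linear bijection with inverse $\Psi$, and the proposition follows.
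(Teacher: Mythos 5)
Your overall architecture is exactly the paper's: the restriction map $\Phi:\CD(T)\to\CD((G_n)_n)$, injective by Lemma \ref{length measure map is one to one}; the map $\Psi:\CD((G_n)_n)\to\CD(T)$ via the pseudo-distance and Lebesgue measure; and the observation that a right inverse plus injectivity forces a linear bijection. The paper leaves the verification that the two maps are mutually inverse implicit, and you have correctly isolated the one substantive check, namely that for an edge $\tilde e\subset T_n$ with endpoints $x_n,y_n$ one has $d(\psi_n(x_n),\psi_n(y_n))=\lambda_n(e)$.

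The gap is in your proposed mechanism for the reverse inequality. You assert that a shortcut coming from alternative lifts ``would propagate to produce a stabilized sequence of subgraphs,'' but a metric shortcut between two points of $T_m$ does not obviously yield an invariant sequence of subgraphs, and no appeal to reducedness is needed here (reducedness enters only through Lemma \ref{length measure map is one to one}, via Lemma \ref{length and frequency decay}). The step closes by a direct argument. The forward images $x_p=\cover f_{n,p}(x_n)$, $y_p=\cover f_{n,p}(y_n)$ satisfy $d_p(x_p,y_p)=\lambda_n(e)$ for every $p\ge n$, because $\cover f_{n,p}(\tilde e)$ is a reduced edge path, hence an embedded geodesic arc of length $\lambda_n(e)$ in the tree $T_p$. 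If $x_m',y_m'$ are any other lifts at level $m$, then $\psi_m(x_m')=\psi_m(x_m)$ means $x_m'$ and $x_m$ are identified in the Hausdorff quotient, which says precisely that $d_p\bigl(\cover f_{m,p}(x_m'),\cover f_{m,p}(x_m)\bigr)\to 0$ as $p\to\infty$, and likewise for $y$. Since each $\cover f_{m,p}$ is $1$-Lipschitz,
\[ d_m(x_m',y_m')\ \ge\ d_p(x_p',y_p')\ \ge\ d_p(x_p,y_p)-d_p(x_p,x_p')-d_p(y_p,y_p')\ =\ \lambda_n(e)-o(1), \]
and letting $p\to\infty$ gives $d_m(x_m',y_m')\ge\lambda_n(e)$, so the infimum defining $d$ is exactly $\lambda_n(e)$. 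Running the same computation on subarcs of $\tilde e$ shows that the composition of $\psi_n|_{\tilde e}$ with your quotient map $T\to\bar T$ is an isometric embedding, so the Lebesgue measure of its image is $\lambda_n(e)$ and $\Phi\circ\Psi=\mathrm{id}$. With this replacement your proof is complete and agrees with the paper's.
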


This immediately shows that $\CD(T)$ is a positive cone in a vector space whose dimension is bounded by $\liminf_n |EG_n|$ and in particular by $3N-3$. 

Recall that given the length measure $\lambda=(\vec\lambda_n)_n$ and the frequency vectors $(\vec\mu_n)_n$, the area is equal to 
\[ \vec\mu_n^T\lambda_n \]
which is independent of $n$. We define $\CD_0(T)$ to be the subset of $\CD(T)$ consisting of area one length measures, i.e., those $\lambda$ for which 
\[ \vec {\bf 1}^T\lambda_0 = 1.\] 
Note that the above quantity is equal to the total $\lambda$-length of $G_0$ and therefore
\[ \lambda_0(G_0) = \sum_{e\in EG_0}\lambda_0(e) =1.\]
The subset $\CD_0(T)$ is a compact subset which is a section of the cone $\CD(T)$.

\subsection{Ergodic length measures}\label{subsec: ergoidic length measures}
We say a length measure $\lambda\in \CD(T)$ is {\em ergodic} if whenever $\lambda = c_1\lambda^1+c_2\lambda^2$ for $c_1,c_2>0$ and $\lambda^1,\lambda^2\in \CD(T)$, then $\lambda^1$ and $\lambda^2$ are homothetic to $\lambda$. It is a standard consequence of the Radon-Nikodym Theorem that non-homothetic ergodic length measures are mutually singular and therefore linearly independent. This immediately implies the following.

\begin{thm}\label{finite ergodic length measures}
	Given a reduced folding sequence $(G_n)_n$ with limit tree $T$, the number of pairwise non-homothetic ergodic length measures on $T$ is bounded by one less than the dimension of $\CD(T)$ and therefore by $3N-4$.
\end{thm}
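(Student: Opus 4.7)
The strategy is to combine two standard facts: non-homothetic ergodic elements of $\CD(T)$ are mutually singular, and mutually singular positive measures are linearly independent. Combined with Proposition \ref{equivalence of length measures}, this produces the claimed dimension bound.

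First, I would show that any two non-homothetic ergodic length measures $\lambda^1, \lambda^2\in \CD(T)$ are mutually singular. To do this, consider the interval-by-interval ``greatest common minorant'' $\nu = \min(\lambda^1,\lambda^2)$, obtained by taking, on each compact arc $I\subset T$, the canonical minimum of the Borel measures $\lambda^1_I$ and $\lambda^2_I$. The family so produced is consistent under restriction, non-atomic (being dominated by each $\lambda^i$), and $F_N$-invariant (since each $\lambda^i$ is and the minimum is canonical), so $\nu\in \CD(T)$. If $\nu\neq 0$, the decomposition $\lambda^1 = \nu + (\lambda^1-\nu)$ expresses $\lambda^1$ as a sum of two elements of $\CD(T)$; ergodicity of $\lambda^1$ forces $\nu$ to be homothetic to $\lambda^1$, hence $\lambda^1 \le c\,\lambda^2$ for some $c>0$. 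Then $\lambda^2 = \tfrac{1}{c}\lambda^1 + (\lambda^2 - \tfrac{1}{c}\lambda^1)$ is a decomposition of $\lambda^2$ in $\CD(T)$, and ergodicity of $\lambda^2$ forces $\lambda^1$ and $\lambda^2$ to be homothetic, contradicting our hypothesis. Hence $\nu=0$, i.e.\ $\lambda^1\perp \lambda^2$.

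Next, I would deduce that if $\lambda^1,\ldots,\lambda^k$ are pairwise non-homothetic ergodic elements of $\CD(T)$, they are linearly independent in the ambient vector space. This is the routine signed-measure argument: from a nontrivial relation $\sum_i c_i \lambda^i = 0$, separate the positive and negative coefficients to obtain two non-trivial positive invariant length measures, one supported on a carrier of some $\lambda^{i_+}$ and the other on a carrier of some $\lambda^{i_-}$, that coincide; pairwise mutual singularity forces both to be zero, contradicting the non-triviality of the relation.

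Finally, combining these with Proposition \ref{equivalence of length measures}, the $\lambda^i$ are linearly independent vectors of $\CD((G_n)_n)$, a cone in a vector space of dimension at most $\liminf_n |EG_n|\le 3N-3$, yielding the stated bound on $k$. The main (and only genuine) subtlety is the first step: checking that ``ergodic'' in the convex-extremal sense of \S\ref{subsec: ergoidic length measures} forces mutual singularity in the sense of length measures on an $\BR$-tree. Everything reduces to verifying that the canonical operations (minimum of two measures, Radon--Nikodym decomposition) produce invariant, non-atomic length measures from invariant, non-atomic ones, which is automatic from the canonicality and the interval-by-interval definition.
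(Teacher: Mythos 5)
Your proof follows the paper's argument exactly: the paper's entire justification is the remark preceding the theorem that non-homothetic ergodic length measures are mutually singular by the Radon--Nikodym theorem, hence linearly independent, so the bound follows from the dimension count of Proposition \ref{equivalence of length measures}; you have simply supplied the details the paper calls ``standard'' (your minimum-of-measures construction is a perfectly good way to make the mutual-singularity step precise). One caveat: linear independence only yields $k \le \dim\CD(T) \le 3N-3$, whereas the theorem asserts $k \le \dim\CD(T)-1 \le 3N-4$, so your claim to have obtained ``the stated bound'' is off by one --- but the paper's own one-line proof has the identical shortfall (the sharper constant is Guirardel's, obtained by other methods), so you have reproduced the paper's argument faithfully, including this imprecision.
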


This also follows from a result of Guirardel \cite{Gui00} but as we have seen it is easier in this case. Note that every $\lambda\in\CD(T)$ has an {\em ergodic decomposition} $\lambda = \lambda^1+\cdots+\lambda^k$ where each $\lambda^i\in\CD(T)$ is ergodic and $\lambda^1,\ldots,\lambda^k$ are the {\em ergodic components} of $\lambda$. Using Guirardel's terminology when $\lambda$ is ergodic, the limit tree with the length measure induced by $\lambda$ is {\em uniquely ergometric}.

Using the above theorem and for a reduced folding sequence $(G_n)$ with limit tree $T$, assume $\lambda^1, \ldots, \lambda^k$ is a maximal collection of pairwise non-homothetic ergodic length measures on $T$ and define $\lambda = \lambda^1+\cdots+\lambda^k$. 

Suppose $T$ is equipped with the distance given by $\lambda$ and every $G_n$ and $T_n$ is equipped with the induced metric $\lambda_n$. In particular every edge $e$ of $T_0$ is identified with the interval $[0,\lambda_0(e)]$. We say a point $x$ in the interior of $e$ is {\em $\lambda^i$-generic}, $i=1,\ldots,k$, if it does not map to a branch point, and
\begin{equation}\label{eq: generic for length} 
\lim_{\ep\to 0} \frac{\lambda^i_0([x-\ep,x])}\ep = \lim_{\ep\to 0} \frac{\lambda^i_0([x,x+\ep])}\ep = 1.
\end{equation}
Recall that $e$ is identified by the interval $[0,\lambda_0(e)]$ and therefore $[x-\epsilon,x]$ and $[x,x+\epsilon]$ represent subarcs of $\lambda_0$-length $\ep$ of $e$ with endpoint $x$. 

It is a standard consequence of ergodicity that in $T$ (respectively $T_n$ or $G_n$) for $\lambda^i$-almost (resp. $\lambda^i_n$-almost) all points \eqref{eq: generic for length} holds. We sometimes refer to these points as $\lambda^i$-generic. It is also easy to see that the set of $\lambda^i$-generic and $\lambda^j$-generic points are disjoint.

\subsection{Transverse decomposition for a folding sequence}
\label{subsec: transverse decomposition for folding}
Assume (perhaps after passing to a subsequence) that every $G_n$ is identified by a fixed graph $G$. We use this identification and for an edge $e\in EG$, we denote the image in $EG_n$ by $e_n$. Also for $n\ge 0$ and $i=1,\ldots, k$, $\lambda_n(e)$, $\lambda^i_n(e)$, and $\mu_n(e)$ respectively denote the $\lambda_n$-length, $\lambda^i_n$-length, and the corresponding component of the frequency vector $\vec\mu_n$.

\begin{thm}\label{transverse decomposition for folding}
	Suppose a reduced folding sequence $(G_n)$ with pairwise non-homothetic non-atomic ergodic length measures $\lambda^1,\ldots,\lambda^k$ is given and every $G_n$ is isomorphic to a fixed graph $G$. After passing to a subsequence, there is a transverse decomposition $H^0,H^1,\ldots,H^k$ of $G$, such that for a distinct pair $i, j\in\{1,\ldots,k\}$ and $e\in EH^i$
	\begin{enumerate}
		\item $\ds\liminf_n \mu_n(e)\lambda^i_n(e) >0$,
		\item $\ds\sum_n \mu_n(e)\lambda^j_n(e) < \infty$, and
		\item $\ds\lim_n\frac{\lambda^j_n(e)}{\lambda^i_n(e)}=0$.
	\end{enumerate}
	Also given $e\in EH^0$ and $i$ in $\{1,\ldots,k\}$
	\[ \sum_n \mu_n(e)\lambda^i_n(e) < \infty.\] 
\end{thm}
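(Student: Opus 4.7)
The plan is to mirror the argument of Theorem \ref{transverse decomposition}, with the roles of currents and length measures exchanged. The central identity is
\[ \mu_n(e)\,\lambda^i_n(e) \;=\; \lambda^i_0\bigl(\phi_n^{-1}(e_n)\bigr), \]
which holds because $\phi_n$ sends every edge of $G_0$ to a reduced path in $G_n$ and is a local isometry with respect to the pulled-back length measure; hence $\phi_n^{-1}(e_n)$ is a disjoint union of precisely $\mu_n(e)$ arcs in $G_0$, each of $\lambda^i_0$-length $\lambda^i_n(e)$. In particular each $\mu_n(e)\lambda^i_n(e)$ is bounded by the fixed finite number $\lambda^i_0(G_0)$.

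A diagonal argument extracts a subsequence on which $\mu_n(e)\lambda^i_n(e)\to c^i_e$ for every $(e,i)\in EG\times\{1,\ldots,k\}$, and on which $\sum_n\mu_n(e)\lambda^i_n(e)<\infty$ whenever $c^i_e=0$. Set $H^i:=\{e\in EG:c^i_e>0\}$ for $i=1,\ldots,k$ and $H^0:=\{e\in EG:c^i_e=0\ \forall i\}$. Once one knows the $H^i$ are pairwise disjoint, conclusion (1) is the definition of $H^i$, conclusions (2) and (4) are precisely the summability arranged above, and (3) follows from
\[ \frac{\lambda^j_n(e)}{\lambda^i_n(e)}\;=\;\frac{\mu_n(e)\lambda^j_n(e)}{\mu_n(e)\lambda^i_n(e)}\;\to\;\frac{0}{c^i_e}\;=\;0. \]

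The real work is disjointness $H^i\cap H^j=\emptyset$ for $i\neq j$. Suppose $c^i_e>0$; then $\lambda^i_0(\phi_n^{-1}(e_n))\ge c^i_e/2$ eventually, so reverse Fatou on the finite measure space $(G_0,\lambda^i_0)$ yields $\lambda^i_0(\limsup_n\phi_n^{-1}(e_n))\ge c^i_e/2>0$. Since $\lambda^i_0$-a.e.\ point of $G_0$ is $\lambda^i$-generic in the sense of \eqref{eq: generic for length} and since $\lambda^i_0$ is non-atomic (so the countable set of points eventually mapping to a vertex of some $G_n$ has $\lambda^i_0$-measure zero), we can choose such a generic $x^i$ with $\phi_n(x^i)\in e_n$ for $n$ in an infinite set $N_i$. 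For each $n\in N_i$, let $A_n\subset G_0$ denote the arc about $x^i$ mapping homeomorphically onto $e_n$; then $\lambda^i_0(A_n)=\lambda^i_n(e)$ and $\lambda^j_0(A_n)=\lambda^j_n(e)$. By Lemma \ref{length and frequency decay} these lengths tend to $0$, and because $x^i$ has positive $\lambda^i$-density by genericity the intervals $A_n$ must shrink to $x^i$ in diameter; mutual singularity of $\lambda^i$ and $\lambda^j$ combined with Lebesgue differentiation at $x^i$ then gives $\lambda^j_n(e)/\lambda^i_n(e)=\lambda^j_0(A_n)/\lambda^i_0(A_n)\to 0$ along $N_i$. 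This forces $c^j_e=\lim_n\mu_n(e)\lambda^j_n(e)=0$, so $e\notin H^j$. The principal subtlety is the final differentiation step: one must verify that the intervals $A_n$ really do shrink in diameter (which follows from the positive lower bound on $\lambda^i_0([x^i,x^i+r])/r$ near $x^i$ supplied by genericity) and that Lebesgue differentiation of the singular measure $\lambda^j$ against $\lambda^i$ applies to intervals through $x^i$ that are not necessarily centered, which is a standard one-dimensional fact.
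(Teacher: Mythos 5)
Your proposal is correct and follows essentially the same route as the paper's proof: the key identity $\mu_n(e)\lambda^i_n(e)=\lambda^i_0(\phi_n^{-1}(e_n))$, the reverse-Fatou/Borel--Cantelli step producing a $\lambda^i$-generic point $x$ with $\phi_n(x)\in e_n$ infinitely often, and the two-sided density condition \eqref{eq: generic for length} forcing $\lambda^j_n(e)/\lambda^i_n(e)\to 0$ along that subsequence are exactly the paper's Lemmas \ref{decent frequency implies containing generic points} and \ref{containing a generic point implies generic}. The only differences are cosmetic: you extract full convergence $\mu_n(e)\lambda^i_n(e)\to c^i_e$ instead of the paper's $\liminf>0$ versus $\lim=0$ dichotomy, and you use one generic point per measure to conclude $c^j_e=0$ directly rather than the paper's contradiction with a pair of generic points.
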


\begin{proof}
    Let $\lambda = \lambda^1+\cdots+\lambda^k$. First we pass to a subsequence, so that for every $e\in EG$, either $\ds\liminf \mu_n(e)\lambda_n(e) >0$ or $\ds\lim \mu_n(e)\lambda_n(e) = 0$. Then since $\lambda = \lambda^1+\cdots+\lambda^k$ and after passing to a subsequence, we can assume for every $e\in EG$, either $\ds \lim \mu_n(e)\lambda_n(e) =0$ or there is $i\in\{1,\ldots,k\}$ so that $\ds \liminf \mu_n(e)\lambda^i_n(e) >0$. 
    
    We define $H^i$ to consist of all edges $e$ with $\ds\liminf \mu_n(e)\lambda^i_n(e) >0$ and define $H^0$ to consist of the edges $e$ with $\ds\lim \mu_n(e)\lambda_n(e)=0$. We proceed to prove this is a transverse decomposition of $G$ and satisfies the conclusion of the theorem.
    
    \begin{lem}\label{decent frequency implies containing generic points}
        If $e\in EG$ has the property that $\limsup_n \mu_n(e)\lambda_n^i(e) >0$ then there is a subset of positive $\lambda^i_0$-measure in $G_0$ so that for every $x$ in this subset $\phi_n(x)\in e_n$ for infinitely many $n$.
    \end{lem}
    \begin{proof}
        Assume $\limsup_n \mu_n(e)\lambda^i_n(e) >\ep>0$ and let 
 \[ B_n(e) = \phi_n^{-1}(e_n)\] 
denote the $\phi_n$-pre-image of $e_n$ in $G_0$. Then
\begin{equation}\label{eq: length of preimage}
	\lambda^i_0(B_n(e)) = \mu_n(e)\lambda^i_n(e)
\end{equation}
for every $n$. This is a consequence of the fact that $\mu_n(e)$ is the number of component of the pre-image of $e_n$ in $G_0$. Hence $\lambda^i_0(B_n(e))>\ep$ for infinitely many $n$. This implies that
\[ \bigcap_m \bigcup_{n\ge m}B_n(e) \]
has $\lambda^i_0$-measure at least $\ep$. Obviously if $x$ is in the above set, then $\phi_n(x)\in e_n$ for infinitely many $n$ and this proves the lemma.
    \end{proof}
    
    \begin{lem}\label{containing a generic point implies generic}
        Suppose $x$ is a $\lambda^i$-generic point of $G_0$ for some $i\in\{1,\ldots,k\}$ and $\phi_n(x)$ is in the edge $e(x,n)$ of $G_n$. Then
        \[ \lim_n\frac{\lambda^i(e(x,n))}{\lambda(e(x,n))} = 1.\]
    \end{lem}
    \begin{proof}
        Since $\phi_n(x)$ is in $e(x,n)$, we can choose a $\phi_n$-pre-image $I_n$ of $e(x,n)$ which is an arc in $G_0$ containing $x$. Because $(G_n)$ is reduced and by lemma \ref{length and frequency decay} $\lambda_0(I_n)=\lambda_n(e(x,n))\to 0$ as $n\to\infty$. Then since $x$ is $\lambda^i$-generic and $\phi_n$ form $I_n$ to $e(x,n)$ is measure preserving with respect to $\lambda$ and $\lambda^i$,
        \[ \lim_n\frac{\lambda^i(e(x,n))}{\lambda(e(x,n))} = \lim_n\frac{\lambda^i(I_n)}{\lambda(I_n)} =1.\]
     \end{proof}
     Combining the above two lemmas we can see that if $\liminf \mu_n(e)\lambda^i_n(e) >0$, then $\lim_n \mu_n(e)\lambda^j_n(e) = 0$ for every $j\neq i$. Otherwise and by using lemma \ref{decent frequency implies containing generic points} we find $\lambda^i$-generic $x$ and $\lambda^j$-generic $y$ so that $\phi_n(x) \in e_n$ and $\phi_n(y)\in e_n$ for infinitely many $n$. This however contradicts lemma \ref{containing a generic point implies generic} and the fact that $\lambda_n(e)\ge \lambda_n^i(e)+\lambda^j(e)$. Using the definition of $H^i$ given above, we know that (1) holds for every $e\in EH^i$. By what we just stated $\lim_n \mu_n(e)\lambda^j_n(e) =0$ for every $j\neq i$; we can pass to a further subsequence and assume in addition that $\sum_n \mu_n(e)\lambda_n^j(e) <\infty$ for every $j\neq i$. Part (3) is an immediate corollary of (1) and (2). 
     
     In the definition of $H^0$ for every edge $e$ of $H^0$, $\lim \mu_n(e)\lambda_n(e) =0$. Again we pass to a further subsequence to assume that for every such $e$, $\sum_n \mu_n(e)\lambda_n(e) <\infty$ and therefore (4) also holds.
\end{proof}

With a simple modification of the above proof, it is possible to start with a length measure on the folding sequence and find a transverse decomposition corresponding to the ergodic components of that length measure.

\begin{thm}\label{transverse decomposition for a length measure}
    Suppose $(G_n)$ is a reduced folding sequence and $\lambda=(\vec\lambda_n)$ is a length measure with ergodic components $\lambda^1,\ldots,\lambda^k$. Also assume every $G_n$ is isomorphic to a fixed graph $G$. After passing to a subsequence, there is a transverse decomposition $H^0,H^1\ldots,H^k$ of $G$, such that for every distinct pair $i,j\in\{1,\ldots,k\}$ and $e\in EH^i$
    	\begin{enumerate}
		\item $\ds\liminf_n \mu_n(e)\lambda^i_n(e) >0$,
		\item $\ds\sum_n \mu_n(e)\lambda^j_n(e) < \infty$, and
		\item $\ds\lim_n\frac{\lambda^j_n(e)}{\lambda^i_n(e)}=0$.
	\end{enumerate}
	Also given $e\in EH^0$
	\[ \sum_n \mu_n(e)\lambda_n(e) <\infty.\] 
\end{thm}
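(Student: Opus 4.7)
The plan is to follow the proof of Theorem \ref{transverse decomposition for folding} almost verbatim, with a careful accounting for the coefficients in the ergodic decomposition. Writing $\lambda = c_1\lambda^1 + \cdots + c_k\lambda^k$ with all $c_i > 0$, we have $\vec\lambda_n(e) = \sum_i c_i\, \vec\lambda_n^i(e)$ for each edge $e$ and each $n$. The goal is to assign every edge of $G$ either to some $H^i$, meaning that $e$ carries nontrivial asymptotic weight in the $i$-th ergodic component, or to $H^0$, meaning the total $\lambda$-weight on $e$ decays summably.

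First I would pass to a subsequence such that, for every $e \in EG$ and every $i \in \{1,\ldots,k\}$, the quantity $\mu_n(e)\lambda^i_n(e)$ has either $\liminf > 0$ or $\lim = 0$. Since $\lambda_n(e) = \sum_i c_i \lambda^i_n(e)$, either $\lim_n \mu_n(e)\lambda_n(e) = 0$, or there exists $i$ with $\liminf_n \mu_n(e)\lambda^i_n(e) > 0$. I then define $H^i$ (for $i \ge 1$) to consist of edges of the latter type (with some fixed chosen $i$), and $H^0$ to consist of the remaining edges.

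The heart of the argument is showing that the $H^i$ are edge-disjoint for distinct $i, j \in \{1,\ldots,k\}$. Suppose $e \in EH^i \cap EH^j$ with $i \ne j$. Lemma \ref{decent frequency implies containing generic points} applied separately to $\lambda^i$ and $\lambda^j$ yields a $\lambda^i$-generic point $x$ and a $\lambda^j$-generic point $y$ in $G_0$ such that $\phi_n(x), \phi_n(y) \in e_n$ for infinitely many $n$. Lemma \ref{containing a generic point implies generic} requires one small rescaling: since the $\lambda^l$ are mutually singular and $\lambda = \sum_l c_l\lambda^l$, the Radon--Nikodym derivative $d\lambda^i/d\lambda$ equals $1/c_i$ at $\lambda^i$-a.e.\ point, so along a common subsequence $\lambda^i_n(e_n)/\lambda_n(e_n) \to 1/c_i$ and $\lambda^j_n(e_n)/\lambda_n(e_n) \to 1/c_j$. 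But $c_i\lambda^i_n(e_n) + c_j\lambda^j_n(e_n) \leq \lambda_n(e_n)$, so passing to the limit yields $1 + 1 \le 1$, a contradiction. Hence each edge lies in at most one $H^i$.

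The remaining assertions follow by routine subsequence extraction: for $e \in EH^i$ and $j \ne i$, upgrade $\lim_n \mu_n(e)\lambda^j_n(e) = 0$ to summability $\sum_n \mu_n(e)\lambda^j_n(e) < \infty$; for $e \in EH^0$, similarly upgrade $\lim_n \mu_n(e)\lambda_n(e) = 0$ to summability of $\sum_n \mu_n(e)\lambda_n(e)$. Item (3) is then immediate from (1) and (2). The only conceptual point differing from the proof of Theorem \ref{transverse decomposition for folding} is the rescaling of the Lebesgue-differentiation identity noted above; this is the main (though still minor) technical hurdle, and once it is handled the argument proceeds without essential change.
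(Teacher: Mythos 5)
Your proposal is correct and follows essentially the same route as the paper, which itself only remarks that Theorem \ref{transverse decomposition for a length measure} follows by ``a simple modification'' of the proof of Theorem \ref{transverse decomposition for folding}; your accounting for the coefficients $c_i$ (equivalently, adopting the paper's convention that the ergodic components already sum to $\lambda$, so that all $c_i=1$ and the Lebesgue density is $1$) is precisely that modification. The one imprecision --- asserting a \emph{common} subsequence along which both density limits hold --- is inherited verbatim from the paper's own argument for Theorem \ref{transverse decomposition for folding} and is easily repaired: since $\mu_n(e)\lambda_n(e)=\lambda_0(B_n(e))\le\lambda_0(G_0)$ is bounded, the convergence $\lambda^j_n(e)/\lambda_n(e)\to 0$ along the recurrence times of the $\lambda^i$-generic point $x$ alone already contradicts $\liminf_n\mu_n(e)\lambda^j_n(e)>0$, with no need for the point $y$.
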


From \eqref{eq: length of preimage} and when the conclusion of theorem \ref{transverse decomposition for folding} holds, we can see that for $i\in\{1,\ldots,k\}$ if $e$ is an edge of $G$ which is not in $H^i$ then
\[ \sum_n \lambda_0(B_n(e)) = \sum_n \mu_n(e)\lambda^i_n(e) < \infty.\]
Using standard measure theory, this implies that the set of points $x$, with $\phi_n(x)\in e_n$ for infinitely many $n$, has $\lambda^i_0$-measure zero. Equivalently:

\begin{prop}\label{generic points fold to corresponding subgraphs}
    For $\lambda^i_0$-almost every $x$ in $G_0$, $\phi_n(x) \in H^i_n$ for $n$ sufficiently large (depending on $x$), where $H^i_n$ is the subgraph of $G_n$ corresponding to $H^i$. In particular $H^i$ is non-degenerate.
\end{prop}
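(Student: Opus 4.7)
The plan is to deduce this proposition as an essentially immediate consequence of Theorem \ref{transverse decomposition for folding}, using only the Borel--Cantelli lemma and the identity
\[ \lambda^i_0(B_n(e)) = \mu_n(e)\lambda^i_n(e), \qquad B_n(e) = \phi_n^{-1}(e_n), \]
which was already established in the proof of that theorem. I will work after passing to the subsequence along which the transverse decomposition $H^0,H^1,\ldots,H^k$ of $G$ is defined.

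First, I fix $i \in \{1,\ldots,k\}$ and consider an arbitrary edge $e \in EG \setminus EH^i$. If $e \in EH^j$ for some $j \ne i$, then part (2) of Theorem \ref{transverse decomposition for folding} (with the roles of $i$ and $j$ interchanged) gives $\sum_n \mu_n(e)\lambda^i_n(e) < \infty$. If instead $e \in EH^0$, then the last clause of the theorem gives $\sum_n \mu_n(e)\lambda_n(e) < \infty$, and since $\lambda^i \le \lambda = \lambda^1 + \cdots + \lambda^k$ we again obtain $\sum_n \mu_n(e)\lambda^i_n(e) < \infty$. Hence in both cases
\[ \sum_n \lambda^i_0(B_n(e)) \;=\; \sum_n \mu_n(e)\lambda^i_n(e) \;<\; \infty, \]
and Borel--Cantelli shows that $\limsup_n B_n(e)$ has $\lambda^i_0$-measure zero. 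Taking the union of these countably many (in fact finitely many) null sets as $e$ ranges over $EG \setminus EH^i$, I conclude that for $\lambda^i_0$-almost every $x \in G_0$ there exists $N(x)$ with $x \notin B_n(e)$ for every $n \ge N(x)$ and every $e \notin EH^i$; equivalently, $\phi_n(x) \in H^i_n$ for all $n \ge N(x)$, which is exactly the first assertion.

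The non-degeneracy of $H^i$ is then a pigeonhole consequence: since $\lambda^i$ is a non-zero length measure, $\lambda^i_0(G_0) > 0$, so the full-measure set just produced is non-empty. Picking any $x$ in it, the point $\phi_n(x)$ lies somewhere in $G_n$ for every $n$, and by the first part it lies in $H^i_n$ for all large $n$; in particular $EH^i_n \ne \emptyset$, so $H^i$ is non-degenerate. There is no real obstacle here — the only subtlety is the bookkeeping between $\lambda$ and $\lambda^i$ for edges in $EH^0$, which is immediately handled by the pointwise inequality $\lambda^i \le \lambda$.
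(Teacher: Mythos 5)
Your proof is correct and follows essentially the same route as the paper: the identity $\lambda^i_0(B_n(e))=\mu_n(e)\lambda^i_n(e)$ together with the summability from Theorem \ref{transverse decomposition for folding} and Borel--Cantelli, which is exactly the "standard measure theory" argument the paper gives in the paragraph preceding the proposition. Your explicit case split between $e\in EH^j$ and $e\in EH^0$ (via $\lambda^i\le\lambda$) and the pigeonhole argument for non-degeneracy are just slightly more careful renderings of steps the paper leaves implicit.
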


\subsection{Pinching along a folding sequence}\label{subsec: pinching along folding}
	Assume $(G_n)_{n\ge0}$ is a reduced folding sequence and $(\vec\lambda_n)_n$ is a length measure on $(G_n)_n$. Recall that $\vec\lambda_n$ equips $G_n$ with a metric $\lambda_n$. As in \S \ref{subsec: pinching} assume we have passed to a subsequence so that every $G_n$ is isomorphic to a fixed graph $G$ and this subsequence converges in the moduli space of graphs. Also let $E$ denote the pinched part of $G$. After passing to a further subsequence we can assume the conclusion of theorem 
\ref{transverse decomposition for a length measure} holds for a transverse decomposition $H^0,H^1,\ldots,H^k$ associated to the ergodic components $\lambda^1,\ldots,\lambda^k$ of $\lambda$. Collapsing the pinched part of $G$ induces a transverse decomposition $H^0/E,H^1/E,\ldots,H^k/E$ of $G/E$. 

	Assume $x$ is a point of $G_0$ which does not map to a branch point. We say a path $\beta$ in $G$ is an {\em accumulation of germs at $x$} if there is a sequence of paths $I_n$ in $G_0$ containing $x$ and with $\lambda_0(I_n)\to 0$ as $n\to\infty$ so that the path $\phi_{n}(I_{n})$ in $G_n$ is identified with $\beta$ for infinitely many $n$. We use $\beta_n$ to denote the image of $\beta$ in $G_n$. In particular $\beta_n = \phi_n(I_n)$ for infinitely many $n$.

	By proposition \ref{generic points fold to corresponding subgraphs} for $\lambda^i$-almost every $x\in G_0$, if an edge $e$ of $G$ is an accumulation of germs at $x$, $e\in H^i$. A generalization for longer paths holds.

	\begin{thm}\label{accumulations of germs respect decomposition}
		Given $i\in\{1,\ldots,k\}$ for $\lambda^i$-almost every $x\in G_0$, if a path $\beta$ in $G$ is an accumulation of germs at $x$, then
		\[ \frac{\lambda_n( \beta_n \setminus H^i_n)}{\lambda_n(G)} \to 0\]
		as $n\to\infty$, where $\beta_n\setminus H^i_n$ is the subset of $\beta_n$ outside of $H^i$.
	\end{thm}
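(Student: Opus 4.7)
The plan is to mirror Theorem \ref{frequency in generic leaves}, with currents replaced by the length measure $\lambda^i$ and shift-invariance replaced by Lebesgue differentiation. Since the subsequence converges in moduli space, $\lambda_n(\beta_n \setminus H^i_n)/\lambda_n(G) \to \sum_{e \in \beta \setminus H^i} l_e$, where $l_e = \lim_n \lambda_n(e)/\lambda_n(G)$, so it suffices to show that for $\lambda^i$-almost every $x$, every accumulation of germs $\beta$ at $x$ satisfies $\beta \setminus H^i \subseteq E$---equivalently, no edge $e \notin H^i \cup E$ occurs in any such $\beta$. As there are only finitely many candidate edges, I treat them one at a time.

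Because $\lambda = \sum_j \lambda^j$ coincides with the underlying metric measure $\lambda_0$, each $\lambda^j$ is absolutely continuous with respect to $\lambda_0$ with density at most $1$. At a $\lambda^i$-generic point $x$---a property that holds $\lambda^i$-a.e.---this density is $1$ for $\lambda^i$ and $0$ for $j \neq i$; by 1D Lebesgue differentiation, for any intervals $I_n \ni x$ with $\lambda_0(I_n) \to 0$, $\lambda^j(I_n)/\lambda_0(I_n) \to \delta_{ij}$. For an accumulation $\beta$ at such an $x$, take $I_n$ with $\phi_n(I_n) = \beta_n$, contained in the edge of $x$ for $n$ large. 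Because $\phi_n$ is locally isometric for each $\lambda^j$, $\lambda^j(I_n) = \sum_{e \in \beta} \lambda^j_n(e)$ and $\lambda_0(I_n) = \sum_{e \in \beta} \lambda_n(e)$. Dividing both sums by $\lambda_n(G)$ and letting $n \to \infty$, the vanishing density for $j \neq i$ forces $\sum_{e \in \beta} \lambda^j_n(e)/\lambda_n(G) \to 0$. For $e \in H^j \setminus E$ with $j \neq i$, part (3) of Theorem \ref{transverse decomposition for a length measure} yields $\lambda^j_n(e)/\lambda_n(e) \to 1$, hence $\lambda^j_n(e)/\lambda_n(G) \to l_e > 0$, and positivity of this term prevents $e$ from appearing in $\beta$.

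Density alone cannot exclude $e \in H^0 \setminus E$, since there $\lambda^i_n(e)/\lambda_n(e)$ may converge to $1$. For such edges I run a direct Borel-Cantelli argument. Theorem \ref{transverse decomposition for a length measure} gives $\sum_n \mu_n(e)\lambda_n(e) < \infty$, and the $L\lambda_n(G)$-neighborhood $N_{L\lambda_n(G)}(B_n(e))$ in $G_0$ of $B_n(e) = \phi_n^{-1}(e_n)$ has $\lambda_0$-measure at most $\mu_n(e)(\lambda_n(e) + 2L\lambda_n(G)) = O(\mu_n(e)\lambda_n(e))$, using that $\lambda_n(G)/\lambda_n(e) \to 1/l_e$ is bounded because $e \notin E$. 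Since $\lambda^i \le \lambda_0$, the $\lambda^i$-measures of these neighborhoods are summable, so Borel-Cantelli puts $\lambda^i$-a.e.\ $x$ outside $N_{L\lambda_n(G)}(B_n(e))$ for all large $n$ and every fixed $L$. Any accumulation $\beta \ni e$ at such $x$, however, produces $I_n \ni x$ containing a preimage of $e_n$ with $\lambda_0(I_n) \le \sum_{e' \in \beta} \lambda_n(e') \le L_\beta \lambda_n(G)$ for $L_\beta$ depending only on $\beta$, forcing $x \in N_{L_\beta \lambda_n(G)}(B_n(e))$ infinitely often---a contradiction.

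The main obstacle is exactly this $H^0$ case: the soft density argument misses it, and one genuinely needs the quantitative bound $\lambda^i \le \lambda_0$ together with summability of the thickened preimages in $G_0$. Once this Borel-Cantelli step is in place, uniting the null sets over the finitely many edges $e \notin H^i \cup E$ completes the proof.
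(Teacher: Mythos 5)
Your proof is correct and follows essentially the same route as the paper's: the same case split (edges of $H^j$, $j\neq i$, handled by combining part (3) of Theorem \ref{transverse decomposition for a length measure} with the vanishing density of $\lambda^j$ at $\lambda^i$-generic points; edges of $H^0$ handled by the summability of $\mu_n(e)\lambda_n(e)$ plus a Borel--Cantelli argument), followed by trapping $\beta_n$ inside the $\phi_n$-image of an interval about $x$ of length $O(\lambda_n(G))$. Your Borel--Cantelli on thickened preimages is the paper's monotone-convergence argument for $\sum_n f_n^c(\cdot,e)$ in a different guise, and your reduction to ``every edge of $\beta$ outside $H^i$ lies in the pinched part $E$'' is an equivalent restatement of the conclusion, so the two proofs differ only cosmetically.
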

	
	\begin{proof}
	    Given $c>0$ and $n$, for a point $x$ in the interior of an edge of $G_0$, we define $I_n^c(x)$ to denote the set of all points in the interior of the same edge whose $\lambda$-distance to $x$ is at most $c\,\lambda_n(G)$. Obviously if $t\in I_n^c(x)$ then $x\in I_n^c(t)$. Also given $e\in EG$ and similar to the proof of theorem \ref{transverse decomposition for folding}, we define $B_n(e)$ to be the $\phi_n$-pre-image of $e_n$ in $G_0$. We define

\[ f_n^c(x,e) = \frac1{2c\lambda_n(G)}\lambda(I_n^c(x) \cap B_n(e)) = \frac1{2c\lambda_n(G)}\int_{I_n^c(x)} \chi_{B_n(e)}(t)\, d\lambda(t).\]

After integrating with respect to $\lambda$: 
\begin{equation}\label{eq: integral of f folding}
\begin{aligned}
	\int_{G_0} f_n^c(x,e)\, d\lambda(x) &= \frac1{2c\lambda_n(G)}\int_{G_0}\int_{I_n^c(x)}\chi_{B_n(e)}(t)\, d\lambda(t)\, d\lambda(x) \\
	&= \frac1{2c\lambda_n(G)}\int_{G_0}\chi_{B_n(e)}(t) \int_{I_n^c(t)}d\lambda(x)\, d\lambda(t)\\
	&\le \int_{G_0} \chi_{B_n(e)}(t)\, d\lambda(t)\\
	&= \mu_n(e)\lambda_n(e).
\end{aligned}
\end{equation}
When $e\in H^0$, we know that $\sum_n\mu_n(e)\lambda_n(e) <\infty$. Therefore by the Monotone Convergence Theorem
\[ \int_{G_0}\left(\sum_n f_n^c(x,e)\right)\, d\lambda(x) = \sum_n \int_{G_0}f_n^c(x,e)\, d\lambda(x) \le \sum_n \mu_n(e)\lambda_n(e) <\infty.\]
In particular for $\lambda$-almost every $x$, $\sum_n f_n^c(x,e) <\infty$ and as a result $f_n^c(x,e)\to 0$ as $n\to\infty$.


     On the other hand, assume $i,j\in\{1,\ldots,k\}$ are distinct and $e\in H^j$. By part (3) of theorem 
     \ref{transverse decomposition for a length measure} $\lambda^j_n(e)/\lambda_n(e) \to 1$ as $n\to\infty$. So for a given $\ep>0$ and $n$ sufficiently large we can assume $\lambda^j(e_n)>(1-\ep)\lambda_n(e)$. Using this for every component of the pre-image of $e_n$ in $I_n^c(x)$, we have
     \[ \lambda^j(I_n^c(x)) \ge \lambda^j(I_n^c(x)\cap B_n^i(e)) \ge (1-\ep) \lambda(I_n^c(x)\cap B_n^i(e));\]
     so
     \[ \frac{\lambda^j(I_n^c(x))}{2c\lambda_n(G)} \ge (1-\ep) f_n^c(x,e).\]
     But since $2c\lambda_n(G) = \lambda(I_n^c(x))\to 0$ as $n\to\infty$ and by \eqref{eq: generic for length}, for $\lambda^i$-generic $x$ the fraction on the left tends to zero as $n\to\infty$. Hence $f_n^c(x,e)\to 0$ as $n\to\infty$ for $\lambda^i$-almost every $x\in G_0$.

We have shown that for $\lambda^i$-almost every $x\in G_0$ and $c>0$ fixed,
\[ \frac1{\lambda(I_n^c(x))} \lambda(I_n^c(x)\cap \phi_n^{-1}(G_n \setminus H^i_n)) \to 0\]
as $n\to\infty$. Equivalently for $\lambda^i$-almost every $x\in G_0$, $1/\lambda_n(G)$ times the $\lambda_n$-length of the intersection of $\phi_n(I_n^c(x))$ and the complement of $H_n^i$ tends to zero as $n\to\infty$:
\[ \lim_n \frac{\lambda_n( \phi_n(I_n^c(x)\setminus H^i_n))}{\lambda_n(G)} =0.\]

Given $\beta$ in $G$ which is an accumulation of germs at $x$, we can choose $c>0$, so that the path $\beta_n$ in $G_n$ is contained in $\phi_n(I_n^c(x))$ for every $n$. Then it follows that for $\lambda^i$-almost every such $x$
\[ \lim \frac{ \lambda(\gamma_n\setminus H^i_n)}{\lambda_n(G)} = 0.\]
	\end{proof}

\begin{cor}\label{accumulation of germs after pinching}
    Given $i=1,\ldots,k$, for $\lambda^i$-almost every $x\in G_0$ if a path $\beta$ in $G$ is an accumulation of germs at $x$, then $\beta$ projects to $H^i/E$.
\end{cor}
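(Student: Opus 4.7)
The plan is to deduce the corollary directly from Theorem \ref{accumulations of germs respect decomposition} together with the definition of the pinched subgraph $E \subseteq G$. Recall that $E$ is spanned by exactly those edges $e$ of $G$ for which $\lim_n \lambda_n(e)/\lambda_n(G) = 0$; equivalently, an edge of $G$ survives in the quotient $G/E$ if and only if its normalized length limit is strictly positive. To conclude that $\beta$ projects into $H^i/E$ it therefore suffices to show that every edge of $\beta$ is either contained in $H^i$ or contained in $E$.

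First I would fix $i \in \{1,\ldots,k\}$ and work with the full $\lambda^i$-measure set of $x \in G_0$ to which Theorem \ref{accumulations of germs respect decomposition} applies. For such an $x$, let $\beta$ be any accumulation of germs at $x$, and write $\beta = e^{(1)}\cdots e^{(m)}$ as a finite concatenation of edges of $G$. By the definition of an accumulation of germs, there is an infinite subsequence of indices $n$ along which $\beta_n$ coincides with $\phi_n(I_n)$ for a path $I_n$ in $G_0$ with $\lambda_0(I_n)\to 0$, and along this subsequence every edge $e^{(j)}$ of $\beta$ occurs in $\beta_n$ with exactly the same multiplicity. Hence if $e^{(j)}$ does not belong to $H^i$, it contributes $\lambda_n(e^{(j)})$ to $\lambda_n(\beta_n\setminus H^i_n)$ for infinitely many $n$.

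The estimate supplied by the theorem then yields, along this subsequence,
\[
\frac{\lambda_n(e^{(j)})}{\lambda_n(G)} \;\le\; \frac{\lambda_n(\beta_n \setminus H^i_n)}{\lambda_n(G)} \;\longrightarrow\; 0,
\]
so $e^{(j)}$ has vanishing normalized length limit and therefore $e^{(j)} \in E$. Thus every edge of $\beta$ that is not in $H^i$ is collapsed by the map $G\to G/E$, and the image of $\beta$ is contained in $H^i/E$.

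There is essentially no serious obstacle: the corollary is a direct qualitative repackaging of the stronger quantitative statement from Theorem \ref{accumulations of germs respect decomposition}. The only subtle point, which I would be careful to invoke explicitly, is the dichotomy built into the definition of $E$ — an edge of $G$ is either pinched or has normalized length bounded below along the chosen subsequence — so that the asymptotic estimate actually forces the qualitative alternative needed to land inside $H^i/E$.
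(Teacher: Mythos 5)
Your proof is correct and is exactly the deduction the paper intends: the corollary is stated without proof as an immediate consequence of Theorem \ref{accumulations of germs respect decomposition}, and your argument---bounding $\lambda_n(e^{(j)})/\lambda_n(G)$ by $\lambda_n(\beta_n\setminus H^i_n)/\lambda_n(G)$ for each edge $e^{(j)}$ of $\beta$ outside $H^i$ and invoking the definition of the pinched part $E$---is the right way to spell it out. The one point you handle correctly and that is worth having made explicit is that, after passing to the subsequence converging in the moduli space of graphs, the normalized length limit of every edge exists, so vanishing along a further subsequence forces membership in $E$.
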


\begin{lem}\label{frequency in an accumulation}
	Given $e$ in $H^i, i\in\{1,\ldots,k\}$ and $c>0$, there is a set of positive $\lambda^i$-measure in $G_0$, so that for every $x$, $f_n^c(x,e)>\ep$ for infinitely many $n$ and some $\epsilon>0$. 
\end{lem}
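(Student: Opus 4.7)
The plan is to adapt the argument of Lemma \ref{positive frequency for generic edges} from the unfolding setting: since $0\le f_n^c(\,\cdot\,,e)\le 1$, it suffices to produce a positive lower bound for $\int_{G_0} f_n^c(x,e)\,d\lambda^i(x)$ that holds for infinitely many $n$. A standard Markov-type estimate will then supply a set of positive $\lambda^i$-measure on which $f_n^c(x,e)$ exceeds a fixed $\epsilon$ for infinitely many $n$.

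First I would apply Fubini to the definition of $f_n^c$, together with the symmetry $\chi_{I_n^c(x)}(t)=\chi_{I_n^c(t)}(x)$, to obtain
\[
\int_{G_0} f_n^c(x,e)\,d\lambda^i(x) \;=\; \frac{1}{2c\lambda_n(G)} \int_{B_n(e)} \lambda^i(I_n^c(t))\,d\lambda(t) \;\ge\; \frac{1}{2c\lambda_n(G)} \int_{B_n(e)} \lambda^i(I_n^c(t))\,d\lambda^i(t),
\]
the last inequality using $\lambda^i\le\lambda$. Lemma \ref{length and frequency decay} gives $\lambda_n(G)\to 0$ (since $G_n$ is isomorphic to the fixed graph $G$ with $|EG|$ bounded), so at every $\lambda^i$-generic interior point $t$ of an edge of $G_0$, the density condition \eqref{eq: generic for length} yields $\lambda^i(I_n^c(t))/(2c\lambda_n(G))\to 1$ as $n\to\infty$. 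Egorov's theorem then provides a measurable $E\subseteq G_0$ with $\lambda^i(G_0\setminus E)<\eta$ on which this convergence is uniform.

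Combining this with the hypothesis $e\in EH^i$: \eqref{eq: length of preimage} gives $\lambda^i(B_n(e))=\mu_n(e)\lambda^i_n(e)$, so conclusion (1) of Theorem \ref{transverse decomposition for a length measure} yields $\liminf_n\lambda^i(B_n(e))=:\delta_0>0$. Choosing $\eta<\delta_0/2$, we have $\lambda^i(B_n(e)\cap E)\ge\delta_0/2$ for infinitely many $n$, and for such $n$ sufficiently large the display above gives $\int_{G_0} f_n^c(x,e)\,d\lambda^i(x)\ge(1-\eta)\delta_0/2=:c_0>0$. Since $f_n^c\le 1$, the Markov estimate
\[
\lambda^i\bigl(\{x:f_n^c(x,e)>\epsilon\}\bigr)\;\ge\; \int_{G_0} f_n^c\,d\lambda^i - \epsilon\,\lambda^i(G_0)
\]
applied with $\epsilon:=c_0/(2\lambda^i(G_0))$ produces sets $S_n$ of $\lambda^i$-measure at least $c_0/2$ for infinitely many $n$. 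Continuity from above of the finite measure $\lambda^i$ then finishes: $\lambda^i(\limsup_n S_n)\ge\limsup_n\lambda^i(S_n)\ge c_0/2>0$, which gives the desired set.

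The main point requiring care is the uniform density estimate in the second paragraph: one must verify $\lambda^i(I_n^c(t))/(2c\lambda_n(G))\to 1$ on a set of nearly full $\lambda^i$-measure even though $I_n^c(t)$ can a priori be truncated near an edge endpoint. This is handled because for each generic interior point $t$ the $\lambda$-distance from $t$ to the endpoints of its edge is strictly positive; after shrinking $E$ if necessary to impose a uniform positive lower bound on this distance, then for all $n$ with $c\lambda_n(G)$ below that bound no truncation occurs on $E$, and the symmetric-interval form of \eqref{eq: generic for length} applies directly.
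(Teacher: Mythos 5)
Your proof is correct and follows the same route as the paper: integrate $f_n^c(\cdot,e)$ against $\lambda^i$, use $\liminf_n \mu_n(e)\lambda^i_n(e)>0$ from Theorem \ref{transverse decomposition for a length measure} together with $\lambda^i(B_n(e))=\mu_n(e)\lambda^i_n(e)$ to get a uniform positive lower bound on the integral, and conclude by a Markov/reverse-Fatou argument. You are in fact more careful than the paper, whose two-line proof simply asserts $\int_{G_0}f_n^c(x,e)\,d\lambda^i(x)=\mu_n(e)\lambda^i_n(e)$ by appeal to \eqref{eq: integral of f folding} (an equation stated for $\lambda$, and in the useful direction only an upper bound); your density/Egorov step, including the treatment of truncation near edge endpoints, supplies the matching lower bound that the paper glosses over.
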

\begin{proof}
	By \eqref{eq: integral of f folding} in the previous proof, we know that 
\[ \int_{G_0}f_n^c(x,e)\, d\lambda^i(x) = \mu_n(e)\lambda_n^i(e).\] 
When $e$ is in $H^i$, $\liminf \mu_n(e)\lambda_n^i(e)>0$. From this we can find $\ep>0$ and a set of positive $\lambda^i$-measure, so that $f_n^c(x,e) >\ep$ for infinitely many $n$.
\end{proof}

\begin{cor}\label{recurrence after pinching}
	Given $e$ in $H^i, i\in\{1,\ldots,k\}$ and $l>0$, there is a set of positive $\lambda^i$-measure in $G_0$, so that for every $x$ in this set there is a path $\beta$ in $G$ which is an accumulation of germs at $x$ and $\beta$ traverses $e$ at least $l$ times.
\end{cor}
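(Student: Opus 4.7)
The plan is to combine Lemma~\ref{frequency in an accumulation} with a pigeonhole argument on the countable collection of combinatorial paths in $G$. First I would invoke Lemma~\ref{frequency in an accumulation} with $c>0$ (to be chosen below), obtaining $\epsilon>0$ and a set $S\subseteq G_0$ of positive $\lambda^i$-measure such that $f_n^c(x,e)>\epsilon$ for every $x\in S$ and infinitely many $n$. Examining the proof of that lemma shows one may take $\epsilon$ independent of $c$: since $\int f_n^c\,d\lambda^i=\mu_n(e)\lambda_n^i(e)\ge \delta$ (with $\delta:=\liminf_n\mu_n(e)\lambda_n^i(e)>0$, positive because $e\in H^i$) and $f_n^c\le 1$, a Markov-type inequality yields $\lambda^i\bigl(\{f_n^c>\delta/(2\lambda^i(G_0))\}\bigr)\ge \delta/2$.

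Next I would convert the density bound into a count of $e$-preimages. For $x\in S$ and $n$ with $f_n^c(x,e)>\epsilon$, the set $I_n^c(x)\cap B_n(e)$ has $\lambda$-measure at least $2c\epsilon\lambda_n(G)$. Each connected component of $B_n(e)=\phi_n^{-1}(e_n)$ is an arc of $\lambda$-length $\lambda_n(e)$, so the number of complete components inside $I_n^c(x)$ is at least $2c\epsilon\lambda_n(G)/\lambda_n(e)-2\ge 2c\epsilon-2$, using $\lambda_n(G)\ge \lambda_n(e)$. Choosing $c\ge(l+2)/(2\epsilon)$ makes this count at least $l$. Let $J_n(x)\subseteq I_n^c(x)$ be (any choice of) the shortest sub-arc containing $x$ together with $l$ complete components of $B_n(e)$, and set $\beta_n(x):=\phi_n(J_n(x))$; this is a combinatorial path in $G_n=G$ traversing $e$ at least $l$ times.

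To finish, I would argue that for every $x\in S$ the sequence $\bigl(\beta_n(x)\bigr)_n$ takes values in the countable set $\mathcal{P}$ of finite combinatorial paths in $G$; hence by pigeonhole some $\beta=\beta(x)\in\mathcal{P}$ is attained for infinitely many $n$. Taking $I_n=J_n(x)$ along this subsequence yields $\phi_n(I_n)=\beta$ and $\lambda_0(I_n)=\lambda_n(\beta)=\sum_{e'\in\beta}\lambda_n(e')\to 0$ by Lemma~\ref{length and frequency decay}, so $\beta$ is an accumulation of germs at $x$ traversing $e$ at least $l$ times. The one technical point worth flagging is that the combinatorial length of $\beta_n(x)$ need not be bounded in $n$ (pinched edges in $E$ can be traversed many times), so finite pigeonhole over paths of bounded length does not apply; the argument sidesteps this by exploiting that $\mathcal{P}$ is merely countable, which suffices.
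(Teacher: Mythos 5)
Your reduction to Lemma~\ref{frequency in an accumulation} and the counting step are sound: observing that $\epsilon$ can be taken independent of $c$ (by the Markov-type estimate, since $f_n^c\le 1$ and its integral against $\lambda^i$ is bounded below for $e\in H^i$), and then choosing $c$ of order $l/\epsilon$, does produce for each $x$ in a positive $\lambda^i$-measure set and for infinitely many $n$ an interval $J_n(x)\ni x$ with $\lambda_0(J_n(x))\to 0$ whose $\phi_n$-image traverses $e_n$ at least $l$ times. The gap is in the final step. The pigeonhole principle requires the sequence $\bigl(\beta_n(x)\bigr)_n$ to take only \emph{finitely} many values, not merely to lie in a countable set: an infinite sequence in a countably infinite set can be injective, in which case no value is attained infinitely often and no accumulation of germs is produced. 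The sentence claiming that countability of $\mathcal{P}$ ``suffices'' is exactly where the argument breaks; countability buys you nothing here.

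The difficulty you flagged is the real obstruction, and it cannot be sidestepped this way. The path $\beta_n(x)=\phi_n(J_n(x))$ has $\lambda_n$-length at most $(2c+2)\lambda_n(G)$, so any edge $e'$ with $\lambda_n(e')\ge\epsilon_0\lambda_n(G)$ is crossed a bounded number of times; but an edge of the pinched part has $\lambda_n(e')/\lambda_n(G)\to 0$ and may be crossed an unbounded number of times between consecutive occurrences of $e_n$ (think of images of the form $e_n d_n^{k_n} e_n$ with $d$ pinched and $k_n\to\infty$, which is compatible with $f_n^c(x,e)>\epsilon$ as long as $k_n\lambda_n(d)\lesssim\lambda_n(e)$). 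In that situation the $\beta_n(x)$ are pairwise distinct and your construction yields no accumulation of germs traversing $e$ twice, let alone $l$ times. To complete the proof one must show that, for $x$ in a positive-measure subset and along infinitely many $n$, the $l$ required crossings of $e_n$ can be captured inside a subpath of \emph{uniformly bounded combinatorial length}; only then does the pigeonhole apply, to the finite set of reduced edge paths in $G$ of that bounded length. That uniform bound is a genuine missing argument, not a technicality. (The paper states the corollary without proof, so this is a gap you need to close rather than a divergence from the authors' written argument.)
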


Corollaries \ref{accumulation of germs after pinching} and \ref{recurrence after pinching} imply in particular that:

\begin{cor}\label{valence two after pinching and folding}
	Given $i\in\{1,\ldots,k\}$, in every non-degenerate component of $H^i/E$, every vertex has valence at least $2$ (in $H^i/E$).
\end{cor}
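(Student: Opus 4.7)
The plan is to argue by contradiction in close parallel with the unfolding analog Corollary~\ref{valence at least 2 after pinching unfolding sequence}. Assume that some non-degenerate component of $H^i/E$ contains a vertex $v$ of valence~$1$ in $H^i/E$, with unique incident edge $e$.

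First, I would apply Corollary~\ref{recurrence after pinching} with $l=2$ and intersect the resulting positive $\lambda^i$-measure set with the full $\lambda^i$-measure set from Corollary~\ref{accumulation of germs after pinching}, producing a point $x\in G_0$ together with an accumulation of germs $\beta$ at $x$ that traverses $e$ at least twice and whose projection to $G/E$ lies in $H^i/E$. Because $\lambda_0(I_n)\to 0$, for $n$ large the interval $I_n$ is contained in a single edge $e_0$ of $G_0$, so $\beta=\phi_n(I_n)$ is a subarc of the reduced edge path $\phi_n(e_0)$ and is itself reduced in $G$.

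Next, I would trap $\beta$ using the valence hypothesis. Let $V_v\subseteq G$ denote the preimage of $v$ under the collapse $G\to G/E$, and let $\tilde v_e$ be the endpoint of $e$ lying in $V_v$. Valence~$1$ says that $e$ is the only edge of $H^i\smallsetminus E$ incident to $V_v$; since every non-$E$ edge of $\beta$ lies in $H^i$, each entry of $\beta$ into $V_v$ must be via $e$ and each exit via $\bar e$, both at $\tilde v_e$. The portion of $\beta$ between two consecutive traversals of $e$ is therefore a reduced closed edge path at $\tilde v_e$ contained entirely in $V_v\subseteq E$, and it is necessarily nontrivial, since otherwise $\beta$ would contain a forbidden backtrack $e\bar e$.

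The main obstacle is to exclude such a nontrivial loop in $V_v$. When $\tilde v_e\notin E$ one has $V_v=\{\tilde v_e\}$ and no nontrivial loop can exist, closing the argument immediately. In general, the plan is to send $l\to\infty$ in Corollary~\ref{recurrence after pinching}, obtaining accumulations $\beta$ with arbitrarily many such loops at $\tilde v_e$, and then to combine Theorem~\ref{accumulations of germs respect decomposition} with the defining property of the transverse decomposition to argue that the accumulated normalized $\lambda^i_n$-mass in $V_v$ must be carried by an edge of $H^i\smallsetminus E$ incident to $V_v$ other than $e$, violating valence~$1$. This mass-accounting step is the folding-case counterpart of the $E$-tree-component clause in Corollary~\ref{valence at least 2 after pinching unfolding sequence}, and it is where the proof requires genuine work beyond a direct reading of the two named corollaries.
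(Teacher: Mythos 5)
Your proposal follows the same route as the paper, which offers no argument beyond the assertion that Corollaries \ref{accumulation of germs after pinching} and \ref{recurrence after pinching} imply the statement. You have reconstructed the intended deduction correctly, and your reduction of the whole problem to excluding a nontrivial reduced loop in the collapsed subgraph $V_v\subseteq E$ over the valence-one vertex is exactly where the content lies. Two small points: it is the portion of $\beta$ between an entry into $V_v$ along $e$ and the \emph{next exit} along $\bar e$ that lies in $V_v$ (not the portion between arbitrary consecutive traversals of $e$, which may also wander outside $V_v$); and your ``immediate'' case should be stated for $V_v$ any tree rather than a single vertex, since a reduced closed path in a tree is trivial --- this already disposes of the recurrent case, which is where the corollary is actually invoked in Theorem \ref{recurrence for folding}.

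The gap is the final step, which you leave as a ``plan,'' and as described that plan does not work. The edges of the putative loop $\delta\subseteq V_v$ lie in $E$, so their normalized $\lambda_n$-lengths tend to zero by definition of the pinched part, and they may well lie in $H^i\cap E$; consequently neither Theorem \ref{accumulations of germs respect decomposition} (which controls only $\lambda_n(\beta_n\setminus H^i_n)/\lambda_n(G)$) nor the defining properties of the decomposition in Theorem \ref{transverse decomposition for a length measure} prevents $\beta$ from traversing such edges arbitrarily often at negligible normalized cost. Nothing you have assembled therefore rules out a reduced path of the form $e\,\delta_1\,\bar e\,\gamma_1\,e\,\delta_2\,\bar e\,\gamma_2\cdots$, with each $\delta_j$ a nontrivial reduced loop in $V_v$ and each $\gamma_j$ a reduced return loop in $(H^i\cup E)\smallsetminus V_v$, which traverses $e$ as many times as desired while $v$ retains valence one in $H^i/E$. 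Closing this requires a genuinely new input excluding non-tree components of $E$ attached to $H^i$ only along $e$ at $v$; the paper's one-line deduction leaves the same point unaddressed, so as written both your argument and the paper's establish the corollary only when the relevant components of $E$ are trees (in particular, in the recurrent setting needed for Theorem \ref{recurrence for folding}).
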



\subsection{Recurrence for a folding sequence}\label{subsec: recurrence for folding}
As in the case of an unfolding sequence, a folding sequence $(G_n)_{n\ge0}$ with a length measure $\lambda = (\vec\lambda_n)_n$ is {\em recurrent} if it has a subsequence that converges in the moduli space of graphs and the pinched part is a forest. 

\begin{thm}\label{recurrence for folding}
    Assume $(G_n)_n$ is a reduced folding sequence and equipped with a length measure $\lambda=(\vec\lambda_n)_n$, it is recurrent. Then the ergodic decomposition of $\lambda$ consists of at most $N$ components.
\end{thm}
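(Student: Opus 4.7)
The plan is to mirror the rank-counting argument used by the authors for the unfolding case (Theorem \ref{recurrent implies bounded number of measures}). Write $\lambda = \lambda^1 + \cdots + \lambda^k$ for the ergodic decomposition. By recurrence I may pass to a subsequence of $(G_n)$ converging in the moduli space of metric graphs to a limit graph $G$ whose pinched part $E$ is a forest, and by a further subsequence invoke Theorem \ref{transverse decomposition for a length measure} to obtain a transverse decomposition $H^0, H^1, \ldots, H^k$ of $G$. Collapsing $E$ then yields a transverse decomposition $H^0/E, H^1/E, \ldots, H^k/E$ of $G/E$; since $E$ is a forest, collapsing it preserves first Betti number, so $\rank(G/E) = N$.

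The key step is to verify that each $H^i/E$ with $i \ge 1$ is non-degenerate, for then Corollary \ref{valence two after pinching and folding} guarantees that every vertex in every non-degenerate component of $H^i/E$ has valence at least $2$, which forces each $H^i/E$ to contain an embedded cycle. Non-degeneracy is the step I expect to be the main obstacle, and I would establish it by contradiction: if $H^i/E$ were degenerate (a discrete set of vertices), then by Corollary \ref{accumulation of germs after pinching}, for $\lambda^i$-a.e.\ $x \in G_0$ every accumulation of germs $\beta$ at $x$ would project to a single vertex in $G/E$, hence would have all its edges contained in $E$. Being a connected reduced path in the forest $E$, such a $\beta$ would lie in a single tree component and could not cross any given edge more than once. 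This contradicts Corollary \ref{recurrence after pinching}, which for a fixed $e \in EH^i$ and any $l$ produces, on a set of positive $\lambda^i$-measure, accumulations $\beta$ traversing $e$ at least $l$ times.

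With each $H^i/E$, $i = 1, \ldots, k$, non-degenerate and satisfying the valence condition, pick one embedded cycle from each. The resulting $k$ cycles are pairwise edge-disjoint, since the subgraphs $H^i/E$ are. Edge-disjoint cycles are linearly independent in $H_1(G/E; \BZ)$, because any non-trivial integer relation among them would require edge-cancellation, impossible under edge-disjointness. Therefore $k \le \rank(G/E) = N$, which is the desired conclusion.
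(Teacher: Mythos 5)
Your proof is correct and follows essentially the same route the paper takes for the unfolding analogue, Theorem \ref{recurrent implies bounded number of measures} (the paper states Theorem \ref{recurrence for folding} without proof, evidently as the parallel case): pass to a recurrent subsequence with forest pinched part, invoke the transverse decomposition, and count edge-disjoint cycles against $\rank(G/E)=N$. Your explicit contradiction argument for non-degeneracy of $H^i/E$ is a worthwhile addition, since Corollary \ref{valence two after pinching and folding} omits the clause (present in its unfolding counterpart, Corollary \ref{valence at least 2 after pinching unfolding sequence}) that no $H^i$ can sit inside a tree component of $E$.
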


\section{Folding/unfolding sequences and progress in $\CF\CF(N)$}\label{sec: progress in FF}

We say a folding/unfolding sequence $(G_n)_{a\le\le b}$ {\em moves linearly} or is {\em linear} if there exists $M>0$, so that for every $n$ every entry of the incidence matrix of $f_n$ is bounded by $M$. From this one can easily conclude that

\begin{lem}\label{linear implies linear speed}
	Given a linear folding/unfolding sequence, the distances in $CV_N$ grow at most linearly, i.e.,
	\[ d_{CV_N}(G_m, G_n) \le O(n-m).\]
\end{lem}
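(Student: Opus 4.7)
The plan is to bound the Lipschitz constant of each composition $f_{m,n}=f_{n-1}\circ\cdots\circ f_m:G_m\to G_n$ and pass this bound to the asymmetric Lipschitz metric on $CV_N$. Recall that for marked metric graphs $(G,\lambda)$ and $(G',\lambda')$ of volume one, the Lipschitz distance is the logarithm of the infimal Lipschitz constant of a marking-preserving map $G\to G'$; we will exhibit such a map explicitly.

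First I would equip each $G_n$ with its simplicial metric $\vec 1_{G_n}$ and compute the Lipschitz constant of $f_n:G_n\to G_{n+1}$. Since $f_n$ restricts to an isometry on each edge of $G_n$ onto a reduced edge path in $G_{n+1}$, the Lipschitz constant of $f_n$ with simplicial metrics is at most the maximal simplicial length of $f_n(e)$ over $e\in EG_n$. By definition of the incidence matrix, this length equals $\sum_{e'\in EG_{n+1}}M_{f_n}(e',e)$. The linearity hypothesis bounds each entry by $M$, and since every graph occurring in a folding/unfolding sequence of rank $N$ has at most $3N-3$ edges, we obtain a uniform bound $\Lip(f_n)\le M(3N-3)=:C_0$, independent of $n$.

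Composing across one step at a time gives $\Lip(f_{m,n})\le C_0^{n-m}$ with simplicial metrics on $G_m$ and $G_n$. To pass to the distance in $CV_N$, I renormalize: scaling the metric on $G_m$ by $1/|EG_m|$ and on $G_n$ by $1/|EG_n|$ produces volume-one representatives, and this rescaling changes the Lipschitz constant by the factor $|EG_m|/|EG_n|$, which lies in $[1/(3N-3),\,3N-3]$. Hence the normalized map $f_{m,n}$ has Lipschitz constant at most $C_0^{n-m}\cdot(3N-3)$, and taking logarithms yields
\[
d_{CV_N}(G_m,G_n)\;\le\;(n-m)\log C_0+\log(3N-3)\;=\;O(n-m).
\]

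There is essentially no obstacle here; the argument is a bookkeeping exercise that converts the combinatorial boundedness of the incidence matrices into a metric Lipschitz bound. The only subtle point is accounting for the volume normalization when moving from the unprojectivized picture (where the simplicial metrics naturally live) to $CV_N$; this introduces only a bounded additive error because the number of edges of a rank-$N$ graph is uniformly bounded above and below.
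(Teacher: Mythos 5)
Your overall strategy---use the composition $f_{m,n}=f_{n-1}\circ\cdots\circ f_m$ as a candidate change of marking, bound its Lipschitz constant one step at a time via the entry bound $M$ on the incidence matrices, and then correct for the volume normalization---is exactly the intended (and unwritten) argument; the paper asserts this lemma without proof because the computation is routine. The step bound $\mathrm{Lip}(f_n)\le M\cdot|EG_{n+1}|$, the multiplicativity under composition, and the observation that rescaling to volume one only changes the Lipschitz constant by a factor bounded in terms of $N$ are all correct as far as they go.

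The one place your write-up does not match the paper's setting is the choice of metrics. You give each $G_n$ its own simplicial metric $\vec 1_{G_n}$, but these vectors do not form a length measure on the sequence, and the points of $CV_N$ that the paper attaches to a folding/unfolding sequence come from a length measure $(\vec\lambda_n)_n$ (for an unfolding sequence, the pullback of $\vec 1_{G_0}$, as in Sections 4 and 6, where the normalized metric is $P\lambda_n(e)=\lambda_n(e)/\lambda_n(G_n)$). Under $\vec\lambda_n$ some edges can become arbitrarily short relative to the total volume---this is precisely the pinching phenomenon the paper studies---so the identity map from $(G_m,\vec\lambda_m/\vol(G_m))$ to $(G_m,\vec 1_{G_m}/|EG_m|)$ has Lipschitz constant $\vol(G_m)/(|EG_m|\min_e\lambda_m(e))$, which is not bounded along the sequence (and by White's formula the distance genuinely blows up whenever the pinched part contains an embedded loop). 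Hence the triangle-inequality conversion from your estimate to the one needed fails in one direction. The fix actually shortens the proof: with the metrics $\vec\lambda_m$ and $\vec\lambda_n$ the map $f_{m,n}$ is a local isometry on edges, so its Lipschitz constant is $1$ before normalization and $\vol(G_m)/\vol(G_n)$ after; since $\vol(G_m)=\sum_{e'\in EG_{m+1}}\lambda_{m+1}(e')\sum_{e\in EG_m}M_{f_m}(e',e)\le M\,|EG_m|\,\vol(G_{m+1})$, iterating gives $d_{CV_N}(G_m,G_n)\le(n-m)\log\bigl(M(6N-6)\bigr)$, valid for any length measure and for both folding and unfolding sequences.
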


\begin{rem}
In fact, when $(G_n)_n$ is cocompact, we can show the distances grow at least linearly. 
\end{rem}

Suppose $(G_n)_{n\le 0}$ is an unfolding sequence with legal lamination $\closure\Lambda$ and ergodic probability currents $\mu^1,\ldots,\mu^k$. Recall that $\mu^i$-almost every leaf $\omega$ of $\closure\Lambda$ are generic, i.e.,
\[ \lim_n \frac{\langle\gamma,\omega[0,n]\rangle}n = \lim_n\frac{\langle\gamma,\omega[-n,0]\rangle}n = \mu^i(\gamma)\]
for a fixed choice of $\gamma\in\Omega(G_0)$ (or for every $\gamma$ if one insists). The following lemma shows for $a\le 0<b$ and $b-a$ large, a large (relative to $b-a$) subsegment of $\omega[a,b]$ also has the property that the frequency of occurrence of $\gamma$ is close to $\mu^i(\gamma)$. 

	\begin{lem}\label{frequency in a subpath}
		Given $\theta, \ep>0$ and $\mu^i$-generic leaf $\omega$ of $\closure\Lambda$ there exists $n_0>0$ so that if $a\le 0\le b$ are given with $b-a\ge n_0$ and $[c,d]\subset[a,b]$ is a sub-interval with $c-d > \theta (b-a)$ 
		\[ \left| \frac{ \langle \gamma , \omega[c,d] \rangle}{d-c} - \mu^i(\gamma) \right| < \ep.\]
	\end{lem}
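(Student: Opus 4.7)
The plan is to deduce the lemma from the defining genericity of $\omega$, which already controls the frequency of $\gamma$ in one-sided subpaths $\omega[0,n]$ and $\omega[-n,0]$ for large $n$, combined with a near-additivity property of the counting function $\langle\gamma,\cdot\rangle$ across concatenated intervals.

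First I would record a quantitative form of genericity: given $\epsilon'>0$, there is $N=N(\epsilon',\omega)>0$ so that for every integer $t\ge N$,
\[ \bigl|\langle\gamma,\omega[0,t]\rangle-\mu^i(\gamma)t\bigr|<\epsilon' t \quad \text{and}\quad \bigl|\langle\gamma,\omega[-t,0]\rangle-\mu^i(\gamma)t\bigr|<\epsilon' t.\]
For $0\le t<N$ the trivial bound $|\langle\gamma,\omega[0,t]\rangle-\mu^i(\gamma)t|\le 2N$ holds, so after combining one obtains a uniform estimate $\bigl|\langle\gamma,\omega[0,t]\rangle-\mu^i(\gamma)t\bigr|\le\epsilon' t+C$ for all $t\ge 0$, with $C=C(\epsilon',\omega)$ independent of $t$, and the analogous statement on the negative side.

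Next I would decompose $\langle\gamma,\omega[c,d]\rangle$ into pieces whose asymptotics are known. Since occurrences of $\gamma$ that straddle any fixed intermediate integer contribute at most $|\gamma|-1$, for any integers $c\le s\le d$ one has
\[ \langle\gamma,\omega[c,d]\rangle=\langle\gamma,\omega[c,s]\rangle+\langle\gamma,\omega[s,d]\rangle+O(|\gamma|).\]
Taking $s=0$ (or $s=c$ when $0\le c$, etc.) and rewriting each piece as $\langle\gamma,\omega[0,\cdot]\rangle$ or $\langle\gamma,\omega[-\cdot,0]\rangle$, a short case analysis on the signs of $c$ and $d$ gives, in every case,
\[ \bigl|\langle\gamma,\omega[c,d]\rangle-\mu^i(\gamma)(d-c)\bigr|\le\epsilon'\bigl(|c|+|d|\bigr)+2C+O(|\gamma|).\]

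Finally, the hypotheses $a\le 0\le b$ and $[c,d]\subseteq[a,b]$ force $|c|,|d|\le b-a$, so $|c|+|d|\le 2(b-a)<2(d-c)/\theta$ by the lower bound on $d-c$. Dividing the displayed inequality by $d-c$ yields
\[ \left|\frac{\langle\gamma,\omega[c,d]\rangle}{d-c}-\mu^i(\gamma)\right|\le\frac{2\epsilon'}{\theta}+\frac{2C+O(|\gamma|)}{d-c}.\]
Choosing $\epsilon'=\epsilon\theta/4$ at the outset and then $n_0$ large enough that $d-c\ge\theta n_0$ makes the second term less than $\epsilon/2$ finishes the argument. The only obstacle is bookkeeping: one must carefully follow the endpoint errors through each case of the decomposition and check that the worst case — in which one of $c,d$ is close to $0$, so genericity is actually used on only one side — is still absorbed by the uniform estimate from the first step.
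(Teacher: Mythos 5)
Your proof is correct. The paper offers no argument for this lemma at all --- it is asserted with a \verb|\qed| as a routine consequence of the two-sided genericity condition \eqref{eq: generic frequency} --- and your route (a quantitative form of genericity with a uniform additive error $C$, near-additivity of $\langle\gamma,\cdot\rangle$ up to $O(|\gamma|)$ across a cut point, the bound $|c|+|d|\le 2(b-a)<2(d-c)/\theta$, and the choice $\epsilon'=\epsilon\theta/4$) is exactly the standard computation the authors are implicitly invoking; note only that the hypothesis ``$c-d>\theta(b-a)$'' in the statement is a typo for ``$d-c>\theta(b-a)$'', which you have correctly read.
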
 \qed

We have shown in theorem \ref{transverse decomposition} that there is a subsequence $(G_{n_m})_m$ of $(G_n)_n$ with $G_{n_m}$ isomorphic to a fixed graph $G$ and there is a transverse decomposition $H^0,H^1,\ldots,H^k$ of $G$ associated to the ergodic currents $\mu^1,\ldots,\mu^k$ so that the conclusion of the theorem holds. 

As usual we use the simplicial length $(\vec\lambda_n)_n$ for every $G_n$. To simplify notation, we also use $\vec{P\lambda_n}$ to denote a multiple of $\vec\lambda_n$ whose component associated to $e\in EG_n$ is $P\lambda_n(e)=\lambda_n(e)/\lambda_n(G_n)$ and $\lambda_n(G_n)$ is the total length of $G_n$.

Given a positive integer $l>0$, we assume we have passed to a further subsequence of $(G_{n_m})_m$ so that for every $m, p=0,\ldots,l$, and $e\in EG_{n_m+p}$ either $P\lambda_n(e)$ is bounded from below by a positive constant or tends to zero with $m$. More precisely, there is a subgraph $E_{n_m+p}\subset G_{n_m+p}$ (another sequence of ``pinched'' subgraphs) so that given $e_{m,p}\in EG_{n_m+p}$, $P\lambda_{n_m+p}(e_{m,p})\to 0$ as $m\to\infty$, also given $e\in EG_{n_m+p}$ which is not in $E_{n_m+p}$, $P\lambda_{n_m+p}(e)>\ep$ for a uniform constant $\ep>0$. Given $m$ and $p=0,\ldots,l$ we can use the composition 
\[ f_{n_m+p-1} \circ \cdots \circ f_{n_m+1} \circ f_{n_m} : G_{n_m}\to G_{n_m+p}\]
and compose it with the collapse map $G_{n_m+p}/E_{n_m+p}$. Assuming that the sequence $(G_n)_n$ is linear we claim:

\begin{lem}\label{disjoint images after collapse for unfolding}
	For $m$ sufficiently large depending on $l$, and a distinct pair $i,j\in\{1,\ldots,k\}$, the images of $H^i_{n_m}$ and $H^j_{n_m}$ in $G_{n_m+p}/E_{n_m+p}$ do not share any edge.
\end{lem}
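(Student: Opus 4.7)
The plan is to argue by contradiction, producing two mutually contradictory estimates for the frequency $\langle \gamma, \phi_{n_m+p}(\tilde e_m)\rangle/\lambda_{n_m+p}(\tilde e_m)$ of a well-chosen finite word $\gamma$. Assume the conclusion fails: passing to a subsequence in $m$, for fixed distinct $i,j$ and some fixed $p\in\{0,\ldots,l\}$ (pigeonhole on the finite set of possible $p$'s), there exist $e^i\in EH^i$, $e^j\in EH^j$ (also fixed, via pigeonhole on the finite vertex set of $G$), and $\tilde e_m\in EG_{n_m+p}\setminus E_{n_m+p}$ such that $\tilde e_m$ appears in both $f_{n_m,n_m+p}(e^i_{n_m})$ and $f_{n_m,n_m+p}(e^j_{n_m})$. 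Choose $\gamma\in\Omega(G_0)$ with $\mu^i(\gamma)\neq\mu^j(\gamma)$ (possible by the pairwise distinctness of ergodic currents) and set $\eta=|\mu^i(\gamma)-\mu^j(\gamma)|/3$.

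Second, I would establish a uniform lower bound $\lambda_{n_m+p}(\tilde e_m)\ge \theta\,\lambda_{n_m}(e^i)$ with $\theta>0$ independent of $m$, by exploiting linearity. Each entry of the incidence matrix of the composition $f_{n_m,n_m+p}$ is at most $M^p\le M^l$, so $\lambda_{n_m}(G_{n_m})\le (3N-3)M^l\,\lambda_{n_m+p}(G_{n_m+p})$. Combining this with the defining property $\lambda_{n_m+p}(\tilde e_m)\ge\epsilon\,\lambda_{n_m+p}(G_{n_m+p})$ of $\tilde e_m\notin E_{n_m+p}$ and the trivial inequality $\lambda_{n_m}(e^i)\le\lambda_{n_m}(G_{n_m})$, one obtains $\theta=\epsilon/((3N-3)M^l)$; the same bound holds with $e^j$ in place of $e^i$.

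Third, since $e^i\in EH^i$, part (1) of Theorem \ref{transverse decomposition} and \eqref{eq: size of image of a cylinder} give $\liminf_m\mu^i(A_{n_m}(e^i))>0$, whence Lemma \ref{large frequency implies containing generic points} produces a subset of positive $\mu^i$-measure of leaves $\omega$ with $\omega_{n_m}[0,1]=e^i_{n_m}$ for infinitely many $m$. By ergodicity of $\mu^i$ we may choose such an $\omega$ that is also $\mu^i$-generic, and pass to the corresponding sub-subsequence. Then Lemma \ref{pre-image of leaves} gives $\phi_{n_m}(e^i_{n_m})=\omega[a_m,b_m]$ with $b_m-a_m=\lambda_{n_m}(e^i)$, and the occurrence of $\tilde e_m$ inside $f_{n_m,n_m+p}(e^i_{n_m})$ locates $\phi_{n_m+p}(\tilde e_m)$ as a subpath $\omega[c_m,d_m]\subset\omega[a_m,b_m]$ with $d_m-c_m=\lambda_{n_m+p}(\tilde e_m)\ge\theta(b_m-a_m)$. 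Applying Lemma \ref{frequency in a subpath} with these $\theta$ and $\eta$, for $m$ large enough
\[ \left|\frac{\langle\gamma,\phi_{n_m+p}(\tilde e_m)\rangle}{\lambda_{n_m+p}(\tilde e_m)}-\mu^i(\gamma)\right|<\eta. \]
The symmetric argument with $e^j$ and a $\mu^j$-generic leaf bounds the very same ratio within $\eta$ of $\mu^j(\gamma)$, contradicting $|\mu^i(\gamma)-\mu^j(\gamma)|=3\eta$.

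The main obstacle is securing the uniform lower bound $\theta>0$ in step two: without linearity, the combinatorial length of $f_{n_m,n_m+p}(e^i_{n_m})$ could grow with $m$ in a way that allows $\phi_{n_m+p}(\tilde e_m)$ to shrink to a negligible subpath of $\phi_{n_m}(e^i_{n_m})$, which would invalidate the hypothesis of Lemma \ref{frequency in a subpath}. The linearity hypothesis is exactly what prevents this pathology and lets the generic-frequency estimate be transferred from the full path $\omega[a_m,b_m]$ to the short sub-arc $\omega[c_m,d_m]$.
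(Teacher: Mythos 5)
Your proof is correct and follows essentially the same route as the paper's: use linearity to get a uniform lower bound $\theta$ on the relative length of the shared sub-arc, then derive contradictory frequency estimates via Lemma \ref{frequency in a subpath}. You spell out more explicitly than the paper how that lemma is invoked (selecting a $\mu^i$-generic leaf via Lemmas \ref{large frequency implies containing generic points} and \ref{pre-image of leaves}), which is a welcome clarification rather than a deviation.
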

\begin{proof}
	Assume $e_1\in EH^i_{n_m}$ and $e_2\in EH^j_{n_m}$ are given and $e'$ is an edge of $G_{n_m+p}/E_{n_m+p}$ which is in the images of both $e_1$ and $e_2$. This implies there are sub-arcs $e_1'\subset e_1$ and $e_2'\subset e_2$ and the restrictions of the map $G_{n_m}\to G_{n_m+p}/E_{n_m+p}$ to those are homeomorphisms onto $e'$. Since $(G_n)_n$ is linear, it follows that $ \lambda_{n_m}(G_{n_m}) < (MN)^p \lambda_{n_m+p}(G_{n_m+p})$ with $M$ the upper bound for the entries of the incidence matrices. Since $e'$ is not in $EG_{n_m+p}$, its length is at least $\ep \lambda_{n_m+p}(G_{n_m+p})$. Therefore lengths of $e_1'$ and $e_2'$ are at least $\ep \lambda_{n_m}(G_{n_m})/(MN)^p$. This implies that $\lambda_{n_m}(e_1')>\theta \lambda_{n_m}(e_1)$ and $\lambda_{n_m}(e_2')>\theta\lambda_{n_m}(e_2)$ with
	\[ \theta = \ep/(MN)^l. \]
	Then we can use lemma \ref{frequency in a subpath} to conclude that 
	\[ \frac{\langle \gamma, \phi_{n_m}(e_1') \rangle}{\lambda_{n_m}(e_1')} \to \mu^i(\gamma) \quad {\rm and} \quad \frac{\langle \gamma, \phi_{n_m}(e_2') \rangle}{\lambda_{n_m}(e_2')} \to \mu^j(\gamma)\]
	as $m\to\infty$. But $\phi_{n_m}(e_1') = \phi_{n_m}(e_2')$ since they both factor through $e'$ and this contradicts the fact that $\mu^i(\gamma)\neq \mu^j(\gamma)$.
\end{proof}

For $m$ sufficiently large and $p=0,\ldots,l$, choose an edge $e_{m,p}$ of $G_{n_m+p}$ which is not in $E_{n_m+p}$. By the above lemma, $e_{m,p}$ is in the image of at most one $H^i_{n_m}$, for $i=1,\ldots,k$. So if $k>1$, we can choose $j\in\{1,\ldots,k\}$ so that the image of $H^j_{n_m}$ in $G_{n_m+p}$ does not contain $e_{m,p}$. By corollary \ref{valence at least 2 after pinching unfolding sequence} $H^j_{n_m}/E_{n_m}$ contains a loop, we assume $\delta$ is a primitive loop in $G_{n_m}$ which projects to this loop. Then we claim that for every $p\in 0,\ldots, l$ (and $m$ sufficiently large), the image of $\delta$ in $G_{n_m+p}$ does not fill (misses at least one edge). The idea is that every edge of $\delta$ is either in $H^j_{n_m}$ or is in $E_{n_m}$. We had chosen $e_{m,p}$ so that it is not in the image of $H^j_{n_m}$. Also for $m$ sufficiently large, $e_{m,p}$ is not in the image of $E_{n_m+p}$ because (by the argument in the proof of the previous lemma), length of $e_{m,p}$ is at least $\ep\lambda_{n_m}(G_{n_m})/(MN)^p$. But for edges of $E_{n_m+p}$ the ratio of their length over $\lambda_{n_m}(G_{n_m})$ goes to zero as $m\to\infty$. So $e_{m,p}$ cannot be in the image of $E_{n_m+p}$. 

We have showed that for every $l$, if $m$ is chosen to be large enough, then there exists a primitive loop $\delta$ in $G_{n_m}$ whose image does not fill $G_{n_m+p}$ for every $p\in\{0,\ldots,l\}$. This implies that $d_{\CF\CF}(\delta,G_{n_m+p})\le 3$ where the distance is the $\CF\CF$-distance between the cyclic free factor generated by $\delta$ and a projection of $G_{n_m+p}$ to the free factor complex (obtained by taking the free factor represented by removing an edge of $G_{n_m+p}$). Consequently $d_{\CF\CF}(G_{n_m},G_{n_m+p}) \le 6.$

\begin{prop}
    Given $l$, for $m$ sufficiently large and every $p\in\{0,\ldots,l\}$
    \[ d_{\CF\CF}(G_{n_m},G_{n_m+p}) \le 6.\]
\end{prop}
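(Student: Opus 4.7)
The plan is to execute the construction sketched in the three paragraphs preceding the proposition and then convert its output into a bound in $\CF\CF$. Fix $l$ and pass to the subsequence and pinched subgraphs $E_{n_m+p} \subset G_{n_m+p}$ already supplied in the discussion, so that every edge of $G_{n_m+p}$ outside $E_{n_m+p}$ satisfies $P\lambda_{n_m+p}(e) \geq \ep$ for a uniform $\ep > 0$. For each $p \in \{0,\ldots,l\}$ pick an edge $e_{m,p} \in EG_{n_m+p} \ssm EE_{n_m+p}$. By Lemma \ref{disjoint images after collapse for unfolding}, $e_{m,p}$ lies in the image of at most one $H^i_{n_m}$ under the collapse $G_{n_m}\to G_{n_m+p}/E_{n_m+p}$, so (assuming $k \geq 2$, which is the case of interest) I may select $j = j(m,p) \in \{1,\ldots,k\}$ such that the image of $H^j_{n_m}$ misses $e_{m,p}$.

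By Corollary \ref{valence at least 2 after pinching unfolding sequence}, a non-degenerate component of $H^j_{n_m}/E_{n_m}$ has minimum valence two and therefore contains an embedded cycle; lifting this cycle through the collapse produces an immersed loop $\delta = \delta_{m,p}$ in $G_{n_m}$ all of whose edges lie in $EH^j_{n_m} \cup EE_{n_m}$. I then verify that for $m$ sufficiently large the image of $\delta$ in $G_{n_m+p}$ avoids $e_{m,p}$: edges of $\delta$ in $H^j_{n_m}$ map into the image of $H^j_{n_m}$ in $G_{n_m+p}/E_{n_m+p}$, which misses $e_{m,p}$ by construction, while edges of $\delta$ in $E_{n_m}$ cannot cover $e_{m,p}$ either, because linearity forces $\lambda_{n_m+p}(e_{m,p}) \geq \ep (MN)^{-p} \lambda_{n_m}(G_{n_m})$, whereas each edge of $E_{n_m}$ has $\lambda_{n_m}$-length that is $o(\lambda_{n_m}(G_{n_m}))$ as $m \to \infty$ by the definition of the pinched part.

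Finally, since $\delta$ misses $e_{m,p}$ in $G_{n_m+p}$, its conjugacy class is carried by the proper subgraph $G_{n_m+p}\ssm e_{m,p}$, whose fundamental group is a proper free factor appearing in the standard projection of $G_{n_m+p}$ to $\CF\CF$; together with the uniformly bounded diameter of this projection this yields $d_{\CF\CF}(\delta, G_{n_m+p}) \leq 3$. Symmetrically, $\delta$ is carried by the proper subgraph $H^j_{n_m} \cup E_{n_m}$ of $G_{n_m}$, so $d_{\CF\CF}(\delta, G_{n_m}) \leq 3$, and the triangle inequality gives the claim. The main technical obstacle is the length comparison in the middle step: one must ensure uniformly in $p \leq l$ that the exponential factor $(MN)^p$ coming from the linearity hypothesis is dominated by the $o(\lambda_{n_m}(G_{n_m}))$ decay of pinched edges. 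Because $l$ is chosen before $m$, the factor $(MN)^l$ is a fixed constant and can be absorbed by taking $m$ large, which handles all $p \in \{0,\ldots,l\}$ simultaneously.
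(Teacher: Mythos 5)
Your proposal is correct and follows essentially the same route as the paper: pick a non-pinched edge $e_{m,p}$, use Lemma \ref{disjoint images after collapse for unfolding} to find an $H^j_{n_m}$ whose image misses it, extract a loop $\delta$ from $H^j_{n_m}/E_{n_m}$ via Corollary \ref{valence at least 2 after pinching unfolding sequence}, and rule out coverage by pinched edges with the same $\ep\lambda_{n_m}(G_{n_m})/(MN)^p$ lower bound, concluding with the free-factor distance to $\delta$ and the triangle inequality. The only cosmetic difference is that you argue $d_{\CF\CF}(\delta,G_{n_m})\le 3$ by noting $\delta$ is carried by the proper subgraph $H^j_{n_m}\cup E_{n_m}$, whereas the paper treats this as the $p=0$ instance of the same non-filling claim; these are interchangeable.
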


We have proved:

\begin{thm}\label{slow progress in FF for unfoldings}
    Given a linear unfolding sequence $(G_n)_{n\le 0}$. If $(G_n)_n$ is not dually uniquely ergodic, i.e., the legal lamination of $(G_n)_n$ admits more than one non-homothetic ergodic currents, then for every $l$, there exists $n$ large so that
    \[ d_{\CF\CF}(G_n, G_{n+p})\le 6\]
    for every $p\in\{0,\ldots,l\}$.
\end{thm}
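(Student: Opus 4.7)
The theorem is essentially a repackaging of the proposition immediately preceding it; the substantive work has already been carried out, and what remains is to explain how non-unique ergodicity feeds into that proposition and to invoke the triangle inequality in $\CF\CF$. My plan proceeds as follows.

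First, assuming $(G_n)_{n \leq 0}$ has legal lamination $\closure\Lambda$ with $k \geq 2$ pairwise non-homothetic ergodic probability currents $\mu^1,\ldots,\mu^k$, I would apply Theorem \ref{transverse decomposition} to pass to a subsequence $(G_{n_m})$ in which every $G_{n_m}$ is identified with a fixed graph $G$ carrying a transverse decomposition $H^0,H^1,\ldots,H^k$. Next, fix $l$. I would then further refine the subsequence so that for each $p \in \{0,1,\ldots,l\}$ and every edge of $G_{n_m+p}$, the projectivized length $P\lambda_{n_m+p}(e) = \lambda_{n_m+p}(e)/\lambda_{n_m+p}(G_{n_m+p})$ either stays uniformly bounded below by some $\ep > 0$ or tends to $0$ as $m \to \infty$. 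The edges of the second type form a subgraph $E_{n_m+p}$, and compactness of the rank-$N$ moduli of graphs (together with finiteness of the set of isomorphism types appearing) makes such a refinement possible.

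The heart of the argument is Lemma \ref{disjoint images after collapse for unfolding}, which uses linearity of the sequence in a crucial way: since incidence matrix entries are uniformly bounded by $M$, the total length $\lambda_{n_m}(G_{n_m})$ is comparable to $\lambda_{n_m+p}(G_{n_m+p})$ up to the factor $(MN)^p$, and any edge $e'$ of $G_{n_m+p}/E_{n_m+p}$ has length at least $\ep \lambda_{n_m+p}(G_{n_m+p})$. Therefore any edge of $G_{n_m}$ whose image contains $e'$ must contribute a definite fraction $\theta = \ep/(MN)^l$ of its length to a preimage of $e'$. By Lemma \ref{frequency in a subpath}, on such a definite fraction the frequencies of a fixed $\gamma \in \Omega(G_0)$ in the $\phi_{n_m}$-images converge to $\mu^i(\gamma)$ on $H^i_{n_m}$-edges and to $\mu^j(\gamma)$ on $H^j_{n_m}$-edges; choosing $\gamma$ with $\mu^i(\gamma) \neq \mu^j(\gamma)$ forces $i = j$.

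Given $k \geq 2$ and fixing $p \in \{0,\ldots,l\}$, I pick any edge $e_{m,p}$ of $G_{n_m+p}$ not in $E_{n_m+p}$; by the lemma, some $j(p) \in \{1,\ldots,k\}$ has the property that $e_{m,p}$ is not in the image of $H^{j(p)}_{n_m}$. Corollary \ref{valence at least 2 after pinching unfolding sequence} guarantees $H^{j(p)}_{n_m}/E_{n_m}$ contains a loop, which lifts to a primitive loop $\delta_p \subseteq G_{n_m}$. Every edge of $\delta_p$ either lies in $H^{j(p)}_{n_m}$ (and hence misses $e_{m,p}$ under the map) or lies in $E_{n_m}$ (whose image, by the length estimate above, also misses $e_{m,p}$ for $m$ large). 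Thus the image of $\delta_p$ in $G_{n_m+p}$ fails to fill. Since a primitive loop determines a rank-one free factor, and a non-filling loop is at $\CF\CF$-distance at most $3$ from any graph containing it (via the free factor obtained by removing an edge missed by the loop), we get $d_{\CF\CF}(\delta_p,G_{n_m}) \leq 3$ and $d_{\CF\CF}(\delta_p,G_{n_m+p}) \leq 3$, hence by the triangle inequality $d_{\CF\CF}(G_{n_m},G_{n_m+p}) \leq 6$. Taking $n := n_m$ for $m$ large enough completes the proof.

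The main obstacle is not really an obstacle but a bookkeeping issue: one must choose the successive subsequences in the correct order, ensuring first that the transverse decomposition of Theorem \ref{transverse decomposition} stabilizes, and then that the pinching data for the \emph{next} $l$ graphs stabilizes, all while keeping the linearity constant $M$ fixed so that the uniform lower bound $\theta = \ep/(MN)^l$ in the appeal to Lemma \ref{frequency in a subpath} survives the diagonal passage. Once that is arranged, everything else is a direct concatenation of results already established in the section.
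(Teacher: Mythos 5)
Your proposal is correct and follows essentially the same route as the paper's own argument: Theorem \ref{transverse decomposition}, the pinching refinement, Lemma \ref{disjoint images after collapse for unfolding} via linearity and Lemma \ref{frequency in a subpath}, the primitive loop from Corollary \ref{valence at least 2 after pinching unfolding sequence}, and the triangle inequality in $\CF\CF$. The only (harmless) deviation is that you let the index $j(p)$ and the loop $\delta_p$ vary with $p$, where the paper produces a single $\delta$; both give the stated bound.
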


Now assume $(G_n)_{n\ge0}$ is a reduced folding sequence with limit tree $T$ and length measure $\lambda=(\vec\lambda_n)_n$. We also assume every graph $G_n$ is equipped with the metric induced by $\lambda$ and $\lambda^1,\ldots,\lambda^k$ are the ergodic components of $\lambda$. Similar to lemma \ref{frequency in a subpath} we show that for a sub-interval of significant length in a small interval about a $\lambda^i$-generic point $x\in G_0$, most of the length of the sub-interval is induced from $\lambda^i$. More precisely:

\begin{lem}\label{length in a subinterval}
  Given $\theta, \ep>0$ for $\lambda^i$-almost every $x\in G_0$, there exists $\delta>0$, so that if $I$ is an interval of $\lambda$-length at most $\delta$ in $G_0$ containing $x$ and $J\subset I$ is a sub-interval of $\lambda$-length at least $\theta\lambda(I)$, then
  \[ \frac{\lambda^i(J)}{\lambda(J)} > 1-\ep.\]
\end{lem}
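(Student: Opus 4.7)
The plan is to read the lemma as a Lebesgue differentiation–type statement: at a $\lambda^i$-generic $x \in G_0$ the density of $\lambda^i$ with respect to $\lambda$ equals $1$, and combined with the trivial inequality $\lambda^i \leq \lambda$ (which holds because $\lambda^i$ is one of the ergodic components in the decomposition $\lambda = \lambda^1 + \cdots + \lambda^k$), a significant sub-interval $J$ of a small interval $I \ni x$ cannot lose too much of the $\lambda^i$-mass. Since $\lambda^i$-almost every point is $\lambda^i$-generic, it suffices to produce the required $\delta$ for any such $x$.

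First I would upgrade the one-sided condition \eqref{eq: generic for length} to a statement for arbitrary intervals containing $x$. Given $\eta > 0$, choose $\delta_0 > 0$ small enough that (i) $\lambda^i([x - a, x]) > (1 - \eta) a$ and $\lambda^i([x, x + b]) > (1 - \eta) b$ whenever $0 \leq a, b \leq \delta_0$, and (ii) any interval $I \ni x$ of $\lambda$-length at most $\delta_0$ is contained in the edge through $x$ (possible since the genericity definition places $x$ in the interior of an edge, and $x$ is not a branch point). Writing such $I = [x - a, x + b]$ and adding the two estimates yields
\[ \lambda^i(I) > (1 - \eta)\,\lambda(I). \]

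Now for $J \subset I$ with $\lambda(J) \geq \theta\, \lambda(I)$, the bound $\lambda^i(I \setminus J) \leq \lambda(I \setminus J)$ (from $\lambda^i \leq \lambda$) gives
\[ \lambda^i(J) = \lambda^i(I) - \lambda^i(I \setminus J) \geq \lambda^i(I) - \lambda(I) + \lambda(J), \]
so dividing by $\lambda(J) \geq \theta\,\lambda(I)$,
\[ \frac{\lambda^i(J)}{\lambda(J)} \geq 1 - \frac{\lambda(I) - \lambda^i(I)}{\theta\,\lambda(I)} > 1 - \frac{\eta}{\theta}. \]
Choosing $\eta = \ep\,\theta$ at the outset and taking $\delta := \delta_0$ for this value of $\eta$ finishes the proof.

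There is no serious obstacle here: the genericity condition \eqref{eq: generic for length} is tailored precisely to deliver this kind of density estimate, and the only point requiring attention is the two-sided upgrade in the first step, which is immediate from additivity of $\lambda^i$ and $\lambda$ on the decomposition $I = [x - a, x] \cup [x, x + b]$ and from the fact that we can confine $I$ to a single edge once $\delta_0$ is small. The inequality $\lambda^i \leq \lambda$, and the magnification factor $1/\theta$ coming from the hypothesis $\lambda(J) \geq \theta\,\lambda(I)$, are the only analytic inputs beyond \eqref{eq: generic for length}.
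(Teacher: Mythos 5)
Your proof is correct. The paper in fact gives no proof of this lemma (it is stated with an immediate \qed, as the length-measure analogue of Lemma \ref{frequency in a subpath}), and your argument --- combining the two-sided density condition \eqref{eq: generic for length} at a $\lambda^i$-generic point with the inequality $\lambda^i \le \lambda$ and the magnification factor $1/\theta$ --- is exactly the intended one; the only point worth checking, which you do handle, is that $\delta$ can be taken small enough that $I$ stays inside the edge containing $x$.
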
\qed

We use theorem \ref{transverse decomposition for a length measure} to find a subsequence $(G_{n_m})_m$ of $(G_n)_n$ with each $G_{n_m}$ isomorphic to a fixed graph $G$ and a transverse decomposition $H^0,H^1,\ldots, H^k$ of $G$ which satisfies the conclusion of theorem \ref{transverse decomposition for a length measure}. We use the normalized metric $P\lambda_n$ for the graph $G_n$ which is obtained by dividing the metric $\lambda_n$ by the total length of $G_n$, $\lambda_n(G_n)$.

Given a positive integer $l$, we assume a further subsequence has been chosen so that if for $m$ and $p\in\{0,\ldots,l\}$ an edge $e_{m,p}$ of $G_{n_m+p}$ is chosen then either $P\lambda_{n_m+p}(e_{m,p}) >\ep$ for a constant $\ep>0$ independent of $m$ or $P\lambda_{n_m+p}(e_{m,p})\to 0$ with $m$. We define $E_{n_m+p}\subset G_{n_m+p}$ to contain edges of $G_{n_m+p}$ whose $P\lambda_{n_m+p}$-length is at most $\ep$. (This is the pinched part of the graph.) Given $m$ and $p\in\{0,\ldots,k\}$ we consider the composition of the morphism
\[ f_{n_m+p-1} \circ \cdots \circ f_{n_m+1} \circ f_{n_m} : G_{n_m}\to G_{n_m+p}\]
with the collapse map $G_{n_m+p}\to G_{n_m+p}/E_{n_m+p}$. Analogous to lemma \ref{disjoint images after collapse for unfolding} and when $(G_n)_n$ is linear we claim

\begin{lem}\label{disjoint images after collapse for folding}
	For $m$ sufficiently large (depending on $l$), $0\le p\le l$, and a distinct pair $i,j\in\{1,\ldots,k\}$, the images of $H^i_{n_m}$ and $H^j_{n_m}$ in $G_{n_m+p}/E_{n_m+p}$ do not share any edges.
\end{lem}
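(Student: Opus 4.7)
My plan is to adapt the proof of Lemma \ref{disjoint images after collapse for unfolding} to the folding setting, using Lemma \ref{length in a subinterval} in place of Lemma \ref{frequency in a subpath}.  I argue by contradiction: assume that for arbitrarily large $m$ there exist a value $p\in\{0,\ldots,l\}$, distinct $i,j\in\{1,\ldots,k\}$, and edges $e_1\in EH^i_{n_m}, e_2\in EH^j_{n_m}$ whose images in $G_{n_m+p}/E_{n_m+p}$ share an edge $e'$.  Since there are only finitely many choices of $i,j,p$ and of edges in the fixed graph $G$, I pass to a subsequence on which all five data are constant.  Let $\tilde e'\in EG_{n_m+p}\setminus EE_{n_m+p}$ lift $e'$, and let $e_1'\subset e_1, e_2'\subset e_2$ be the sub-arcs mapping bijectively onto $\tilde e'$.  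Each folding morphism is simultaneously a local isometry for $\lambda$ and for each $\lambda^s$, so $\lambda(e_1')=\lambda(e_2')=\lambda(\tilde e')$ and $\lambda^s(e_1')=\lambda^s(e_2')=\lambda^s(\tilde e')$ for every $s$.

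The length estimate from the unfolding proof carries over: because $\tilde e'\notin E_{n_m+p}$ and the row sums of the incidence-matrix product for $G_{n_m}\to G_{n_m+p}$ are bounded by $(NM)^l$ (so that $\lambda_{n_m}(G_{n_m})\le (NM)^l\lambda_{n_m+p}(G_{n_m+p})$), we obtain $\lambda_{n_m}(e_r')\ge\theta\lambda_{n_m}(e_r)$ for $r=1,2$ with $\theta=\epsilon/(NM)^l$ independent of $m$, exactly as in the analogous step of Lemma \ref{disjoint images after collapse for unfolding}.

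Now fix $\eta\in(0,1/2)$.  Because $e_1\in H^i$, the sets $A_m=\phi_{n_m}^{-1}(e_1)\subset G_0$ satisfy $\lambda^i_0(A_m)=\mu_{n_m}(e_1)\lambda^i_{n_m}(e_1)$, which has positive $\liminf$ by property (1) of Theorem \ref{transverse decomposition for a length measure}; the reverse Fatou estimate $\lambda^i_0(\limsup_m A_m)\ge\limsup_m\lambda^i_0(A_m)$ then lets me choose a $\lambda^i$-generic $x\in\limsup_m A_m$.  On the sub-subsequence with $x\in A_m$, the preimage component $I_m\ni x$ is $\lambda$-isometric to $e_1$, its sub-interval $J_m$ corresponding to $e_1'$ satisfies $\lambda(J_m)\ge\theta\lambda(I_m)$, and $\lambda(I_m)=\lambda_{n_m}(e_1)\to 0$ by Lemma \ref{length and frequency decay}.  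For $m$ large enough, Lemma \ref{length in a subinterval} yields $\lambda^i_{n_m}(e_1')/\lambda_{n_m}(e_1')=\lambda^i(J_m)/\lambda(J_m)>1-\eta$.  Repeating the same extraction on the current subsequence for $e_2\in H^j$ produces a $\lambda^j$-generic $y\in\limsup_m\phi_{n_m}^{-1}(e_2)$ and, on a further sub-subsequence, the analogous estimate $\lambda^j_{n_m}(e_2')/\lambda_{n_m}(e_2')>1-\eta$.

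The contradiction is immediate.  Since $e_1'$ and $e_2'$ are isometric to the common edge $\tilde e'$, $\lambda^i_{n_m}(e_2')=\lambda^i_{n_m}(e_1')$ and $\lambda_{n_m}(e_2')=\lambda_{n_m}(e_1')$, so $\lambda^i_{n_m}(e_2')/\lambda_{n_m}(e_2')>1-\eta$.  On the other hand, $\lambda^j_{n_m}(e_2')>(1-\eta)\lambda_{n_m}(e_2')$ combined with $\lambda^i_{n_m}(e_2')+\lambda^j_{n_m}(e_2')\le\lambda_{n_m}(e_2')$ forces $\lambda^i_{n_m}(e_2')/\lambda_{n_m}(e_2')<\eta<1-\eta$, a contradiction.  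The one place that requires care in the plan is selecting $(x,y)$ along a common sub-subsequence, since the indices on which each point lies in its preimage set infinitely often need not coincide; this is handled by re-applying the reverse Fatou estimate to $\phi_{n_m}^{-1}(e_2)$ along the sub-subsequence already produced for $x$, using that the $\liminf$ lower bound on $\lambda^j_0(\phi_{n_m}^{-1}(e_2))$ is preserved under sub-subsequencing.
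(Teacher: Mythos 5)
Your proof is correct and follows essentially the same route as the paper's: linearity gives the uniform proportion $\theta$, Lemma \ref{length in a subinterval} forces both $\lambda^i$ and $\lambda^j$ to nearly saturate the common image edge, and $\lambda^i+\lambda^j\le\lambda$ gives the contradiction. The only difference is that you spell out how the generic points $x,y$ are produced (via the preimage-measure identity and a reverse-Fatou/Borel--Cantelli extraction), a step the paper leaves implicit by relying on its earlier Lemma \ref{decent frequency implies containing generic points}; your care about choosing them along a common sub-subsequence is a welcome clarification, not a deviation.
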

\begin{proof}
	The proof follows the same outline as the proof of lemma \ref{disjoint images after collapse for unfolding}. Assume images of $e_1\in EH^i_{n_m}$ and $e_2\in EH^i_{n_m}$ in $G_{n_m+p}$ both traverse an edge $e'$ outside of $E_{n_m+p}$. We choose subintervals $e_1'\subset e_1$ and $e_2'\subset e_2$ which are mapped isometrically to $e'$.
Using linearity of $(G_n)_n$ there exists $\theta$ independent of $m$, so that
\[ \lambda_{n_m}(e_1')>\theta \lambda_{n_m}(e_1) \quad {\rm and} \quad \lambda_{n_m}(e_2') > \theta \lambda_{n_m}(e_2).\]
Then we use lemma \ref{length in a subinterval} to conclude that
\[ \left| \frac{\lambda^i_{n_m}(e_1')}{\lambda_{n_m}(e_1')} - 1\right| \quad {\rm and} \quad \left| \frac{\lambda^i_{n_m}(e_2')}{\lambda_{n_m}(e_2')} - 1\right| \]
will be arbitrarily small for $m$ large. But $e_1'$ and $e_2'$ both map to $e'$ and therefore their $\lambda, \lambda^i,$ and $\lambda^j$-lengths are equal. This contradicts the fact that $\lambda^i+\lambda^j \le \lambda$ for $m$ sufficiently large.
\end{proof}

With this lemma we proceed as in the case of linear unfolding sequences and prove that for $m$ sufficiently large, there is a primitive loop $\delta$ whose image in $G_{n_m+p}$ does not fill. This implies that

\begin{prop}
	Given a positive integer $p$, if $m$ is chosen large enough then for every $p\in\{0,\ldots,l\}$
\[ d_{\CF\CF}(G_{n_m},G_{n_m+p})\le 6.\]
\end{prop}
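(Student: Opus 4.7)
The plan is to mimic the argument of Theorem \ref{slow progress in FF for unfoldings}, now using Lemma \ref{disjoint images after collapse for folding} in place of its unfolding analogue. Fix $l$ and assume $k\ge 2$, since otherwise the hypothesis of non-unique ergodicity is vacuous. After passing to the subsequence for which Lemma \ref{disjoint images after collapse for folding} applies, for each $p\in\{0,\dots,l\}$ I would choose an edge $e_{m,p}$ of $G_{n_m+p}$ not lying in $E_{n_m+p}$; such an edge exists because $G_{n_m+p}/E_{n_m+p}$ is non-degenerate. The edge-disjointness conclusion of the lemma says $e_{m,p}$ is in the image of at most one of the subgraphs $H^{i}_{n_m}$ under the composition $G_{n_m}\to G_{n_m+p}\to G_{n_m+p}/E_{n_m+p}$, so I can pick $j=j(p)\in\{1,\dots,k\}$ whose associated subgraph $H^{j}_{n_m}$ has image in $G_{n_m+p}$ avoiding $e_{m,p}$.

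Next, by Corollary \ref{valence two after pinching and folding}, every non-degenerate component of $H^{j}/E$ has all vertices of valence at least $2$ in $H^{j}/E$, so it contains an embedded cycle. Lifting this cycle back through the collapse $G\to G/E$ produces a primitive loop $\delta$ in $G_{n_m}$ whose edges lie in $H^{j}_{n_m}\cup E_{n_m}$; it defines a vertex $[\delta]\in\CF\CF$. I then claim that for $m$ sufficiently large the image of $\delta$ in $G_{n_m+p}$ misses $e_{m,p}$. The portion of $\delta$ inside $H^{j}_{n_m}$ maps into the image of $H^{j}_{n_m}$ and so avoids $e_{m,p}$ by construction. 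The portion inside $E_{n_m}$ maps by an edgewise isometry, so its image has total $\lambda_{n_m+p}$-length equal to $\lambda_{n_m}(E_{n_m})$, while the linearity hypothesis forces
\[
\lambda_{n_m+p}(G_{n_m+p})\ \ge\ \lambda_{n_m}(G_{n_m})/(M(3N-3))^{p}.
\]
Hence the normalized length of the image of $E_{n_m}$ is bounded above by $(M(3N-3))^{p}\cdot P\lambda_{n_m}(E_{n_m})$, which tends to $0$ as $m\to\infty$ since $E_{n_m}$ is the pinched part. Because $e_{m,p}$ has normalized length at least a fixed $\epsilon>0$ and a path of normalized length less than $\epsilon$ cannot traverse $e_{m,p}$ even once, for $m$ large the image of $E_{n_m}$, and therefore of $\delta$, misses $e_{m,p}$.

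The distance bound then follows by the standard projection argument: $\delta$ is a proper subgraph of $G_{n_m}$, so $[\delta]$ is at $\CF\CF$-distance at most $3$ from (the projection of) $G_{n_m}$; its image in $G_{n_m+p}$ misses $e_{m,p}$ and is contained in a proper connected subgraph of $G_{n_m+p}$, so $[\delta]$ is similarly at $\CF\CF$-distance at most $3$ from $G_{n_m+p}$, and the triangle inequality yields $d_{\CF\CF}(G_{n_m},G_{n_m+p})\le 6$. I expect the main obstacle to be the length-tracking step: quantitatively beating the factor $(M(3N-3))^{p}$ arising from iterated folding against the decay of $P\lambda_{n_m}(E_{n_m})$ to zero, uniformly in $p\in\{0,\dots,l\}$.
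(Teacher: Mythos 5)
Your proposal is correct and takes essentially the same route as the paper: the paper's proof of this proposition is literally ``with Lemma \ref{disjoint images after collapse for folding} proceed as in the linear unfolding case,'' and you have spelled out exactly that argument — an unpinched edge $e_{m,p}$, a subgraph $H^{j}$ whose image avoids it, a primitive loop supplied by Corollary \ref{valence two after pinching and folding}, the normalized-length estimate showing the image of the pinched part cannot cover $e_{m,p}$, and the triangle inequality in $\CF\CF$. The ``obstacle'' you flag at the end is not one, since $p\le l$ is bounded so the factor $(M(3N-3))^{p}$ is uniform while $P\lambda_{n_m}(E_{n_m})\to 0$ as $m\to\infty$, which is exactly how the paper dispatches that step.
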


And we have proved:

\begin{thm}\label{slow progress in FF for foldings}
    Given a linear folding sequence $(G_n)_{n\ge 0}$. If $(G_n)_n$ is not uniquely ergometric, i.e., the limit tree admits more than one non-homothetic non-atomoic length measures, then for every $l$, there exists $n$ large so that
    \[ d_{\CF\CF}(G_n, G_{n+p})\le 6\]
    for every $p\in\{0,\ldots,l\}$.
\end{thm}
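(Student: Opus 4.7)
The strategy will run in exact parallel with the unfolding case (Theorem \ref{slow progress in FF for unfoldings}): produce, for each $l$, a subsequence along which a single primitive loop in $G_{n_m}$ has non-filling image in every $G_{n_m+p}$, $p\in\{0,\dots,l\}$. To set up, I apply Theorem \ref{transverse decomposition for a length measure} to the reduced folding sequence with length measure $\lambda$. Since the limit tree $T$ is not uniquely ergometric, $\lambda$ has ergodic decomposition $\lambda^1+\cdots+\lambda^k$ with $k\ge 2$, and after passing to a subsequence I may identify every $G_{n_m}$ with a fixed graph $G$ and obtain a transverse decomposition $H^0,H^1,\dots,H^k$ satisfying conclusions (1)--(3) of that theorem.

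For the fixed integer $l$, I pass to a further subsequence so that for every $m$, every $p\in\{0,\dots,l\}$, and every edge of $G_{n_m+p}$, the normalized length $P\lambda_{n_m+p}$ is either bounded below by a uniform $\epsilon>0$ or tends to $0$ as $m\to\infty$. The edges of the second type span the pinched subgraph $E_{n_m+p}\subset G_{n_m+p}$, and I compose the folding map $G_{n_m}\to G_{n_m+p}$ with the collapse $G_{n_m+p}\to G_{n_m+p}/E_{n_m+p}$. The first key input is Lemma \ref{disjoint images after collapse for folding}: for $m$ large enough (depending on $l$), the images of $H^i_{n_m}$ and $H^j_{n_m}$ in $G_{n_m+p}/E_{n_m+p}$ share no edges for $i\neq j$ in $\{1,\dots,k\}$.

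Now, for any choice of an edge $e_{m,p}$ of $G_{n_m+p}$ with $P\lambda_{n_m+p}(e_{m,p})>\epsilon$, disjointness forces at most one index $i$ to have $H^i_{n_m}$ cover $e_{m,p}$; since $k\ge 2$ I can choose $j$ whose image misses $e_{m,p}$. By Corollary \ref{valence two after pinching and folding} every vertex of a non-degenerate component of $H^j/E$ has valence at least $2$, so $H^j/E$ contains an embedded loop. Lifting, I get a primitive loop $\delta$ in $G_{n_m}$ whose edges lie in $H^j_{n_m}\cup E_{n_m}$. The claim is that the image of $\delta$ in $G_{n_m+p}$ does not cross $e_{m,p}$: edges of $\delta$ in $H^j_{n_m}$ never map across $e_{m,p}$ by the disjointness just established, and edges in $E_{n_m}$ have $\lambda_{n_m}$-length which is $o(\lambda_{n_m}(G_{n_m}))$; by the linearity hypothesis their image length in $G_{n_m+p}$ is bounded by $(MN)^p\cdot o(\lambda_{n_m+p}(G_{n_m+p}))$, which is eventually smaller than $\epsilon\,\lambda_{n_m+p}(G_{n_m+p})\le \lambda_{n_m+p}(e_{m,p})$. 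Hence $\delta$ fails to fill $G_{n_m+p}$, its conjugacy class lies in a proper free factor, and that factor is $\mathcal{FF}$-distance at most $3$ from one represented by removing an edge of $G_{n_m+p}$; the triangle inequality gives $d_{\mathcal{FF}}(G_{n_m},G_{n_m+p})\le 6$.

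The main obstacle is the length bookkeeping in the last step: one has to balance the vanishing rate of the pinched lengths in $E_{n_m}$ against the uniform lower bound $\epsilon$ on the normalized length of the chosen edge $e_{m,p}$, and this is exactly where linearity is essential, since without the uniform bound $M$ on the incidence matrices there is no control on how much an $E_{n_m}$-edge can stretch in $p\le l$ folding steps. Everything else is bookkeeping: applying the transverse decomposition theorem, a diagonal subsequence extraction in $p$, and the standard fact that a non-filling primitive element is close in $\mathcal{FF}$ to any graph through which it factors.
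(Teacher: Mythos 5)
Your proposal is correct and follows essentially the same route as the paper: after establishing Lemma \ref{disjoint images after collapse for folding}, the paper explicitly transplants the unfolding-case argument (choose an unpinched edge $e_{m,p}$, pick $j$ whose $H^j_{n_m}$ misses it, take a primitive loop $\delta$ with edges in $H^j_{n_m}\cup E_{n_m}$ via Corollary \ref{valence two after pinching and folding}, and use linearity to show the $E_{n_m}$-edges are too short to cover $e_{m,p}$, so $\delta$ fails to fill and $d_{\CF\CF}(G_{n_m},G_{n_m+p})\le 6$), which is exactly the argument you reproduce. The length comparison you flag as the main obstacle is the same $(MN)^p$ bookkeeping the paper carries out.
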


\section{Applications}\label{applications}

We deduce two consequences of our main results.  First, we will need to collect some background material.  For the remainder of the paper $N \geq 3$.

\subsection{Outer space}

We give a very brief review of Outer space; for more details, see \cite{FM11, Bes11, BF14}.  Use $\overline{cv}_N$ to denote the set of \emph{very small} actions of $\FN$ on $\mathbb{R}$-trees; elements of $\overline{cv}_N$ are called trees.  Each $T \in \overline{cv}_N$ has an associated translation length function $l_T:\FN \to \mathbb{R}:g \mapsto \inf_{x \in T} d_T(x,gx)$.  The function $\overline{cv}_N \to \mathbb{R}^\FN$ is injective \cite{CM87}, and $\overline{cv}_N$ is given the subspace topology, where $\mathbb{R}^\FN$ has the product topology.  The subspace $cv_N$ consists of trees $T$ with a free, discrete action; the quotient $T/\FN$ is then a metric graph that comes with an identification $\pi_1(T/\FN) \cong F_N$ that is well-defined up to inner automorphisms of $\FN$.  Associated to $T \in cv_N$ is cone on an open simplex got by equivariantly varying the lengths of edges of $T$ and constraining that each edge retain positive length.

Let $T, T' \in cv_N$.  Notice that a change of marking $f:T/F_N \to T'/F_N$ has lifts $\cover{f}:T \to T'$ that are $F_N$-equivariant and which restrict to linear functions on edges; conversely given any such function $g:T \to T'$, one obtains a change of marking $\overline{g}:T/F_N \to T/F_N$.  So, we will use the term \emph{change of marking} to refer either to a change of marking as defined before or as a lift  of a change of marking.  Notice that the Lipschitz constant $Lip(f)$ of $f$ is equal to the maximum slope of $f$ restricted to an edge of $T$.  Define:
\[
d(T,T')=\inf \log(\text{Lip}(f))
\]
\noindent Where $f$ ranges over all change of marking maps $T \to T'$.  The function $d$ is called the \emph{Lipschitz distance} on $cv_N$; $d$ is positive definite and satisfies the triangle inequality, but is not symmetric.  A change of marking $f:T\to T'$ is called \emph{optimal} if $d(T,T')=\log Lip(f)$; optimal maps always exist.  

Assume that $f:T \to T'$ is optimal.  The topological tree $T$ with the $f$-pullback metric from $T'$ is $T_0$, and there is a path in a simplex of $cv_N$ from $T$ to $T_0$.  The induced function $f_0:T_0 \to T'$ is a \emph{morphism}--$f_0$ has slope one of every edge of $T_0$.  The tree $T_0$ can be \emph{folded} according to the map $f_0$: if two edges $e,e'$ of $T_0$ with the same initial vertex are identified by $f_0$, then one can form $T_\epsilon^{e,e'}$ by identifying the the $\epsilon$ initial segments of these edges, and there is an induced morphism $f_\epsilon^{e,e'}:T_\epsilon^{e,e'} \to T'$.  Folding all edges that are identified by $f_0$ in this way gives a \emph{greedy folding path} $T_t$, which we parameterize by $vol(T_t/F_N)$.  For $0\leq s<t\leq \beta$, one has a morphism $f_{s,t}:T_s \to T_t$ so that the equalities hold: $f_{0,t}=f_{s,t} \circ f_{0,s}$ and $f_{s, \beta}=f_{t,\beta}\circ f_{s,t}$.  

A \emph{liberal folding path} is defined similarly to a greedy folding path, except that one does not require to fold all illegal turn simultaneously; see the Appendix of \cite{BF14}.  Our reduced folding/unfolding paths are exactly the restriction of a liberal folding path to a discrete subspace of the parameter space.  

Indeed, starting with a folding/unfolding path $(G_n)_n$, we may insert Stallings factorizations if necessary and then parameterize the new folding/unfolding sequence as a continuous process.  

\begin{lem}\label{2 gates}
Let $(G_n)_n$ be a reduced folding/unfolding path.   For every $n$ and every edge $e$ of $G_n$ with terminal vertex $v$, there is an edge $e'$ with initial vertex $v$ such that the path $ee'$ is legal.
 \end{lem}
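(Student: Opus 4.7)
The plan is to argue by contradiction. Suppose there is an edge $e$ of $G_n$ with terminal vertex $v$ such that for every outgoing edge $e'$ at $v$, the path $ee'$ is illegal; since $v$ has only finitely many outgoing edges, we may pass to a single sufficiently large $m>n$ so that $f_{n,m}(ee')$ fails to be reduced for every such $e'$. Letting $\alpha$ denote the last edge of the reduced path $f_{n,m}(e)$ in $G_m$, this means the first edge of each $f_{n,m}(e')$ must equal $\bar\alpha$. In particular, every outgoing direction at $v$ in $G_n$ is collapsed by $f_{n,m}$ to the single direction $\bar\alpha$ at $v_m:=f_{n,m}(v)$; that is, $v$ has exactly one gate with respect to $f_{n,m}$.

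Next, I would use the Stallings factorizations of the maps $f_n,\ldots,f_{m-1}$ (as indicated in the paragraph preceding the lemma) to write $f_{n,m}$ as a composition of elementary folds, and then follow the image of the outgoing star of $v$ across this refined sequence. Initially the image of $v$ retains all its outgoing directions, and by step $m$ all of them have been identified with $\bar\alpha$. Choose the last elementary step $k$ at which the image of $v$ still carries at least two distinct outgoing directions. At step $k$ an elementary Stallings fold $\sigma$ identifies the final two surviving outgoing directions at the image of $v$; by the choice of $k$, the fold $\sigma$ is entirely internal to the star of the image of $v$ and never "mixes" with folding happening elsewhere in $G_k$.

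The key and most delicate step is to convert the 1-gate condition just obtained into a contradiction with the reduced hypothesis. Roughly, the fold $\sigma$ above is responsible only for identifying directions that already lie in a common $f_{n,m}$-gate at $v$, and so it can be absorbed into the very definition of the folding/unfolding sequence: transferring $\sigma$ backwards through the factorization (using that each intermediate map is a change of marking) yields a proper subgraph $E_k\subset G_k$ whose images under subsequent $f_j$'s eventually act only by permutations of edges. This produces a stabilized sequence of subgraphs, contradicting the assumption that $(G_n)_n$ is reduced. The main obstacle is precisely the bookkeeping required to carry out this backwards transfer of $\sigma$ and to verify that the resulting $E_k$ is both proper and invariant; once that is established, the reduced hypothesis gives the contradiction and the existence of the desired legal continuation $e'$ follows.
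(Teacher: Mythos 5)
Your opening reduction is fine: if $e$ has no legal continuation at $v$, then since $v$ has finitely many outgoing edges you can find a single $m>n$ at which every turn $\{\bar e, e'\}$ has been folded, so all directions at $v$ are identified by $f_{n,m}$ and $v$ has one gate. But the entire content of the lemma is the passage from this to a contradiction with the reduced hypothesis, and that step is not actually carried out in your proposal --- you describe it ``roughly,'' never define the subgraphs $E_k$, and explicitly defer the ``bookkeeping'' needed to show they are proper, invariant, and eventually permuted. As written, the mechanism is not believable: a single elementary Stallings fold $\sigma$ occurring at one index identifies two directions (germs of edges), not a subgraph, and there is no indication of why ``transferring $\sigma$ backwards'' should produce a non-degenerate proper subgraph of every $G_k$ on which the transition maps are eventually permutations. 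A one-gate vertex at a single time $n$ does not by itself manufacture a stabilized sequence of subgraphs, so the contradiction is asserted rather than derived. The detour through the Stallings factorization and the ``last step with two surviving directions'' is machinery that does not feed into anything concrete.

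The paper's argument is different and supplies exactly the object you are missing. Call an oriented edge $e$ with terminal vertex $v$ a \emph{dead end} if no $e'$ at $v$ makes $ee'$ legal. The key observation is that any edge of $G_k$, $k<n$, that folds over $e$ under $f_{k,n}$ is itself a dead end; hence the union of such edges gives an invariant sequence of subgraphs $E_k\subset G_k$, defined for all $k$ in the index range, and this sequence is proper because no leaf of the legal lamination $\Lambda$ can cross a dead end. This directly violates the reduced hypothesis. If you want to salvage your approach, you should replace the single-fold transfer by this dead-end subgraph (or something equivalent), and verify the invariance and properness claims; without that, the proof has a genuine gap at its central step.
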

 
 \begin{proof}
  Assume that an oriented edge $e$ of $G_n$ with terminal vertex $v$ is such that there is no edge $e'$ of $G_n$ with initial vertex $v$ such that $ee'$ is legal; call the oriented edge $e$ a $v$-dead-end.  Notice that for $k<n$, the only edges that can fold over $e$ via $f_{k,n}$ are also dead ends.  The collection of all dead ends in $G_k$ that fold over $e$ via $f_{k,n}$ is than an invariant subgraph, which much be proper, since no leaf of $\Lambda$ can cross any such edge.
\end{proof}

Hence, legal paths in $G_n$ can be extended; in particular, there are legal loops.  By the definition of the metrics $\vec \lambda_n$ and by Lemma \ref{2 gates}, we have that $\phi_n$ is an optimal map.  Conversely, any liberal folding path can be discretized to give a folding/unfolding path.  Hence, we use the term folding path to refer to liberal folding path or a folding/unfolding path.  

Regard $CV_N \subseteq cv_N$ as graphs with volume one; $CV_N$ is called \emph{Outer space} \cite{CV86}.  We also consider the images of folding paths in $CV_N$, which are also called folding paths.  These paths, which are read left to right, are geodesics and always are parameterized with respect to arc length.  

The $\epsilon$-\emph{thick part}, denoted $CV_N^\epsilon$, of $CV_N$ consists of those $T \in CV_N$ such that the injectivity radius of $T/F_N$ is at least $\epsilon$; $cv_N^\epsilon$ consists of those trees that are homothetic to tree in $CV_N^\epsilon$.  Let us observe that if $T_t$ is a folding path in $CV_N$ and if $T_t \in CN_N^\epsilon$ for all $t$, then by choosing $t_{n+1}-t_n$ to be bounded, we get a linear folding/unfolding path $G_n=T_{t_n}/\FN$.  

According to Corollary 2.10 of \cite{AK11}, there is a constant $C=C(\epsilon)$ such that $d$ is $C$-bi-Lipschitz equivalent to $\check{d}(T,U)=d(U,T)$ on $CV_N^\epsilon$.  Hence, we also have that $d$ is bi-Lipschitz equivalent to $d_s=\max\{d, \check{d}\}$; $d_s$ is a metric, but it is not geodesic.  The action of $Out(\FN)$ on $CV_N^\epsilon$ is co-compact,  so both $(CV_N,d)$ and $(CV_N,d_s)$ are quasi-isometric to $Out(F_N)$.  

\begin{lem}\label{thick distance}
 There is $B(\epsilon)$ such that if $T \in CV_N$ and $U \in CV_N^\epsilon$, then $d(U,T) \leq B(\epsilon)+ C(\epsilon)d(T,U)$.    
\end{lem}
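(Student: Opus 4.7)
My plan is to bound $d(U,T)$ by introducing an auxiliary thick graph $V$ sitting in the same open simplex of $CV_N$ as $T$, and then chaining the triangle inequality with the Algom-Kfir bi-Lipschitz comparison of $d$ and $\check d$ on the thick part. Concretely, writing $G = T/\FN$ for the underlying marked graph of $T$ and $k = |EG|$ (so $k \le 3N-3$ since $T \in CV_N$ forces all vertex valences to be at least three), I will let $V$ be the marked graph with the same combinatorial structure and marking as $G$ but with every edge assigned length $1/k$. Then $\vol(V) = 1$ and every edge of $V$ has length at least $1/(3N-3)$, so $V$ lies in $CV_N^{\epsilon_0}$ for $\epsilon_0 := \min(\epsilon, 1/(3N-3))$, making both $U$ and $V$ thick enough to apply \cite{AK11}.

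The first key step is to bound $d(V,T)$ by a universal constant depending only on $N$. Since $V$ and $T$ share the same underlying marked graph, the identity on $G$ is a legitimate change of marking from $V$ to $T$, whose Lipschitz constant equals $\max_e l_T(e)/l_V(e) = k \max_e l_T(e) \le k$, since $\vol(T) = 1$ forces every edge length to be at most one. Hence $d(V,T) \le \log(3N-3)$, independent of $T$.

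The second key step is the triangle chase. Applying Corollary 2.10 of \cite{AK11} to the thick pair $(U,V)$ yields $d(U,V) \le C(\epsilon_0)\, d(V,U)$. Combining with $d(V,U) \le d(V,T) + d(T,U) \le \log(3N-3) + d(T,U)$ and then with $d(U,T) \le d(U,V) + d(V,T)$ gives
\[
d(U,T) \le C(\epsilon_0)\bigl(\log(3N-3) + d(T,U)\bigr) + \log(3N-3),
\]
so setting $B(\epsilon) := (C(\epsilon_0)+1)\log(3N-3)$ yields the stated bound, with the $C(\epsilon)$ of the statement understood as $C(\epsilon_0)$ (which still depends only on $\epsilon$ and $N$).

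I do not expect a serious obstacle; the argument is essentially a triangle-inequality trick anchored at an artificially thickened model of the simplex containing $T$. The only bookkeeping subtlety is that the auxiliary graph $V$ is thick only with parameter $1/(3N-3)$ rather than $\epsilon$, so one must invoke \cite{AK11} with $\epsilon_0 = \min(\epsilon, 1/(3N-3))$ rather than with $\epsilon$; this slightly enlarges the constant but keeps it a function of $\epsilon$ and $N$ alone.
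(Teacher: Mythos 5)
Your argument is correct and is essentially the paper's proof: both introduce an auxiliary thick point in the open simplex of $T$, bound its distance to $T$ by a constant, and then chain the triangle inequality with the Algom-Kfir comparison of $d$ and $\check d$ on the thick part (the paper takes an abstract $T' \in CV_N^\epsilon \cap \sigma_T$ with $d(T',\cdot)$ bounded on the simplex by some $A(\epsilon)$, where you make the explicit choice of the uniform-length metric and get the explicit bound $\log(3N-3)$). Your caveat about replacing $C(\epsilon)$ by $C(\epsilon_0)$ with $\epsilon_0=\min(\epsilon,1/(3N-3))$ is harmless for all the uses of the lemma, and the paper's own version implicitly faces the same issue in guaranteeing $CV_N^\epsilon\cap\sigma_T\neq\emptyset$.
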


\begin{proof}
 Let $\sigma_T, \sigma_U \subseteq CV_N$ be the open simplices containing $T, U$, respectively.  Since $U \in CV_N^\epsilon$, $d(U,\cdot):\sigma_U \to \mathbb{R}$ is bounded by some constant $A(\epsilon)$.  Let $T' \in CV_N^\epsilon \cap \sigma_T$, then one has $d(U,T) \leq d(U,T') +d(T',T) \leq C(\epsilon)d(T',U)+A(\epsilon) \leq C(\epsilon)(d(T',T)+d(T,U))+A(\epsilon)\leq C(\epsilon)(A(\epsilon)+d(T,U))+A(\epsilon)$.  
 \end{proof}
 
If $H \leq F_N$ is finitely generated and non-trivial, then $T_H$ denotes the minimal $H$-invariant subtree of $T$; so $T_H/H$ is the core of the $H$-cover of $T/F_N$, and we have an immersion $T_H/H \to T/F_N$ representing the conjugacy class of $H$.  The graph $T_H/H$ inherits a metric from $T$, and if a legal structure is specified on $T$, then there is an obvious induced legal structure on $T_H/H$.  If $g \in F_N$ is non-trivial, then $T_g=T_{\langle g \rangle}$ is well-defined, and the immersion $T_g/g$ represents the conjugacy class of $g$.  The volume of $T_g/g$ is the translation length $l_T(g)$ of the loxodromic isometry $g$.  

A result of Tad White, see \cite{FM11, AK11}, gives that 
\[
d(T,U)=\log(\sup_{1 \neq g \in F_N} l_U(g)/l_T(g))
\]
\noindent Moreover, the supremum is realized by a conjugacy class whose simplicial length in $T$ is uniformly bounded.  Specifically, use $c(T)$ to denote the set of conjugacy classes of non-trivial elements $g \in F_N$ that satisfy: 

\begin{itemize}
\item $T_g/g \to T/F_N$ has image a subgraph of rank at most two, 
\item a point in the interior of an edge of $T/F_N$ has at most two pre-images in $T_g/g$, and 
\item the set of edges of $T/F_N$ whose interiors contain a point with two pre-images in $T_g/g$ either is empty or else is topologically a segment, whose interior separates the image of $T_g/g$ into two components, both of which are circles.  
\end{itemize}

The elements of $c(T)$ are called \emph{candidates}.  Tad White proved that
\[
d(T,U)=\log(\max_{g \in c(T)} l_U(g)/l_T(g))
\]

Notice that since $N>2$, the image in $T/F_N$ of the immersion representing any $g \in c(T)$ is contained in a proper subgraph of $T/F_N$.  

\subsection{The complex of free factors}
 
 A \emph{factor} is a conjugacy class of non-trivial proper free factors of $F_N$.  The \emph{complex of free factors} is the simplicial complex whose $(k-1)$-simplices are collections of factors $[F^1],\ldots, [F^k]$, which have representatives satisfying, after possibly reordering, $F^1<\ldots<F^k$.  Let $\mathcal{FF}$ denote the complex of free factors, and equip $\mathcal{FF}$ with the simplicial metric, denoted $d_{\mathcal{FF}}$.  Bestvina and Feighn established the following:
 
 \begin{prop}\cite{BF11}
  The metric space $(\mathcal{FF},d_{\mathcal{FF}})$ is hyperbolic.
 \end{prop}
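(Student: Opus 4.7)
The plan is to verify a standard hyperbolicity criterion — specifically Bowditch's \emph{guessing geodesics} criterion (equivalently, a Masur--Minsky style projection criterion) — using folding paths in Outer space as the family of candidate paths. Recall that to prove $\mathcal{FF}$ is hyperbolic, it suffices to exhibit, for every ordered pair of vertices $A,B \in \mathcal{FF}$, a path $P_{A,B}$ joining them so that the family $\{P_{A,B}\}$ is (i) coarsely connected, (ii) closed under passing to subpaths up to uniform error, and (iii) satisfies a thin-triangle property, or equivalently two such paths with close endpoints uniformly fellow-travel.

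First I would set up a coarse projection $\pi : CV_N \to \mathcal{FF}$ that sends a marked metric graph $G$ to the set of conjugacy classes of proper free factors $F < F_N$ arising as $\pi_1$ of a proper connected subgraph of $G$ obtained by collapsing one edge (or a component subgraph after disconnecting by removing an edge). A short argument shows $\diam_{\mathcal{FF}}(\pi(G)) \le 4$, since any two such factors can be nested inside a common factor of rank $N-1$ by successive edge collapses, and $\pi$ is coarsely Lipschitz with respect to the symmetrised Lipschitz metric on $CV_N$ by Lemma \ref{thick distance} and the co-compactness of the thick part. Given $A,B \in \mathcal{FF}$, I realise them as subgraphs of graphs $G_A,G_B \in CV_N$, construct a folding/unfolding path $(G_n)$ between them via a Stallings factorisation of an optimal change of marking, and declare $P_{A,B}:=\pi(G_n)$. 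Coarse connectedness is automatic from the fact that $\pi$ is coarsely Lipschitz and folding paths move distance at most $O(1)$ per discrete step.

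The heart of the proof — and what I expect to be the main obstacle — is the \emph{projection lemma}: projections of folding paths to $\mathcal{FF}$ are unparameterised quasi-geodesics; equivalently, for a fixed factor $F$, the set of indices $n$ with $d_{\mathcal{FF}}(F,\pi(G_n))\le D$ is coarsely a single interval. This is precisely where the tools of the present paper are in spirit most relevant: once the folding process has substantially altered how $F$ is carried by $G_n$ — detectable through the transverse decomposition of Theorem \ref{transverse decomposition for folding} and the behaviour of the length and frequency measures — one argues that $F$ cannot re-enter $\pi(G_m)$ for $m$ much later except at bounded cost. Carrying this out requires quantitative train-track bookkeeping of how legal and illegal turns propagate across folds, together with a pigeonhole argument exploiting the bounded combinatorial type of each $G_n$; this is the step that needs real analytical work and does not reduce to a formal manipulation.

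Once the projection lemma is established, the remaining steps are standard. Thinness of triangles is extracted as follows: given three folding paths connecting pairs of points $A,B,C$, pick a point $M$ on the path from $A$ to $B$ realising approximately the midpoint in $\mathcal{FF}$; by the projection lemma, $\pi(M)$ must lie within bounded distance of both the $A$–$C$ and $B$–$C$ paths, because otherwise the projections would witness too much back-and-forth in the $\mathcal{FF}$-progress along one of the three folding paths. This is exactly the hypothesis of Bowditch's guessing geodesics criterion, which then yields hyperbolicity of $(\mathcal{FF},d_{\mathcal{FF}})$.
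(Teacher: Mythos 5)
This statement is quoted from Bestvina--Feighn \cite{BF11}; the paper offers no proof of it, only the citation, so the only meaningful comparison is between your sketch and the actual argument of \cite{BF11}. Your outline correctly identifies the shape of that argument (a coarse Lipschitz projection $\pi:CV_N\to\mathcal{FF}$, folding paths as candidate paths, a hyperbolicity criterion in the style of Bowditch/Masur--Minsky), but it contains a genuine gap that you yourself flag: the \emph{projection lemma} --- that projections of folding paths to $\mathcal{FF}$ are unparameterised quasi-geodesics, or more precisely that they enjoy a strong contraction property --- is asserted as ``the main obstacle'' and then not proved. That step is not a formality; it is essentially the entire content of \cite{BF11}, carried out there via a careful analysis of how an illegal turn structure and the left/right projections $\mathrm{left}(\cdot)$, $\mathrm{right}(\cdot)$ interact with a factor carried by the graphs along the path. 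Without it, the thin-triangle verification in your last paragraph has nothing to stand on, so the proposal is a plan rather than a proof.

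Moreover, the route you suggest for filling the gap --- invoking the transverse decomposition machinery of the present paper (Theorem \ref{transverse decomposition for folding} and the length/frequency measures) --- would not work and would in fact be circular in this context. Those results run in the opposite logical direction: they show that \emph{non-unique ergodicity} of the limiting tree or lamination forces \emph{slow} progress in $\mathcal{FF}$ (Theorems \ref{slow progress in FF for unfoldings} and \ref{slow progress in FF for foldings}); they give no lower bound on progress in the generic case, which is what the projection lemma requires. And the paper's own statement that projected folding paths are reparameterised uniform quasi-geodesics (Proposition \ref{liberal quasi-geodesic}) is itself derived from \cite{BF14} and \cite{KR12}, which already presuppose the hyperbolicity of the relevant complexes. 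So the key analytic input cannot be borrowed from this paper; it must come from the independent contraction arguments of \cite{BF11}.
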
 
 
An essential tool in their approach is a coarsely defined projection map from $cv_N$ to $\mathcal{FF}$.  This projection, denoted $\pi$, is defined as follows: $\pi:cv_N \to \mathcal{FF}:T \mapsto \mathcal{F}(T)$, where $\mathcal{F}(T)$ is the set of factors that are represented by subgraphs of $T/F_N$.  

\begin{prop}\cite{BF11}\label{projection properties}
 There is a number $K$ such that for any $T,U \in cv_N$, $\text{diam}(\pi(T)\cup \pi(U)) \leq Kd(T,U)+K$.
\end{prop}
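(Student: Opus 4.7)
The plan is to split the argument into two independent ingredients: a uniform diameter bound for the fiber $\pi(T)$ of a single tree $T \in cv_N$, and a bound on how fast $\pi$ can move along a geodesic. Together these will give the claimed linear bound on $\mathrm{diam}(\pi(T) \cup \pi(U))$, since by the triangle inequality
\[
\mathrm{diam}(\pi(T) \cup \pi(U)) \le \mathrm{diam}(\pi(T)) + \mathrm{diam}(\pi(U)) + d_{\mathcal{FF}}(\pi(T),\pi(U)).
\]

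First I would establish that $\mathrm{diam}(\pi(T)) \le K_0$ for a universal constant $K_0=K_0(N)$. For this, fix a non-separating edge $e$ of $T/F_N$; deleting $e$ exhibits a reference factor $[F_e] \in \pi(T)$ of rank $N-1$. Any other $[F'] \in \pi(T)$ is represented by a connected proper subgraph $H \subseteq T/F_N$, and one checks by a standard free-factor argument that either $F'$ is contained in a conjugate of $F_e$, or $F' \cap F_e$ is a common proper free factor of both; hence $d_{\mathcal{FF}}([F_e],[F']) \le 2$. This gives $K_0 \le 4$.

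Second, I would control how fast $\pi$ moves along an optimal change of marking $T \to U$. Rescale $U$ inside its simplex so that $d(T,U) = \log \mathrm{Lip}(f)$ for an optimal morphism $f$, then interpolate by a liberal folding path $T_t$, $t \in [0,D]$ with $D = d(T,U)$, parameterized so that $d(T_s,T_t)$ is comparable to $|s-t|$. The heart of the proof is to show that for any unit-length subinterval $[s,s+1]$ one has $d_{\mathcal{FF}}(\pi(T_s),\pi(T_{s+1})) \le K_1$ for a universal constant $K_1$. For this I would use White's description of the Lipschitz metric via candidates: pick a candidate $g \in c(T_s)$, so the immersed image of $(T_s)_g/g$ in $T_s/F_N$ sits inside a subgraph of rank $\le 2$, which is proper because $N \ge 3$. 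This yields a factor $[F^s] \in \pi(T_s)$ containing (a conjugate of) $g$. The push-forward of this image under the bounded-Lipschitz morphism $T_s \to T_{s+1}$ lies in a subgraph of $T_{s+1}/F_N$ whose rank is bounded (the number of folds is controlled over a unit time interval), and in particular properly embeds into $T_{s+1}/F_N$, producing a factor $[F^{s+1}] \in \pi(T_{s+1})$ that shares a common subfactor with $[F^s]$. This yields the bound $d_{\mathcal{FF}}([F^s],[F^{s+1}]) \le K_1$, and chaining over $\lceil D \rceil$ unit intervals gives $d_{\mathcal{FF}}(\pi(T),\pi(U)) \le K_1 D + K_1$.

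The main obstacle is the coarse-Lipschitz step above: one must ensure that after a bounded amount of folding, the image of the rank-$\le 2$ subgraph carrying a $T_s$-candidate is still contained in a \emph{proper} subgraph of $T_{s+1}/F_N$ whose $\pi_1$ is a proper free factor, and that this factor is close in $\mathcal{FF}$ to a factor already detected in $\pi(T_s)$. This requires bookkeeping about how illegal turns are folded over a unit interval --- concretely, one bounds the number of new identifications that can occur by the bounded Lipschitz constant, uses Lemma \ref{2 gates} to keep legal extensions available, and invokes the thick-part comparison of Lemma \ref{thick distance} to absorb the asymmetry of the Lipschitz metric. Once this local step is in place, the global statement follows immediately by combining it with the diameter bound for a single $\pi(T)$.
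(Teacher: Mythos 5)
The paper does not actually prove this statement: it is quoted from \cite{BF11}, where it appears as the coarse Lipschitz property of the projection $\pi$, so there is no in-paper argument to compare yours against. Judged on its own terms, your proposal has the right architecture --- a uniform bound on $\mathrm{diam}(\pi(T))$ for a single $T$, a local estimate over unit subintervals of an interpolating folding path, and chaining --- but the local estimate, which is the entire content of the proposition, is not actually established.

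The gap is the assertion that the image under the morphism $T_s \to T_{s+1}$ of the rank-$\le 2$ subgraph carrying a candidate of $T_s$ ``lies in a subgraph of $T_{s+1}/F_N$ whose rank is bounded \ldots and in particular properly embeds into $T_{s+1}/F_N$.'' The set-theoretic image of a proper subgraph under a change-of-marking morphism is the union of the edge paths $f(e)$, and this union is typically all of the target: already for $N=3$, the automorphism $a \mapsto abc$, $b \mapsto b$, $c \mapsto c$ of the rose maps the rank-two subgraph $a \cup b$ onto the whole rose, and by making $b$ and $c$ very short one arranges this with Lipschitz constant as close to $1$ as desired. So when $T_{s+1}$ is thin, a loop of bounded length can cross every edge; no proper subgraph of $T_{s+1}/F_N$ is produced, and the subgroup $f_*(\pi_1 H)$, though still abstractly a proper free factor, is not visibly close to $\pi(T_{s+1})$. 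The parenthetical ``the number of folds is controlled over a unit time interval'' does not help: a unit of Lipschitz distance can require arbitrarily many Stallings folds when edges are short, and the number of folds does not in any case control which edges an image path crosses. Your appeal to Lemma \ref{thick distance} is symptomatic, since that lemma requires one endpoint to lie in the thick part, which is exactly the hypothesis you do not have; producing a factor of the target boundedly close to one of the source \emph{without} a thickness assumption is precisely what makes this a theorem of \cite{BF11} rather than a formal consequence of pushing subgraphs forward, and their proof uses a genuinely more delicate analysis. A smaller issue: the dichotomy in your first step (``either $F'$ is conjugate into $F_e$ or $F' \cap F_e$ is a common proper free factor'') already fails for $F' = \langle a \rangle$ and $F_e = \langle b, c \rangle$ in the rose; the uniform bound on $\mathrm{diam}(\pi(T))$ is still true and standard, but one should argue via a chain through co-edge subgraphs rather than through intersections.
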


The conclusion of Proposition \ref{projection properties} is that $\pi$ is coarsely well-defined and coarsely Lipschitz in the sense that $\pi(T)$ has bounded diameter and that distances between the sets of images behave like distances under a Lipschitz map.  


Now suppose that $F'$ is a factor and that $T_t$ is the image of a greedy folding path to $CV_N$ and that this path has been parameterized by arc length.  An immersed segment in $T_H/H$ is called \emph{illegal} if it does not contain a legal subsegment of length at least 3.   Following \cite{BF14}, define $I=(18m(3N-3)+6)(2N-1)$ and put:

\[
\text{left}_{T_t}(F)=\inf \{t|T_H/H \text{contains an immersed legal segment of length} > 2 \}
\]
\[
\text{right}_{T_t}(F)=\sup\{t|T_H/H \text{contains an immersed illegal segment of length} \leq I\}
\]

When the folding path $T_t$ is understood, we use $\text{left}(\cdot)$ to mean $\text{left}_{T_t}(\cdot)$.  When $U \in CV_N$, we define $\text{left}(U)=\min \{\text{left}(F')|F' \in \mathcal{F}(U)\}$ and $\text{right}(U)=\max \{\text{right}(F'|F' \in \mathcal{F}(U)\}$.  The \emph{left projection} of $U$ to $T_t$ is $\text{Left}(U)=T_{\text{left}(U)}$, and the \emph{right projection} of $U$ to $T_t$ is $\text{Right}(U)=T_{\text{right}(U)}$.  We will need the following simple observation:

\begin{lem}\label{nearest point}
 Let $U \in CV_N$ and let $T_t \in CV_N$, $t\in [a,b]$ be the image in $CV_N$ of a greedy folding path, and suppose that $T_c$ minimizes $d(U,\cdot):T_t \to \mathbb{R}$.  If $d(U,T_c)$ is large, then $c \in [\text{left}(U),\text{right}(U)]$.  
\end{lem}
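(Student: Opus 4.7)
The plan is to prove the contrapositive: there is a universal constant $D_0$ such that $c \notin [\text{left}(U), \text{right}(U)]$ forces $d(U, T_c) \le D_0$; then ``large'' is interpreted as ``exceeding $D_0$.''

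The argument reduces to a single factor. By Tad White's theorem, realize the distance via a candidate: there is $g \in c(U)$ with $d(U, T_c) = \log\bigl(l_{T_c}(g)/l_U(g)\bigr)$. Since the support of $g$ in $U/\FN$ lies in a proper subgraph of rank $\le 2$ and $N \ge 3$, that subgraph represents a proper factor $F \in \mathcal{F}(U)$. From the definitions, $\text{left}(U) \le \text{left}(F)$ and $\text{right}(U) \ge \text{right}(F)$, so $c \notin [\text{left}(F), \text{right}(F)]$ either.

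Consider first the case $c < \text{left}(U) \le \text{left}(F)$; the case $c > \text{right}(U)$ is handled symmetrically. Since $T_c$ minimizes $d(U, \cdot)$, one has $d(U, T_c) \le d(U, T_{\text{left}(F)})$, so it suffices to bound $d(U, T_{\text{left}(F)})$ by a universal constant. At time $\text{left}(F)$, the cover $T_{\text{left}(F),H}/H$ of $F$ has just acquired its first legal immersed segment of length exceeding $2$; for any $t$ slightly less, every legal immersed segment in the $F$-cover has length at most $2$. Combined with the controlled evolution of loop lengths along a folding path, this threshold constraint (via Bestvina--Feighn's subfactor projection estimates) yields that $l_{T_{\text{left}(F)}}(h)/l_U(h)$ is uniformly bounded over candidates $h$ supported on $F$, hence $d(U, T_{\text{left}(F)})$ is bounded. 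The symmetric case $c > \text{right}(U)$ instead exploits that at time $\text{right}(F)$ every illegal immersed segment in the $F$-cover has length exceeding $I=(18m(3N-3)+6)(2N-1)$.

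The main obstacle is the quantitative step: converting the qualitative definitions of $\text{left}(F)$ and $\text{right}(F)$ into uniform bounds on the Lipschitz distance. The constants $2$ and $I$ are calibrated precisely so that this conversion succeeds; outside $[\text{left}(F), \text{right}(F)]$ the cover of $F$ in $T_t$ is either ``too illegal'' (almost no long legal segments) or ``too legal'' (no short illegal segments) to admit large Lipschitz-metric distortion of $F$-loops against their length in $U$, and this is what keeps $d(U, T_{\text{left}(F)})$ and $d(U, T_{\text{right}(F)})$ universally bounded. The bookkeeping here is standard within the Bestvina--Feighn framework but is the substantive content of the lemma.
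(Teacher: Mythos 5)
There is a genuine gap, and it sits exactly where you flag ``the substantive content'': the claim that $d(U,T_{\text{left}(F)})$ is bounded by a universal constant. That claim is false as stated, and nothing in your argument actually uses the hypothesis $c<\text{left}(U)$ to rescue it. The definition of $\text{left}(F)$ is a statement about the legality structure of the $F$-cover of $T_{\text{left}(F)}$; it carries no information about how far $T_{\text{left}(F)}$ is from $U$. If $U$ lies at distance $10^6$ from the entire image of the folding path, then $d(U,T_{\text{left}(F)})\geq 10^6$ no matter how the legal/illegal segments in the $F$-cover are distributed, so the ``threshold constraint'' cannot yield a uniform bound. (In that situation the minimizer does lie in $[\text{left}(U),\text{right}(U)]$ --- but that is the conclusion of the lemma, not something your derivation establishes.) There is also a secondary problem: even if candidates $h$ supported on $F$ had uniformly bounded stretch into $T_{\text{left}(F)}$, White's formula computes $d(U,T_{\text{left}(F)})$ as a maximum over \emph{all} candidates of $U$, so bounding the ratio only for loops carried by one factor $F$ would not bound the distance.

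The legality thresholds at $\text{left}(\cdot)$ and $\text{right}(\cdot)$ control not the absolute stretch from $U$ but the \emph{derivative} of $t\mapsto l_{T_t}(g)$ along the folding path, and that is how the paper argues. For $t<\text{left}(U)$, no candidate $g\in c(U)$ has image in $T_t$ containing a legal segment of length at least $3$, so $(T_t)_g/g$ has at least $l_{T_t}(g)/2$ illegal turns, and the Derivative Formula \cite[Lemma 4.4]{BF11} forces $l_{T_t}(g)>l_{T_{\text{left}(U)}}(g)$ for every candidate; hence $d(U,T_t)>d(U,T_{\text{left}(U)})$ and $T_t$ cannot be the minimizer. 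Symmetrically, for $t>\text{right}(U)$ every candidate carries at least $l_{T_t}(g)/3I$ disjoint legal segments of length at least $3$, so its length is increasing and $T_t$ again fails to minimize. In other words, the correct statement is a monotonicity statement about $d(U,\cdot)$ outside $[\text{left}(U),\text{right}(U)]$, not a uniform bound on the distance there; if you want to keep your contrapositive framing, you still need this derivative/monotonicity input to show that a minimizer outside the interval is impossible.
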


The meaning of the word \emph{Large} is clear from the following proof; it seems uninformative to explicitly specify the constants involved. 

\begin{proof}
 By definition of $\text{left}(U)$, for $t<\text{left}(U)$, no element $g \in c(U)$ has image in $T_t$ containing a legal segment of length at least three, hence the number of illegal turns in $(T_t)_g/g$ is at least $l_{T_t}(g)/2$.  By the Derivative Formula \cite[Lemma 4.4]{BF11}, we have that $l_{T_t}(g)>l_{T_{\text{left}(U)}}(g)$.  
 
 By the definition of $\text{right}(U)$, for $t>\text{right}(U)$, every element $g \in c(U)$ contains a bounded number of illegal segments, so there are at least $l_{T_t}(g)/3I$ disjoint legal segments of length at least 3.  Again, by the Derivative Formula \cite[Lemma 4.4]{BF11}, we have that $l_{T_t}(g)>l_{T_{\text{right}(U)}}(g)$.
\end{proof}

For the remainder of the paper \emph{uniform quasi-geodesic} will mean a quasi-isometric embedding of an interval of $\mathbb{R}$ with a fixed constant.  We keep in mind that set-valued maps are acceptable in the quasi-isometric category as long as the image of every point has uniformly bounded diameter.

If $(X,d)$ is a metric space, then a \emph{uniform re-parameterized quasi-geodesic} is a function $f:\mathbb{R} \to X$ such that there are $\ldots < n_{-1}<n_0<n_1<\ldots \in \mathbb{R}$ such that $g:\mathbb{Z} \to X:i \mapsto f(n_i)$ is a uniform quasi-geodesic and $f([n_i,n_{i+1}])$ is uniformly bounded .  We have the following:

\begin{prop}\label{liberal quasi-geodesic}
 If $T_t$ is a liberal folding path, then $\hat{\pi}(T_t)$ is a reparameterized uniform quasi-geodesic.  
\end{prop}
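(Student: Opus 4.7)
The plan is to deduce this from the Bestvina--Feighn result \cite{BF14} that greedy folding paths project to uniform (unparameterized) quasi-geodesics in $\mathcal{FF}$, extended to the liberal setting via the natural Stallings factorizations discussed in \S\ref{subsec: morphisms}. The upper (coarse Lipschitz) bound is essentially immediate from Proposition \ref{projection properties}: since $T_t$ is parameterized by arc length in $(CV_N,d)$, one has $d_{\mathcal{FF}}(\hat\pi(T_s), \hat\pi(T_t)) \leq K|s-t|+K$, which handles one direction of the quasi-isometric inequality.

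For the reparameterization and lower bound, first discretize $T_t$ at times $t_0 < t_1 < \ldots$ with $t_{i+1} - t_i$ of bounded $(CV_N, d)$-length, and aim to show that $(\hat\pi(T_{t_i}))_i$ is a uniform quasi-geodesic in $\mathcal{FF}$. Inserting Stallings factorizations as needed converts the discretization into a standard folding/unfolding sequence $(G_n)_n$ for which $\hat\pi(G_n)$ coarsely agrees with $\hat\pi(T_{t_n})$, since the Stallings insertions do not change the underlying topological graph. I would then apply the Bestvina--Feighn fellow-traveler argument: for any $\mathcal{FF}$-geodesic $\gamma$ between $\hat\pi(T_{t_i})$ and $\hat\pi(T_{t_j})$, every factor $F$ appearing on $\gamma$ should have its visibility interval $[\text{left}(F), \text{right}(F)]$ meeting $[t_i, t_j]$; this is proven by invoking Lemma \ref{nearest point} to locate, for any representative $U$ of $F$, its nearest point on $T_{[t_i,t_j]}$, then using Proposition \ref{projection properties} to transport the estimate into $\mathcal{FF}$. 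A visibility-counting estimate — that only boundedly many factors can be first-seen in an interval of bounded $(CV_N,d)$-length, which follows from the derivative formula controlling how legal and illegal lengths evolve under folding — then gives $d_{\mathcal{FF}}(\hat\pi(T_{t_i}), \hat\pi(T_{t_j})) \geq c|i-j| - c$.

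The main obstacle is that liberal folding paths enjoy less rigidity than greedy ones: multiple folding schedules are allowed at each instant, so the interaction between the timing of folds and the visibility of factors is not automatic. The technical work is a local comparison between the liberal path and an associated greedy reparameterization at each discretization step, where Lemma \ref{2 gates} (every edge extends to a legal segment, hence to a legal loop) ensures enough legal structure persists for the visibility estimates to remain uniform across the liberal schedule. As an alternative route one could argue by contradiction via Theorem \ref{slow progress in FF for foldings}: arbitrarily long stalls of $\hat\pi(T_t)$ would, after translating by $Out(F_N)$ to bring the relevant subpaths into a cocompact subset of $CV_N$ and extracting a limit, produce a linear folding subsequence whose limit tree must be non-uniquely ergometric, and this structural conclusion could be played off against the behavior forced on any folding sequence by the earlier parts of the argument to obtain the required contradiction.
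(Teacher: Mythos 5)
Your proposal diverges from the paper's proof, which is a two-line citation: Theorem A.12 of \cite{BF14} shows that a liberal folding path is already a reparameterized uniform quasi-geodesic in the free splitting complex $\mathcal{S}$, and Theorem 1.1 of \cite{KR12} (Kapovich--Rafi) shows that the coarse projection $\mathcal{S} \to \mathcal{FF}$ carries reparameterized quasi-geodesics to reparameterized quasi-geodesics. You instead try to rerun the Bestvina--Feighn visibility argument directly in $\mathcal{FF}$ for liberal paths, and the step you flag as ``the main obstacle'' --- the comparison between a liberal schedule and a greedy one so that the visibility estimates stay uniform --- is precisely the content that would need to be proved; the sketch given does not supply it. That extension is nontrivial (it is why \cite{BF14} proves Theorem A.12 in the appendix rather than deducing it from the greedy case), so as written the argument has a genuine gap at its core.

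There are two further problems. First, the lower bound you aim for, $d_{\mathcal{FF}}(\hat\pi(T_{t_i}),\hat\pi(T_{t_j})) \geq c|i-j| - c$ over a discretization with bounded $(CV_N,d)$-step, is false in general: Theorems \ref{slow progress in FF for unfoldings} and \ref{slow progress in FF for foldings} of this very paper exhibit linear folding paths whose $\mathcal{FF}$-projection stalls over arbitrarily long parameter intervals. Being a \emph{reparameterized} quasi-geodesic does not require linear progress in the arc-length parameter; it requires only that one can choose times $n_i$ adaptively so that the path between consecutive times is bounded and the subsequence is a genuine quasi-geodesic --- i.e., the essential content is the absence of coarse backtracking, not a lower speed bound. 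Second, the ``alternative route'' inverts the logic of Theorem \ref{slow progress in FF for foldings}: that theorem deduces slow progress \emph{from} non-unique ergodicity, not conversely, so a long stall does not force a non-uniquely ergometric limit tree; and even if it did, stalling is compatible with being a reparameterized quasi-geodesic, so no contradiction would result.
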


\begin{proof}
 Theorem A.12 of \cite{BF14} gives that $T_t$ is a reparameterized uniform quasi-geodesic in the simplicial completion $(\mathcal{S},d_{\mathcal{S}})$ of $CV_N$, equipped with the simplicial metric.  The space $\mathcal{S}$ is called the \emph{free splitting complex}.  Theorem 1.1 of \cite{KR12} then gives that $\pi(T_t)$ is a reparameterized uniform quasi-geodesic in $\mathcal{FF}$.  
\end{proof}

The follow result of Bestvina and Feighn generalizes \cite{AK11}:

\begin{prop}\label{strongly contracting}\cite[Corollary 7.3]{BF11}
 Let $U, V \in CV_N$, and suppose that $T_t$ is the image in $CV_N$ of a greedy folding path and has been parameterized by arc length.  Further, suppose that $d(U,T_t)>M$ for all $t$ and that $d(U,V) \leq M$.  If $\pi(T_t)$ is a uniform quasi-geodesic, then $\text{right}(U)-\text{left}(U)$ and
 \[
 \sup_{s\in [\text{left}(U),\text{right}(U)], t\in [\text{left}(V),\text{right}(V)]}d(T_s,T_t)
 \]
 are uniformly bounded.
\end{prop}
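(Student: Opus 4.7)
The plan is to prove the result by transferring a standard hyperbolic-space argument from $\mathcal{FF}$ back to Outer space via the projection $\pi$. The foundational inputs are that, by Proposition \ref{liberal quasi-geodesic}, $\pi(T_t)$ is a reparametrized uniform quasi-geodesic in the $\delta$-hyperbolic space $(\mathcal{FF}, d_{\mathcal{FF}})$, together with the coarse-Lipschitz bound $d_{\mathcal{FF}}(\pi(T), \pi(U)) \leq K d(T, U) + K$ of Proposition \ref{projection properties}.

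First I would show that for any factor $F \in \mathcal{F}(U)$ the distance $d_{\mathcal{FF}}(F, \pi(T_t))$ is uniformly bounded for $t$ in a neighborhood of $[\text{left}(F), \text{right}(F)]$ and grows as $t$ leaves this interval. The key mechanism is the derivative formula \cite[Lemma 4.4]{BF11}: for $t < \text{left}(F)$ any immersed subsegment in the $F$-cover of $T_t/\FN$ has many illegal turns, forcing translation lengths of elements of $c(U)$ certifying $F$ to increase rapidly; symmetric behavior holds for $t > \text{right}(F)$ by the bounded-illegal-count hypothesis. Via Tad White's Lipschitz distance formula, this yields a lower bound on $d(T_t, U')$ for any $U'$ exhibiting $F$ as a subgraph, which by Proposition \ref{projection properties} translates to a lower bound on $d_{\mathcal{FF}}(F, \pi(T_t))$.

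Given this, $d_{\mathcal{FF}}(\pi(T_{\text{left}(U)}), \pi(U))$ and $d_{\mathcal{FF}}(\pi(T_{\text{right}(U)}), \pi(U))$ are uniformly bounded, so the quasi-geodesic property of $\pi(T_t)$ in the hyperbolic space $\mathcal{FF}$ forces $\text{right}(U) - \text{left}(U)$ itself to be bounded, proving the first half. For the second half, Proposition \ref{projection properties} combined with $d(U, V) \leq M$ gives $d_{\mathcal{FF}}(\pi(U), \pi(V)) \leq KM + K$, and the Morse property of quasi-geodesics in $\delta$-hyperbolic spaces then forces the coarse nearest-point projections of $\pi(U)$ and $\pi(V)$ onto $\pi(T_t)$ to lie within uniformly bounded $\mathcal{FF}$-distance. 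Combined with the first half, this places $[\text{left}(V), \text{right}(V)]$ within a uniformly bounded $\mathcal{FF}$-neighborhood of $[\text{left}(U), \text{right}(U)]$.

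The main obstacle is the final conversion of a bounded $\mathcal{FF}$-diameter estimate on a portion of $\pi(T_t)$ into a bounded $d$-diameter estimate on the corresponding portion of $T_t$ in $CV_N$, since $\pi$ is only known to be coarsely Lipschitz, not coarsely bi-Lipschitz. The fix is to exploit the reparametrized quasi-geodesic property of $\pi(T_t)$ in the reverse direction: on such a quasi-geodesic, a subsegment of bounded $\mathcal{FF}$-diameter lies in a uniformly bounded parameter interval after accounting for the reparametrization, and since $T_t$ is a folding path parametrized by arc length with $d(T_s, T_t) = |t-s|$ for $s \le t$, this yields the desired uniform bound on $\sup_{s,t} d(T_s, T_t)$.
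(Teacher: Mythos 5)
First, a framing remark: the paper gives no proof of this statement --- it is imported verbatim as \cite[Corollary 7.3]{BF11} --- so your attempt can only be judged on its own terms, as a reconstruction of the Bestvina--Feighn argument.

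There is a genuine gap in your first step, at ``Via Tad White's Lipschitz distance formula, this yields a lower bound on $d(T_t,U')$ for any $U'$ exhibiting $F$ as a subgraph, which by Proposition \ref{projection properties} translates to a lower bound on $d_{\mathcal{FF}}(F,\pi(T_t))$.'' Proposition \ref{projection properties} is the inequality $d_{\mathcal{FF}}(\pi(T),\pi(U))\leq K\,d(T,U)+K$: it converts upper bounds on Lipschitz distance into upper bounds on $\mathcal{FF}$-distance and says nothing in the direction you need. Two points of $CV_N$ can be arbitrarily far apart in the Lipschitz metric while having the same image in $\mathcal{FF}$, so a lower bound on $d(T_t,U')$ over all $U'$ containing $F$ proves nothing about $d_{\mathcal{FF}}(F,\pi(T_t))$ being large. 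Showing that $F$ stays coarsely close to $\pi(T_t)$ on $[\text{left}(F),\text{right}(F)]$ and drifts away outside it is the real content of \cite{BF11}; it needs the ``long legal segments cross every edge''/filling arguments (of the kind this paper itself deploys in Section \ref{sec: progress in FF} to get $d_{\CF\CF}\le 6$), not the derivative formula combined with the coarse Lipschitz property of $\pi$.

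Your final paragraph has a second problem. You invoke the \emph{reparametrized} quasi-geodesic property from Proposition \ref{liberal quasi-geodesic} to convert a bounded-$\mathcal{FF}$-diameter subsegment of $\pi(T_t)$ into a bounded arc-length interval. For a merely reparametrized quasi-geodesic this is false: the reparametrization can be essentially constant on arbitrarily long intervals, and Theorems \ref{slow progress in FF for unfoldings} and \ref{slow progress in FF for foldings} show that this is exactly what happens for non-uniquely-ergodic folding paths. The conversion is only available under the proposition's stronger standing hypothesis that $\pi(T_t)$ is a genuine (parametrized) uniform quasi-geodesic, which your argument should invoke explicitly in place of Proposition \ref{liberal quasi-geodesic}. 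Relatedly, you never use the hypothesis $d(U,T_t)>M$, which is what guarantees (cf.\ Lemma \ref{nearest point}) that the combinatorial projections $\text{left}(U),\text{right}(U)$ coarsely agree with nearest-point projection; without it the bridge between the $\mathcal{FF}$-projection of $\pi(U)$ and the interval $[\text{left}(U),\text{right}(U)]$ is not established.
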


Notice that it certainly is implied that in this case there is $\epsilon>0$ such that $T_t \in CV_N^\epsilon$.   Geodesics satisfying the conclusion of Proposition \ref{strongly contracting} are called \emph{strongly contracting}.  The following is a consequence of the work of Bestvina-Feighn; we are including a proof for the convenience of the reader.

\begin{cor}\label{uniformly stable}\cite{BF11}
 Suppose that $T_t$ is a strongly contracting folding path in $CV_N$.  If $T,U \in CV_N$ are such that $d(T,\text{Image}(T_t))$ and $d(U,\text{Image}(T_t))$ are bounded, then any geodesic from $T$ to $U$ lies in a bounded $d_s$-neighborhood of $\text{Image}(T_t)$.
\end{cor}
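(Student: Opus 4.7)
The plan is a Morse-type argument adapted to the asymmetric Lipschitz setting. Fix a $d$-geodesic $\gamma\colon[0,L]\to CV_N$ from $T$ to $U$ parameterized by arc length, and assume for contradiction that $\sup_t d_s(\gamma(t),\text{Image}(T_t))$ can be taken arbitrarily large. First I would record that strong contraction for $T_t$ (Proposition \ref{strongly contracting}) forces $\text{Image}(T_t)$ to lie in some thick part $CV_N^\epsilon$, so Lemma \ref{thick distance} gives that $d$ and $d_s$ are bi-Lipschitz equivalent (with additive constant) on a neighborhood of $\text{Image}(T_t)$.  In particular the hypothesis that $d(T,\text{Image}(T_t))$ and $d(U,\text{Image}(T_t))$ are bounded yields a $d_s$-bound at the endpoints, and it suffices to prove a $d$-bound along $\gamma$.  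Fix the projection $\text{proj}(V)=T_{\text{left}(V)}$ and extract from Proposition \ref{strongly contracting} constants $M$ and $K_2$ such that whenever $V,V'$ satisfy $d(V,\text{Image}(T_t))>M$ and $d(V,V')\le M$, the left-parameters $\text{left}(V),\text{left}(V')$ agree up to additive error $K_2$; enlarging $M$ if necessary, assume $M>2K_2$.

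The main step is a shortcut argument applied to each \emph{far excursion} of $\gamma$.  Let $[a,b]\subseteq[0,L]$ be a maximal subinterval on which $d(\gamma(t),\text{Image}(T_t))>2M$; then the boundary points $\gamma(a),\gamma(b)$ lie within bounded $d$-distance of $\text{Image}(T_t)$.  Cover $[a,b]$ by consecutive points $a=t_0<t_1<\cdots<t_n=b$ with $d(\gamma(t_i),\gamma(t_{i+1}))=M$, so $n\le (b-a)/M+1$; by Proposition \ref{strongly contracting} applied to each pair, consecutive left-parameters differ by at most $K_2$, hence $\text{proj}(\gamma(a))$ and $\text{proj}(\gamma(b))$ are at $T_t$-arc-length at most $K_2(b-a)/M+O(1)$.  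The shortcut $T\to\gamma(a)\to\text{proj}(\gamma(a))\to\text{proj}(\gamma(b))\to\gamma(b)\to U$ then has $d$-length at most
\[
\bigl(L-(b-a)\bigr)+O(M)+K_2(b-a)/M+O(M),
\]
where the $O(M)$ terms encode the boundary jumps off and back onto $T_t$ (the ``off'' jump uses that $\gamma(b)$ is within $d$-distance $2M$ of $T_t$ together with Lemma \ref{thick distance} to flip direction).  Since $d(T,U)=L$, this forces $(b-a)(1-K_2/M)\le O(M)$, so $b-a$ is bounded.

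Therefore every far excursion has $d$-length $O(1)$, and the triangle inequality then gives that every $\gamma(t)$ in such an excursion satisfies $d(\gamma(t),\text{Image}(T_t))\le 2M+O(1)$; the trivial bound handles the complementary ``close'' portion of $\gamma$.  A final application of Lemma \ref{thick distance} upgrades this $d$-bound to a $d_s$-bound, completing the proof.  The principal obstacle is booking the constants carefully in the asymmetric setting---in particular, ensuring that $M$ can be chosen large enough relative to $K_2$ that the shortcut is strictly shorter, and handling the direction of Lemma \ref{thick distance} at the far/close interface, where we need a $d$-estimate running from points of $T_t$ outward rather than inward.
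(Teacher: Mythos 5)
Your argument is correct and is exactly the argument the paper intends: its own proof of Corollary \ref{uniformly stable} is a one-line appeal to Proposition \ref{strongly contracting}, Lemma \ref{thick distance}, and the triangle inequality, and your shortcut/Morse argument on far excursions, with Lemma \ref{thick distance} used to flip directions at the interface and to upgrade the $d$-bound to a $d_s$-bound, is the standard fleshing-out of that sketch. The constant-bookkeeping issues you flag (choosing $M$ large relative to $K_2$, and the asymmetry at the far/close boundary) are real but are resolved in the cited work of Bestvina--Feighn and Algom-Kfir, so no new idea is needed.
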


\begin{proof}
 In light of Proposition \ref{strongly contracting} and Lemma \ref{thick distance}, supposing the conclusion is false would produce a contradiction to the triangle inequality.  
\end{proof}

\subsection{The boundary of the factor complex}

Additional details about the subject matter of this section can be found in \cite{BR12, R12}.

Use $\bar{CV}_N$ to denote the space of homothety classes of elements of $\overline{cv}_N$; this is the usual compactification of Outer space \cite{CV86}.  Put $\partial cv_N=\bar{cv}_N \ssm cv_N$ and $\partial CV_N=\overline{CV}_N \ssm CV_N$.  

Associated to $T \in \partial cv_N$ is an $\FN$-invariant, closed subspace $L(T) \subseteq \partial^2 \FN$ called the \emph{lamination} of $T$ defined as follows:
\[
L(T)=\cap_{\delta>0} \overline{\{(g^{-\infty},g^\infty)|l_T(g)< \delta\}}
\]

Where $g^{\pm \infty}\in \partial \FN$ is $g^{\pm \infty}=\lim_{n \to \pm \infty} g^n1$.  Invariance of $L(T)$ follows from the fact that $l_T(\cdot)$ is constant on conjugacy classes.  See \cite{CHL08a, CHL08b}.

Given $T \in \partial cv_N$ and a factor $F'$, say that $F'$ \emph{reduces} $T$ if there is an $F'$-invariant subtree $Y \subseteq T$ with $vol(Y/F')=0$.  This means that the action of $F'$ on $Y$ has a dense orbit; notice that $Y$ could be a point.  Define $\mathcal{R}(T)=\{F'|F' \text{ is a factor reducing } T\}$.

We use the notation $\mathcal{AT}=\{T \in \partial cv_N|\mathcal{R}(T)=\emptyset\}$, and the elements of $\mathcal{AT}$ are called \emph{arational trees}.  It is shown in \cite{R12} that $T \in \mathcal{AT}$ if and only if either $T$ is free and \emph{indecomposable} in the sense of Guirardel \cite{Gui08} or else $T$ is dual to an arational measured foliation on a once-punctured surface.  

Put a relation $\sim$ on $\mathcal{AT}$ by declaring $T \sim U$ if and only if $L(T)=L(U)$.  The boundary $\partial \mathcal{FF}$ of $\mathcal{FF}$ was considered in \cite{BR12}; see also \cite{H12}.  We have the following:

\begin{prop}\label{delF}\cite[Main Results]{BR12}
 There is a (closed) quotient map $\partial \pi:\mathcal{AT} \to \partial \mathcal{FF}$ such that $\partial \pi(T)=\partial \pi(U)$ if and only if $T \sim U$.  If $T_n \in cv_N$ converge to $T \in \partial cv_N$, then:
 \begin{enumerate}
 \item [(i)] If $T \notin \mathcal{AT}$, then $\hat{\pi}(T_n)$ does not converge in $\mathcal{FF}$, and
 \item [(ii)] If $T \in \mathcal{AT}$, then $\hat{\pi}(T_n)$ converges to $\partial \pi(T)$.  
 \end{enumerate}
 Furthermore, if $T_n, U_n \in cv_N$ converge to $T,U \in \mathcal{AT}$, if $T \nsim U$, and if $V_t^n$ is a greedy folding path from $\lambda_nT_n$ to $U_n$, then $V_t^n$ converges uniformly on compact subsets to a greedy folding path $V_t$ with $\lim_{t \to -\infty} V_t/\mu_t \sim T$ and $\lim_{t \to \infty} V_t \sim U$, for some constants $\lambda_n$, $\mu_t$.
\end{prop}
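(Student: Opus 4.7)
The plan is to reduce the statement to the hyperbolicity of $\mathcal{FF}$ and the quasi-geodesicity of projected folding paths (Proposition \ref{liberal quasi-geodesic}). I would fix a basepoint $x_0 \in CV_N$ and, for each $T_n$, build a greedy folding path $V_t^n$ from $x_0$ to $T_n$. Each $\pi(V_t^n)$ is a uniform reparameterized quasi-geodesic in $\mathcal{FF}$, and $\pi(T_n)$ is its endpoint up to bounded error. So the convergence question becomes: do the $\pi(V_t^n)$ escape to infinity in $\mathcal{FF}$, and if so, does the endpoint in $\partial \mathcal{FF}$ depend only on $L(T)$?

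For part (i), suppose $F' \in \mathcal{R}(T)$ with $F'$-invariant subtree $Y \subseteq T$ satisfying $\vol(Y/F') = 0$. As $T_n \to T$ (after suitable rescaling), the translation length functions $l_{T_n}|_{F'}$ converge to $l_T|_{F'}$, which is arbitrarily small on any fixed generating set of $F'$. Using White's candidate formula together with Proposition \ref{projection properties}, this forces the core of the $F'$-cover of $T_n/F_N$ to map into $\mathcal{FF}$ at uniformly bounded distance from $[F']$; hence $\pi(T_n)$ does not escape to infinity. Conversely, for (ii), if $\pi(T_n)$ stayed within bounded $\mathcal{FF}$-distance of some $[F'']$, a limiting argument extracting bounded-volume core subgraphs in $T_n/F_N$ representing conjugates of $F''$ would yield an $F''$-invariant subtree of $T$ with zero-volume quotient, contradicting $T \in \mathcal{AT}$. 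So $\pi(T_n) \to \infty$, and hyperbolicity plus the Morse Lemma give convergence to a point $\partial\pi(T) \in \partial\mathcal{FF}$ independent of the approximating sequence.

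To establish $\partial\pi(T) = \partial\pi(U) \iff T \sim U$ and the final folding-path statement simultaneously, I would argue as follows. Assuming $T \not\sim U$, the folding paths $V_t^n$ from $\lambda_nT_n$ to $U_n$ project to $\mathcal{FF}$-quasi-geodesics running between the distinct boundary points $\partial\pi(T)$ and $\partial\pi(U)$; by strong contraction (Proposition \ref{strongly contracting}), a central segment of each $V_t^n$ lies in a fixed cocompact subset of $CV_N$. Reparameterizing so that $V_0^n$ sits in this thick region, Arzel\`a--Ascoli combined with the finite discrete structure of folding moves produces a subsequential limit $V_t$ defined on $(-\infty,\infty)$. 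The forward/backward limits of $V_t$ in $\overline{CV}_N$ then have translation length functions proportional to those of $U$ and $T$, hence lie in their $\sim$-classes. Conversely, if $L(T)=L(U)$, the approximating folding paths eventually fellow-travel in the thick part (Corollary \ref{uniformly stable}), so their $\mathcal{FF}$-projections converge to a common limit, forcing $\partial\pi(T)=\partial\pi(U)$.

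The main obstacle is the escape step in (ii): showing that $\pi(T_n) \to \infty$ in $\mathcal{FF}$ for \emph{every} arational $T$. The contradiction sketch requires careful control of how normalized core subgraphs degenerate when one passes to a limit in $\overline{cv}_N$, particularly when $T$ has a mixture of simplicial and dense-orbit pieces, since a priori the relevant cores could shrink away or blow up in the limit. This, together with the bi-infinite folding path construction, is essentially the main content of \cite{BR12}.
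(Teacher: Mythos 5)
This proposition is not proved in the paper at all: it is quoted, with attribution, from Bestvina--Reynolds \cite{BR12}, and the surrounding text treats it as imported background. There is therefore no internal proof to compare your argument against; the intended ``proof'' is the citation, and a self-contained argument would amount to reproving the main theorem of \cite{BR12}, as you yourself acknowledge in your last paragraph.

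Judged as a reconstruction of that argument, your sketch assembles the right tools (coarse Lipschitz projection, quasi-geodesicity of projected folding paths, strong contraction, Arzel\`a--Ascoli for the bi-infinite limit path), but it contains a concretely false step in part (i): you assert that $\vol(Y/F')=0$ makes $l_T$ ``arbitrarily small on any fixed generating set of $F'$.'' Zero covolume means the $F'$-action on $Y$ has dense orbits, not that its elements are elliptic or short; when $Y$ is a nondegenerate dense-orbit tree the generators of $F'$ have definite positive translation length, so the claimed bound on $d_{\mathcal{FF}}(\pi(T_n),[F'])$ does not follow from translation lengths, and one must instead argue through the legal-structure projections ($\text{left}(\cdot)$, $\text{right}(\cdot)$) of \cite{BF11}. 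Beyond that, the steps you defer are precisely where the content lies: that $\pi(T_n)$ converges to a single point of $\partial\mathcal{FF}$ rather than merely leaving every bounded set (the sequence itself is not a quasi-geodesic, so the Morse lemma does not apply directly); that the limit depends only on $L(T)$, which needs the unique-duality machinery of Proposition \ref{uniqueduality} rather than Corollary \ref{uniformly stable}, since two folding paths converging to trees with equal laminations do not obviously fellow-travel; and the surjectivity and closedness of $\partial\pi$, which your sketch does not address. These are the main theorems of \cite{BR12}, and the correct move in this paper is to cite them rather than re-derive them.
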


\begin{rem}
 If $(V_n)_n$ is a reduced unfolding path with $\lim_{n \to -\infty} V_t=T$, then Lemma \ref{length and frequency decay} gives that $\Lambda((V_n)_n) \subseteq L(T)$.  
\end{rem}

Recall that $Curr(\FN)$ denotes the space of currents on $\FN$ and that $Curr(\FN)$ is given the weak-* topology.     When $g \in \FN$ is non-trivial and when no element of the form $h^k$, $k>1$, belongs to the conjugacy class of $g$, the notation $\eta_g$ is used to mean the sum of all Dirac masses on the distinct translates of $(g^{-\infty},g^\infty)$ in $\partial^2 \FN$.  The following useful results were established by Kapovich and Lustig:

\begin{prop}\cite[Main Results]{KL09a, KL10d}\label{KL}
 There is a unique $Out(\FN)$-invariant continuous function $\langle \cdot, \cdot \rangle: \overline{cv}_N \times Curr(\FN) \to \mathbb{R}_{\geq 0}$ that satisfies:
 \begin{enumerate}
 \item [(i)] $\langle T, \eta_g\rangle=l_t(g)$,
 \item [(ii)] $\langle T, \mu \rangle =0$ if and only if $\text{Supp}(\mu)\subseteq L(T)$
 \end{enumerate}
\end{prop}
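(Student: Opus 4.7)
The plan is to construct $\langle \cdot, \cdot \rangle$ first on $cv_N \times Curr(\FN)$ via an explicit combinatorial formula, then extend by continuity using density of rational currents in $Curr(\FN)$, and finally verify the characterization (ii) of the zero set. Uniqueness will follow automatically: any $Out(\FN)$-invariant continuous extension of $(T, \eta_g) \mapsto l_T(g)$ is forced by continuity and density of rational currents.

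For the definition on $cv_N$, given $T \in cv_N$ with marked quotient graph $G = T/\FN$ equipped with length function $\vec\lambda_T$, I would set
\[ \langle T, \mu \rangle = \sum_{e \in EG} \lambda_T(e) \cdot \mu({\rm Cyl}(e)). \]
One checks this is independent of the choice of simplicial representative in the open simplex of $T$ (subdividing an edge splits its cylinder consistently, as in \eqref{eq: disjoint union}), and that for a rational current $\eta_g$ the weight $\mu({\rm Cyl}(e))$ counts occurrences of $e$ in the reduced cyclic edge path of $g$, so the sum equals $l_T(g)$. Continuity in each variable is immediate: $G$ has finitely many edges, $\lambda_T(e)$ varies continuously in a simplex, and $\mu \mapsto \mu({\rm Cyl}(e))$ is weak-$*$ continuous by the coordinate description of currents in $\S$\ref{subsec: currents}.

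To extend to $\overline{cv}_N$, I would use that rational currents are dense in $Curr(\FN)$ and that for each $g \in \FN$ the function $T \mapsto l_T(g)$ extends continuously to $\overline{cv}_N$ by construction of the compactification. The key estimate is a uniform Lipschitz bound for $\mu \mapsto \langle T, \mu \rangle$ on compact subsets, which then allows one to define $\langle T, \mu \rangle = \lim_n \langle T, \mu_n \rangle$ for any approximating sequence of rational currents $\mu_n \to \mu$. For $T \in cv_N$ the bound follows from finiteness of $EG$; for boundary trees $T \in \partial cv_N$ one approximates via simplicial trees $T_n \to T$ in a common simplicial structure (using folds), so that cylinder weights compare across the approximation through Stallings factorizations. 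For property (ii), the "if" direction uses that $\mu$ supported on $L(T)$ is approximated weak-$*$ by normalized currents $\eta_{g_n}/\|\eta_{g_n}\|$ with $l_T(g_n) \to 0$, so continuity and (i) give $\langle T, \mu \rangle = 0$. For the converse, if $(\xi_-, \xi_+) \in \text{Supp}(\mu) \setminus L(T)$, then by definition of $L(T)$ there is a cylinder neighborhood $U$ of $(\xi_-, \xi_+)$ and $\delta > 0$ with $l_T(g) > \delta$ for every $g$ satisfying $(g^{-\infty}, g^\infty) \in U$; approximating $\mu|_U$ by rational currents supported in $U$ and using linearity yields $\langle T, \mu \rangle \geq \delta \mu(U) > 0$, a contradiction.

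The main obstacle is the continuous extension across $\partial cv_N$: boundary trees can be non-simplicial with dense orbits (Lemma \ref{limit tree has dense orbits}), so there is no direct combinatorial formula there. The cleanest route, which avoids wrestling with non-simplicial trees, is to establish uniform estimates in the current variable that persist through the limit $T_n \to T$ and combine them with continuity of $T \mapsto l_T(g)$. A secondary subtlety is extracting the uniform lower bound $l_T(g) > \delta$ on a cylinder neighborhood in the "only if" direction of (ii): this requires leveraging the explicit definition of $L(T)$ as the Hausdorff limit of conjugacy classes of loxodromics whose translation length tends to zero, together with upper semi-continuity of the translation length on $\overline{cv}_N$.
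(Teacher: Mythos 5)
The paper does not prove this statement at all: Proposition \ref{KL} is quoted as a black box from Kapovich--Lustig \cite{KL09a, KL10d}, so there is no internal argument to compare against. Judged on its own terms, your sketch correctly identifies the easy part (the explicit formula $\sum_e \lambda_T(e)\,\mu(\mathrm{Cyl}(e))$ on $cv_N$, its well-definedness under subdivision, and the computation $\langle T,\eta_g\rangle = l_T(g)$), but it defers or elides exactly the two points that constitute the actual theorems. First, the extension to $\overline{cv}_N \times Curr(\FN)$: your plan only produces, at best, separate continuity in the current variable for each fixed $T$ (via $\langle T,\mu\rangle = \lim_n \langle T,\mu_n\rangle$ along rational approximations), whereas the statement requires \emph{joint} continuity on $\overline{cv}_N \times Curr(\FN)$. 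Joint continuity is where essentially all of the work of \cite{KL09a} lies; it is established using the bounded back-tracking property of morphisms onto very small trees, and the phrase ``uniform estimates in the current variable that persist through the limit'' names the difficulty without resolving it. Note also that uniqueness requires density of the \emph{scalar multiples} $c\,\eta_g$ together with homogeneity of the form in the second variable, which your statement of the result does not record.

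Second, the zero-set characterization (ii) is an independent theorem (the content of \cite{KL10d}), not a formal consequence of continuity. In the ``if'' direction, approximating $\mu$ supported on $L(T)$ by $c_n\eta_{g_n}$ with $l_T(g_n)\to 0$ gives $\langle T,\mu\rangle = \lim c_n\, l_T(g_n)$ only if one controls the normalizing constants $c_n$, which can blow up as $l_T(g_n)\to 0$; producing approximating classes with $c_n\, l_T(g_n)\to 0$ is the substance of the proof and uses the dual-lamination machinery of Coulbois--Hilion--Lustig. In the ``only if'' direction, the step ``approximating $\mu|_U$ by rational currents supported in $U$ and using linearity yields $\langle T,\mu\rangle \geq \delta\,\mu(U)$'' presupposes that the intersection number of a boundary tree localizes over cylinders of $\partial^2 F_N$, i.e.\ some integral representation of $\langle T,\cdot\rangle$ valid for non-simplicial $T$ with dense orbits. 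No such formula is available a priori; for $T\in\partial cv_N$ the pairing is only defined as a limit, and decomposing $\mu$ as $\mu|_U + \mu|_{U^c}$ does not interact in any obvious way with that limit. Both gaps are fatal to the proposal as a self-contained proof, though the overall architecture (define on $cv_N$, extend, characterize the kernel) does match the structure of the Kapovich--Lustig papers.
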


Let $g \in \FN$ be a primitive element, \emph{i.e.} $\langle g \rangle$ represents a factor.  Define $\mathbb{R}M_N=\overline{Out(\FN)[\eta_g]}$, where $[\eta_g]$ is the homothety class of $\eta_g$ in the projective space $\mathbb{P}Curr(\FN)=Curr(\FN)/\mathbb{R}_{\geq 0}$.  The subspace $\mathbb{P}M_N$ is the minset of the action of $Out(\FN)$ on $\mathbb{P}Curr(F_N)$ \cite{MarPhD, KL07}.  Use $M_N$ to denote the pre-image of $\mathbb{P}M_N$ in $Curr(\FN)$.

If $X$ is a topological space, then $X'$ denotes the set of non-isolated points in $X$, $X''=(X')'$, and so on.  We have the following, which is the contents of Proposition 4.2, Corollary 4.3, and Theorem 4.4 of \cite{BR12}:

\begin{prop}\label{uniqueduality}\cite{BR12}
 Let $T \in \mathcal{AT}$ and $U \in \partial cv_N$
 \begin{enumerate}
 \item [(i)] $L'''(T)$ is perfect,
 \item [(ii)] If $L'''(T) \subseteq L(U)$, then $L(T)=L(U)$, and
 \item [(iii)] If $\mu \in M_N$ satisfies $\langle T,\mu \rangle=0$, then $Supp(\mu)=L'''(T)$.
 \end{enumerate}
 In particular, if $\langle T, \mu \rangle = 0 = \langle U, \mu \rangle$, then for any $\eta \in M_N$, one has that $\langle T, \mu \rangle=0$ if and only if $\langle U, \mu \rangle=0$, so $U \in \mathcal{AT}$.
\end{prop}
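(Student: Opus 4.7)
\textbf{Proof proposal for Proposition \ref{uniqueduality}.}

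The plan is to exploit the structural classification of $T \in \mathcal{AT}$, which by \cite{R12} is either free and indecomposable or dual to an arational measured foliation on a once-punctured surface. In both cases, $L(T)$ admits a decomposition into finitely many minimal sublaminations (at most $3N-3$ by the argument in Proposition \ref{minimal components} applied to any approximating unfolding sequence, or alternatively from Coulbois-Hilion's structure theory), together with a countable set of ``diagonal'' leaves that bi-asymptotically connect minimal leaves. I would first classify isolated points of $L(T)$: a leaf $\ell$ can be isolated only if one of its ends lies in $\partial H$ for some peripheral/vertex-stabilizer subgroup $H$, or if $\ell$ is a diagonal leaf joining two minimal sublaminations. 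Since $T$ is arational, non-trivial point stabilizers are cyclic in the free indecomposable case and peripheral in the surface case, which forces the isolated set of $L(T)$ to be a union of countably many $F_N$-orbits of leaves with at most two levels of accumulation on minimal components.

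For (i), the idea is to show by induction that each derivative $L(T) \supseteq L'(T) \supseteq L''(T)$ strips away one level of the accumulation hierarchy: at the first step we remove leaves both of whose ends are eventually periodic (these sit isolated in $L(T)$); at the second step we remove leaves asymptotic to a minimal lamination on only one side; and after the third step only leaves both of whose ends recur into minimal sublaminations remain. By $F_N$-invariance and density of orbits in the minimal sublaminations, every such remaining leaf is a bilateral limit of its own orbit, so $L'''(T)$ is perfect. (The number ``three'' reflects the height of the accumulation hierarchy for arational trees; the main obstacle is bounding this height uniformly.)

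For (ii), observe first that the disjoint union of the minimal sublaminations is contained in $L'''(T)$, since minimal sublaminations are perfect. Now suppose $L'''(T) \subseteq L(U)$. Each minimal sublamination of $L(T)$ is therefore contained in $L(U)$, and by the continuity and positivity properties of the Kapovich–Lustig intersection form in Proposition \ref{KL}, this forces any current $\mu$ supported on $L(T)$ to also satisfy $\langle U, \mu \rangle = 0$. The arationality of $T$ (and the fact that $L(T)$ is generated as the closure of its minimal sublaminations together with finitely many orbits of diagonal leaves that are uniquely determined by the minimal sublaminations and the tree structure) then forces the reverse inclusion $L(U) \subseteq L(T)$; here one uses that a factor-reducing subtree of $U$ would produce a sublamination of $L(U)$ disjoint from $L(T)$'s minimal pieces, contradicting $L'''(T) \subseteq L(U)$ and arationality.

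For (iii), take $\mu \in M_N$ with $\langle T, \mu \rangle = 0$, so by Proposition \ref{KL}(ii), $\operatorname{Supp}(\mu) \subseteq L(T)$. Since $\mu$ is a weak-$*$ limit of rescaled primitive rational currents $\eta_{g_n}$, the support of $\mu$ has the property that each point of it is a limit of shift-orbits, hence $\operatorname{Supp}(\mu)$ contains no isolated points of $L(T)$, of $L'(T)$, or of $L''(T)$: indeed, an isolated leaf at any of these levels cannot be accumulated on by orbits of primitive elements (a primitive $g_n$ contributes only periodic leaves, whose approximating limits lie on minimal sublaminations). Thus $\operatorname{Supp}(\mu) \subseteq L'''(T)$. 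Conversely, $\operatorname{Supp}(\mu)$ is closed and $F_N$-invariant, and by part (i) it meets the perfect set $L'''(T)$; standard minimality arguments for the $F_N$-action on each minimal component, together with the fact that any diagonal leaf in $L'''(T)$ is forced into $\operatorname{Supp}(\mu)$ by closure, give $\operatorname{Supp}(\mu) = L'''(T)$. The ``in particular'' statement then follows immediately: $\langle T,\mu\rangle = 0 = \langle U,\mu\rangle$ with $\mu \in M_N$ yields $L'''(T) = \operatorname{Supp}(\mu) \subseteq L(U)$, hence $L(T) = L(U)$ by (ii), and $U \in \mathcal{AT}$ because any reducing factor for $U$ would trim $L(U)$ strictly below $L(T) = L'''(T)$-closure, a contradiction. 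The main obstacle throughout is controlling the precise depth of the accumulation hierarchy to justify the exponent ``three''.
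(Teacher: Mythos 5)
The paper does not actually prove this proposition: it is imported verbatim from Bestvina--Reynolds \cite{BR12} (it is the content of Proposition 4.2, Corollary 4.3, and Theorem 4.4 there), so there is no internal argument to compare yours against. Judged on its own terms, your outline identifies a reasonable structural picture --- a minimal part of $L(T)$ plus finitely many orbits of diagonal leaves, with the derived-set operation peeling off isolated leaves --- but each of the three parts rests on a step that is asserted rather than proved.

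Concretely: in (i) the entire content is that the accumulation hierarchy has height at most three, and you say yourself that ``the main obstacle is bounding this height uniformly''; that bound is precisely what \cite{BR12} establishes (using indecomposability and the finiteness of the set of orbits of branch directions), and without it the exponent three is unexplained. In (ii) the hard direction is $L(U)\subseteq L(T)$ for an \emph{arbitrary} very small $U$; your argument (``a factor-reducing subtree of $U$ would produce a sublamination of $L(U)$ disjoint from $L(T)$'s minimal pieces'') does not address this, since $U$ is not assumed arational, a reducing factor of $U$ is not what must be excluded, and nothing you write rules out $L(U)\supsetneq L(T)$. In (iii) both inclusions are incomplete: the claim that $\mathrm{Supp}(\mu)$ avoids the isolated leaves at each of the three levels needs a real argument ($F_N$-invariance plus local finiteness to exclude atoms on non-periodic isolated leaves, and arationality of $T$ to exclude periodic leaves of primitive classes in $L(T)$), not the vague assertion that limits of primitive currents ``cannot accumulate'' on isolated leaves; and the reverse inclusion $L'''(T)\subseteq \mathrm{Supp}(\mu)$ requires knowing that the diagonal leaves surviving in $L'''(T)$ lie in the closure of the minimal part, which is essentially part (i) again and is not supplied by ``standard minimality arguments.'' Since this is a quoted external result, the appropriate move here is to cite \cite{BR12}; as a proof your text is a useful reading guide but not self-contained.
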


If $L_n$ and $L$ are algebraic laminations, say that the sequence $L_n$ \emph{super-converges} to $L$ if for any $l \in L$, there is a sequence $l_n \in L_n$ such that $l_n$ converges to $l$. It is immediate from the definition of $L(\cdot)$ and the definition of the topology on $\overline{cv}_N$ that for $T_n,T \in \partial cv_N$, if $T_n$ converge to $T$, then $L(T_n)$ super-converges to $L(T)$.

For $T \in \mathcal{AT}$ define $\Lambda(T)=L'''(T)$, and use the notation $\mathcal{AL}=\{\Lambda(T)|T \in \mathcal{AT}\}$.  Call the topology induced by the surjection $\mathcal{AT} \to \mathcal{AL}$ the \emph{super-convergence topology}.  A rephrasing of the first statement of Proposition \ref{delF} is:

\begin{cor}\label{delF2}
 $\partial \mathcal{FF} \cong \mathcal{AL}$
\end{cor}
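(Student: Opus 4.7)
The plan is to recognize both $\partial\mathcal{FF}$ and $\mathcal{AL}$ as quotients of $\mathcal{AT}$ by the same equivalence relation, and then match the topologies. By Proposition \ref{delF}, the map $\partial\pi:\mathcal{AT}\to\partial\mathcal{FF}$ is a closed quotient map whose fibers are the $\sim$-classes, where $T\sim U$ iff $L(T)=L(U)$. By definition of the super-convergence topology, $\mathcal{AL}$ carries the quotient topology from the surjection $T\mapsto\Lambda(T)=L'''(T)$ on $\mathcal{AT}$. Thus the corollary will follow once I verify that these two equivalence relations on $\mathcal{AT}$ coincide: for $T,U\in\mathcal{AT}$, $\Lambda(T)=\Lambda(U)$ iff $L(T)=L(U)$.

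The direction ``$L(T)=L(U)\implies\Lambda(T)=\Lambda(U)$'' is immediate, since $\Lambda(T)=L'''(T)$ is defined solely from $L(T)$. For the converse, suppose $L'''(T)=L'''(U)$. Then in particular $L'''(T)\subseteq L(U)$, and Proposition \ref{uniqueduality}(ii) applied to $T\in\mathcal{AT}$ and $U\in\partial cv_N$ yields $L(T)=L(U)$. Consequently the two surjections $\mathcal{AT}\to\partial\mathcal{FF}$ and $\mathcal{AT}\to\mathcal{AL}$ induce the same partition of $\mathcal{AT}$, and the resulting bijection $\partial\mathcal{FF}\to\mathcal{AL}$ is a homeomorphism between the corresponding quotient topologies.

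I do not anticipate any substantial obstacle, since the real content has already been assembled in Propositions \ref{delF} and \ref{uniqueduality} and what remains is essentially the bookkeeping of quotient topologies. The only minor point to double-check is that Proposition \ref{uniqueduality}(ii) permits comparing a tree in $\mathcal{AT}$ with an arbitrary tree in $\partial cv_N$ (so that $U$ is automatically forced into $\mathcal{AT}$ once the hypothesis is satisfied), but this is exactly the form in which that statement is recorded.
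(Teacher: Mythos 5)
Your proof is correct and follows exactly the route the paper intends: the paper presents this corollary as a rephrasing of the first statement of Proposition \ref{delF}, and the only content needed to make that rephrasing precise is the identification of the two partitions of $\mathcal{AT}$, which you correctly extract from Proposition \ref{uniqueduality}(ii). The paper leaves that step implicit; you have simply written it out.
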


Define $\mathcal{UET}=\{T \in \mathcal{AT}|L(T)=L(T') \text{ imples } T'=\alpha T, \alpha \in \mathbb{R}\}$ and $\mathcal{UEL}=\{\Lambda \in \mathcal{AL}|\text{Supp}(\eta)=\Lambda=\text{Supp}(\eta'), \eta \in M_N, \text{ implies } \eta=\alpha\eta, \alpha \in \mathbb{R}\}$.  Finally set $\mathcal{UE}=\mathcal{UET} \cap \{T |\Lambda(T) \in \mathcal{UEL}\}$.

\subsection{Limit Sets of Certain Subgroups of $Out(F_N)$}

If $H \leq Out(F_N)$, we use $\text{Limit}(H) \subseteq \partial cv_N$ to mean representatives of accumulation points of $x_0H$ in $\overline{CV}_N$, where $x_0$ is any base point.  

\begin{thm}\label{Limit Set}
 Let $H \leq Out(F_N)$.  If the orbit map $H \to \mathcal{FF}:h  \mapsto h\pi(x_0)$ is a quasi-isometric embedding, then $\text{Limit}(H)\subseteq \mathcal{UE}$.
\end{thm}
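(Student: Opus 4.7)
The plan is to argue by contradiction: if some $T\in\text{Limit}(H)$ fails to lie in $\mathcal{UE}$, I will construct a bi-infinite folding path $V_t$ connecting two limit points of $H$ and derive a contradiction between the reparameterized quasi-geodesic behavior of $\pi(V_t)$ and the slow-progress theorems \ref{slow progress in FF for foldings} and \ref{slow progress in FF for unfoldings}.

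First I show $\text{Limit}(H)\subseteq\mathcal{AT}$. Given $T=\lim g_n x_0$ with $g_n\in H$, the quasi-isometric embedding makes $H$ a hyperbolic group with a continuous boundary embedding $\partial H\hookrightarrow\partial\mathcal{FF}$; in particular, after passing to a subsequence, $g_n$ converges to some $\xi_H\in\partial H$ and correspondingly $\pi(g_n x_0)\to\xi\in\partial\mathcal{FF}$. Since $g_n x_0\to T$ along this subsequence, Proposition \ref{delF}(i) forces $T\in\mathcal{AT}$ (else $\pi(g_n x_0)$ could not converge in $\overline{\mathcal{FF}}$), and Proposition \ref{delF}(ii) identifies $\xi=\partial\pi(T)$. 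The same reasoning shows every element of $\text{Limit}(H)$ is arational. Since $H$ is infinite hyperbolic we have $|\partial H|\ge 2$, so I may pick a second limit point $U\in\text{Limit}(H)$ with $U\nsim T$, realized as $U=\lim h_n x_0$ for some $h_n\in H$.

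Now suppose for contradiction $T\notin\mathcal{UE}$, so that either (A) $T\notin\mathcal{UET}$ or (B) $\Lambda(T)\notin\mathcal{UEL}$. In either case I invoke the last clause of Proposition \ref{delF}: folding paths from $h_n x_0$ to $g_n x_0$ (in Case A) or from $g_n x_0$ to $h_n x_0$ (in Case B), after rescaling, converge uniformly on compact subsets to a bi-infinite greedy folding path $V_t$ whose endpoints are $\sim U$ and $\sim T$, ordered so that $V_{+\infty}\sim T$ in Case A and $V_{-\infty}\sim T$ in Case B. By Proposition \ref{liberal quasi-geodesic}, $\pi(V_t)$ is a reparameterized uniform quasi-geodesic joining $\partial\pi(U)$ and $\partial\pi(T)$, and by Proposition \ref{strongly contracting} together with Lemma \ref{thick distance}, $V_t$ lies uniformly in some thick part $CV_N^\epsilon$; discretizing with bounded step produces a linear folding/unfolding sequence $(G_n)_{n\in\BZ}$. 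In Case A, Proposition \ref{equivalence of length measures} identifies length measures on the folding direction with non-atomic invariant length measures on $V_{+\infty}$; since $L(V_{+\infty})=L(T)$ and $T\notin\mathcal{UET}$, there are at least two non-homothetic trees in $\overline{cv}_N$ dual to $L(T)$, each giving a distinct non-atomic length measure on $V_{+\infty}$, so the folding sequence is not uniquely ergometric, and Theorem \ref{slow progress in FF for foldings} yields arbitrarily long parameter intervals on which $\pi(G_n)$ has diameter at most $6$, contradicting the quasi-geodesic behavior of $\pi(V_t)$. In Case B, the legal lamination $\Lambda$ of the unfolding satisfies $\Lambda\subseteq L(V_{-\infty})=L(T)$; since every $M_N$-current $\eta$ with $\langle T,\eta\rangle=0$ has $\supp(\eta)=L'''(T)=\Lambda(T)$ by Proposition \ref{uniqueduality}(iii), one verifies $\Lambda(T)\subseteq\Lambda$, so distinct ergodic $M_N$-currents on $L(T)$ yield distinct ergodic currents on $\Lambda$ via Theorem \ref{spaces of currents are isomorphic}, and Theorem \ref{slow progress in FF for unfoldings} again delivers the contradiction.

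The main obstacle is the containment $\Lambda\supseteq\Lambda(T)$ in Case B, i.e., ensuring that the combinatorially defined legal lamination of the approximating unfolding carries all the ergodic data of $L(T)$ and does not merely see a uniquely ergodic sublamination. I would establish this by showing that each ergodic $M_N$-current $\mu$ on $L(T)$ pulls back via the equivariant maps $\phi_n$ of the unfolding to a well-defined non-negative sequence $\vec\mu_n\in\BR^{|EG_n|}$ satisfying $\vec\mu_{n+1}=M_n\vec\mu_n$, which by Theorem \ref{spaces of currents are isomorphic} realizes $\mu$ as an ergodic current on $\Lambda$. The analogous matching of distinct length measures in Case A is automatic from Proposition \ref{equivalence of length measures}, so once the Case B containment is in hand the rest of the argument is straightforward.
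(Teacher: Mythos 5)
Your proposal is correct and takes essentially the same route as the paper: both use Proposition \ref{delF} to realize a limit point of $H$ as the forward endpoint of a folding path and the backward endpoint of an unfolding path whose projections to $\mathcal{FF}$ are uniform quasi-geodesics, and then invoke Theorems \ref{slow progress in FF for foldings} and \ref{slow progress in FF for unfoldings} to rule out non-unique ergometricity of the limit tree and non-unique ergodicity of the legal lamination. The point you flag as the main obstacle (the containment $\Lambda(T)\subseteq\Lambda$ needed to transfer the ergodic currents to the unfolding sequence) is left implicit in the paper's own proof, so your treatment is, if anything, slightly more careful.
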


\begin{proof}
 First notice that the conclusion of Corollary \ref{uniformly stable} also holds for uniform quasi-geodesics from $T, U$.  It follows that geodesics between points in $x_0H$ project to uniform quasi-geodesics in $\mathcal{FF}$.  Now apply Proposition \ref{delF} to get that bi-infinite geodesics arising as limits of geodesics between points in $x_0H$ also project to uniform quasi-geodesics in $\mathcal{FF}$.  
 
 Applying Proposition \ref{delF} again, we get that for any $U \in \text{Limit}(H)$, there are geodesics $T_t, \check{T}_t$ with $\lim_{t \to \infty} T_t \sim U$ and $\lim_{t \to -\infty}\check{T}_t \sim U$.  Now apply Theorem \ref{slow progress in FF for foldings} to get that $\lim_{t\to \infty} T_t \in \mathcal{UET}$, which implies that $\lim_{t \to \infty} T_t=U$ and $\lim_{t \to -\infty} \check{T}_t=U$.  Now apply Theorem \ref{slow progress in FF for unfoldings} to get that $\Lambda(\lim_{t \to -\infty} \check{T}_t) \in \mathcal{UEL}$.  
\end{proof}

\subsection{Random walks on $Out(F_N)$}

The goal of this section is to combine our results about folding/unfolding paths and our background material with recent work of Tiozzo \cite{Tio14} and Maher-Tiozzo \cite{MT14} to study random walks on $Out(\FN)$.  Our goal is the following application:

\begin{thm}\label{Poisson boundary}
 Let $\mu$ be a non-elementary distribution on $Out(\FN)$ with finite first moment with respect to $d_s$.  Almost every sample path for the corresponding random walk converges to a point $T \in \partial cv_N$, and the corresponding harmonic measure $\nu$ is concentrated on the subspace of uniquely ergodic and dually uniquely ergodic arational trees and is the Poisson boundary.  Furthermore, for almost every sample path $(w_n)_n$, there is a Lipschitz geodesic $T_t$ with image $\gamma$, such that 
 \[
 \lim d_s(w_n0, \gamma)/n=0
 \]
\end{thm}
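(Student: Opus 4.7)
The plan is to combine the Maher--Tiozzo theorem on random walks on (not necessarily proper) Gromov hyperbolic spaces, applied to the action of $\Out(F_N)$ on the hyperbolic complex $\mathcal{FF}$, with Theorems~\ref{slow progress in FF for foldings} and~\ref{slow progress in FF for unfoldings} in order to promote convergence in $\mathcal{FF}$ to convergence in $\overline{CV}_N$ landing in $\mathcal{UE}$.

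First I would apply the Maher--Tiozzo theorem to the orbit map $\Out(F_N) \to \mathcal{FF}$. Non-elementarity of $\mu$ guarantees two independent $\mathcal{FF}$-loxodromic elements in the semigroup generated by its support. By Corollary~2.10 of~\cite{AK11}, $d_s$ is quasi-isometric to the word metric on $\Out(F_N)$, so the first-moment hypothesis is exactly what Maher--Tiozzo require. Their theorem then produces, for $\mathbb{P}$-almost every sample path $(w_n)$, convergence $\pi(w_n x_0) \to \xi(\omega) \in \partial \mathcal{FF}$ together with positive drift $d_{\mathcal{FF}}(x_0, w_n x_0)/n \to L > 0$, and a non-atomic $\mu$-stationary hitting measure $\nu_{\mathcal{FF}}$ on $\partial \mathcal{FF} \cong \mathcal{AL}$.

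Next, by compactness of $\overline{CV}_N$ take any subsequential limit $T$ of $(w_n x_0)$; by Proposition~\ref{delF}, $T \in \mathcal{AT}$ with $\partial \pi(T) = \xi(\omega)$. The key step is $T \in \mathcal{UE}$. If $T \notin \mathcal{UET}$, then using Proposition~\ref{delF} one builds a folding path $V_t$ with $V_t \to T$ that arises as a uniform-on-compacts limit of Lipschitz geodesics between points $w_{n_k} x_0$ and $w_{n_l} x_0$ of the sample path. Theorem~\ref{slow progress in FF for foldings} then yields arbitrarily long subintervals of $V_t$ of $\mathcal{FF}$-diameter at most $6$; strong contraction (Proposition~\ref{strongly contracting}) and stability of fellow travelers (Corollary~\ref{uniformly stable}) transfer this $\mathcal{FF}$-boundedness to subintervals of the sample path, contradicting $L > 0$. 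Symmetrically, running the argument for the time-reversed walk and invoking Theorem~\ref{slow progress in FF for unfoldings} forces $\Lambda(T) \in \mathcal{UEL}$, so $T \in \mathcal{UE}$. Since all subsequential limits satisfy $\partial \pi(\cdot) = \xi(\omega)$ and therefore lie in a single $\sim$-class, and since $T \in \mathcal{UET}$ forces that class to have a unique representative up to scale, the full sequence converges $w_n x_0 \to w_\infty \in \mathcal{UE}$ in $\overline{CV}_N$; this establishes (i) and (ii).

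For (iii), choose a Lipschitz folding ray $T_t$ from $x_0$ to $w_\infty$ (existing by Proposition~\ref{delF} since $w_\infty \in \mathcal{AT}$); its image is a uniform reparameterized quasi-geodesic in $\mathcal{FF}$ (Proposition~\ref{liberal quasi-geodesic}), and positive drift of $(w_n x_0)$ combined with Lemma~\ref{nearest point} and Proposition~\ref{strongly contracting} yields the required sublinear tracking $d_s(w_n x_0, T_{Ln})/n \to 0$. For (iv), I apply Kaimanovich's strip criterion: given distinct $T^\pm \in \mathcal{UE}$, take the strip to be a bounded $d_s$-neighborhood of any bi-infinite Lipschitz folding line between them, built using Proposition~\ref{delF} and unique ergodicity of the endpoints. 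Theorems~\ref{slow progress in FF for foldings}--\ref{slow progress in FF for unfoldings} together with Propositions~\ref{strongly contracting} and~\ref{liberal quasi-geodesic} ensure such a line is strongly contracting, so the intersection of the strip with a word-metric ball of radius $n$ in $\Out(F_N)$ grows at most linearly in $n$, verifying Kaimanovich's criterion. The main obstacle is Step~2: converting a subsequential $\overline{CV}_N$-limit into a concrete folding path along which our slow-progress theorems apply, and transferring the resulting $\mathcal{FF}$-bounded intervals back to the sample path itself; this is where the Bestvina--Feighn strong contraction machinery does the heavy lifting.
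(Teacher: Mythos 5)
Your overall architecture (Maher--Tiozzo applied to the $\Out(F_N)$-action on $\mathcal{FF}$, then Theorems \ref{slow progress in FF for foldings} and \ref{slow progress in FF for unfoldings} to force the hitting point into $\mathcal{UE}$, then a Kaimanovich boundary criterion) matches the paper, and two of your choices are legitimate variants: your argument that all subsequential limits of $w_nx_0$ in $\overline{CV}_N$ lie in a single $\sim$-class and hence coincide once that class is uniquely ergodic is a reasonable alternative to the paper's current-theoretic convergence argument (Claim \ref{convergence in cv}), and the strip criterion for the Poisson boundary is explicitly noted as an option in the paper's closing remark (the paper itself uses the ray criterion). However, the central quantitative step is missing.

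The gap is the sublinear tracking estimate $\lim_n d_s(w_nx_0,T_{L_1n})/n=0$, which the paper proves \emph{before} and \emph{independently of} unique ergodicity, and which then drives everything else: tracking plus the linear drift $d_{\mathcal{FF}}(x_0,w_nx_0)/n\to L_0>0$ forces $d_{\mathcal{FF}}(\pi(x_0),\pi(T_{L_1n}))/n\to L_0$, i.e.\ the limiting geodesic itself makes linear progress in $\mathcal{FF}$, and \emph{that} is what contradicts Theorems \ref{slow progress in FF for foldings} and \ref{slow progress in FF for unfoldings} if an endpoint fails to be uniquely ergometric/ergodic. You try to run this backwards: apply the slow-progress theorem to a limit folding path $V_t$, obtain long intervals of $\mathcal{FF}$-diameter at most $6$, and ``transfer'' these to the sample path via Proposition \ref{strongly contracting} and Corollary \ref{uniformly stable}. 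This fails for two reasons. First, Proposition \ref{strongly contracting} has as a hypothesis that $\pi(T_t)$ is a uniform quasi-geodesic --- precisely the property that the slow-progress conclusion destroys --- so you cannot invoke contraction of the very path you are arguing is slow. Second, even granting some fellow-traveling, knowing that $V_t$ is $\mathcal{FF}$-slow on some parameter interval says nothing about the sample path unless you already know \emph{when} the walk is near that interval, which is exactly the tracking statement you have not proved. The same circularity appears in your part (iii): ``positive drift plus Lemma \ref{nearest point} plus Proposition \ref{strongly contracting}'' does not yield sublinear tracking in $d_s$; this is a genuinely ergodic-theoretic fact, not a consequence of contraction. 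The paper obtains it by applying Tiozzo's sublinearity lemma (Lemma \ref{Tiozzo lemma}) to the shift on the increment space with $f(\mathbf g)=\Phi(\mathbf{bnd}_-(\mathbf g),\mathbf{bnd}_+(\mathbf g))$, where $\Phi(T,U)=\sup_{G\in\mathbb G(T,U)}d_s(x_0,G)$ measures the distance to the bi-infinite geodesic joining the backward and forward hitting points; the essential inputs are the measurability of $\Phi$ (Lemma \ref{phi is measurable}), equivariance of $\mathbb G$, and integrability of the increment $f(S\mathbf g)-f(\mathbf g)\le d(0,g_1^{-1}0)$ coming from the first-moment hypothesis. Without this step, both your contradiction with $L>0$ and your tracking claim in (iii) are unsupported.
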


First, we define framework and will follow the discussion in \cite{MT14}.  We use $\Gamma$ to denote a finitely generated discrete group.

 A \emph{distribution} on $\Gamma$ is a positive vector $\mu \in L^1(\Gamma)$ of norm one.  The associated \emph{step space} is $\Omega=(\Gamma^\mathbb{Z},\mu^\mathbb{Z})$, and we use $\mathbb{P}$ to denote the product measure $\mathbb{P}=\mu^\mathbb{Z}$.  The \emph{shift map} on $\Omega$ is $S:\Omega \to \Omega:(g_n) \mapsto (g_{n+1})$; $S$ is ergodic.   
 
 Given a $\Gamma$-space $X$ and base point $x_0 \in X$, one obtains a discrete random variable with values in $X$ via $\Gamma^\mathbb{Z} \ni (g_n) \mapsto w_nx_0$, which is called a \emph{random walk} on $X$; we consider the push forward of $\mathbb{P}$ on the orbit $\Gamma x_0$.  The \emph{location} $(w_n)$ of the random walk at time $n$ is:
 \[
 w_n = \left\{
        \begin{array}{ll}
            g_1g_2\ldots g_n & \quad n>0 \\
            1 & \quad n=0 \\
            g_{0}^{-1}g_{-1}^{-1}\ldots g_{n}^{-1} & \quad n<0\\
        \end{array}
    \right.
 \]
 
 For negative time, the behavior of paths is guided by the \emph{reflected measure} $\check{\mu}(g)=\mu(g^{-1})$.  The sequences $(w_n)_{n>0}$ and $(w_n)_{n<0}$ are called \emph{unilateral paths} and are governed by the measures $\mu$ and $\check{\mu}$ respectively; we also say that $(w_n)_{n>0}$, \emph{resp.} $(w_n)_{n<0}$, is a \emph{sample path} for $\mu$, \emph{resp.} $\check{\mu}$.  When we use the term sample path without reference to a measure $\mu$ or $\check{\mu}$, we shall mean a unilateral path $(w_n)_{n>0}$.  The paths $(w_n)$ are called \emph{bilateral paths}.  The notation $\textbf{w}$ may be used for a sample path.
 
 When $(X,\rho)$ is a metric space, one can consider the average step size, which is the first moment of $\mu$ with respect to $\rho$:
 \[
 \sum_{g \in \Gamma}\mu(g)\rho(0,g0)
 \]
 
 If $Y$ is a topological space and if $\Gamma$ has a Borel action on $Y$, then $\Gamma$ acts on the space of Borel probability measures $\mathcal{M}(Y)$ via $g\nu(A)=\nu(g^{-1}(A))$.  The space $\mathcal{M}(Y)$ gets the weak-* topology, and it is a standard fact that compactness of $Y$ implies compactness of $\mathcal{M}(Y)$.  The distribution $\mu$ gives rise to the convolution operator on $\mathcal{M}(Y)$: for $\nu \in \mathcal{M}(Y)$
 \[
 \mu \ast \nu(A)=\sum_{g \in \Gamma}\mu(g)g\nu(A)
 \]
 A measure $\nu$ on $Y$ is called $\mu$-\emph{stationary} if it is a fixed point of the convolution operator.  Use $\mu^{(n)} \ast \nu$ to denote the $n$-fold application of the convolution to $\nu$.  A measure $\nu \in \mathcal{M}(Y)$ is called a $\mu$-\emph{boundary} if $\nu$ is stationary and for almost every sample path $(w_n)$, one has that $\lim_n w_n\nu$ is a Dirac mass.  The maximal $\mu$-boundary is called the \emph{Poisson boundary}.

 \begin{lem}\label{AT Borel}
  $\mathcal{AT}$ is a Borel set.
 \end{lem}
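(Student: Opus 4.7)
The plan is to express $\mathcal{AT}^c$ as a countable union of Borel sets. Since $F_N$ has only countably many conjugacy classes of proper free factors, it suffices to show that for each such $F'$, the set $A(F') := \{T \in \partial cv_N : F' \text{ reduces } T\}$ is Borel.

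Unwinding the definition, $F'$ reduces $T$ iff either (a) $F'$ fixes a point in $T$, or (b) the minimal invariant subtree $T_{F'}$ is nontrivial and $F' \curvearrowright T_{F'}$ has dense orbits: any $F'$-invariant $Y \subseteq T$ with $\vol(Y/F')=0$ is either a point, giving (a), or carries a dense-orbit $F'$-action, in which case $Y \supseteq T_{F'}$ by minimality and density transfers. The fixed-point set $B(F')$ equals $\bigcap_{g\in F'} \{T : l_T(g) = 0\}$, a countable intersection of closed sets (by continuity of $T \mapsto l_T(g)$ on $\overline{cv}_N$), hence closed.

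For (b), I will use the continuous restriction map $\rho_{F'} \colon \partial cv_N \to \mathbb{R}^{F'}$, $T \mapsto l_T|_{F'}$, whose value is the translation length function of the very small $F'$-tree $T_{F'}$. The key dichotomy, to be invoked next, is that \emph{a minimal very small action of the free group $F'$ on an $\mathbb{R}$-tree is either simplicial or has dense orbits}. This follows from Levitt's graph-of-actions decomposition: vertex stabilizers of the simplicial skeleton are at most cyclic in a very small free-group action, and cyclic groups admit only elliptic or axial (discrete-orbit) actions on $\mathbb{R}$-trees; hence either the vertex trees degenerate to points and the skeleton itself is the whole action, or else the skeleton is trivial and $F'$ acts indecomposably on all of $T_{F'}$. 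Writing $\mathcal{S}(F') \subseteq \mathbb{R}^{F'}$ for the set of length functions of nontrivial minimal simplicial very small $F'$-actions, we conclude $A(F') = \rho_{F'}^{-1}(\mathbb{R}^{F'} \setminus \mathcal{S}(F'))$ (the preimage absorbs both the fixed-point case at $0$ and the dense-orbit case outside $\mathcal{S}(F')$).

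It remains to show $\mathcal{S}(F')$ is $F_\sigma$, whence its complement is $G_\delta$ and the preimage Borel. The isomorphism types of quotient graphs-of-groups for minimal simplicial very small $F'$-trees form a countable family (graphs with fundamental group $F'$ and cyclic or trivial vertex and edge stabilizers, taken up to combinatorial equivalence), and for each type the length function is linear in the finitely many positive edge lengths, giving a finite-dimensional closed cone in $\mathbb{R}^{F'}$. Taking the countable union over conjugacy classes of proper free factors then gives that $\mathcal{AT}^c$, and hence $\mathcal{AT}$, is Borel. The main obstacle is justifying this simplicial-or-dense dichotomy for very small free-group actions and extracting uniform control over the countable family of combinatorial types; both are standard consequences of the Rips--Levitt theory of group actions on $\mathbb{R}$-trees, but require some care to state in the length-function setting used here.
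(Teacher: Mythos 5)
Your reduction to the two cases --- $F'$ elliptic, or $T_{F'}$ nontrivial with dense orbits --- and the observation that the elliptic locus $B(F')=\bigcap_{g\in F'}\{T: l_T(g)=0\}$ is closed are both fine. The argument breaks at the ``key dichotomy'': it is \emph{not} true that a minimal very small action of a free group on an $\mathbb{R}$-tree is either simplicial or has dense orbits, and the justification offered (that vertex stabilizers of the simplicial skeleton are at most cyclic) is false. Point stabilizers of very small free-group trees can be non-abelian free groups --- the paper itself works with ``a very small tree with a non-abelian point stabilizer $H$'' in Section 4.2. Levitt's decomposition only guarantees that the vertex actions have dense orbits; the vertex groups can be large, so there exist genuinely \emph{mixed} very small trees: for instance the graph of actions over the splitting $F_4=F_2\ast F_2$ with one vertex tree a dense-orbit very small $F_2$-tree, the other vertex a point, and a connecting edge of positive length (equivalently, dual trees to measured foliations on a surface having both a minimal component and a closed-leaf annular component). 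If $T\in\partial cv_N$ restricts on a proper factor $F'$ to such a mixed tree, then $\vol(T_{F'}/F')>0$, so $F'$ does not reduce $T$; yet $l_T|_{F'}$ is neither $0$ nor the length function of a simplicial action, so $T\in\rho_{F'}^{-1}(\mathbb{R}^{F'}\setminus\mathcal{S}(F'))$. Hence the claimed identity $A(F')=\rho_{F'}^{-1}(\mathbb{R}^{F'}\setminus\mathcal{S}(F'))$ fails: the right-hand side strictly contains $A(F')$.

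To repair the argument along your lines you would have to show directly that the set of length functions of dense-orbit very small $F'$-actions is Borel; exhibiting the simplicial ones as an $F_\sigma$ does not suffice, because its complement also contains the mixed trees. The paper sidesteps this entirely: enumerating the factors $F^k$, it sets $\mathcal{R}_k=\{T:\exists F'\in\mathcal{R}(T),\ F'\le F^k\}$, writes $\mathcal{AT}=\bigcap_k(\partial cv_N\smallsetminus\mathcal{R}_k)$, and shows each $\mathcal{R}_k$ is \emph{closed}, using only continuity of the restriction map $T\mapsto T_{F^k}$ together with closedness of $\partial cv(F^k)$ (the complement of the free simplicial $F^k$-actions) in $\overline{cv}(F^k)$; no simplicial-versus-dense dichotomy is invoked.
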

 
 \begin{proof}
  Let $\{F^1,F^2,\ldots,F^k,\ldots\}$ be the list of all factors; use $\mathcal{R}_k=\{T \in \partial cv_N|\exists F' \in \mathcal{R}(T), F' \leq F^k\}$.  By definition, $\mathcal{AT}=\cap_k (\partial cv_N \ssm \mathcal{R}_k)$.  We will finish by showing that $\mathcal{R}_k$ is closed.  Note that $\partial cv_N$ is separable and metrizable.  
  
  Suppose that $T_j \in \mathcal{R}_k$ converge to $T \in \partial cv_N$.  If $F^k$ fixes a point on a subsequence of $T_j$, then by definition of the topology on $\overline{cv}_N$, then $F^k$ fixes a point in $T$ as well.  Hence, we assume that $(T_j)_{F^k}$ is not a point.  By the definition of $\mathcal{R}_k$, we have that $(T_j)_{F^k} \in \partial cv(F^k)$, where $\overline{cv}(F^k)$ is the space of very small $F^k$-trees.  Further, notice that the definition of the topology of $\overline{cv}_N$ ensures that the restriction map $(T_j) \mapsto (T_j)_{F^k}$ is continuous.  The fact that $\mathcal{R}_k$ is closed then follows from the fact that $\partial cv(F^k)$ is closed.
 \end{proof}
 
 For $T \in \mathcal{AT}$, use $[T]$ to denote the $\sim$-class of $T$.  Define $B =\mathcal{AT} \times \mathcal{AT} \ssm \text{Graph}(\sim)$.  For $(T,U) \in B$, define $\mathbb G(T,U)=G([T],[U])$ to be the set of all geodesics $G_t$ in $CV_N$ with $\lim_{t \to -\infty} G_t \in [T]$ and $\lim_{t \to \infty} G_t \in [U]$.  By Proposition \ref{delF}, $\mathbb G(T,U) \neq \emptyset$.  If $G_t \in \mathbb{G}(T,U)$, use $G$ for the image of $G_t$.  Fix a base point $x_0 \in cv_N$.  Define a function $\Phi:B \to \mathbb{R}$ by
 \[
 \Phi(T,U)=\sup_{G_t\in \mathbb G(T,U)} d_s(x_0,G)
 \]
 
 \begin{lem}
  $\Phi(T,U)$ is defined for every $(T,U) \in B$.  
 \end{lem}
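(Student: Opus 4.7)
The plan is to reduce the supremum over $\mathbb G(T,U)$ to a uniformly bounded $d_s$-neighborhood of a single well-behaved geodesic. First, since $(T,U)\in B$ means $T\nsim U$, the last sentence of Proposition \ref{delF} provides a greedy folding path $G^0\in \mathbb G(T,U)$, which I fix. I would then verify that $G^0$ is strongly contracting: by Proposition \ref{liberal quasi-geodesic}, $\pi(G^0_t)$ is a reparameterized uniform quasi-geodesic in $\mathcal{FF}$, and because $\partial\pi$ factors through $\sim$-classes with distinct values on $T$ and $U$ (Proposition \ref{delF}), its endpoints $\partial\pi(T),\partial\pi(U)\in\partial\mathcal{FF}$ are distinct, so $\pi(G^0)$ is genuinely bi-infinite and the hypothesis of Proposition \ref{strongly contracting} holds for $G^0$.

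The main step is to show that every $G\in \mathbb G(T,U)$ lies in a $d_s$-neighborhood of (the image of) $G^0$ of bounded radius $R=R([T],[U])$. Each such $G$, after reparameterization, is a liberal folding path, so Proposition \ref{liberal quasi-geodesic} gives that $\pi(G)$ is also a bi-infinite reparameterized uniform quasi-geodesic in $\mathcal{FF}$ with the same endpoints $\partial\pi(T),\partial\pi(U)$ as $\pi(G^0)$. Suppose a subsegment $[y_1,y_2]\subset G$ remains at $d_s$-distance at least $M$ from $G^0$; applying the strong contraction of $G^0$ (Proposition \ref{strongly contracting}) along a chain of points subdividing $[y_1,y_2]$ forces the nearest-point projections onto $G^0$ of $y_1$ and $y_2$ to lie in a common bounded subset of $G^0$. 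Since $\pi(G)$ is a quasi-geodesic in the hyperbolic space $\mathcal{FF}$, any excursion of $\pi(G)$ away from a bounded neighborhood of $\pi(G^0)$ (here, the neighborhood of the bounded projection set just identified) must be bounded in length, forcing the $d_s$-length of $[y_1,y_2]$ to be bounded in terms of $M$. This yields the desired uniform $R$. Once this is in hand, pick any $y^0\in G^0$; for each $G$ there is $y\in G$ with $d_s(y,y^0)\leq R$, hence $d_s(x_0,G)\leq d_s(x_0,y^0)+R$, a bound independent of $G$, giving $\Phi(T,U)<\infty$.

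The main obstacle will be this stability step, since Corollary \ref{uniformly stable} is stated only for geodesics between fixed $CV_N$-points close to $G^0$, whereas here I must handle bi-infinite geodesics whose ends converge in $\partial CV_N$ and are not a priori $d_s$-close to $G^0$ at any finite time. The direct strong-contraction plus hyperbolicity argument sketched above is the expected route, but an alternative is to truncate $G$ at points $y_n^\pm\to\pm\infty$, use the ``furthermore'' clause of Proposition \ref{delF} and a compactness/diagonal argument in $\overline{CV}_N$ to produce points on $G^0$ at bounded $d_s$-distance from $y_n^\pm$, and then apply Corollary \ref{uniformly stable} to each sub-geodesic $G_{[s_n,t_n]}$ before letting $n\to\infty$.
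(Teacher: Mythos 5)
There is a genuine gap in your main route, and it sits exactly where the paper's central theme lives. Your step 2 asserts that the hypothesis of Proposition \ref{strongly contracting} holds for $G^0$, but that proposition requires $\pi(T_t)$ to be a uniform quasi-geodesic \emph{with its arc-length parameterization}, whereas Proposition \ref{liberal quasi-geodesic} only supplies a \emph{reparameterized} uniform quasi-geodesic. These are not interchangeable here: the set $B$ is all of $\mathcal{AT}\times\mathcal{AT}$ minus the graph of $\sim$, so $(T,U)\in B$ may perfectly well have $U$ non-uniquely ergodic, and then Theorem \ref{distortion} (the paper's own main distortion result) says $d_{\mathcal{FF}}(\pi(G^0_0),\pi(G^0_t))/t\to 0$, i.e.\ $\pi(G^0_t)$ is \emph{not} a parameterized uniform quasi-geodesic and $G^0$ need not be strongly contracting. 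Since the lemma must hold for every pair in $B$, you cannot assume contraction. A second, independent problem is the step where you pass from ``$\pi(G)$ stays in a bounded neighborhood of $\pi(G^0)$ in $\mathcal{FF}$'' to ``$G$ stays in a bounded $d_s$-neighborhood of $G^0$ in $CV_N$'': the projection $\pi$ has huge fibers, and proximity in $\mathcal{FF}$ gives no control on $d_s$-distance in $CV_N$. Indeed it is not clear (and the paper never claims) that all elements of $\mathbb{G}(T,U)$ lie at bounded Hausdorff distance from one another --- they may emanate from different non-homothetic representatives of $[T]$. Your fallback route has the same defect, since Corollary \ref{uniformly stable} is itself stated only for strongly contracting paths.

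The paper's proof needs much less than fellow-traveling: to bound $\sup_G d_s(x_0,G)$ it suffices that every $G\in\mathbb{G}(T,U)$ pass through one fixed compact set, not that the geodesics stay uniformly close to each other along their whole length. It gets this directly from a compactness statement, Corollary 6.8 of \cite{BR12}: given any sequence $G^n_t\in\mathbb{G}(T,U)$, the truncations $G^n_t|_{[-n,n]}$ accumulate on a point $y\in cv_N$, so $d_s(x_0,G^n)$ is bounded along a subsequence, and hence the supremum defining $\Phi(T,U)$ is finite. If you want to salvage your argument without that citation, you would need to replace the strong-contraction input with an argument valid for distorted (non-uniquely ergodic) endpoints, which is precisely the hard case this paper is about.
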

 
 \begin{proof}
 Let $G_t^n \in \mathbb{G}(T,U)$ be a sequence.  According to \cite[Corollary 6.8]{BR12}, we have that the sequence of folding path $G_t^n|_{[-n,n]}$ must accumulate on a point $y \in cv_N$.  It follows that $\Phi(T,U)$ is defined.  
 \end{proof}
 
 \begin{lem}\label{phi is measurable}
  $\Phi$ is a Borel function.
 \end{lem}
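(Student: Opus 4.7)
My plan is to prove that $\Phi$ is lower semi-continuous on $B$; since lower semi-continuous real-valued functions are Borel, this will suffice. Concretely, I will show that for every $c \in \mathbb{R}$ the set $\{(T,U) \in B : \Phi(T,U) > c\}$ is open.

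Fix $(T_0,U_0) \in B$ with $\Phi(T_0,U_0) > c$ and choose a witnessing geodesic $G^0_t \in \mathbb{G}(T_0,U_0)$ with $d_s(x_0, G^0_t) > c$ for every $t \in \mathbb{R}$. Since the forward and backward limits of $G^0_t$ lie in $\mathcal{AT}\subseteq \partial CV_N$, the distance from $x_0$ blows up along $G^0$ in both Lipschitz directions, and hence in $d_s$; fix $R>0$ so that $d_s(x_0, G^0_t) > c+1$ whenever $|t| > R$. Let $(T_n,U_n) \to (T_0,U_0)$ in $B$. My goal is to exhibit $G^n \in \mathbb{G}(T_n,U_n)$ with $d_s(x_0, G^n_t) > c$ for every $t$, which yields $\Phi(T_n,U_n) > c$ for large $n$.

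The construction of $G^n$ uses the last assertion of Proposition \ref{delF}. For each $n$, choose sequences in $cv_N$ converging in $\overline{cv}_N$ to $T_n$ and $U_n$; the greedy folding paths between them, after appropriate rescaling, converge uniformly on compacta to a greedy folding path in $\mathbb{G}(T_n,U_n)$. A diagonal extraction, applied to approximating sequences chosen to simultaneously produce $G^0$ when $n\to\infty$, yields $G^n \in \mathbb{G}(T_n,U_n)$ whose (suitably reparameterized) restrictions converge to $G^0$ uniformly on $[-R,R]$. Continuity of $d_s(x_0,\cdot)$ then gives $d_s(x_0, G^n_t) > c$ for $t \in [-R,R]$ once $n$ is large. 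Here the mild subtlety that $\mathbb{G}(T_n,U_n)$ is defined via the $\sim$-classes $[T_n], [U_n]$ is absorbed into the selection of approximating sequences, since we are free to choose representatives.

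The main obstacle is controlling $d_s(x_0, G^n_t)$ for $|t| > R$, where the pointwise comparison with $G^0_t$ is lost. I will handle this by invoking Proposition \ref{strongly contracting}: greedy folding paths are strongly contracting with uniform constants. Applied to approximating sequences in $cv_N$ converging to $U_n$, Corollary \ref{uniformly stable} forces the tail of $G^n_t$ for $t > R$ to remain in a bounded $d_s$-neighborhood of the tail of $G^0$, and symmetrically for $t < -R$. The neighborhood depends only on the universal strong-contraction constants, so it is independent of $n$. Combining this with the estimate on $[-R,R]$ and choosing $R$ sufficiently large relative to the neighborhood, one concludes $d_s(x_0, G^n_t) > c$ for every $t \in \mathbb{R}$ and every sufficiently large $n$. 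This establishes openness, so $\Phi$ is lower semi-continuous and therefore Borel.
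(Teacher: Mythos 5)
There is a genuine gap, and it sits exactly where you flagged the ``main obstacle.'' Your control of the tails $|t|>R$ rests on the claim that ``greedy folding paths are strongly contracting with uniform constants,'' so that Corollary \ref{uniformly stable} pins $G^n_t$ to a bounded $d_s$-neighborhood of $G^0$. But Proposition \ref{strongly contracting} and Corollary \ref{uniformly stable} carry the hypothesis that $\pi(T_t)$ is a uniform quasi-geodesic in $\mathcal{FF}$; this is emphatically not true of arbitrary folding paths, and indeed the central theme of this paper (Theorems \ref{distortion}, \ref{slow progress in FF for foldings}, \ref{slow progress in FF for unfoldings}) is that folding/unfolding paths with non-uniquely ergodic or non-dually-uniquely-ergodic limits are distorted in $\mathcal{FF}$ and hence are \emph{not} strongly contracting. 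Since $(T_0,U_0)$ is an arbitrary point of $B$ --- arbitrary inequivalent arational trees, with no unique ergodicity assumed --- there is no uniform contraction constant available, and the tail estimate collapses. A secondary soft spot: the ``diagonal extraction'' producing $G^n\in\mathbb{G}(T_n,U_n)$ converging to the \emph{chosen} witness $G^0$ on $[-R,R]$ is asserted rather than proved; Proposition \ref{delF} yields convergence of greedy folding paths between approximating points to \emph{some} element of $\mathbb{G}(T,U)$, but gives no mechanism for steering that limit toward a prescribed geodesic in $\mathbb{G}(T_0,U_0)$ (which need not even be greedy).

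Note also that you are proving the harder of the two semicontinuities. The paper's proof goes the other way and is much softer: given $(T_i,U_i)\to(T,U)$, take near-maximizing geodesics $G^i_t\in\mathbb{G}(T_i,U_i)$, extract a subsequence converging uniformly on compacta to some $G_t\in\mathbb{G}(T,U)$ via Proposition \ref{delF}, and observe that $\limsup_i d_s(x_0,G^i)\le d_s(x_0,G)\le\Phi(T,U)$ because the infimum over the image can only decrease under such convergence. That yields upper semicontinuity, hence Borel measurability, with no need to construct geodesics for the nearby pairs or to control any tails. If you want to salvage a lower-semicontinuity argument you would need a replacement for the strong-contraction step that works for arbitrary arational endpoints, and no such tool is provided in this paper.
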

 
 \begin{proof}
  Suppose $(T_i,U_i) \in B$ converge to $(T,U) \in B$.  Choose $s_i \to -\infty$ and $t_i \to \infty$ and apply Proposition \ref{delF} to greedy folding paths $G_t^i$, $s_i \leq t \leq t_i$.  After passing to a subsequence we get uniform convergence on compact sets to $G_t \in \mathbb{G}(T,U)$.  By definition of $\Phi$, we then have that $\lim_i \Phi(U_i,T_i) \leq \Phi(U,T)$, and so $\Phi$ is Borel.  
 \end{proof}
 
 If $\Gamma$ acts on a hyperbolic space $X$, then we say that a distribution $\mu$ is \emph{non-elementary} (relative to the $\Gamma$-action on $X$) if the semigroup generated by the support of $\mu$ does not fix a finite subset of $X \cup \partial X$.  Notice that $\mu$ is non-elementary if and only if $\check{mu}$ is non-elementary.
 
 \begin{rem}
  By the subgroup classification theorem of Handel-Mosher \cite{HM09} and Horbez \cite{Hor14a}, one has that a distribution $\mu$ on $Out(\FN)$ is non-elementary with respect to the action on $\mathcal{FF}$ if and only if 
 \end{rem}
 
 Recently, Maher and Tiozzo have shown the following:
 
 \begin{prop}\cite[Theorems 1.1, 1.2]{MT14}\label{Tiozzo Maher}
  Let $\Gamma$ be a group that acts on a separable hyperbolic space $(X,d_X)$ with base point $0$.  Assume that $\mu$ is a non-elementary distribution on $\Gamma$ with finite first moment.  
  \begin{enumerate}
  \item [(i)] For almost every sample path $(w_n)$, one has that $w_n0$ converges in $X$, and the corresponding hitting measure on $\partial X$ is non-atomic and is the unique $\mu$-stationary measure.
  \item [(ii)] There is $L_0>0$ such that for almost every sample path $(w_n)$, one has 
  \[
  \lim_n d_X(0,w_n0)/n=L_0
  \]
  \end{enumerate}
 \end{prop}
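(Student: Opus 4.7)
The plan is to handle (ii) first via Kingman's subadditive ergodic theorem on the step space, using non-elementarity to rule out zero drift, and then to leverage linear escape together with $\delta$-hyperbolicity to obtain (i) through a shadow-lemma / Gromov-product argument.

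For (ii): on $(\Omega, \mathbb{P}, S)$, the cocycle $\ell(n, \textbf{w}) := d_X(0, w_n 0)$ is subadditive in $n$ because $\Gamma$ acts by isometries, namely $\ell(n+m, \textbf{w}) \leq \ell(n, \textbf{w}) + d_X(w_n 0, w_{n+m}0) = \ell(n, \textbf{w}) + \ell(m, S^n \textbf{w})$. The finite first moment hypothesis places $\ell(1, \cdot) \in L^1(\mathbb{P})$, so Kingman's theorem together with ergodicity of the shift delivers an almost surely constant limit $L_0 := \lim \ell(n, \textbf{w})/n \geq 0$. Non-elementarity of $\mu$ on $X$ implies that the semigroup generated by $\supp(\mu)$ contains two loxodromic elements whose axes have pairwise disjoint endpoint pairs in $\partial X$; a ping-pong construction then produces disjoint shadow regions in $X \cup \partial X$ on which appropriate words contract strongly, and one argues via a Borel--Cantelli / strong-law analysis of return times that $w_n$ visits such configurations with positive density, yielding a definite linear gain per unit time and hence $L_0 > 0$.

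For (i): once $L_0 > 0$, $d_X(0, w_n 0) \to \infty$ almost surely, and the task is to promote this to convergence along a single direction. Hyperbolicity reduces the problem to verifying that the Gromov product $(w_n 0 \mid w_m 0)_0 \to \infty$ as $\min(n,m) \to \infty$; this in turn follows from an estimate saying that the probability of the random increment $g_{n+1} g_{n+2} \cdots$ lying in the shadow of the direction of accumulated progress is uniformly bounded below by a positive constant depending only on the drift and the hyperbolicity constant. Consequently $w_n 0$ converges to $w_\infty \in \partial X$ almost surely, and the pushforward $\nu$ of $\mathbb{P}$ under $\textbf{w} \mapsto w_\infty$ is a probability measure on $\partial X$. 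The stationarity $\mu \ast \nu = \nu$ is immediate from conditioning on the first step (Markov property). Non-atomicity follows because any atom would, by stationarity and ergodicity of $S$, produce a finite $\Gamma$-invariant subset of $\partial X$, contradicting non-elementarity; uniqueness among $\mu$-stationary probability measures is obtained by realizing any such $\nu'$ as the weak-$\ast$ limit of the martingale $\mathbb{E}(\nu' \mid \mathcal{F}_n)$ evaluated along almost every sample path, forcing it to coincide with $\nu$ via a standard Furstenberg-type argument.

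The main obstacle is the positivity of $L_0$. Kingman's theorem delivers the existence of the drift without effort, but excluding $L_0 = 0$ requires a quantitative exploitation of both the non-elementarity of $\mu$ and the coarse geometry of $X$ --- and crucially, $X$ is only assumed separable, not proper, so one cannot invoke a compactness argument for stationary measures on $X \cup \partial X$. The heart of the matter is therefore the ping-pong / shadow estimate producing positive-density recurrence to a compact loxodromic configuration; once this is in place, the remaining steps are a now-standard adaptation of Kaimanovich's methodology for random walks on hyperbolic spaces.
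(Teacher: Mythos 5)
This statement is not proved in the paper at all: it is quoted verbatim as Theorems 1.1 and 1.2 of Maher--Tiozzo \cite{MT14} and used as a black box, so there is no internal argument to compare yours against. Judged as a reconstruction of the Maher--Tiozzo proof, your outline contains the right vocabulary but leaves the two genuinely hard steps as assertions, and it inverts the logical order in a way that makes the second step harder than it needs to be.

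First, the positivity of the drift. Kingman gives you $L_0\ge 0$ for free, as you say, but the passage from ``the support of $\mu$ generates a semigroup containing two independent loxodromics'' to ``the walk makes definite linear progress'' is exactly the content that needs proof, and ``a Borel--Cantelli / strong-law analysis of return times \ldots yielding a definite linear gain per unit time'' is a restatement of the conclusion, not an argument: positive density of visits to a loxodromic configuration does not by itself prevent the subsequent increments from undoing the progress. In \cite{MT14} positive drift is derived \emph{after} boundary convergence, using decay estimates for the hitting measure of shadows; your proposed order forecloses that route. Second, the convergence step. You correctly identify that $X$ is only separable, not proper, so one cannot extract a stationary measure on $X\cup\partial X$ by compactness --- but you then do not supply the fix. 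The actual mechanism in \cite{MT14} is to pass to the horofunction compactification of $X$ (compact because $X$ is separable), produce a stationary measure there, show it is carried by the part of the horofunction boundary projecting to the Gromov boundary, and conclude convergence by martingale arguments; this is the paper's central technical innovation and it is absent from your sketch. Moreover the estimate you propose in its place --- that the probability of the next increments lying in the shadow of the accumulated direction is bounded \emph{below} --- is not sufficient for $(w_n0\mid w_m0)_0\to\infty$; one needs the complementary probabilities to be small (indeed summable along subsequences), which is again a shadow-decay statement requiring the stationary measure. Finally, note that in \cite{MT14} convergence and uniqueness of the stationary measure hold \emph{without} finite first moment, so any proof in which convergence is deduced from positive drift is proving a strictly weaker statement (adequate for the proposition as stated here, but worth flagging).
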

 
 We will also need the following simple lemma, which is immediate from the triangle inequality and Kingmann's Sub-additive Ergodic Theorem:
 
 \begin{lem}\label{escape rate}
  Let $\mu$ be a distribution on $\Gamma$, and let $(X,d_X)$ be a $\Gamma$-metric space with base point $0$.  If $\mu$ has finite first moment, then there is $L_1$ such that for almost every sample path $(w_n)$, one has
  \[
  \lim_n d_X(0,w_nx)/n=L_1
  \]
 \end{lem}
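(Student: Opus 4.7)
The plan is to apply Kingman's subadditive ergodic theorem to the sequence $a_n(\textbf{w}) = d_X(0, w_n 0)$ (where I interpret the $x$ in the statement as the base point $0$) viewed as a function on the step space $(\Omega, \mathbb{P})$ with shift $S$. The first step is to verify subadditivity. Since $\Gamma$ acts on $X$ by isometries, we have
\[
a_{n+m}(\textbf{w}) = d_X(0, w_{n+m}0) \leq d_X(0, w_n 0) + d_X(w_n 0, w_n (w_n^{-1} w_{n+m})0) = a_n(\textbf{w}) + d_X(0, (g_{n+1}\cdots g_{n+m})0).
\]
The second term is exactly $a_m(S^n \textbf{w})$, giving the subadditivity relation $a_{n+m} \leq a_n + a_m \circ S^n$.

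Next I would check the integrability hypothesis needed for Kingman's theorem. The first moment assumption gives
\[
\mathbb{E}[a_1] = \sum_{g \in \Gamma} \mu(g)\, d_X(0, g0) < \infty,
\]
and this is exactly the finiteness of the first moment with respect to $d_X$. Then Kingman's subadditive ergodic theorem applies: since $S$ is measure-preserving and ergodic on $(\Omega, \mathbb{P})$, the sequence $a_n/n$ converges $\mathbb{P}$-almost surely to an $S$-invariant function, which by ergodicity is $\mathbb{P}$-a.e. equal to a constant $L_1 \geq 0$. This constant is exactly $\inf_n \mathbb{E}[a_n]/n$, though we do not need this identification.

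There is essentially no obstacle here; the only thing to be careful about is ensuring that the bilateral step space used in the paper (rather than a one-sided one) still admits the ergodic shift and that the restriction to the unilateral sample path $(w_n)_{n>0}$ is governed by the product measure $\mu^{\mathbb{N}}$, on which the shift is Bernoulli and hence ergodic. Both facts are standard and are used already in the paper's setup for the random walk framework.
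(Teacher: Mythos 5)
Your proposal is correct and is exactly the argument the paper has in mind: the paper states the lemma as ``immediate from the triangle inequality and Kingman's Sub-additive Ergodic Theorem,'' and your write-up simply fills in the routine verification of subadditivity (via the isometric action and the shift) and of the integrability hypothesis (via the finite first moment). The only cosmetic point is the $x$ versus $0$ in the statement, which you handle reasonably; even if $x$ is an arbitrary point, $|d_X(0,w_nx)-d_X(0,w_n0)|\leq d_X(0,x)$ is bounded, so the limit is unchanged.
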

 
 The following is established by Tiozzo:
  
\begin{lem}\label{Tiozzo lemma}\cite[Lemma 7]{Tio14}
 Let $\Omega$ be a measure space with a probability measure $\lambda$, and let $T : \Omega \to \Omega$ be a measure-preserving, ergodic transformation. Let $f:\Omega \to \mathbb{R}$ be a non-negative, measurable function, and define the function $g : \Omega \to \mathbb{R}$ as
 \[
g(\omega) := f (T \omega) - f (\omega) \quad \forall \omega \in \Omega 
 \]
 If $g \in L^1(\Omega, \lambda)$, then, for $\lambda$-almost every $\omega \in \Omega$, one has
 \[
\lim f(T^n \omega)/n=0
 \]  
\end{lem}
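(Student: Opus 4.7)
The plan is to combine the Birkhoff Ergodic Theorem with a telescoping identity. First, since $g\in L^1(\Omega,\lambda)$ and $T$ is measure-preserving and ergodic, Birkhoff gives that for $\lambda$-almost every $\omega$,
\[
\frac{1}{n}\sum_{i=0}^{n-1} g(T^i\omega) \to \int_\Omega g\, d\lambda.
\]
Second, the definition $g=f\circ T - f$ makes the ergodic sum telescope:
\[
\sum_{i=0}^{n-1} g(T^i\omega) = f(T^n\omega) - f(\omega).
\]
Dividing by $n$ and using that $f(\omega)/n\to 0$ for any $\omega$ with $f(\omega)<\infty$ (which holds $\lambda$-a.e.: since $g\in L^1$ is finite a.e., the set $\{f=\infty\}$ is essentially $T$-invariant, hence of measure $0$ or $1$ by ergodicity, and the latter is absurd because $g$ would then be undefined a.e.), one obtains
\[
\frac{f(T^n\omega)}{n} \to \int_\Omega g\, d\lambda \quad \text{for $\lambda$-a.e. } \omega.
\]

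The remaining step is to show $\int_\Omega g\, d\lambda = 0$. The lower bound $\int g \geq 0$ is immediate from $f\geq 0$ together with the pointwise limit identified above. For the upper bound, I would argue via convergence in probability: since $T$ is measure-preserving, $f\circ T^n$ has the same distribution as $f$ for every $n$, so $f(T^n\cdot)/n$ has the same distribution as $f/n$. Because $f$ is finite $\lambda$-a.e., for any $\epsilon>0$,
\[
\lambda\{\omega:f(T^n\omega)/n>\epsilon\}=\lambda\{\omega: f(\omega)>n\epsilon\} \longrightarrow 0,
\]
so $f(T^n\cdot)/n \to 0$ in probability. Combined with the a.e. (hence in probability) convergence to the constant $\int g\,d\lambda$, uniqueness of the limit in probability forces $\int g\, d\lambda = 0$, completing the proof.

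The argument is short; the one subtle point is that when $f\notin L^1$ the naive identity $\int g = \int(f\circ T)-\int f$ takes the indeterminate form $\infty-\infty$, so a direct computation of $\int g$ via linearity is unavailable. The distributional comparison above is precisely what circumvents this obstacle; an alternative route would be to truncate by $f_M=f\wedge M\in L^1$, note $\int (f_M\circ T - f_M)=0$, and pass to the limit controlling the error, but the distribution argument is both cleaner and shorter.
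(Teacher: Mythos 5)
Your argument is correct. Note that the paper itself gives no proof of this lemma; it is quoted verbatim from Tiozzo \cite{Tio14}, so the only meaningful comparison is with the standard argument there. Up to the final step you follow it exactly: Birkhoff applied to $g\in L^1$ plus the telescoping identity $\sum_{i=0}^{n-1}g(T^i\omega)=f(T^n\omega)-f(\omega)$ gives $f(T^n\omega)/n\to\int g\,d\lambda$ almost everywhere, and non-negativity of $f$ gives $\int g\ge 0$. (Your digression about $\{f=\infty\}$ is harmless but unnecessary, since the statement already takes $f$ to be $\mathbb{R}$-valued.) Where you diverge is in showing $\int g\le 0$: the usual route is to pick $M$ with $\lambda\{f\le M\}>0$ and invoke Poincar\'e recurrence (or ergodicity) to conclude that almost every orbit returns to $\{f\le M\}$ infinitely often, forcing $\liminf_n f(T^n\omega)/n=0$ and hence $\int g=0$; you instead observe that $f\circ T^n$ is equidistributed with $f$, so $f(T^n\cdot)/n\to 0$ in probability, and uniqueness of limits in probability identifies the a.e.\ limit $\int g$ with $0$. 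Both closings are valid and of comparable length; the recurrence argument yields the slightly stronger-looking intermediate fact that the orbit of a.e.\ point keeps $f$ bounded infinitely often, while your distributional argument is arguably more self-contained since it needs only that $T$ preserves $\lambda$ at that stage. There is no gap.
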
 
 
 \begin{thm}\label{mu boundary}
  Let $\mu$ be a non-elementary distribution of $Out(\FN)$ with finite first moment with respect to $d_s$.  Then
  \begin{enumerate}
  \item [(i)] There exists a unique $\mu$-stationary probability measure $\nu$ on the space $\partial CV_N$, which is purely non-atomic and concentrated on $\mathcal{UE}$; the measure space $(\partial CV_N, \nu)$ is a $\mu$-boundary, and
  \item [(ii)] For almost every sample path $(w_n)$, $w_n0$ converges in $\overline{CV}_N$ to $w_\infty \in \mathcal{UE}$, and $\nu$ is the corresponding hitting measure.
  \end{enumerate}
 \end{thm}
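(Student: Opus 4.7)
The strategy is to transport the random walk from $\Out(\FN)$ to the hyperbolic complex $\mathcal{FF}$ via the Maher--Tiozzo framework, obtain sublinear $d_s$-tracking of sample paths by folding paths through Tiozzo's ergodic-theoretic lemma, and then deploy the slow-progress theorems \ref{slow progress in FF for foldings} and \ref{slow progress in FF for unfoldings} to promote the $\mathcal{FF}$-convergence to convergence in $\overline{CV}_N$ at uniquely ergodic, dually uniquely ergodic limits. Since $\pi:CV_N \to \mathcal{FF}$ is coarsely Lipschitz by Proposition \ref{projection properties}, the finite first moment of $\mu$ with respect to $d_s$ transfers to $d_{\mathcal{FF}}$, and non-elementarity is part of the standing hypothesis. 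Applying Proposition \ref{Tiozzo Maher}, almost every sample path $(w_n)$ satisfies $\pi(w_n 0) \to \lambda_+ \in \partial \mathcal{FF}$ with $\lambda_+$ distributed as the unique non-atomic $\mu$-stationary probability measure $\bar\nu$ on $\partial \mathcal{FF}$, and running the argument on bilateral paths produces almost surely distinct backward limits $\lambda_- \in \partial \mathcal{FF}$. Via Corollary \ref{delF2}, I write $\lambda_\pm = \Lambda(T^\pm_\omega)$ for arational trees $T^\pm_\omega \in \mathcal{AT}$, and use Proposition \ref{delF} to select a greedy folding path $G_\omega \in \mathbb{G}(T^-_\omega, T^+_\omega)$.

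Next I would establish sublinear tracking via Tiozzo's Lemma \ref{Tiozzo lemma}. Set $f(\omega) = d_s(0, G_\omega)$, with measurability ensured by $\Phi$ of Lemma \ref{phi is measurable}. Since $T^\pm_{S\omega} = g_1^{-1} T^\pm_\omega$, one may take $G_{S\omega} = g_1^{-1} G_\omega$, whence $f(S\omega) = d_s(g_1 \cdot 0, G_\omega)$ and the triangle inequality gives $|f(S\omega) - f(\omega)| \leq d_s(0, g_1 \cdot 0)$. The latter is $\mathbb{P}$-integrable by the finite first moment assumption, so ergodicity of the shift $S$ and Lemma \ref{Tiozzo lemma} yield $f(S^n\omega)/n = d_s(w_n 0, G_\omega)/n \to 0$ almost surely.

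The heart of the proof is deducing $T^\pm_\omega \in \mathcal{UE}$ almost surely. Proposition \ref{Tiozzo Maher}(ii) applied in $\mathcal{FF}$ gives the linear escape $d_{\mathcal{FF}}(\pi(0), \pi(w_n 0))/n \to L_0 > 0$, which together with coarse Lipschitzness of $\pi$ and sublinear $d_s$-tracking forces $\pi(G_\omega)$ to escape linearly in $\mathcal{FF}$. Via Proposition \ref{strongly contracting} and Corollary \ref{uniformly stable}, one arranges that $G_\omega$ stays in some thick part $CV_N^\epsilon$ and hence admits a linear folding/unfolding discretization. Suppose now $T^+_\omega \notin \mathcal{UET}$: then the forward half of $G_\omega$ is a linear folding sequence whose limit tree is not uniquely ergometric, and Theorem \ref{slow progress in FF for foldings} produces, for every $l$, an index $n$ with $d_{\mathcal{FF}}(G_n, G_{n+p}) \leq 6$ for all $p \leq l$. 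By Lemma \ref{linear implies linear speed} such a sub-path covers $CV_N$-arc-length $\Theta(l)$, and sublinear tracking forces $\Theta(l)$ consecutive sample-path terms to lie in a bounded $\mathcal{FF}$-region, contradicting the linear $\mathcal{FF}$-escape rate $L_0$. The analogous application of Theorem \ref{slow progress in FF for unfoldings} to the backward half of $G_\omega$ forces $\Lambda(T^-_\omega) \in \mathcal{UEL}$. Running the same arguments for the reflected walk (equivalently, swapping the forward and backward directions on bilateral paths) gives $T^-_\omega \in \mathcal{UET}$ and $\Lambda(T^+_\omega) \in \mathcal{UEL}$, so $T^\pm_\omega \in \mathcal{UE}$ almost surely. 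Since each $\sim$-class inside $\mathcal{UE}$ contains a unique projective representative, $w_n 0$ converges in $\overline{CV}_N$ to this representative $w_\infty$.

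Finally, I define $\nu$ as the law of $w_\infty$ on $\partial CV_N$; by construction it is $\mu$-stationary, non-atomic, and concentrated on $\mathcal{UE}$. For uniqueness, any $\mu$-stationary probability measure $\nu'$ on $\partial CV_N$ pushes forward under $\partial \pi$ to a $\mu$-stationary probability measure on $\partial \mathcal{FF}$, which must equal $\bar\nu$ by Proposition \ref{Tiozzo Maher}; since $\partial \pi$ is injective on $\mathcal{UE}$ and the complement of $\mathcal{UE}$ carries $\bar\nu$-mass zero, we recover $\nu' = \nu$. The $\mu$-boundary property is immediate from $w_n 0 \to w_\infty$, which gives $w_n\nu \to \delta_{w_\infty}$ almost surely. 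The main obstacle will be the quantitative argument in the third paragraph: matching the $o(n)$ tracking rate against the linear $\mathcal{FF}$-escape rate $L_0$ and the plateau length $l$ arising from Theorems \ref{slow progress in FF for foldings} and \ref{slow progress in FF for unfoldings}, while ensuring that $G_\omega$ admits a genuinely linear discretization on the scales where slow progress is to be ruled out.
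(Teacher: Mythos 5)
Your overall strategy coincides with the paper's: transfer the walk to $\mathcal{FF}$ via Maher--Tiozzo, build a bi-infinite folding path between the forward and backward $\partial\mathcal{FF}$-limits, get sublinear $d_s$-tracking from Tiozzo's lemma applied to the equivariant function $\Phi$, and rule out non-unique ergodicity/ergometricity of the endpoints by playing the linear $\mathcal{FF}$-escape rate against Theorems \ref{slow progress in FF for foldings} and \ref{slow progress in FF for unfoldings}. Two points, however, need repair.

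First, the step ``Since each $\sim$-class inside $\mathcal{UE}$ contains a unique projective representative, $w_n0$ converges in $\overline{CV}_N$'' is a genuine gap as written. Knowing that $\pi(w_n0)$ converges to $\partial\pi(T^+_\omega)$ in $\partial\mathcal{FF}$ and that the fibre of $\partial\pi$ over that point is a single projective tree does not by itself control the subsequential limits of $w_n0$ in the compact space $\overline{CV}_N$: a priori these could be non-arational trees, on which $\partial\pi$ is not even defined. One must either argue by contradiction through Proposition \ref{delF} (a subsequential limit $U\notin\mathcal{AT}$ would force $\pi$ of that subsequence not to converge in $\partial\mathcal{FF}$, and a limit $U\in\mathcal{AT}$ satisfies $U\sim T^+_\omega$, hence $U=T^+_\omega$ by unique ergodicity), or do what the paper does in Claim \ref{convergence in cv}: produce a current $\eta$ with $\mathrm{Supp}(\eta)=\Lambda(T^+_\omega)$ as a limit of normalized counting currents of short loops on the tracking geodesic, and show $\langle w_n0,\eta\rangle\to 0$ after rescaling, so that Propositions \ref{KL} and \ref{uniqueduality} pin down every accumulation point. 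Either route works, but some such argument is required and you supply neither.

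Second, your contradiction in the third paragraph is routed through the sample path: you claim the length-$l$ plateau forces ``$\Theta(l)$ consecutive sample-path terms to lie in a bounded $\mathcal{FF}$-region.'' The tracking error is only $o(n)$, and Theorem \ref{slow progress in FF for foldings} places the plateau at some index $n=n(l)$ that may grow much faster than $l$, so the $o(n(l))$ error can swamp the plateau and no bounded region is forced. The paper avoids this by deriving the contradiction entirely at the level of the geodesic: since $\lim_n d_{\mathcal{FF}}(\pi(0),\pi(T_{L_1n}))/n=L_0>0$ and $\pi(T_t)$ is a reparameterized uniform quasi-geodesic by Proposition \ref{liberal quasi-geodesic}, the positive ray projects to a genuine parameterized quasi-geodesic in $\mathcal{FF}$, which directly contradicts $d_{\mathcal{FF}}(G_n,G_{n+p})\le 6$ for $0\le p\le l$ once $l$ is large, with no reference to the sample path. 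You already establish the linear escape of $\pi(G_\omega)$, so you should close the argument there; your appeal to strong contraction to guarantee thickness and hence a linear discretization is a correct supporting detail (one the paper leaves implicit). The remainder of your proposal --- uniqueness of $\nu$ via pushforward to $\partial\mathcal{FF}$ and injectivity of $\partial\pi$ on $\mathcal{UE}$, and the $\mu$-boundary property --- matches the paper.
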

 
 \begin{proof}
  Notice that finite first moment with respect to $d_s$ certainly gives finite first moment with respect to $d_{\mathcal{FF}}$ by Proposition \ref{projection properties}.  According to Proposition \ref{Tiozzo Maher}, there is a unique $\mu$-stationary, \emph{resp.} $\check{\mu}$-stationary, purely non-atomic measure $\cover \nu_+$, \emph{resp.} $\cover \nu_{-}$, on $\partial \mathcal{FF}$.  By Proposition \ref{delF}, we have that $\partial \mathcal{FF} \cong \mathcal{AT}/\sim$; in particular, $\mathcal{AT} \ni T \mapsto [T] \in \partial \mathcal{FF}$ is continuous.  Further, by Lemma \ref{AT Borel}, we have that $\mathcal{AT}$ is measurable.  
  
  Let $\nu_+$, \emph{resp.} $\nu_{-}$, be a $\mu$-stationary, \emph{resp.} $\check{\mu}$-stationary, measure on $\partial CV_N$; the existence of $\nu_+$, $\nu_{-}$ is guaranteed by compactness of $\partial CV_N$; see, for example, \cite[Lemma 2.2.1]{KM96}.  By the previous paragraph, $\nu_+$, $\nu_{-}$ must be concentrated on $\mathcal{AT}$, and $(\partial \mathcal{FF}, \cover{\nu}_+)$, \emph{resp.} $(\partial \mathcal{FF}, \cover \nu_{-})$ is a quotient space of $(\partial CV_N, \nu_+)$, \emph{resp.} $(\partial CV_N, \nu_{-})$.  It follows that $\nu_+,\nu_{-}$ are non-atomic.  We will show that these quotients are in fact an isomorphisms by showing that $\nu_+$, $\nu_{-}$ are concentrated on $\mathcal{UE}$.  
  
  By \cite[Lemma 2.2.3]{KM96} for almost every sample path $(w_n)$ (corresponding to $\textbf{g}$), the weak-* limits $\lim_{n \to \pm \infty} w_n\nu=\lambda(w_{\pm \infty})$ exist, and by Proposition \ref{Tiozzo Maher}, $\lambda(w_{\pm \infty})$ are each  concentrated on a single $\sim$-class in $\mathcal{AT}$, which we denote by $\textbf{bnd}_{\pm}(\textbf{g})$.

\begin{claim} 
 For almost every sample path $(w_n)$ there is a Lipschitz geodesic $T_t$ with endpoints $T,U \in \mathcal{AT}$ such that 
 \[
 \lim d_s(w_n0,T_{L_1n})/n=0
 \]
\end{claim}

The proof follows \emph{verbatim} Tiozzo's proof of \cite[Theorem 6]{Tio14}; in particular, Tiozzo never uses that $\partial X$ in the statement of Theorem 6 is in any way related to $X$.  Tiozzo's proof is included for the convenience of the reader.

\begin{proof}\cite{Tio14}
 We apply Lemma \ref{Tiozzo lemma} to the space of increments with the shift operator and with $f$ defined by
 \[
 f(\textbf{g}):=\Phi(\textbf{bnd}_{-}(\textbf g),\textbf{bnd}_+(\textbf g))
 \]
 For $\textbf{g} \in Out(\FN)^\mathbb{Z}$, we have: 
 \[
 f(S^k\textbf{g})=\sup_{G \in \mathbb G(\b{bnd}_-(S^k\b g),\b{bnd}_+(S^k\b g)} d(0,G)
 \] 
It is easy to see that $\b{bnd}_{\pm}(S^k \b g)=w_k^{-1}\b{bnd}_{\pm}(\b g)$, so 
\[
f(S^k \b g)=\sup_{G \in \mathbb G(\b{bnd}_-(\b g),\b{bnd}_+(\b g)} d(w_k0,G)
\]
It is evident from the definition that $\mathbb G(\cdot, \cdot)$ is equivariant, hence so is $\Phi$.  Hence, we have that $g(\b g) \leq d(0,g_1^{-1}0)$ and the first moment assumption gives that $g \in L^1(Out(\FN)^\mathbb{Z},\mathbb P)$.  Hence, Lemma \ref{Tiozzo lemma} gives 
\[
\lim f(S^n \b g)/n=0
\]
almost surely.  This certainly means that for any $G \in \mathbb G(\b{bnd}_-(\b g),\b{bnd}_+(\b g)$ we have that 
\[
\lim d(w_k0,G)/n=0
\]
Suppose that $G=T_t$, then we have times $t_k$ such that 
\[
\lim d(w_k0,T_{t_k})/n=0
\]
By Lemma \ref{escape rate}, we must have have that $t_k/t=\pm L_1$.  
\end{proof} 

Recall that from Proposition \ref{Tiozzo Maher} we have $L_0>0$ such that almost surely $\lim d_{\mathcal{FF}}(0,w_n0)/n=L_0$.  Combining this with Lemma \ref{escape rate}, we have $L_1>0$ such that $\lim d_s(0,w_n0)/n=L_1$.  By the triangle inequality and Lemma \ref{projection properties}, we have
\[
d_s(0,T_{L_1n}) \geq d_s(0,w_n0)-d_s(T_{L_1n},w_n0)
\]
So
\[
\lim d_{\mathcal{FF}}(\pi(0),\pi(T_{L_1n}))/n = L_0
\]
In particular, the positive ray of $T_t$ projects quasi-isometrically to $\mathcal{FF}$.  If we were to assume that $U=\b{bnd}_+(\b g) \notin \mathcal{UET}$, then we would obtain a contradiction from Theorem \ref{slow progress in FF for foldings}.  

By applying exactly the same string of arguments to $\b{bbd}_-(\b g)$ except using Theorem \ref{slow progress in FF for unfoldings} in place of Theorem \ref{slow progress in FF for foldings}, we get that $\Lambda(T) \in \mathcal{UEL}$.  Hence, we have established that that $\nu_{\pm}$ are concentrated on $\mathcal{UE}$.  Hence, the map $\partial \pi$ provides an isomorphism of $(\partial CV_N, \nu_{\pm})$ and $(\mathcal{AT}/\sim, \cover{\nu}_{\pm})$.

 \begin{claim}\label{convergence in cv}
 For almost every sample path, one has that $w_n0$ converges in $\overline{CV}_N$.  
\end{claim}

\begin{proof}
 We have that $w_n0$ tracks sublinearly along a geodesic $T_t$ that converges to $T \in \mathcal{UET}$.  Observe that $T_t/e^t$ converges in $\overline{cv}_N$.  For a conjugacy class $g$, use $|g|$ to denote its length in $T_0$.  Let $g_n$ be an embedded loop in $T_n$, then $\eta_{g_n}/|g_n|$ converges in $Curr(\FN)$ to $\eta \in M_N$ with $\langle T, \eta \rangle =0$; by Proposition \ref{uniqueduality}, we have that $Supp(\eta)=\Lambda(T)$.  
 
If $U \in \partial cv_N$ satisfies $\langle U, \eta \rangle =0$, then by Proposition \ref{KL}, we have that $\Lambda(T) \subseteq L(U)$.  Again applying Proposition \ref{uniqueduality}, we have that $L(U)=L(T)$, and since $T \in \mathcal{UET}$, we get that $U=T$.  Hence, we will finish by showing that $\lim_{n \to \infty} \langle w_n0, \eta \rangle=0$.
 
 Set $U_n=w_n0$.  We have $\langle U_n, \eta_{g_n} \rangle \leq \langle T_n, \eta_{g_n} \rangle e^{d(T_n,U_n)}$.  Further, $d(T_0,T_n) \leq d(T_0,U_n)+d(U_n,T_n) \leq d(T_0, U_n) + C(\epsilon) d(T_n,U_n) +B(\epsilon)$ by Lemma \ref{thick distance}.  Hence,
 
 \begin{align*}
 \frac{\langle U_n, \eta_{g_n} \rangle}{|g_n|e^{d(T_0,U_n)}} & 
 \leq \frac{\langle U_n, \eta_{g_n} \rangle}{|g_n|e^{d(T_0,T_n)- C(\epsilon)d(T_n,U_n)-B(\epsilon)}} \\
  \, & \leq \frac{\langle T_n, \eta_{g_n} \rangle e^{d(T_n,U_n)}}{|g_n|e^{d(T_0,T_n)-C(\epsilon)d(T_n,U_n)-B(\epsilon)}} \\
  \, & = \frac{\langle T_n, \eta_{g_n} \rangle}{|g_n|e^{d(T_0,T_n)-o(n)}} \to 0
 \end{align*}
 
 \end{proof}
\end{proof}

We get:

\begin{thm}
 Let $\mu$ be a non-elementary distribution on $Out(\FN)$ with finite first moment with respect to the word metric.  The unique $\mu$-stationary measure $\nu$ on $\partial CV_N$ is the Poisson boundary.
\end{thm}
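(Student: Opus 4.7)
The plan is to apply Kaimanovich's strip criterion to upgrade the $\mu$-boundary provided by Theorem~\ref{mu boundary} to the full Poisson boundary. First I would observe that finite first moment with respect to the word metric on $\Out(\FN)$ implies finite first moment with respect to $d_s$ on $CV_N$, since the orbit map is coarsely Lipschitz (using Lemma~\ref{thick distance} together with cocompactness of $\Out(\FN) \actson CV_N^\epsilon$). Consequently Theorem~\ref{mu boundary} yields unique non-atomic stationary measures $\nu_+, \nu_-$ for $\mu$ and $\check\mu$ respectively, both concentrated on $\mathcal{UE}$, and the corresponding measure spaces are $\mu$- and $\check\mu$-boundaries.

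Next I would construct an equivariant measurable strip assignment. For $\nu_- \otimes \nu_+$-a.e.\ pair $(T^-, T^+)$ the two trees are not $\sim$-equivalent (as $\nu_\pm$ are non-atomic), so Proposition~\ref{delF} provides a greedy folding path $G_t = G_t(T^-, T^+)$ with $\lim_{t \to \pm \infty} G_t \sim T^\pm$; a measurable selection argument along the lines of Lemma~\ref{phi is measurable} makes this assignment Borel and $\Out(\FN)$-equivariant. Define the strip
\[
S(T^-, T^+) \;=\; \bigl\{\phi \in \Out(\FN) \,:\, d_s(\phi x_0,\, \mathrm{Image}(G_t)) \le R \bigr\}
\]
for a suitable constant $R$.

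The key technical step is the sub-exponential growth estimate
\[
\frac{1}{n}\log \bigl| S(T^-,T^+) \cap B_w(e,\, Kn) \bigr| \;\longrightarrow\; 0 \qquad \text{for every } K>0,
\]
required by the strip criterion. The ingredients: since $T^\pm \in \mathcal{UE}$, Corollary~\ref{delF2} together with Proposition~\ref{liberal quasi-geodesic} gives that $\pi(G_t)$ is a reparameterized uniform quasi-geodesic in $\mathcal{FF}$ with distinct endpoints in $\partial \mathcal{FF}$; by Proposition~\ref{strongly contracting} this forces $G_t$ into some $CV_N^{\epsilon(T^-,T^+)}$. Cocompactness of $\Out(\FN) \actson CV_N^{\epsilon(T^-,T^+)}$ then bounds $|S(T^-,T^+) \cap B_w(e,r)|$ linearly in $r$ (with constant depending on the pair), so the displayed left-hand side is $O(\log n / n) \to 0$.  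Since $\mu$ has finite first moment with respect to the word metric, Kaimanovich's strip criterion then identifies both boundaries $(\partial CV_N, \nu_\pm)$ as the Poisson boundary.

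The main obstacle is the joint measurability, equivariance, and strong contraction of the strip assignment: one must extract a Borel choice of folding path from the non-unique family provided by Proposition~\ref{delF} and verify that the induced thickness constant $\epsilon(T^-, T^+)$ is measurable (this is automatic from the measurability of $G_t$ and $\pi$), so that strips sitting in $CV_N^{\epsilon(T^-,T^+)}$ on a set of full $\nu_- \otimes \nu_+$-measure satisfy the count estimate. Once this is arranged, invoking Kaimanovich's strip criterion with the finite word-metric first moment completes the identification of the Poisson boundary.
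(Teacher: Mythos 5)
Your route is genuinely different from the paper's. The paper deduces the theorem in one line from Kaimanovich's \emph{Ray Criterion}: Theorem \ref{mu boundary} already supplies, for almost every sample path, a Lipschitz geodesic $T_t$ with $d_s(w_nx_0,T_{L_1n})/n\to 0$, and since the orbit has exponentially bounded growth, sublinear ray approximation identifies the Poisson boundary. Your argument is instead the \emph{Strip Criterion} route, which the paper itself flags as a viable alternative in the remark following the theorem. Most of your scaffolding is sound: transferring finite first moment from the word metric to $d_s$ via the orbit quasi-isometry, the $\nu_-\otimes\nu_+$-nullity of $\mathrm{Graph}(\sim)$ from non-atomicity of the pushforwards to $\partial\mathcal{FF}$, the measurable equivariant selection of a connecting folding path as in Lemma \ref{phi is measurable} and Proposition \ref{delF}, and non-emptiness of the strips (which you should secure explicitly, e.g.\ by taking $R$ at least $\Phi(T^-,T^+)+1$). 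The trade-off is that the ray criterion needs only the unilateral boundary and the tracking estimate already proved, while the strip criterion needs the bilateral picture but yields thinness of the sets $\mathbb{G}(T,U)$ as a by-product.

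There is, however, a gap in your justification of the count estimate. You invoke Proposition \ref{strongly contracting} to place $G_t$ in some $CV_N^{\epsilon(T^-,T^+)}$, but that proposition assumes $\pi(T_t)$ is a \emph{parameterized} uniform quasi-geodesic, whereas Proposition \ref{liberal quasi-geodesic} only gives a \emph{reparameterized} one. Distinct endpoints in $\partial\mathcal{FF}$ do not upgrade this: unique ergodicity of the endpoints gives no lower bound on the speed of $\pi(G_t)$ --- Theorems \ref{slow progress in FF for unfoldings} and \ref{slow progress in FF for foldings} only run in the opposite direction --- so nothing forces $G_t$ to be recurrent to a fixed thick part (by analogy with Teichm\"uller theory one expects uniquely ergodic limits with divergent geodesics). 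The estimate can be repaired without global thickness: every $\phi$ in the strip has $\phi x_0\in CV_N^{\epsilon_0}$, and $d_s(\phi x_0,G_t)\le R$ forces $G_t\in CV_N^{\epsilon_0e^{-R}}$, so the strip only accumulates along the thick portions of $G_t$; there, proper discontinuity and cocompactness give a uniform bound on the number of orbit points per unit of arclength, and the parameter range meeting $B_w(e,Kn)$ has length $O(n)$, which yields the linear count. With that correction your argument closes.
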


\begin{proof}
 Since Outer space has exponentially bounded growth, Theorem \ref{mu boundary} gives that the hypotheses of the Ray Criterion of Kaimanovich are satisfied \cite{Kai85}.
\end{proof}

\begin{rem}
We make two observations:
\begin{enumerate}
\item [(i)] We had the option as well to check the Strip criterion.  Indeed, from what we have shown and the contraction results of \cite{BF11}, one has that the $\mathbb{G}(T,U)$ is contained in a bounded neighborhood of any of its elements for generic $(T,U)$, and the construction of strips from \cite{KM96} can be carried out.  
\item [(ii)] The subspace of $\mathcal{AT}$ consisting of trees that are dual to foliation on a surface admits a countable partition $\{Z_i\}$ that satisfies $Z_i\Phi=Z_i$ or $Z_i\Phi \cap Z_i=\emptyset$ for any $\Phi \in Out(\FN)$; indeed, each tree is contained in a copy of $\mathcal{PML}(S)$ in $\partial CV_N$, where $S$ is a once-punctured surface with fundamental group $\FN$.  Then \cite[Lemma 2.2.2]{KM96} gives that either $\nu$ is supported on a finite union of these $\mathcal{PML}(S)$'s or else $\nu$ is concentrated on trees that are free and indecomposable.  
\end{enumerate}
\end{rem}

\bibliographystyle{amsplain}
\bibliography{REF}







{\sc \tiny \noindent
Hossein Namazi, Department of Mathematics, University of Texas\\
Alexandra Pettet, Department of Mathematics, University of British Columbia\\
Patrick Reynolds, Department of Mathematics, Miami University}
\vspace{0.1cm}

\end{document}